\DeclareMathOperator{\spec}{sp}
\DeclareMathOperator{\Cliff}{Cliff}
\DeclareMathOperator{\exte}{ext}
\DeclareMathOperator{\inte}{int}
\DeclareMathOperator{\id}{id}
\DeclareMathOperator{\Ad}{Ad}
\DeclareMathOperator{\op}{op}
\newcommand{\hill}{\mathcal{H}}
\newcommand{\E}{\mathcal{E}}
\newcommand{\cC}{\mathcal{C}}
\newcommand{\cS}{\mathcal{S}}
\newcommand{\cL}{\mathcal{L}}
\newcommand{\cB}{\mathcal{B}}
\newcommand{\cH}{\mathcal{H}}
\newcommand{\BH}{B(\mathcal{H})}
\newcommand{\bbC}{\mathbb{C}}
\newcommand{\bbZ}{\mathbb{Z}}
\newcommand{\bbR}{\mathbb{R}}
\newcommand{\K}{\mathcal{K}}
\newcommand{\A}{\mathfrak{A}}
\newcommand{\T}{\mathfrak{T}}
\newcommand{\Ahat}{\widehat{A}}
\newcommand{\C}{C^\ast}
\newcommand{\Calg}{C^\ast\text{-algebra}}
\newcommand{\Calgs}{C^\ast\text{-algebras}}
\newcommand{\MnC}{M_n(\mathbb{C})}
\newcommand{\s}[1]{\langle #1 \rangle}
\newcommand{\red}{\C_{\text{red}}(G)}
\theoremstyle{definition}
\newtheorem{theorem}[subsection]{Theorem}
\newtheorem*{theorem*}{Theorem}  
\newtheorem{dfn}[subsection]{Definition}
\newtheorem*{dfn*}{Definition}
\newtheorem{prop}[subsection]{Proposition}
\newtheorem{lemma}[subsection]{Lemma}
\newtheorem{cor}[subsection]{Corollary}
\newtheorem{example}[subsection]{Example}
\newtheorem{conj}[subsection]{Conjecture}
\begin{document}

\title{On the Lifting of the Dirac Elements\\ in the Higson-Kasparov Theorem}
\author{Shintaro Nishikawa\\Keio University}
\date{September 2016}

\maketitle
\thispagestyle{empty}
\begin{abstract} In this thesis, we investigate the proof of the Baum-Connes Conjecture with Coefficients for a-$T$-menable groups. We will mostly and essentially follow the argument employed by N. Higson and G. Kasparov in the paper \cite{HigKas2}. The crucial feature is as follows. One of the most important point of their proof is how to get the Dirac elements (the inverse of the Bott elements) in Equivariant $KK$-Theory. We prove that the group homomorphism used for the lifting of the Dirac elements is an isomorphism in the case of our interests. Hence, we get a clear and simple understanding of the lifting of the Dirac elements in the Higson-Kasparov Theorem. In the course of our investigation, on the other hand, we point out a problem and give a fixed precise definition for the non-commutative functional calculus which is defined in the paper \cite{HigKas2}. In the final part, we mention that the $\Calg$ of (real) Hilbert space becomes a $G$-$\Calg$ naturally even when a group $G$ acts on the Hilbert space by an affine action whose linear part is of the form an isometry times a scalar and prove the infinite dimensional Bott-Periodicity in this case by using Fell's absorption technique.
\end{abstract}

\newpage
\setcounter{page}{2}
\tableofcontents

\newpage
\section*{Acknowledgement}\markboth{Acknowledgement}{Acknowledgement} I would like to thank, first and foremost, my advisor, Takeshi Katsura who always gives me great insights all of which help me to overcome difficulties I have.  At the time when this thesis was written, two and a half years passed since he introduced me a beautiful realm of Functional Analysis, Operator Algebras and Noncommutative Geometry where the infinite dimensionality makes such a wonderful interplay between topology, analysis and algebra. Since then, his way of seeing and doing mathematics has influenced mine a lot in a good way. It is his constant support which made it possible for me to get such wonderful understandings of these subjects some of which are explored in this thesis. Also, here, I would like to thank Nigel Higson who kindly answered to my questions and gave me many helpful comments on my research. I am deeply indebted to his great hospitality during my visit to the Pennsylvania State University in December 2015. Finally, I would like to thank Narutaka Ozawa and Klaus Thomsen who kindly responded to my questions in their busy schedule. 
\addcontentsline{toc}{section}{\hspace{40pt}Acknowledgement}
\newpage
\section*{Introduction}\markboth{Introduction}{Introduction} The Baum-Connes Conjecture (Conjecture \ref{thm:BC}) is a long-standing conjecture in non-commutative geometry. It does have deep relations with other fields of mathematics; the Novikov conjecture in topology and the idempotent conjecture in algebra are famous examples of conjectures which the Baum-Connes Conjecture implies. Since it was formulated in 1982 by Baum and Connes, there has been outstandingly great progress in understanding and verification of this conjecture. For a second countable, locally compact topological group $G$, the reduced group $\Calg$ $\C_{\text{red}}(G)$ of $G$ is defined as the completion of the convolution algebra $L^1(G)$ acting on the Hilbert space $L^2(G)$ of square integrable functions on $G$. The set of unitary equivalence classes of irreducible representations of the $\Calg$ $\red$ correspond bijectively to that of irreducible  unitary representations of the group $G$ which are weakly contained in the (left) regular representation of $G$; this set is the reduced unitary dual $\hat{G}_r$. When $G$ is a compact or an abelian group, the natural topology defined on $\hat{G}_r$ is locally compact and Hausdorff. However, for a general group $G$, the topology on $\hat{G}_r$ may not be Hausdorff. The $K$-theory $K_\ast(\red)$ of the $\C$-algebra $\red$ can be considered as one of the tools for properly describing the geometric nature of the ``space'' $\hat{G}_r$. On the other hand, Kasparov (\cite{Kas2}) generalized the index theory of elliptic operators on smooth manifolds to develop the bivariant theory of $\Calgs$: the equivariant $KK$-theory. This beautiful generalization of the index theory  achieved to define not only the notion of abstract elliptic operators which induce the group homomorphisms on $K$-theory groups of $\Calgs$ but also the well-defined product of two elliptic operators so that it is compatible with the composition of group homomorphisms they induce; this is the Kasparov product. Kasparov and others managed to define the (higher) indices of elliptic operators taking values in the groups $K_\ast(\red)$. The Baum-Connes Conjecture states that all elements of the $K$-theory groups $K_\ast(\red)$ should be indices of some elliptic operators and that any two elliptic operators having same indices should be linked by certain geometric relations (i.e.\ homotopies). 

N. Higson and G. Kasparov (\cite{HigKas2}) showed that the Baum-Connes Conjecture holds for all a-$T$-menable groups (Definition \ref{dfn:a-t}), in particular for all amenable groups. Actually, what they proved is that they satisfy the Baum-Connes Conjecture with Coefficients (Conjecture \ref{thm:BCC}) which is a much stronger conjecture than the Baum-Connes Conjecture. This is the Higson-Kasparov Theorem (Theorem \ref{thm:BCCa-T}). They proved this result following the Dual-Dirac method (Theorem \ref{DD}), the standard method used for proving the Baum-Connes Conjecture with Coefficients which says the Baum-Connes Conjecture with Coefficients holds for a group $G$ if one finds an isomorphism between the $\Calg$ $\bbC$ and some proper $G$-$\Calg$ in Equivariant Kasparov's category $KK^G$. For an a-$T$-menable group $G$, there is a natural candidate of this isomorphism which is called the Bott element. However, as is described in the paper \cite{HigKas2}, there is a certain analytic technicality in finding the inverse of the Bott element, the Dirac element. N. Higson and G. Kasparov defined for separable $G$-$\Calgs$ $A,B$, an abelian group $\{\Sigma A, B\}_G$ and a group homomorphism $\eta$ from the odd Kasparov group $KK^G_1(A, B)$ to $\{\Sigma A, B\}_G$ ($\Sigma A=C_0(0, 1)\otimes A$). They defined a Dirac element $\alpha$ in the group $\{\Sigma A(\hill), S\Sigma\}_G$  where $A(\hill)$ is a certain proper $G$-$\Calg$ and $S=C_0(\bbR)$. They managed to find the ``honest'' Dirac element $d$ by showing that we can lift $\alpha$ to $d$ by $\eta$. Their proof of this lifting (\cite{HigKas2} Theorem 8.1.) contains a very technical argument concerning the extension of $G$-$\Calgs$ having a not necessarily equivariant completely positive cross section. In this thesis, among the other things, we prove the following result:

\begin{theorem}\label{Theo}(See Theorem \ref{Result}) Let $A,B$ be separable $G$-$\Calgs$. Suppose that $A$ is a nuclear, proper $G$-$\Calg$ and that $B$ is isomorphic to $\Sigma B'$ for some separable $G$-$\Calg$ $B'$. Then, the homomorphism $\eta\colon KK^G_1(A, B)\to \{\Sigma A, B\}_G$ is an isomorphism of abelian groups.
\end{theorem}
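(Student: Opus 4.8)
The plan is to prove that $\eta$ is an isomorphism by verifying surjectivity and injectivity separately, using the suspension coordinates on $\Sigma A$ and on $B=\Sigma B'$ to pass between the Kasparov picture and the homotopy-of-morphisms picture defining $\{\Sigma A, B\}_G$. First I would unwind the two constructions: a class in $KK^G_1(A,B)$ is given by a pair $(\E, F)$, where $\E$ is a $G$-equivariant Hilbert $B$-module carrying a representation of $A$ and $F$ is a self-adjoint operator such that $(F^2-1)a$, $[F,a]$ and $(g\cdot F - F)a$ lie in $\K(\E)$ for every $a\in A$ and $g\in G$, and $\eta$ records the equivariant morphism out of $\Sigma A$ obtained from $(\E,F)$ by the standard clutching construction. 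The two hypotheses enter as follows: properness of $A$ supplies $G$-equivariant cutoff data (an equivariant approximate unit adapted to the proper action), while nuclearity of $A$ guarantees both that the requisite completely positive sections exist and that the asymptotic and strict morphism pictures agree.

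For surjectivity I would, starting from a representative of a class in $\{\Sigma A, B\}_G$, manufacture a Kasparov cycle mapped onto it by $\eta$. The isomorphism $B\cong\Sigma B'$ is exactly what produces the needed self-adjoint operator: identifying $\Sigma B'=C_0(0,1)\otimes B'$, the position multiplier $t\mapsto 2t-1$ on $C_0(0,1)$ is a self-adjoint element of the multiplier algebra whose defect $(2t-1)^2-1=-4t(1-t)$ lies in $C_0(0,1)$, so tensoring it against the given morphism yields an operator $F$ with $F^2-1$ landing in the algebra, i.e.\ a genuine odd cycle; chasing definitions shows its $\eta$-image is the class we started from.

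For injectivity I would show that $\eta(\E,F)=0$ forces $[(\E,F)]=0$ in $KK^G_1(A,B)$. Here one is handed an equivariant homotopy in $\{\Sigma A,B\}_G$ from the morphism attached to $(\E,F)$ to a degenerate one, and the task is to lift this to a homotopy of Kasparov cycles, that is, a cycle in $KK^G_1(A, C([0,1],B))$. The extra suspension parameters give room to rotate $F$ into the normalized form in which the homotopy becomes an operator homotopy, and nuclearity permits replacing the completely positive lift of the homotopy by an honest path of cycles without altering the $KK$-class.

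I expect the principal obstacle to be the equivariance of all of these constructions. The completely positive cross-sections arising both when building a cycle for surjectivity and when lifting a homotopy for injectivity are in general not $G$-equivariant; this is precisely the technical difficulty in the original Higson--Kasparov argument. My way around it is to exploit the properness of $A$: averaging a non-equivariant section against the group using the proper cutoff functions produces an (at worst asymptotically) equivariant replacement, with nuclearity ensuring that the averaging does not leave the relevant homotopy class. The delicate point is to carry out this averaging uniformly in the homotopy parameter, so that the lifted homotopy remains a valid continuous path of equivariant Kasparov cycles; controlling that uniformity is, I expect, the heart of the matter.
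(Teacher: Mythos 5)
Your proposal has a genuine gap, and it sits exactly where the whole difficulty of the theorem lies: the passage from asymptotic data to exact data. An element of $\{\Sigma A, B\}_G$ is represented by an equivariant \emph{asymptotic} morphism $(\phi_t)\colon \Sigma A \to\to \K(\E)$, which is neither multiplicative nor equivariant on the nose. Your surjectivity step never converts this into the ingredients of a Kasparov cycle: tensoring against the position multiplier $2t-1$ of $C_0(0,1)\otimes B'$ produces a self-adjoint multiplier whose defect lies in $B$, but it produces neither a genuine equivariant representation of $A$ on a Hilbert $B$-module nor an exactly multiplicative homomorphism --- applying multipliers to asymptotic data returns asymptotic data. (Note also that the operator of an odd cycle is governed by the suspension direction of $\Sigma A$: a nondegenerate \emph{genuine} equivariant homomorphism $\Sigma A\to\K(\E)$ extends to $M(\Sigma A)$ and yields a cycle $(\E,\bar\phi,F)$; it is not governed by the suspension direction of $B$. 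The hypothesis $B\cong\Sigma B'$ plays a completely different role in the actual proof.) Your injectivity step has the same problem one level up: an asymptotic homotopy must be promoted to a homotopy of genuine cycles. Your proposed fix --- averaging non-equivariant completely positive sections with proper cutoff data --- yields, as you yourself say, only \emph{asymptotically} equivariant sections, which lands you back in the asymptotic category you are trying to leave. Closing that circle is precisely the content of Theorem 8.1 of \cite{HigKas2}, the ``very technical argument'' that the present theorem is designed to make avoidable; your sketch defers the hard step (``controlling that uniformity is \ldots the heart of the matter'') rather than supplying it.

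The paper's proof runs on an entirely different track and lifts nothing by hand. First, properness is used through the equivariant stabilization theorem (Proposition \ref{prop:stab}) to produce bijections $\{\Sigma A, B\}_G \cong [[\Sigma A, B\K]]_G \cong [[\Sigma A\K, B\K]]_G$ via the maps $\iota$, $\kappa_A$, $\sigma_\K$ and explicit homotopies of isometries --- no completely positive sections appear. Second, the hypothesis $B\cong\Sigma B'$ is used to read these groups as honest $E$-theory groups $E^G(A,B')$ and $E^G(\Sigma A,B)$. Third, nuclearity together with properness enters only through Corollary \ref{important} (via Proposition \ref{KasSka}): the canonical map $KK^G_1(A,B)\to E^G(\Sigma A,B)$ is an isomorphism because $KK^G(A,\cdot)$ is half-exact for $A$ proper and nuclear --- a cited Kasparov--Skandalis result combined with the universal properties of $KK^G$ and $E^G$, not a Choi--Effros lifting. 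The theorem then follows from a two-out-of-three chase in the commutative diagram \eqref{diagram:proof of theorem}. If you insist on a direct construction-and-lifting proof along your lines, you must either reprove Higson--Kasparov's Theorem 8.1 with genuinely (not asymptotically) equivariant sections obtained from the proper averaging, or find another mechanism; as written, both halves of your argument assume the very conversion they are supposed to establish.
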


Thanks to this result, we can avoid the technical theorem (\cite{HigKas2} Theorem 8.1.) in defining the Dirac element in Equivariant Kasparov's category $KK^G$.

The brief description of this thesis is as follows. Chapter 1 serves as a very quick introduction of $\Calgs$ for readers who might not be familiar with these notions. Chapter 2 contains further preliminary materials which are used in later chapters such as graded $\Calgs$, Hilbert-modules and unbounded multipliers. The functional calculus for unbounded multipliers is explained using the Bott-Dirac operator which plays the important role in the proof of the Higson-Kasparov Theorem. In Chapter 3, we give a basic introduction of $K$-Theory and $K$-Homology of $\Calgs$ and go on to introducing Kasparov's Equivariant KK-Theory in Chapter 4 and Equivariant E-Theory in Chapter 5. We confine ourselves to see only,  necessary facts for our investigation of the proof of the Higson-Kasparov Theorem. In Chapter 6, we quickly see the standard formalization of the Baum-Connes Conjecture and the Baum-Connes Conjecture with Coefficients. In this chapter, we also introduce the Higson-Kasparov Theorem and give a brief review of the proof given by N. Higson and G. Kasparov. In Chapters 7 and 8, we give a proof of the Higson-Kasparov Theorem following the argument employed by N. Higson and G. Kasparov. In Chapter 7, we point out a certain problem of the non-commutative functional calculus defined by N. Higson and G. Kasparov and give a fixed precise definition. In Chapter 8, among the other things, we show our main result (Theorem \ref{Theo}) which says that the group homomorphism used for the lifting of the Dirac elements is in fact, an isomorphism in the case of our interests. This gives us a clear and simple understanding of the technical part of the Higson-Kasparov Theorem. In the final Chapter, we mention that  the $\Calg$ of Hilbert space becomes a $G$-$\Calg$ naturally even when a group $G$ acts on the Hilbert space by an affine action whose linear part is not necessarily isometric but of the form an isometry times a scalar, and prove the infinite dimensional Bott-Periodicity in this case by using the Fell's absorption technique.

\addcontentsline{toc}{section}{\hspace{40pt}Introduction}
\newpage
\pagenumbering{arabic}
\section{$\C$-Algebras}

In this first chapter, we give a basic introduction of $\Calgs$. Materials given here can be found in many textbooks of this subject such as \cite{BrOza}, \cite{DixC}, \cite{HigRoe} and \cite{analysisnow}.

\begin{dfn} A complex algebra $A$ is a {Banach algebra} (resp.\ {normed algebra}) if its underlying vector space is a Banach space (resp.\ normed vector space) with a norm which is submultiplicative (i.e.\ $\|xy\|\leq\|x\|\|y\|$ for all $x,y \in A$).

An {involution} on a normed algebra $A$ is a conjugate-linear antimultiplicative isometry of order two, denoted $x \mapsto x^\ast$ for $x\in A.$

A {Banach {$\ast$}-algebra} (resp. {normed {$\ast$}-algebra}) is a Banach algebra (resp. normed algebra) with an involution. 

A {{$\Calg$}} is a Banach $\ast$-algebra $A$ satisfying {{$\C$-identity}}:
\begin{align}
\|x^\ast x\| &= \|x\|^2 \quad \text{for all} \quad x\in A
\label{eq:identity}
\end{align}

A normed algebra is called {separable} if it is separable in a topological sense, i.e. if it has a countable dense subset. 

A normed algebra $A$ is called {unital} if it has a unit (a multiplicative identity) usually denoted $1$ or $1_A$. A {unital} subalgebra of $A$ is a subalgebra of $A$ containing the unit $1_A.$  

A {{$\C$-subalgebra}} is a norm-closed selfadjoint (closed under the involution) subalgebra of a $\C$-algebra. It is a $\C$-algebra in an obvious way.

Let $A$ and $B$ be normed $\ast$-algebras. A {{$\ast$}-homomorphism} from $A$ to $B$ is an algebraic homomorphism from $A$ to $B$ which intertwines the involutions. An isomorphism of normed $\ast$-algebras is a surjective isometric $\ast$-homomorphism.

For any Banach $\ast$-algebra (resp.\ $\Calg$) $A$, there is a unital Banach ${\ast}$-algebra (resp.\ $\Calg$) $\tilde A$ containing $A$ as a subalgebra of codimension one. Its algebraic structure is unique, but it may be defined several norms on $\tilde A$. If $A$ is a $\C$-algebra, it will soon be clear that a $\Calg$ $\tilde A$ is unique up to isomorphisms. For a non-unital algebra $A$, $\tilde A$ is called a {unitization} of $A.$ 

Let $A$ be a unital Banach algebra. For $a \in A$, the {spectrum} $\spec_A(a)$ of $a$ in $A$ is a subset of $\mathbb{C}$ defined by $\spec_A(a)=\{ \, \lambda \in \mathbb{C} \mid \text{$\lambda - a$ is not invertible in $A$}\, \}$. The spectrum $\spec_A(a)$ is a nonempty compact subset of $\mathbb{C}$. If $A$ is a unital subalgebra of a unital Banach algebra $B$, $\spec_A(a)$ and $\spec_B(a)$ may not coincide in general. Fortunately, if $B$ is a $\C$-algebra and $A$ is its unital $\C$-subalgebra, they can be shown to be the same. Henceforth, we can speak about the spectrum of $a$ in a $\C$-algebra $A$ without any confusion; we will denote it by $\spec(a)$ (for $a$ in a non-unital $\Calg$ $A$, $\spec(a)$ is defined to be $\spec_{\tilde A}(a)$). 
\end{dfn}

Any $\ast$-homomorphism from a Banach $\ast$-algebra to a $\C$-algebra is bounded (continuous). In fact, it is always norm decreasing. Therefore, any bijective $\ast$-homomorphism between $\C$-algebras is  automatically an isomorphism. This explains the uniqueness of a unitization of a $\C$-algebra.

\begin{dfn} Let $A$ be a $\C$-algebra.
\begin{itemize}
\item[(i)] $a \in A$ is {normal} if $a^\ast a=aa^\ast$;
\item[(ii)] $a \in A$ is {selfadjoint} if $a=a^\ast$; 
\item[(iii)] $a \in A$ is {positive} if $a=b^\ast b$ for some $b \in A$; 
\item[(iv)] $p \in A$ is a {projection} if $p=p^\ast=p^2$;
\item[(v)] $w \in A$ is a {partial isometry} if $w^\ast w$ and $ww^\ast$ are projections.

Assume $A$ is unital.
\item[(vi)] $v \in A$ is an {isometry} if $v^\ast v=1$;
\item[(vii)] $u \in A$ is a {unitary} if $u^\ast u=uu^\ast=1.$
\end{itemize}
\end{dfn}

The set of positive elements in a $\C$-algebra $A$ forms a cone. We define an order for selfadjoint elements in $A$ in the following way. For  selfadjoint elements $a,b \in A$, $a\leq b$ if $b-a$ is positive.

\begin{example} Let $\hill$ be a complex Hilbert space, i.e.\ a complex Banach space whose norm is coming from an inner product $\langle \cdot,\cdot \rangle \colon \hill \times \hill \to \mathbb{C}$ (we will always take it to be linear in the second variable). A linear operator $T$ on a normed vector space is continuous if and only if it is uniformly bounded on the unit ball; we call such $T$ a bounded operator. The algebra $\BH$ of bounded operators on $\hill$ is a $\C$-algebra in the following way. We consider the {operator norm} $\|T\|=\displaystyle \sup_{\|\xi\|=1}\|T\xi\|.$ There is an involution $T\mapsto T^\ast$, where $T^\ast$ for a bounded operator $T$ is the unique bounded operator on $\hill$ satisfying $\s{T\xi,\eta}=\s{\xi,T^\ast\eta}$ for all $\xi,\eta \in \hill.$ Equipped with them, $\BH$ becomes a $\C$-algebra. If $\dim(\hill)=n<\infty$, $\BH$ can be identified with a matrix algebra $\MnC$ uniquely up to inner automorphisms. In this paper, $\MnC$ will be almost all cases treated as $\C$-algebras endowed with the canonical operator norm.
\end{example}

$\C$-subalgebras of $\BH$ are sometimes called concrete $\C$-algebras. It turns out any abstract $\C$-algebra is isomorphic to a concrete one (See Proposition \ref{prop:cstarrep}.) In other words, $\C$-identity \eqref{eq:identity} encodes all the necessary and sufficient informations for Banach $\ast$-algebras to be realized as ``operator algebras'' on Hilbert spaces. All the definitions above about particular elements of $\C$-algebras reflect the corresponding notions defined for operators on Hilbert spaces.

\begin{example} (commutative $\C$-algebras) Let $X$ be a locally compact Hausdorff space. The algebra $C_b(X)$ of bounded continuous $\mathbb{C}$-valued functions on X becomes a $\C$-algebra in the following way. The norm is supremum norm $\displaystyle \|f\|=\sup_{x\in X}|f(x)|$; and the involution is a pointwise complex conjugation $f\mapsto \overline{f}.$ Inside $C_b(X)$, there is a normed $\ast$-algebra $C_c(X)$ of continuous functions on $X$ with compact supports; its completion $C_0(X)$ in $C_b(X)$ is a $\C$-subalgebra of $C_b(X)$; it is identified with the algebra of continuous functions on $X$ vanishing at infinity. The $\Calg$  $C_0(X)$ is unital if and only if $X$ is compact, and in this case we usually denote it by $C(X).$ The algebra  $C_0(X)$ is separable if and only if $X$ is second countable. 
\end{example}
Any commutative $\C$-algebra is canonically isomorphic to $C_0(X)$ for some $X$.

\begin{prop} (cf. \cite{HigRoe} THEOREM 1.3.12) Let $A$ be a commutative $\C$-algebra. Denote by $\Ahat$, the space of characters of $A$ (nonzero $\ast$-homomorphisms from $A$ to $\mathbb{C}$) equipped with the weak-$\ast$ topology (pointwise convergence topology). Then $\Ahat$ is a locally compact Hausdorff space; and is compact if and only if $A$ is unital. The $\C$-algebra $A$ is isomorphic to $C_0(\Ahat)$; the isomorphism sends $a$ in $A$ to a function $\hat{a}\colon \psi \mapsto \psi(a).$ The character space $\Ahat$ is called Gelfand spectrum of $A.$   
\end{prop}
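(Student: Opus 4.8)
The plan is to establish the result first when $A$ is unital and then to recover the general case from the unitization $\tilde A$. Throughout, the map in question is the \emph{Gelfand transform} $\Gamma \colon A \to C_0(\Ahat)$ defined by $\Gamma(a) = \hat a$, $\hat a(\psi) = \psi(a)$; the whole proof amounts to checking that $\Gamma$ is a well-defined isometric $\ast$-isomorphism onto $C_0(\Ahat)$ and that $\Ahat$ carries the asserted topological properties.

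Suppose first that $A$ is unital. I would begin by observing that every character $\psi$ is automatically contractive: for unital $A$ the element $\psi(a) - a$ lies in the kernel of $\psi$ and hence is not invertible, so $\psi(a) \in \spec(a)$ and $|\psi(a)| \le \|a\|$. Thus $\Ahat$ is a subset of the closed unit ball of the dual space $A^\ast$, which is weak-$\ast$ compact by Banach--Alaoglu. Since the defining conditions $\psi(ab) = \psi(a)\psi(b)$ and $\psi(1) = 1$ are preserved under pointwise limits, $\Ahat$ is weak-$\ast$ closed, hence compact; it is Hausdorff because distinct characters disagree on some $a$, which $\hat a$ then separates. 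Next I would check that $\Gamma$ is a $\ast$-homomorphism. Multiplicativity and linearity are immediate, while compatibility with the involution reduces, by writing $a = b + ic$ with $b,c$ self-adjoint, to showing $\psi(b) \in \bbR$ for self-adjoint $b$; this in turn follows from the fact that self-adjoint elements have real spectrum, a consequence of the $\C$-identity. The crucial analytic point is that $\Gamma$ is \emph{isometric}: one has $\|\hat a\|_\infty = \sup_{\psi} |\psi(a)| = \max\{\,|\lambda| : \lambda \in \spec(a)\,\}$, the spectral radius of $a$, and since $A$ is commutative every element is normal, so the $\C$-identity forces the spectral radius to equal $\|a\|$.

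It remains to prove surjectivity and then to remove the unitality hypothesis. For surjectivity I would note that $\Gamma(A)$ is a closed (being the isometric image of a complete space) self-adjoint subalgebra of $C(\Ahat)$ that contains the constants (as $\hat 1 \equiv 1$) and separates points of $\Ahat$ by construction of the weak-$\ast$ topology; the Stone--Weierstrass theorem then forces $\Gamma(A) = C(\Ahat)$. For a general, possibly non-unital, $A$ I would pass to the unitization $\tilde A$ and identify its character space with $\Ahat \cup \{\psi_\infty\}$, where $\psi_\infty$ is the unique character annihilating $A$; this exhibits $\widehat{\tilde A}$ as the one-point compactification of $\Ahat$, so $\Ahat$ is locally compact Hausdorff and is compact precisely when $A$ already has a unit. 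Applying the unital case to $\tilde A$ and restricting to the ideal $A$, whose image is exactly the functions vanishing at $\psi_\infty$, yields the isomorphism $A \cong C_0(\Ahat)$.

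The main obstacle is the isometry step, where the $\C$-identity enters decisively through the spectral radius formula $r(a) = \|a\|$ for normal $a$; everything else is either formal (the $\ast$-homomorphism property, the topological bookkeeping of the unitization) or a direct appeal to Banach--Alaoglu and Stone--Weierstrass. A secondary subtlety worth isolating is the reality of the spectrum of self-adjoint elements, which underlies compatibility with the involution and likewise rests on the $\C$-identity rather than on general Banach-algebra theory.
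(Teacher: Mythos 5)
The paper does not prove this proposition at all --- it is stated with a citation of \cite{HigRoe} (Theorem 1.3.12) and no proof follows --- so there is no internal argument to compare yours against. Your proof is correct and is the standard one, essentially the route of the cited reference: contractivity of characters plus Banach--Alaoglu and weak-$\ast$ closedness make $\Ahat$ compact Hausdorff in the unital case; the $\C$-identity yields $r(a)=\|a\|$ for (automatically normal) elements and hence isometry of the Gelfand transform; Stone--Weierstrass gives surjectivity; and passing to $\tilde A$ identifies $\widehat{\tilde A}$ with the one-point compactification of $\Ahat$, which settles local compactness, the compact-iff-unital criterion, and the non-unital isomorphism. The one ingredient you use silently is the Gelfand-theory identity $\spec(a)=\{\,\psi(a)\mid\psi\in\Ahat\,\}$ for a commutative unital Banach algebra (equivalently, the character/maximal-ideal correspondence, resting on Gelfand--Mazur), which your isometry step needs in order to equate $\|\hat a\|_\infty$ with the spectral radius; given that the paper cites the entire theorem, invoking this standard fact without proof is a reasonable level of granularity, but it is the one genuinely nontrivial Banach-algebra input your sketch does not supply.
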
 

For a normal element $a$ in a unital $\C$-algebra $A$, denote by $C^\ast(a,1)$ the minimal unital $\C$-subalgebra of A containing $a.$ This is a unital commutative $\C$-algebra isomorphic to $C(\spec(a))$; the canonical (unital) isomorphism takes $a \in C^\ast(a,1)$ to the coordinate function $z \in C(\spec(a)).$ For any continuous function $f \in C(\spec(a)),$ we denote by $f(a)$ the corresponding element in $C^\ast(a,1).$ This correspondence is called {functional calculus.} More generally, for a normal element $a$ in any $\C$-algebra, the minimal $\C$-subalgebra  $C^\ast(a)$ containing $a$ is canonically isomorphic to $C_{0}(\spec(a)\backslash \{0\})$. There is analogous functional calculus for this possibly non-unital situation.

\begin{dfn} For a normed $\ast$-algebra $A$, a {representation} of $A$ on a Hilbert space $\hill$ is a bounded $\ast$-homomorphism from $A$ to $\BH.$ Two representations $\rho_1$ on $\hill_1$ and $\rho_2$ on $\hill_2$ are {unitary equivalent} if there exists a unitary (an isomorphism of Hilbert spaces) $U$ from $\hill_1$ to $\hill_2$ which intertwines the two representations:
\begin{align*}
U\rho_1(a) U^\ast &= \rho_2(a) \quad \text{for} \,\, a \in A
\end{align*}
A representation $\rho$ of $A$ on $\hill$ is called {nondegenerate} if $\rho(A)\hill=$ span$\{\,\rho(a)v \mid a\in A, v\in \hill \,\}$ is dense in $\hill.$ 
\end{dfn}

\begin{prop}
\label{prop:cstarrep} (cf. \cite{HigRoe} THEOREM 1.6.2) The following are equivalent for a normed $\ast$-algebra $A.$
\begin{itemize}
\item[(i)] $A$ is a $\C$-algebra;
\item[(ii)] $A$ is isomorphic to a $\C$-subalgebra of $\BH$ for some Hilbert space $\hill.$ 
\end{itemize}
\end{prop}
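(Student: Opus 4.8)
The plan is to prove the two implications separately, with essentially all of the content concentrated in $(i)\Rightarrow(ii)$. The reverse implication $(ii)\Rightarrow(i)$ is a matter of unwinding definitions: a $\C$-subalgebra of $\BH$ is by construction a norm-closed selfadjoint subalgebra, and since $\BH$ satisfies the $\C$-identity \eqref{eq:identity}, so does every subalgebra; hence it is a $\Calg$, and any normed $\ast$-algebra isomorphic to it inherits the $\C$-identity through the isometric $\ast$-isomorphism. For the substantial direction I would construct a faithful (isometric) representation of $A$ on a Hilbert space via the Gelfand--Naimark--Segal construction, assembled as a large direct sum of cyclic representations indexed by states.

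First I would reduce to the unital case by passing to the unitization $\tilde A$: an isometric representation of $\tilde A$ restricts to one of $A$, so it suffices to represent $\tilde A$ faithfully. Next, for a fixed state $\phi$ (a positive linear functional of norm one), I would carry out the GNS construction. The sesquilinear form $\s{a,b}_\phi=\phi(a^\ast b)$ is positive semidefinite, its radical $N_\phi=\{a:\phi(a^\ast a)=0\}$ is a closed left ideal, and the completion of $A/N_\phi$ in the induced inner product is a Hilbert space $\hill_\phi$ on which left multiplication $\pi_\phi(a)[b]=[ab]$ defines a bounded $\ast$-representation with cyclic vector $\xi_\phi=[1]$ satisfying $\s{\pi_\phi(a)\xi_\phi,\xi_\phi}_\phi=\phi(a)$.

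The crux of the argument, and the step I expect to be the main obstacle, is producing \emph{enough} states to detect the norm: for every nonzero $a\in A$ I must exhibit a state $\phi$ with $\phi(a^\ast a)=\|a\|^2$. To this end I would apply the functional calculus to the positive element $a^\ast a$: since $\|a^\ast a\|=\|a\|^2$ lies in $\spec(a^\ast a)$, the commutative subalgebra $C^\ast(a^\ast a,1)\cong C(\spec(a^\ast a))$ carries the character given by evaluation at the point $\|a\|^2$, which is a state taking the value $\|a\|^2$ on $a^\ast a$. The delicate point is to extend this functional off the commutative subalgebra to a genuine state on all of $A$: I would use the Hahn--Banach theorem to obtain a norm-preserving linear extension and then argue that a norm-one functional on a unital $\Calg$ with $\phi(1)=1$ is automatically positive, hence a state. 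This last equivalence---that positivity of $\phi$ is detected by the relation $\|\phi\|=\phi(1)$---is the technical heart and again rests on the commutative functional calculus applied to selfadjoint elements.

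Finally I would assemble the universal representation $\pi=\bigoplus_\phi\pi_\phi$, the sum ranging over all states of $A$, acting on $\bigoplus_\phi\hill_\phi$. For each $a$, choosing the state $\phi$ from the previous step gives
\[
\|\pi_\phi(a)\|^2=\|\pi_\phi(a^\ast a)\|\geq\s{\pi_\phi(a^\ast a)\xi_\phi,\xi_\phi}_\phi=\phi(a^\ast a)=\|a\|^2,
\]
while every representation is norm-decreasing, so $\|\pi_\phi(a)\|=\|a\|$ and therefore $\|\pi(a)\|=\sup_\phi\|\pi_\phi(a)\|=\|a\|$. Thus $\pi$ is isometric, hence injective, and identifies $A$ with the $\C$-subalgebra $\pi(A)\subseteq\BH$, which completes $(i)\Rightarrow(ii)$.
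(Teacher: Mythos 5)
Your proposal is correct and takes essentially the same approach the paper indicates: the paper's proof is only a sketch that invokes ``an elaboration of the Hahn--Banach Theorem and the GNS-construction'' and defers details to \cite{HigRoe}, which is exactly the argument you carry out (states detecting the norm via Hahn--Banach extension of a character of $C^\ast(a^\ast a,1)$, then the universal direct-sum GNS representation). Your write-up simply supplies the details the paper omits, and it is sound.
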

\begin{proof} The proof comes down to constructing for each selfadjoint element $a$ in $A$, a representation of $A$ which sends $a$ to a nonzero element. This is done by an elaboration of the Hahn-Banach Theorem and the GNS-construction. We omit the detail; see \cite{HigRoe} for example.
\end{proof}

For any $\Calg$ $A$, the matrix algebra $M_n(A)$ over $A$ becomes a $\Calg$ in the following way. One first identifies $A$ as a $\C$-subalgebra of $\BH$ by faithfully representing $A$ on some Hilbert space $\hill$. Then, $M_n(A)$ is naturally identified with a $\C$-subalgebra of $B(\hill^n)$. The $\C$-norm defined on $M_n(A)$ in this way is independent of representations of $A$. 

\begin{example} Let $A$ be a Banach $\ast$-algebra. There is a canonical pre-$\C$-norm on $A$ defined by:
\begin{align}
\|a\| &= \displaystyle \sup_{\rho}\|\rho(a)\|
\label{eq:envelop norm}
\end{align}
where the supremum is taken over all representations of $A.$ Since any representation of $A$ is norm decreasing as we remarked earlier, this norm is well-defined. It satisfies $\C$-identity \eqref{eq:identity} because it comes from the operator norm on Hilbert spaces. The completion of $A$ with this new norm (after taking a quotient by ``zero'' elements) is the {enveloping {$\C$-algebra}} of $A.$ By its construction, it has a universal property that representations of $A$ correspond bijectively to representations of the enveloping $\C$-algebra of $A$.  
\end{example}

Let's see some examples of this construction. The last one is the most important; the first two are described just for seeing what kinds of properties of Banach $\ast$-algebras make them far away from being $\C$-algebras.

\begin{itemize}
\item[(1)] Consider a subalgebra $A$ (as a Banach algebra) of $M_2(\mathbb{C})$ consisting of upper triangular matrices. Define an involution on $A$ by the following formula.
\begin{equation*}
\begin{pmatrix}
a & b \\
0 & c
\end{pmatrix}
^\ast=
\begin{pmatrix}
\bar{c} & \bar{b} \\
0 & \bar{a}
\end{pmatrix}
\quad \text{for} \,\, a,b,c \in \mathbb{C} 
\end{equation*}
One can check that this makes $A$ a Banach $\ast$-algebra and that its enveloping $\C$-algebra is $0.$  The reason of vanishing of all elements is clear: all the three basic coordinate vectors satisfy $a^\ast a = 0$ which implies $a=0$ in $\C$-algebras.  
\item[(2)]Denote the closed unit disk of the complex plane $\mathbb{C}$ by $\mathbb{D}$. Consider a subalgebra $A$ (as a Banach algebra) of $C(\mathbb{D})$ consisting of bounded holomorphic functions on $\mathbb{D}$. Define a new involution on $A$ by $f^\ast(z)=\overline{f(\overline{z})}.$ This makes it a commutative Banach $\ast$-algebra; and its enveloping $\C$-algebra is $C([-1,1])$ of continuous functions on the interval $[-1,1].$ To check this, one can first see the image of the coordinate function $z$ and the identity (the constant function $1$) generate the enveloping algebra and note that since $z$ is selfadjoint in $A$, the spectrum of its image must be contained in $\mathbb{D}\cap\mathbb{R}=[-1,1].$  

\item[(3)](full group $\C$-algebras) Let $G$ be a locally compact topological group. We denote by $\mu$ its left invariant Haar measure which is unique up to scalar multiplication. Let $\Delta$ be the associated modular function. Consider a Banach space $L^1(G,\mu)$ of integrable functions. We define a product and an involution to make it a Banach $\ast$-algebra: for $f,g \in L^1(G,\mu)$
\begin{align*}
(fg)(t)&=\int f(s)g(s^{-1}t)d\mu(s) \\
(f^\ast)(t)&=\Delta(t)^{-1}\overline{f(t^{-1})}
\end{align*}
The enveloping $\C$-algebra of $L^1(G,\mu)$ is the (full) {group {$\C$}-algebra} $\C(G)$ of a locally compact topological group $G.$ This $\C$-algebra has an important universal property that associated to any unitary representation of $G$ on a Hilbet space, the canonical representation of $C_c(G)$ extends continuously (hence uniquely) to $\C(G);$ here one may identify $C_c(G)$ as a subalgebra of $\C(G)$ not just of $L^1(G,\mu).$ Conversely, any nondegenerate representation of $\C(G)$ arises in this way and uniquely determines the underlying unitary representation of $G.$ The $\Calg$ $\C(G)$ is commutative if and only if $G$ is abelian; and in this case, it is isomorphic to $C_0(\widehat{G})$ where $\widehat{G}$ is the character space of $G$ which is locally compact in its own right. The group $\Calg$ $\C(G)$ is separable if $G$ is second countable. 
\end{itemize}

\begin{dfn} Let $G$ be a locally compact topological group. Associated to the left regular representation of $G$ on $L^2(G,\mu)$, we have the canonical representation of $\C(G)$. The image of this representation is the {reduced group {$\Calg$}} of $G$; it is denoted by $\C_{\text{red}}(G)$.
\end{dfn}

\begin{dfn} Let $A$ be a $\Calg$ and $G$ be a locally compact topological group. A $G$-action on $A$ is a group homomorphism from $G$ to the automorphism group $\mathrm{Aut}(A)$ of $A$.  An element $a$ in $A$ with a $G$-action is {{$G$}-continuous} if the map $g \mapsto g\cdot a$ is a continuous map from $G$ to $A$. A  $\Calg$ $A$ with a $G$-action is called a {{$G$}-{$\Calg$}} if all elements in $A$ are $G$-continuous. A $\ast$-homomorphism between $\Calgs$ with $G$-action is called $G$-equivariant or simply, equivariant if it intertwines the two $G$-actions. Note, an equivariant $\ast$-homomorphism necessarily sends $G$-continuous elements to $G$-continuous elements.
\end{dfn}

A $G$-Hilbert space is a Hilbert space $\hill$ with a unitary representation of $G$.  A representation of $G$-$\Calg$ $A$ on a $G$-Hilbert space $\hill$ is a $\C$-algebraic representation $\rho$ of $A$ on $\hill$ which satisfies the following additional condition: for any $a$ in $A$ and for any $g$ in $G$, \,$\rho(g\cdot a)=u_g\rho(a)u_g^\ast$. Here, $u_g$ denotes the unitary on $\hill$ corresponding to $g$ in $G$.

\begin{dfn} Let $G$ be a locally compact group and $A$ be a $G$-$\Calg$. Consider a Banach space $L^1(G,A)$ of integrable functions from $G$ to $A$. We define a product and an involution to make it a Banach $\ast$-algebra: for $f,g \in L^1(G,A)$
\begin{align*}
(fg)(t)&=\int f(s)s(g(s^{-1}t))d\mu(s) \\
(f^\ast)(t)&=\Delta(t)^{-1}t(f(t^{-1}))^\ast
\end{align*}
The enveloping $\C$-algebra of $L^1(G,A)$ is the {full crossed product} $\C_{\text{max}}(G,A)$ of $A$ by $G$. It has a universal property that associated to any representation of a $G$-$\Calg$ $A$ on a $G$-Hilbert space $\hill$, the canonical representation of $C_c(G,A)$ extends continuously (hence uniquely) to a representation of $\Calg$ $\C_{\text{max}}(G,A)$. Conversely, any nondegenerate representation of $\C_{\text{max}}(G,A)$ arises in this way. 
\end{dfn}

\begin{dfn} Let $G$ and $A$ be the same as above. Represent $A$ faithfully and nondegenerately on a Hilbert space $\hill$. Then, the Hilbert space $L^2(G,\hill)$ becomes a $G$-Hilbert space by means of the left regular representation. There is a canonical representation of $G$-$\Calg$ $A$ on $L^2(G,\hill)$. The image of the associated representation of $\C_{\text{max}}(G,A)$ is the {reduced crossed product} $\C_{\text{red}}(G,A)$ of $A$ by $G$.
\end{dfn}

If $A$ is a commutative $G$-$\Calg$ $C_0(X)$ of continuous functions which vanish at infinity on a locally compact space $X$ equipped with a continuous $G$-action, we usually denote the full (resp.\ reduced) crossed product algebra by $\C_{\text{max}}(G,X)$ (resp. $\C_{\text{red}}(G,X)$).

\begin{dfn} Let $A$ be a $\C$-algebra. A (countable) {approximate unit} for $A$ is an increasing sequence $(u_n)_{n\geq 1}$ of positive contractible elements (contractive means having the norm no more than $1$) in $A$ such that for all $a \in A$, $\|a-u_na\|\rightarrow 0$ as $n \rightarrow \infty.$ A continuous approximate unit for $A$ is a family $(u_t)_{t\geq1}$ of (not necessarily increasing) positive contractive elements in $A$ such that for all $a \in A$, $\|a-u_ta\|\rightarrow 0$ as $t \rightarrow \infty.$
\end{dfn}
There is a net version of approximate units; and any $\C$-algebra has an approximate unit in this sense. A $\C$-algebra having a countable approximate unit is called {{$\sigma$}-unital.} Separable $\C$-algebras are $\sigma$-unital. A $\Calg$ has a continuous approximate unit if and only if it is $\sigma$-unital. 

\begin{dfn} Let $J$ be a closed selfadjoint ideal of a $\Calg \,\,A$ (selfadjointness actually follows from the other conditions). Then, the quotient algebra $A/J$ naturally becomes a $\Calg$. In this paper, by an ideal of a  $\Calg$ $A$, we mean a closed selfadjoint ideal of $A$.
\end{dfn}
Associated to an ideal $J$ of $A$, we have a short exact sequence:
\begin{align}
\xymatrix{
0 \ar[r] & J \ar[r] & A \ar[r] & A/J \ar[r] & 0
}
\label{def:extension}
\end{align}
We usually call a short exact sequence \eqref{def:extension} an extension of $A/J$ by $J$. When all $\Calgs$ which appear in \eqref{def:extension} are $G$-$\Calg$ and all connecting $\ast$-homomorphisms are equivariant, then we call it an $G$-extension.

An ideal $J$ of $A$ is called essential if the annihilator ideal \[J^\perp=\{\, a\in A\mid \text{$aj=ja=0$ for all $j\in J$} \,\} \]of $J$ in $A$ is $0$. For a $\Calg$ $A$, the {multiplier algebra} $M(A)$ of $A$ is a $\Calg$ containing $A$ as an essential ideal and maximal among such in the following sense. For any $\Calg$ $B$ containing $A$ as an ideal, there is a unique $\ast$-homomorphism from $B$ to $M(A)$ which is identity on $A$ and has kernel $A^\perp$. The multiplier algebra $M(A)$ can be defined for example, after faithfully and nondegenerately representing $A$ on a Hilbert space $\hill$, as an idealizer $\{\,T\in B(\hill)\mid \text{for any $a\in A$, $Ta, \,aT\in A$ } \,\}$ of $A$ in $B(\hill)$. When $A$ is a $G$-$\Calg$, the $G$-action extends to the natural $G$-action on the multiplier algebra $M(A)$. The quotient algebra $M(A)/A$ is called the outer multiplier algebra of $A$.

\begin{example} An operator $T$ on a Hilbert space is {compact} if it is a norm-limit of finite rank operators. The set of compact operators on a separable infinite dimensional Hilbert space $\hill$ forms an ideal of $\BH$. We denote it by $\K(\hill)$ or simply by $\K$ when there is no confusions. Calkin algebra $Q(\hill)$ or simply $Q$ is the quotient of $\BH$ by $\K$.
\end{example}

\begin{dfn} Suppose we have a system $(A_\lambda)_{\lambda\in\Lambda}$ of $\Calgs$ indexed by an upward filtering set $\Lambda$ with connecting $\ast$-homomorphisms $\phi_{\lambda_1\lambda_2}\colon A_{\lambda_1}\to A_{\lambda_2}$ for $\lambda_1\leq\lambda_2$ satisfying $\phi_{\lambda_2\lambda_3}\circ\phi_{\lambda_1\lambda_2}=\phi_{\lambda_1\lambda_3}$ for $\lambda_1\leq\lambda_2\leq\lambda_3$. We assume all connecting maps are injective and $\phi_{\lambda\lambda}=\id_{A_\lambda}$. An {inductive limit} $\displaystyle \lim_{\lambda\in\Lambda}A_\lambda$ of $(A_\lambda)_{\lambda\in\Lambda}$ is defined as the completion of an algebraic inductive limit of $(A_\lambda)_{\lambda\in\Lambda}$ which can be viewed as the union $\displaystyle\cup_{\lambda\in\Lambda}A_\lambda$ with the obvious pre-$\C$-norm.
\end{dfn}

A tensor product of $\Calgs$ is a very complicated notion. It is defined as a completion of an algebraic tensor product of $\Calgs$ by a $\C$-norm. Surprisingly, such a completion is not unique in general. The following two $\C$-tensor products are standard and very important. The detail of these definitions, for example, the definition of a tensor product of Hilbert spaces can be found in \cite{BrOza}. 

\begin{dfn} Let $A$ and $B$ be $\Calgs$. The {maximal tensor product} $A\otimes_{\text{max}}B$ of $A$ and $B$ is the completion of an algebraic tensor product $A\odot B$ by the following $\C$-norm:
\begin{align*}
\|x\| &= \displaystyle \sup_{\rho}\|\rho(x)\| \,\, \text{for $x$ in $A\odot B$}
\end{align*}
Here, the supremum is taken for all (algebraic) representation of the $\ast$-algebra $A\odot B$. The maximal tensor product $A\otimes_{\text{max}}B$ has a universal property that for any pair of commuting $\ast$-homomorphisms from $A$ and from $B$ to a $\Calg$ $C$, it ``extends'' uniquely to a $\ast$-homomorphism from $A\otimes_{\text{max}}B$ to $C$.

The {minimal tensor product} $A\otimes_{\text{min}}B$ or simply $A\otimes B$ of $A$ and $B$ is defined by the following way. We first faithfully represent $A$ and $B$ on Hilbert spaces $\hill_1$ and $\hill_2$. Then, an algebraic tensor product $A\odot B$ is realized as a $\ast$-subalgebra of $B(\hill_1\otimes\hill_2)$, where $\hill_1\otimes\hill_2$ is a tensor product of Hilbert spaces. We take $A\otimes_{\text{min}}B$ to be the completion of $A\odot B$ inside $B(\hill_1\otimes\hill_2)$. It is independent of choices of representations.
\end{dfn}

\begin{example} Let $A$ be a $\Calg$ and $X$ be a locally compact Hausdorff space. The algebra $C_0(X,A)$ (also denoted by $A(X)$) of $A$-valued continuous functions on $X$ which vanish at infinity naturally becomes a $\Calg$. There is a canonical isomorphism from the tensor product $A\otimes C_0(X)$ to $A(X)$ which sends an elementary tensor $a\otimes f$ to a function $x\mapsto f(x)a$. When $X$ is an interval, say $(0,1)$, then we further simply write $A(X)$ by $A(0,1)$.
\end{example}

The {nuclearity} of $\Calgs$ is a very fundamental notion in $\Calg$ theory. We refer to \cite{BrOza} for a very detailed account for this class of $\Calgs$. Here, we note the following important facts. For a nuclear $\Calg$ $A$, the maximal tensor product $A\otimes_{\text{max}}B$ and the minimal tensor product $A\otimes B$ coincide for any $\Calg$ $B$. All commutative $\Calgs$ are nuclear. A direct sum and an inductive limit of nuclear $\Calgs$ are nuclear. The minimal (maximal) tensor product of nuclear $\Calgs$ is nuclear.

\newpage
\section{Further Preliminaries}
In this chapter, we give a further preparation needed for the discussions in the following chapters. The contents of this chapter include proper $\Calgs$, graded Hilbert spaces, graded $\Calgs$, Hilbert modules, continuous fields of Hilbert spaces, continuous fields of $\Calgs$, unbounded operators on a Hilbert space and unbounded multipliers on a Hilbert module. In this chapter, $G$ always denote a second countable, locally compact topological group.

\begin{dfn} A second \,countable,\ locally \ compact, Hausdorff \ topological \ space equipped with a $G$-action $G\times X\to X$ is called a $G$-space. A $G$-space $X$ is called a proper $G$-space if the map $G\times X \ni (g, x) \to (gx, x) \in X\times X$ is proper (i.e. the inverse image of any compact set is compact). A separable $G$-$\Calg$ $A$ is a {proper} $G$-$\Calg$ if, for some second countable, locally compact proper $G$-space $X$, there exists an equivariant $\ast$-homomorphism from the $G$-$\Calg$ $C_0(X)$ to the center of the multiplier algebra $Z(M(A))$ of $A$ such that $C_0(X)A$ is dense in $A$. We denote by $A_c(X)$ the (frequently non-complete) subalgebra $C_c(X)A$ of $A$.
\end{dfn}

\begin{dfn} Let $X$ be a proper $G$-space. A cut-off function $c$ on $X$ is a bounded, non-negative continuous function on $X$ satisfying the following conditions. First, for any compact subset $K$ of $X$, there exists a compact subset $L$ of $G$ such that $(gc)(x)=c(g^{-1}x)=0$ for any $x$ in $K$ and for any $g$ outside $L$: in other words, a map $g\mapsto (gc)f$ from $G$ to $C_c(X)\subset C_0(X)$ has compact support for any $f$ in $C_c(X)$. Secondly $\int_G(gc)(x)^2d\mu=1$ for all $x$ in $X$. 
\end{dfn}

A cut-off function exists for any proper $G$-space $X$; this may be constructed as follows. Our assumption on $X$ ensures the orbit space $X/G$ is second countable, locally compact, Hausdorff and in particular paracompact. We can take a family of compact sets and relatively compact open sets $K_\lambda\subset U_\lambda$ of $X/G$ such that the compact sets $K_\lambda$ cover $X/G$ and that the family of relatively compact open sets $U_\lambda$ are locally finite. Now, we can further take a family of compact sets and relatively compact open sets $F_\lambda\subset W_\lambda$ such that each $W_\lambda/G$ is contained in $U_\lambda$ and that each $F_\lambda$ contains $K_\lambda$. Now, for each $\lambda$, get a nonnegative function $\theta_\lambda$ such that $\theta_\lambda(x)=1$ for all $x$ in $F_\lambda$ with support contained in $W_\lambda$. Then, a well-defined expression $\theta=\Sigma\theta_\lambda$ defines a continuous, nonnegative function $\theta$ such that first, for any $G$-compact subset $F$ of $X$, there exists a $G$-invariant open subset $W$ of $X$ on which the sum $\Sigma\theta_\lambda$ becomes a finite sum (thus $\theta$ has compact support inside $W$) and secondly, for any $x$ in $X$ there exsits $g\in G$ with $\theta(gx)>0$. The desired cut-off function $c$ on $X$ can be defined by $c(x)={\left(\frac{\theta(x)}{\int_G(g\theta)(x)d\mu} \right)}^{1/2}$. We also remark here that the set of cut-off functions on a proper $G$-space $X$ is connected in $C_b(X)$.

\begin{dfn}(cf. \cite{HigRoe} APPENDIX A) A graded $G$-Hilbert space is a $G$-Hilbert space $\hill$ with a fixed grading automorphism $\epsilon$ which is involutive (a selfadjoint unitary) and commutes with the action of  $G$. A grading automorphism, or simply a grading $\epsilon$ defines a decomposition of $\hill$ into two orthogonal closed $G$-invariant subspaces $\hill^{(0)}$ and $\hill^{(1)}$, where $\hill^{(0)}$ (resp.\ $\hill^{(1)}$) is the $+1$ (resp.\ $-1$) eigenspace of $\epsilon$. In this way, a graded $G$-Hilbert space is understood as nothing but as a pair of $G$-Hilbert spaces $\hill^{(0)}$ and $\hill^{(1)}$. An operator on a graded $G$-Hilbert space is called {even} (resp. {odd}) if it commutes (resp.\ anti-commutes) with the grading $\epsilon$. A graded tensor product $\hill_1\hat\otimes\hill_2$ of graded $G$-Hilbert spaces $\hill_1$ and $\hill_2$ is defined as a Hilbert space $\hill_1\otimes\hill_2$ with a grading $\epsilon_1\otimes\epsilon_2$ where $\epsilon_i$ are the gradings on $\hill_i$ for $i=1,2$.
\end{dfn}

\begin{dfn}(cf. \cite{HigRoe} APPENDIX A) A graded $G$-$\Calg$ is a $G\times\mathbb{Z}/2\mathbb{Z}$-$\Calg$ $A$. This is nothing but a $G$-$\Calg$ with a fixed grading automorphism on $A$ of degree two which commutes with the $G$-action. A graded $G$-$\Calg$ $A$ decomposes into two $G$-invairant closed selfadjoint subspaces $A^{(0)}$ and $A^{(1)}$, where $A^{(0)}$ (resp.\ $A^{(1)}$) is the $+1$ (resp.\ $-1$) eigenspace of the grading automorphism on $A$. They satisfy $A^{(i)}\cdot A^{(j)} = A^{(i+j)}$ for $i,j \in \mathbb{Z}/2\mathbb{Z}$. An element $a$ in $A^{(i)}$ is called homogeneous of degree $i$; and we express it by $\partial a = i$. We also call a element $a$ in $A^{(0)}$ (resp.\ $A^{(1)}$) as even (resp.\ odd).  A grading commutator $[\,,\,]$ is defined by $[a,b]=ab-(-1)^{\partial a\partial b}ba $ for homogeneous elements $a,b \in A$ and by extending it linearly.
\end{dfn}

Let $\hill$ be a graded $G$-Hilbert space with a grading $\epsilon$. The conjugation by $\epsilon$ defines a grading on the $\Calg$ $\BH$.

The algebra $C_0(\mathbb{R})$ of continuous functions on the real line which vanish at infinity becomes a graded $\Calg$ by a grading automorphism which is identity on even functions and $-1$ on odd functions. We denote this graded $\Calg$ by $\mathcal{S}$.

Let $A$ and $B$ be graded $G$-$\Calgs$. There is a notion of graded tensor products of $A$ and $B$. The crucial feature is that we first define a product and an involution on an algebraic tensor product $A\hat\odot B$ (a tensor product of vector spaces) by $(a\hat\otimes b)(c\hat\otimes d)=(-1)^{\partial b\partial c}(ac)\hat\otimes(bd), (a\hat\otimes b)^\ast=(-1)^{\partial a\partial b}a^\ast\hat\otimes b^\ast$ for homogeneous $a,c \in A, \, b,d \in B$. There are the maximal graded tensor product $A\hat\otimes_{\text{max}}B$ and the minimal graded tensor product $A\hat\otimes_{\text{min}}B$, or simply $A\hat\otimes B$. They coincide when $A$ or $B$ is nuclear. We refer to the book \cite{Bla} for further details.

\begin{example}\label{Clif} (Clifford algebras) (cf. \cite{HigRoe} APPENDIX A) Let $V$ be a finite dimensional vector space over $\mathbb{R}$. The complexified exterior algebra $\Lambda^\ast(V)\otimes\mathbb{C}$ naturally becomes a graded Hilbert space. The Clifford algebra $\Cliff(V)$ of $V$ is a (graded) $\C$-subalgebra of $B(\Lambda^\ast(V)\otimes\mathbb{C})$ generated by the Clifford multiplication operators $c(v)=\text{ext}(v)+\text{int}(v)$ for $v \in V$. Here, $\text{ext}(v)$ is the exterior multiplication by $v$ and $\text{int}(v)$ is its adjoint. One can also define the Clifford algebra $\overline{\Cliff}(V)$ as a (graded) $\C$-subalgebra of $B(\Lambda^\ast(V)\otimes\mathbb{C})$ generated by the Clifford multiplication operators $\overline c(v)=\text{ext}(v)-\text{int}(v)$. The graded $\Calgs$ $\Cliff(V)$ and $\overline{\Cliff}(V)$ are both isomorphic (as graded $\Calgs$) to the (abstract) Clifford algebra $\mathbb{C}_n$ where $n=\text{dim}(V)$ which is a graded $\Calg$ generated by $n$ anticommuting odd selfadjoint unitaries. A graded tensor product $\Cliff(V)\hat\otimes\overline{\Cliff}(V)$ can be naturally identified with $B(\Lambda^\ast(V)\otimes\mathbb{C})$. Also, we have a natural isomorphism $\Cliff(V)\hat\otimes \Cliff(W) \cong \Cliff(V\oplus W)$ for finite real vector spaces $V$ and $W$. When a group $G$ acts on $V$ by linear isometries $g\colon v\mapsto g(v)$ for $g$ in $G$ and $v$ in $V$, the $\Calg$ $\Cliff(V)$ or $\overline{\Cliff}(V)$ naturally becomes a graded $G$-$\Calg$ by defining $g(c(v))=c(g(v))$ for $g$ in $G$ and $v$ in $V$.
\end{example}

\begin{dfn} (cf. \cite{Bla} Chapter 13.) Let $B$ be a $G$-$\Calg$. A pre-Hilbert $B$-module is a right $B$-module $\mathcal{E}$ with a $B$-valued inner product $\langle \cdot,\cdot \rangle \colon \mathcal{E} \times \mathcal{E} \to B$. The norm on $\E$ is defined by $\|e\|=\|\s{e,e}\|^{\frac12}$ for $e \in \E$. If $\E$ is complete with this norm, we call it a Hilbert $B$-module. It is called full if $\s{\E,\E}=B$. A Hilbert $G$-$B$-module is a Hilbert $B$-module with a continuous $G$-action which is compatible with the $B$-module structure:\ $\s{ge_1,ge_2}=g\s{e_1,e_2}$, $g(eb)=g(e)g(b)$ for $g\in G, e,e_1,e_2\in\E, b\in B$. For a graded $G$-$\Calg$ $B$ (i.e.\ $G\times\bbZ/2\bbZ$-$\Calg$), a graded Hilbert $G$-$B$-module is nothing but a Hilbert $G\times\bbZ/2\bbZ$-$B$-module.   
\end{dfn}

For Hilbert $B$-modules $\E_1,\E_2$, a $B$-linear map $T\colon\E_1\to\E_2$ is called adjointable if there exists a $B$-linear map $T^\ast\colon\E_2\to\E_1$ such that $\s{Te_1,e_2}=\s{e_1,T^\ast e_2}$ for $e_1\in \E_1, e_2\in \E_2$. An adjointable $B$-linear map is automatically continuous. We denote the set of adjointable $B$-linear maps on $\E_1$ (resp.\ from $\E_1$ to $\E_2$) by $B(\E_1)$ (resp.\ $B(\E_1,\E_2)$).
The set $B(\E_1)$ becomes a $\Calg$ with the operator norm. If $\E_1$ is a graded $G$-$B$-Hilbert module, $B(\E_1)$ naturally becomes a graded $\Calg$ with a $G$-action. 

An adjointable $B$-linear map $T$ from $\E_1$ to $\E_2$ is called compact if it is in the closed linear span of $B$-rank-one operators $\theta_{e_2,e_1}\colon e \mapsto e_2\s{e_1,e}$ $e_1\in \E_1, e_2\in\E_2$. The set of compact operators on $\E_1$, denoted by $\K(\E_1)$, is an ideal of $B(\E_1)$ (we also denote the set of compact operators from $\E_1$ to $\E_2$ by $\K(\E_1,\E_2)$).  Moreover, $B(\E_1)$ can be identified with the multiplier algebra $M(\K(\E_1))$. The quotient $\Calg$ of $B(\E_1)$ by the ideal $\K(\E_1)$ is sometimes called as the Calkin algebra of $\E_1$; we denote it by $Q(\E_1)$. If $\E_1$ is a graded Hilbert $G$-$B$-module, $\K(\E_1)$ is a graded $G$-$\Calg$.

(Graded) exterior and interior tensor products of Hilbert modules are defined in \cite{Bla} for example. For ungraded (resp. graded) Hilbert $G$-$B_i$-modules $\E_i$ for $i=1, 2$, we denote their ungraded (resp. graded) exterior tensor product by $\E_1\otimes\E_2$ (resp. $\E_1\hat\otimes\E_2$). It is an ungraded (resp. graded) Hilbert $G$-$B_1\hat\otimes B_2$ module. We may sometimes omit to write $\otimes$ or $\hat\otimes$ for convenience when there could be no confusion. 

\begin{example} Let $B$ be a graded $G$-$\Calg$. Then, $B$ itself may be viewed as a graded Hilbert $G$-$B$-module with $\s{b_1,b_2}=b_1^\ast b_2$. The $\Calg$ $\K(B)$ of compact operators on $B$ is naturally isomorphic to $B$ by means of left multiplication. Hence the $\Calg$ $B(B)$ of adjointable operators on $B$ is isomorphic to the multiplier algebra $M(B)$.
\end{example}

The standard $G$-Hilbert space $\hill_G$ is defined as $L^2(G)\otimes l^2$ equipped with the left-regular representation $G$ on $L^2(G)$ and the trivial one on $l^2$. For a proper $\Calg$ $A$, any countably generated Hilbert $G$-$A$-module can be equivariantly embedded into a standard one $A\otimes\hill_G$.

\begin{prop} \label{prop:stab}(cf. \cite{bolic} PROPOSITION 5.5.) Let $A$ be a proper $\Calg$ with the base space $X$ and $\E$ be a countably generated Hilbert $G$-$A$-module. Then, there exists a $G$-equivariant adjointable isometry $V$ from $\E$ to $A\otimes\hill_G$. 
\end{prop}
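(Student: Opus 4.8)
The plan is to deduce this equivariant stabilisation statement from the classical (non-equivariant) Kasparov stabilisation theorem together with an averaging procedure over $G$ built from a cut-off function, the failure of equivariance being absorbed into the regular-representation factor of $\hill_G$ by Fell's absorption trick. Write $\hill_A:=A\otimes l^2$ for the standard Hilbert $A$-module, and let $\beta_g$ denote the $g$-twisted unitary $(g\cdot)\otimes\id_{l^2}$ on $\hill_A$ induced from the action on $A$, so that $\s{\beta_g\xi,\beta_g\eta}=g\cdot\s{\xi,\eta}$ and $\beta_g(\xi b)=\beta_g(\xi)(g\cdot b)$. Forgetting the $G$-action entirely, $\E$ is a countably generated Hilbert $A$-module, so the classical Kasparov stabilisation theorem supplies an adjointable isometry $W\colon\E\to\hill_A$, not assumed equivariant. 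Finally I fix a cut-off function $c$ on the base space $X$; via the structural map $C_0(X)\to Z(M(A))$ and the module structure it acts as a self-adjoint central bounded multiplier on $\E$, and by construction $\int_G(gc)^2\,d\mu=1$ as a multiplier.

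Identifying $A\otimes\hill_G=A\otimes L^2(G)\otimes l^2$ with $L^2(G,\hill_A)$ — on which $G$ acts by $(g'\cdot F)(g)=\beta_{g'}\big(F(g'^{-1}g)\big)$ — I would define $V\colon\E\to L^2(G,\hill_A)$ by
\[
(Ve)(g):=\beta_g\Big(W\big(c\cdot(g^{-1}e)\big)\Big).
\]
Right $A$-linearity of $V$ is immediate from that of $W$ and $c$ once one uses $\beta_g(\xi\,(g^{-1}\cdot a))=\beta_g(\xi)\,a$. Equivariance is the cocycle cancellation
\[
(g'\cdot Ve)(g)=\beta_{g'}\beta_{g'^{-1}g}\,W\big(c\cdot(g^{-1}g')e\big)=\beta_g\,W\big(c\cdot g^{-1}(g'e)\big)=(V(g'e))(g),
\]
using $\beta_{g'}\beta_{g'^{-1}g}=\beta_g$; this is exactly where the $\beta_g$-twist (Fell absorption) is needed to undo the non-equivariance of $W$. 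For the isometry one computes, using that $\beta_g$ rescales the inner product by $g$ and that $W$ and $c$ are isometric and self-adjoint,
\[
\s{Ve,Ve}=\int_G g\cdot\s{c\cdot g^{-1}e,\,c\cdot g^{-1}e}\,d\mu=\int_G \s{e,(gc)^2 e}\,d\mu=\s{e,\Big(\int_G(gc)^2\,d\mu\Big)e}=\s{e,e},
\]
where the middle equality is the identity $g\cdot\s{c\cdot g^{-1}e,\,c\cdot g^{-1}e}=\s{e,(gc)^2e}$ and the last is the normalisation of $c$.

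To make $V$ well defined I would first restrict to the dense subspace $C_c(X)\E$ (recall $A_c(X)=C_c(X)A$ and that $C_0(X)A$ is dense in $A$). For $e=f\,e_0$ with $f\in C_c(X)$ the scalar function $g\mapsto c\cdot(g^{-1}f)$, whose value at $x$ is $c(x)f(gx)=(gc)(gx)\,f(gx)$, is supported where a point of $\mathrm{supp}\,f$ meets the support of $gc$; the defining support property of the cut-off function then forces $g$ into a compact subset of $G$. Hence $g\mapsto(Ve)(g)$ is continuous and compactly supported, so $Ve\in L^2(G,\hill_A)$, and the isometry estimate above lets me extend $V$ to all of $\E$ by continuity. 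Adjointability follows by exhibiting the adjoint $V^\ast F=\int_G g\cdot\big(c\cdot W^\ast(\beta_g^{-1}F(g))\big)\,d\mu$ and checking $\s{Ve,F}=\s{e,V^\ast F}$ termwise under the integral.

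I expect the genuinely delicate points to be analytic rather than algebraic: justifying convergence of the $G$-integrals defining $\s{Ve,Ve}$ and $V^\ast$ (handled by the compact-support reduction to $C_c(X)\E$ and continuity) and checking that the central multiplier $c$ interacts correctly with the action, i.e. $g\cdot(c\cdot x)=(gc)\cdot(g\cdot x)$ on $\E$. The main obstacle is precisely the bookkeeping that makes the averaged inner product telescope to $\s{e,e}$: verifying that the $\beta_g$-twist cancels the non-equivariance of $W$ so that the normalisation $\int_G(gc)^2\,d\mu=1$ applies. Everything else is a routine verification on the dense subalgebra $C_c(X)\E$.
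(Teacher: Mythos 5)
Your proof is correct and takes essentially the same route as the paper: the paper factors the map as the cut-off isometry $e\mapsto\bigl(g\mapsto (gc)\cdot e\bigr)$ from $\E$ into $L^2(G,\E)$, followed by the pointwise-twisted embedding $f\mapsto\bigl(g\mapsto g(W(g^{-1}(f(g))))\bigr)$ into $L^2(G,A\otimes l^2)\cong A\otimes\hill_G$, and your single formula $(Ve)(g)=\beta_g\bigl(W(c\cdot(g^{-1}e))\bigr)$ is exactly this composition. The only difference is presentational: you verify isometry, equivariance and adjointability of the composite directly (and record the convergence details on $C_c(X)\E$ more explicitly than the paper does).
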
  
\begin{proof} Take any cut-off function $c$ on $X$. We have an adjointable isometry $V$ from $\E$ to $L^2(G, \E)$ which maps an element $e$ in $\E$ to a function $f\colon g\mapsto c(g)e$. The adjoint of $V$ is an operator from $L^2(G, \E)$ to $\E$ sending a function $f$ to $\int_Gc(g)f(g)d\mu$. This defines an equivariant embedding of $\E$ into $L^2(G, \E)$. Now, by using any non-equivariant embedding $W$ of $\E$ into $A\otimes l^2$ (we refer the book \cite{Bla} for the existence of such embeddings), we have an equivariant embedding $\tilde W$ of $L^2(G, \E)$ into $L^2(G, A\otimes l^2)$ which sends a function $f$ to a function $\tilde W(f)\colon g\mapsto g(V(g^{-1}(f(g))))$. Hence, we have an equivariant embedding of $\E$ into $L^2(G, A\otimes l^2)\cong A\otimes\hill_G$.
\end{proof}

In the above proposition, considering in particular the Hilbert $G$-$A$-module $A$, we have an $G$-equivariant embedding $V$ of $A$ into $A\otimes\hill_G$. We have an injective $G$-equivariant $\ast$-homomorphism $\Ad_V$ from $\K(A)\cong A$ into $\K(A\otimes\hill_G)\cong A\otimes\K(\hill_G)$. This is called the Stabilization of a proper $G$-$\Calg$. Here, we define for any adjointable isometry $V$ of Hilbert $G$-$B$-modules $\E_1$ to $\E_2$, the $\ast$-homomorphism $\Ad_V\colon T\mapsto VTV^\ast$ from $B(\E_1)$ to $B(\E_2)$ which is $G$-equivariant if $V$ is; and this $\ast$-homomorphism restricts to the $\ast$-homomorphism $\Ad_V$ from $\K(\E_1)$ to $\K(\E_2)$.

\begin{dfn}(Continuous field of Hilbert spaces) (cf. \cite{DixC} CHAPTER 10.) A continuous field of (complex) $G$-Hilbert spaces over a locally compact, Hausdorff topological space $X$ is a pair $((\hill_x)_{x\in X}, \Gamma)$ of a family $(\hill_x)_{x\in X}$ of $G$-Hilbert spaces over $X$ and a prescribed set $\Gamma$ of sections of $(\hill_x)_{x\in X}$ (we call them basic sections) satisfying the following conditions: $\Gamma$ is a vector space (over $\bbC$) with respect to its vector space structure coming from those of $\hill_x$; $\Gamma_x=\{\,v(x)\in\hill_x \,\mid\, v\in\Gamma\,\}$ is dense in $\hill_x$ for each $x$ in $X$; for any sections $v,w$ in $\Gamma$, a function $x \to \langle v(x),w(x)\rangle_x$ is continuous on $X$ ($\langle\cdot,\cdot\rangle_x$ denotes the inner product on $\hill_x$); the set $\Gamma$ is $G$-invariant with respect to the evident pointwise action of $G$ on  $(\hill_x)_{x\in X}$; and for any section $v$ in $\Gamma$ and for any sequence $(g_n)$ converging to $1$ in $G$, $(g_n(v(x)))$ converges to $v(x)$ uniformly on compact subsets of $X$. An arbitrary section of $(\hill_x)_{x\in X}$ is said to be continuous if it is a uniform limit over compact subsets of $X$ of basic sections. The set $\E$ of continuous sections which vanish at infinity becomes a Hilbert $G$-$C_0(X)$-module. The inner product on $\E$ is defined by $\langle v\,w\rangle\colon x\mapsto \langle v(x),w(x)\rangle_x$ for $v,w$ in $\E$. The continuous field of graded $G$-Hilbert spaces can be defined analogously. In this case, the set of continuous sections which vanish at infinity becomes a graded Hilbert $G$-$C_0(X)$-module. We note here that one can further generalize this construction to define a continuous field of graded Hilbert $G$-$B$-modules. 
\end{dfn}

\begin{dfn}(Continuous field of $\Calgs$) (cf. \cite{DixC} CHAPTER 10.) A continuous field of $G$-$\Calgs$ over a locally compact, Hausdorff topological space $X$ can be defined similarly to a continuous field of Hilbert spaces. It is a pair $((A_x)_{x\in X}, \Gamma)$ of a family $(A_x)_{x\in X}$ of $\Calgs$ over $X$ and a prescribed set $\Gamma$ of (basic) sections of $(A_x)_{x\in X}$ satisfying the following conditions: $\Gamma$ is a $\ast$-algebra with respect to its structure of a $\ast$-algebra coming from those of $A_x$; $\Gamma_x=\{\,v(x)\in\ A_x \,\mid\, v\in\Gamma\,\}$ is dense in $A_x$ for each $x$ in $X$; for any section $v$ in $\Gamma$, a function $x \to ||v(x)||_x$ is continuous on $X$ ($||\cdot||_x$ denotes the norm on $A_x$); the set $\Gamma$ is $G$-invariant; and for any section $v$ in $\Gamma$ and for any sequence $(g_n)$ converging to $1$ in $G$, $(g_n(v(x)))$ converges to $v(x)$ uniformly on compact subsets of $X$. An arbitrary section of $(A_x)_{x\in X}$ is said to be continuous if it is a uniform limit over compact subsets of $X$ of basic sections. The set of continuous sections $A$ which vanish at infinity becomes a $\Calg$. The continuous field of graded $G$-$\Calgs$ can be defined analogously. In this case, the set of continuous sections which vanish at infinity becomes a graded $G$-$\Calg$.
\end{dfn}

Given a continuous field $((A_x)_{x\in X}, \Gamma)$ of graded $G$-$\Calgs$ and a nuclear graded $G$-$\Calg$ $B$, one can perform a (graded) tensor product  on each fiber to get a new continuous field $((A_x\hat\otimes B)_{x\in X}, \Gamma')$. The set of basic sections $\Gamma'$ can be defined as the span of algebraic tensors $v\hat\otimes b$ with $v\in\Gamma$ and $b\in B$. On the other hand, when $G$ is an abelian group or more generally, an amenable group (a group $G$ is said to be amenable if its reduced group $\Calg$ $C^{\ast}_{\text{red}}(G)$ is nuclear), one can perform a reduced (maximal) crossed product on each fiber to get a continuous field $((\C_{\text{red}}(G,A_x))_{x\in X}, \Gamma'')$. The set of basic sections $\Gamma''$ can be defined as the span of algebraic tensors $f\otimes c$ with $v\in\Gamma$ and a continuous function $f\in C_c(G)$ with compact support. That these indeed define continuous fields of $\Calgs$ is explained in \cite{KW} for example. 

\begin{example} Let $((\hill_x)_{x\in X}, \Gamma)$ be a continuous field of graded $G$-Hilbert spaces over a locally compact space $X$. It naturally defines a continuous field $(\K(\hill_x)_{x\in X}, \Gamma')$ of graded $G$-$\Calgs$ over $X$. The set of basic sections $\Gamma'$ is defined to be the span of rank-one sections $x\mapsto \theta_{v(x),w(x)}$ for sections $v,w$ in $\Gamma$.
\end{example}

\begin{dfn}(Unbounded operators on a Hilbert space) (cf. \cite{analysisnow} CHAPTER 5.) Let $\hill$ be a Hilbert space over $\bbC$ or $\bbR$ and denote the inner product on $\hill$ by $\langle\cdot,\cdot\rangle$. The linear map $T$ from a dense subspace $D(T)$ of $\hill$ to $\hill$ is usually called an unbounded operator on $\hill$. (One can of course consider an unbounded operator taking another Hilbert space for its range). The adjoint $T^\ast$ of $T$ is defined on $D(T^\ast)=\{\,v\in\hill\mid w\mapsto \langle v,Tw\rangle \,\,\text{is a bounded linear functional on $D(T)$}\,\}$; for any $v$ in $D(T^\ast)$, $T^\ast v$ is defined to be the unique vector in $\hill$ satisfying $\langle T^\ast v, w\rangle= \langle v,Tw\rangle$ for all $w$ in $D(T)$. When $D(T^\ast)$ is a dense subspace of $\hill$, in other words, if $T^\ast$ is densely defined on $\hill$, then we say $T$ is adjointable, and call $T^\ast$ as the adjoint of $T$. Evidently, in this case, $T^\ast$ is an adjointable unbounded operator on $\hill$ and its adjoint $T^{\ast\ast}$ is an extension of $T$. (By an extension of $T$, we mean an unbounded operator $S$ on $\hill$ defined on the domain containing $D(T)$ such that $S=T$ on $D(T)$ as linear maps.) For an adjointable operator $T$, $D(T^{\ast\ast})$ coincides with a subspace \begin{small}\[\{\,v\in\hill \mid \text{there exists a sequence $(v_n)\subset D(T)$ s.t. $v_n\to v$ and $(Tv_n)$ is convergent as $n\to \infty$}\,\}.\] \end{small}An adjointable unbounded operator $T$ on $\hill$ is called symmetric if $T^\ast$ is an extension of $T$, selfadjoint if $T$ is symmetric and $D(T)=D(T^\ast)$ and essentially selfadjoint if $T^{\ast\ast}$ is selfadjoint. For a symmetric operator $T$, being essentially selfadjoint is equivalent to that $T^2+1$ has dense range; and in complex case, this is equivalent to that $T\pm i$ have dense ranges. In this case, we have well-defined bounded operators $(T^2+1)^{-1}$ and $(T\pm i)^{-1}$ in complex case. We say an essentially selfadjoint operator $T$ has compact resolvent if these operators are compact. We are mostly interested in selfadjoint operators, and so, when $T$ is essentially selfadjoint, we usually treat $T$ as a selfadjoint operator by implicitly using its extension $T^{\ast\ast}=T^{\ast\ast\ast}=T^\ast$ whenever it makes no confusion. Any diagonalizable operator with diagonal entries in $\bbR$ is essentially selfadjoint; and it has compact resolvent if and only if the number of its eigenvalues lying in any compact set of $\bbR$ is finite taking into account the multiplicities. For an essentially selfadjoint operator $T$ on a complex Hilbert space $\hill$, $T\pm i$ are unbounded operators defined on $D(T)$ which have dense images in $\hill$ and are bounded away from $0$. One has the unique $\ast$-homomorphism from $C_0(\bbR)$ to $B(\hill)$ sending functions $(x\pm i)^{-1}$ to $(T\pm i)^{-1}$. When $T$ is a diagonalizable operator, then this $\ast$-homomorphism becomes the evident one. If in addition, $T$ has compact resolvent, then it is also clear that this $\ast$-homomorphism takes $\K(\hill)$ for its range. We remark here that in the case when the Hilbert space $\hill$ is graded and the operator $T$ is odd, then the above defined $\ast$-homomorphism becomes a graded $\ast$-homomorphism from the graded $\Calg$ $\cS$. 
\end{dfn}

\begin{example}\label{harm}(Harmonic oscillator) (cf.\cite{HigKas2} Definition 2.6.) For a positive real number $\alpha$, let $H=\alpha^2\Delta+x^2$ be an unbounded operator on the real (or complex) Hilbert space $L^2(\bbR)$ which is defined on the subspace $C^\infty_c(\bbR)$ of test functions. Here, $\Delta$ is the laplacian $-\frac{d^2}{dx^2}$. This operator is a diagonalizable operator with diagonal entries in $\bbR$ having compact resolvent, and so in particular, is essentially selfadjoint. This can be seen as follows. First, note that $H=KL-\alpha=LK+\alpha$ where $K=\alpha\frac{d}{dx}+x$ and $L=-\alpha\frac{d}{dx}+x$. One finds that $HK^n=K^nH+2n\alpha K^n$ and that a function $f_0(x)=e^{-\frac{x^2}{2\alpha}}$ is in the kernel of $K$ and hence an eigenvector for $H$ with an eigenvalue $\alpha$. By $HK^n=K^nH+2n\alpha K^n$, we see a function $f_n(x)=K^nf_0(x)=p_n(x)e^{-\frac{x^2}{2\alpha}}$ is an eigenvector for $H$ with an eigenvalue $(2n+1)\alpha$ for any $n\geq0$ where $p_n(x)$ is a certain polynomial of degree $n$. We note that any two eigenvectors for a symmetric operator corresponding to different eigenvalues are orthogonal.  After being normalized, these functions become an orthonormal basis of the Hilbert space $L^2(\bbR)$ and hence $H$ is diagonalizable with eigenvalues $(2n+1)\alpha$ for $n\geq0$ each of which has single multiplicity. The functional calculus sends $f$ in $C_0(\bbR)$ to a bounded diagonal operator $\left(
\begin{array}{ccccc}
f(\alpha)&0&0&0&\cdots\\
0&f(3\alpha)&0&0&\cdots\\
0&0&f(5\alpha)&0&\cdots\\
0&0&0&\ddots&\\
\vdots&\vdots&\vdots&&\ddots \\
\end{array}
\right)$ where we used the mentioned orthonormal basis for $L^2(\bbR)$ to write an operator as an infinite matrix.
\end{example}

\begin{lemma}\label{lemselfad} Let $T$ be a symmetric unbounded operator on a complex Hilbert space $\hill$ defined on $D(T)$ with $D(T)=D(T^2)$ (i.e. the image $TD(T)$ is contained in $D(T)$).  Suppose, $T^2$ is essentially selfadjoint. Then $T$ is essentially selfadjoint. In addition, if $T^2$ has compact resolvent, then so does $T$.
\end{lemma}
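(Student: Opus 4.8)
The plan is to reduce essential selfadjointness of $T$ to the range criterion recorded above: a symmetric operator on a complex Hilbert space is essentially selfadjoint precisely when both $T+i$ and $T-i$ have dense range. The device linking $T$ to $T^2$ is the algebraic factorization
\begin{align*}
(T-i)(T+i) = T^2+1 = (T+i)(T-i),
\end{align*}
which I would verify as an identity of operators on $D(T^2)$; this is exactly where the hypothesis $D(T)=D(T^2)$ (equivalently $TD(T)\subseteq D(T)$) is used, since it guarantees that $T\pm i$ maps $D(T^2)$ into $D(T)$, so that the composite is defined. I would also record at the outset that $T^2$ is symmetric and nonnegative, because $\langle T^2 v,v\rangle=\|Tv\|^2\geq 0$ for $v\in D(T^2)$.

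First I would produce a dense range for $T^2+1$. Since $T^2$ is assumed essentially selfadjoint, $(T^2)^\ast=\overline{T^2}$ is selfadjoint, and being the closure of a nonnegative operator it is itself nonnegative; hence $\overline{T^2}+1\geq 1$ is injective. Therefore
\begin{align*}
\operatorname{Ran}(T^2+1)^{\perp}=\Ker\big((T^2+1)^\ast\big)=\Ker\big(\overline{T^2}+1\big)=0,
\end{align*}
so $\operatorname{Ran}(T^2+1)$ is dense. Because $\operatorname{Ran}(T^2+1)\subseteq\operatorname{Ran}(T+i)$ and $\operatorname{Ran}(T^2+1)\subseteq\operatorname{Ran}(T-i)$ by the factorization, both $\operatorname{Ran}(T\pm i)$ are dense, and the range criterion yields that $T$ is essentially selfadjoint.

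For the statement on compact resolvents I would first identify $\overline{T}^{\,2}=\overline{T^2}$. The inclusion $T^2\subseteq\overline{T}^{\,2}$ is immediate, and $\overline{T}^{\,2}$ is selfadjoint, hence closed, by the spectral theorem applied to the now-selfadjoint $\overline{T}$, so $\overline{T^2}\subseteq\overline{T}^{\,2}$; as $\overline{T^2}$ is selfadjoint and $\overline{T}^{\,2}$ is symmetric, a selfadjoint operator admitting a symmetric extension must coincide with it, giving $\overline{T^2}=\overline{T}^{\,2}$. Consequently the hypothesis that $T^2$ has compact resolvent says exactly that $(\overline{T}^{\,2}+1)^{-1}$ is compact. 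Finally, using selfadjointness of $\overline{T}$ one has $\big((\overline{T}+i)^{-1}\big)^\ast(\overline{T}+i)^{-1}=(\overline{T}-i)^{-1}(\overline{T}+i)^{-1}=(\overline{T}^{\,2}+1)^{-1}$, which is compact; since an operator $S$ with $S^\ast S$ compact is itself compact, $(\overline{T}\pm i)^{-1}\in\K(\hill)$, i.e.\ $T$ has compact resolvent.

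The main obstacle I anticipate is the operator-domain bookkeeping rather than any deep analytic input: one must be careful that the factorization is valid on the correct domain (this is where $D(T)=D(T^2)$ enters) and, more subtly, that $\overline{T}^{\,2}=\overline{T^2}$, since the square of the closure generally differs from the closure of the square, and it is the essential selfadjointness of $T^2$ together with the maximality of selfadjoint operators among symmetric ones that forces them to agree. The remaining steps are then formal consequences of the spectral theorem and the elementary fact that $S^\ast S$ compact implies $S$ compact.
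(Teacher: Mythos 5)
Your proof is correct and follows essentially the same route as the paper's: both establish density of $\operatorname{Ran}(T^2+1)$ from the essential selfadjointness and nonnegativity of $T^2$, then use the factorization $T^2+1=(T\pm i)(T\mp i)$, valid on $D(T^2)=D(T)$, to get dense ranges for $T\pm i$, and obtain the compact-resolvent statement from $(\overline{T}^{\,2}+1)^{-1}=(\overline{T}-i)^{-1}(\overline{T}+i)^{-1}$. The only differences are expository: where you invoke the standard range criterion (which the paper records in Chapter 2), the paper re-derives it by checking $D(T^\ast)\subseteq D(T^{\ast\ast})$ by hand via resolvent and sequence arguments, while your handling of the compact-resolvent step (the identification $\overline{T}^{\,2}=\overline{T^2}$ through maximality of selfadjoint operators, and the fact that $S^\ast S$ compact implies $S$ compact) supplies details the paper leaves as a bare assertion.
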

\begin{proof} Denote the inner product by $\s{\cdot,\cdot}$. First, we see $T^2+1$ has the dense range. In fact, assume for $y\in\hill$, $\s{(T^2+1)x, y}=0$ for any $x\in D(T)$. Then, it follows $y\in D({T^{2}}^{\ast})$. Take $(y_n)\subset D(T)$ with $y_n\to y$ and $T^2y_n \to {T^2}^\ast y$. We have $\s{({T^2}^\ast+1)y, y}=0$ showing $y=0$. Now, it follows $T\pm i$ has the dense ranges. Hence we have bounded operators $(T\pm i)^{-1}$. Now, we may assume $T\pm i$ is onto (just replace $T$ by $T^{\ast\ast}$). Take any $y=(T+i)x\in D(T^\ast)$ with $x\in D(T)$: so, $z\mapsto \s{(T+i)x, Tz}$ is bounded on $D(T)$. Since $T$ is symmetric, it follows $x$ in $D({T^2}^\ast)$; so take $(x_n)\subset D(T)$ with $x_n\to x$ and $((T^2+1)x_n)$ convergent. Applying a bounded operator $(T-i)^{-1}$ to a convergent sequence ($(T^2+1)x_n$), we see ($(T+i)x_n)=(y_n$) converges to $(T+i)x=y$. Also, ($Ty_n$) is convergent. Thus, $y$ is in $D(T^{\ast\ast})$ showing $T$ is essentially selfadjoint. When $T^2$ has compact resolvent, it follows $(T^2+1)^{-1}$ is a compact operator on $\hill$. Thus, $(T\pm i)^{-1}$ must be compact.
\end{proof}

\begin{example}(Bott-Dirac operator)\label{BottDirac}  (cf. \cite{HigKas2} Definition 2.6.)  For a positive real number $\alpha$, let $B=\alpha\overline{c}(w)\frac{d}{dx}+c(w)x=\begin{pmatrix}
0& -\alpha \frac{d}{dx}+x \\
\alpha \frac{d}{dx}+x & 0
\end{pmatrix}$ be an odd symmetric unbounded operator on a graded complex Hilbert space $\hill=L^2(\bbR, \Lambda^\ast(\bbR)\otimes\bbC)$ which is defined on a subspace $C^\infty_c(\bbR, \Lambda^\ast(\bbR)\otimes\bbC)$. Here, we used the Clifford multiplication $c(w)$ and $\overline{c}(w)$ as explained in Example \ref{Clif} where $w$ denotes the standard basis vector on a real Hilbert space $\bbR$. The matrix representation respects the even subspace $L^2(\bbR)$ and the odd subspace $L^2(\bbR)w$ of $\hill$. Note, in the same notation in Example \ref{harm}, $B=\begin{pmatrix}
0& L \\
K & 0
\end{pmatrix}$. Hence, $B^2=\begin{pmatrix}
LK & 0 \\
0 & KL
\end{pmatrix}=\begin{pmatrix}
H-\alpha & 0 \\
0 & H+\alpha
\end{pmatrix}$. It is now easy to see that $B^2$ is a diagonalizable operator having compact resolvent with eigenvalues $2n\alpha$ $(n\geq0)$ on the even subspace and $2(n+1)\alpha$ $(n\geq0)$ on the odd subspace. It follows by Lemma \ref{lemselfad}, $B$ is an odd essentially selfadjoint operator having compact resolvent, hence diagonalizable. Note, eigenvalues of $B$ is necessarily $\pm\sqrt{2n\alpha}$ $(n\geq0)$ (each of which has single multiplicity): if we have a nonzero eigenvalue $a$ of $B$, its eigenvector $v$ can be written $v^{(0)}+v^{(1)}$ with homogeneous $v^{(0)},v^{(1)}$; and it follows the odd operator $B$ must send $v^{(0)}$ to $av^{(1)}$ and $v^{(1)}$ to $av^{(0)}$. We see $v^{(0)}-v^{(1)}$ is an eigenvector with eigenvalue $-a$. One may want to write $B$ as a diagonal operator, but since any eigenvector for $B$ corresponding to a nonzero eigenvalue is necessarily not homogeneous, it is not so enlightening to do so. However, it is easy to see we have a following way of writing $B$ as an infinite matrix which respects the grading of the Hilbert space: \begin{equation*}B=\left(
\begin{array}{ccccc}
0 &0&0&0&\cdots\\
0& \left(\begin{array}{cc}
0&\sqrt{2\alpha}\\
\sqrt{2\alpha}&0\\
\end{array} \right)
&0&0&\cdots\\
0&0&\left(\begin{array}{cc}
0&\sqrt{4\alpha}\\
\sqrt{4\alpha}&0\\
\end{array} \right)&0&\cdots\\
0&0&0&\left(\begin{array}{cc}
0&\sqrt{6\alpha}\\
\sqrt{6\alpha}&0\\
\end{array} \right)&\\
\vdots&\vdots&\vdots&&\ddots \\
\end{array}
\right) \end{equation*} where we are using here a basis of Hilbert space $\hill$ consisting of (homogeneous) eigenvectors for $B^2$. We remark that whenever we have an odd (symmetric) diagonalizable operator $T$ on a graded Hilbert space with $T^2$ which is diagonalizable by using homogeneous eigenvectors, we can represent $T$ in similar way using eigenvectors for $T^2$ even if $T^2$ have an infinite dimensional eigenspace for some eigenvalues. Now, the functional calculus for $B$ send an odd function $f$ to:\begin{small}
\begin{equation*}f(B)=\left(
\begin{array}{ccccc}
0 &0&0&0&\cdots\\
0& \left(\begin{array}{cc}
0&f(\sqrt{2\alpha})\\
f(\sqrt{2\alpha})&0\\
\end{array} \right)
&0&0&\cdots\\
0&0&\left(\begin{array}{cc}
0&f(\sqrt{4\alpha})\\
f(\sqrt{4\alpha})&0\\
\end{array} \right)&0&\cdots\\
0&0&0&\left(\begin{array}{cc}
0&f(\sqrt{6\alpha})\\
f(\sqrt{6\alpha})&0\\
\end{array} \right)&\\
\vdots&\vdots&\vdots&&\ddots \\
\end{array}
\right), \end{equation*} \end{small} and an even function $f$ to: \begin{small}
\begin{equation*}f(B)=\left(
\begin{array}{ccccc}
f(0) &0&0&0&\cdots\\
0& \left(\begin{array}{cc}
f(\sqrt{2\alpha})&0\\
0&f(\sqrt{2\alpha})\\
\end{array} \right)
&0&0&\cdots\\
0&0&\left(\begin{array}{cc}
f(\sqrt{4\alpha}) & 0\\
0 & f(\sqrt{4\alpha}) \\
\end{array} \right)&0&\cdots\\
0&0&0&\left(\begin{array}{cc}
f(\sqrt{6\alpha})&0\\
0& f(\sqrt{6\alpha})\\
\end{array} \right)&\\
\vdots&\vdots&\vdots&&\ddots \\
\end{array}
\right). \end{equation*}  \end{small}
\end{example}

\begin{lemma}(Mehler's formula)  (cf. \cite{HKT} APPENDIX B, \cite{Shrodinger}) For $\alpha>0$, we have the following equation of bounded operators on the Hilbert space $L^2(\bbR)$: \begin{equation*}
e^{-(\alpha^2\Delta+x^2)}=e^{-r(\alpha)\alpha^{-1}x^2}e^{-s(\alpha)\alpha\Delta}e^{-r(\alpha)\alpha^{-1}x^2} \\
\end{equation*}
with, 
\begin{equation*}
 r(\alpha)=\frac{1}{\sqrt{2}}\frac{\sinh(\frac{\alpha}{\sqrt{2}})}{\cosh(\frac{\alpha}{\sqrt{2}})},\,\,\,\,\, s(\alpha)=\frac{1}{\sqrt{2}}\sinh(\sqrt{2}\alpha) \\
\end{equation*}
\end{lemma}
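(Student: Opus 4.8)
The plan is to reduce the claimed identity to a comparison of Gaussian integral kernels. Both sides are bounded operators on $L^2(\bbR)$: by Example \ref{harm}, $H=\alpha^2\Delta+x^2$ is essentially selfadjoint with spectrum $\{(2n+1)\alpha:n\ge0\}\subset[\alpha,\infty)$, so $e^{-H}$ is defined by the functional calculus and is Hilbert--Schmidt (indeed trace class) with eigenvalues $e^{-(2n+1)\alpha}$; on the right, $e^{-r\alpha^{-1}x^2}$ is multiplication by a function bounded by $1$ (since $r,\alpha>0$) and $e^{-s\alpha\Delta}$ is the Gaussian heat semigroup (a contraction, since $s>0$). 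The left side is an integral operator with a continuous kernel, while the right side is a product of three bounded operators whose kernel I can compute directly by Fubini: the two outer multiplication factors supply Gaussian decay in both variables, so the composite kernel is again continuous and square-integrable and the right side is Hilbert--Schmidt as well. Thus both sides are integral operators with continuous kernels, and it suffices to show these kernels agree pointwise.

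The right-hand kernel is immediate. Writing $e^{-s\alpha\Delta}=e^{s\alpha\,d^2/dx^2}$, its kernel is $(4\pi s\alpha)^{-1/2}\exp(-(x-y)^2/4s\alpha)$, and pre- and post-composition with $e^{-r\alpha^{-1}x^2}$ multiplies it by $e^{-r\alpha^{-1}x^2}$ and $e^{-r\alpha^{-1}y^2}$. Collecting terms, the right side is the integral operator with kernel
\begin{equation*}
\frac{1}{\sqrt{4\pi s\alpha}}\exp\!\left(-\Bigl(\tfrac{r}{\alpha}+\tfrac{1}{4s\alpha}\Bigr)(x^2+y^2)+\tfrac{1}{2s\alpha}\,xy\right),
\end{equation*}
a Gaussian whose coefficients of $x^2+y^2$ and of $xy$, together with the normalising constant, are explicit in $r,s,\alpha$.

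For the left side I would produce the Mehler kernel. Using the orthonormal eigenbasis $(\psi_n)$ of $H$ from Example \ref{harm} (rescaled Hermite functions, $H\psi_n=(2n+1)\alpha\,\psi_n$), the kernel of $e^{-H}$ is $\sum_{n\ge0}e^{-(2n+1)\alpha}\psi_n(x)\psi_n(y)$, and the classical Mehler generating-function identity for Hermite polynomials sums this to a Gaussian $C\exp(-A(x^2+y^2)+B\,xy)$, with $A,B,C$ built from $\cosh$ and $\sinh$ of $2\sqrt{ab}$, where $a,b$ are the coefficients of $-d^2/dx^2$ and of $x^2$ in $H$. I would then match the two Gaussians: the $xy$-coefficients determine $s$, the $x^2+y^2$-coefficients then determine $r$, and the double-angle identities $\cosh2\theta-1=2\sinh^2\theta$ and $\sinh2\theta=2\sinh\theta\cosh\theta$ convert these into the stated $\tanh$ and $\sinh$ expressions, after which the prefactors agree automatically as a final consistency check.

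The main obstacle is the rigorous evaluation of this Mehler kernel: one must justify the Hermite generating-function summation and its convergence, and --- the delicate point --- keep careful track of the normalisation of $\Delta$, since the number $2\sqrt{ab}$ inside the hyperbolic functions is exactly what decides whether the arguments read $2\alpha$ or $\sqrt2\,\alpha$. In particular the stated constants $r(\alpha)=\tfrac1{\sqrt2}\tanh(\alpha/\sqrt2)$ and $s(\alpha)=\tfrac1{\sqrt2}\sinh(\sqrt2\alpha)$ emerge precisely when $\Delta$ is read as $-\tfrac12\,d^2/dx^2$ in these exponents, so the convention must be fixed consistently with Example \ref{harm}. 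As an independent check on the constants I would redo the identity inside the oscillator (metaplectic) representation: the operators $x^2$, $\Delta$ and the dilation $x\frac{d}{dx}+\tfrac12$ span a copy of $\mathfrak{sl}_2(\bbR)$, under which the claimed formula becomes a symmetric factorisation of the single matrix $\exp(-\rho(H))$ into unipotent lower- and upper-triangular factors; since $\rho(H)$ is traceless with $\rho(H)^2$ scalar, this is a one-line $2\times2$ computation that recovers $r$ and $s$ and confirms the hyperbolic-function normalisation.
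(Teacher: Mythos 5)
Your proposal is correct, but your primary route is genuinely different from the paper's. The paper's proof is purely Lie-algebraic: after reducing by a rescaling to the identity $e^{-t(\Delta+x^2)}=e^{-r(t)x^2}e^{-s(t)\Delta}e^{-r(t)x^2}$, it sets $A=x^2$, $B=\Delta$, $C=[A,B]$, records the relations $[A,B]=C$, $[C,A]=4A$, $[C,B]=-4B$, observes that the same relations are satisfied by the matrices $A=\left(\begin{smallmatrix}0&\sqrt2\\0&0\end{smallmatrix}\right)$, $B=\left(\begin{smallmatrix}0&0\\\sqrt2&0\end{smallmatrix}\right)$, $C=\left(\begin{smallmatrix}2&0\\0&-2\end{smallmatrix}\right)$, and reduces everything to the corresponding factorization of exponentials of $2\times2$ matrices. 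So the metaplectic/$\mathfrak{sl}_2(\bbR)$ ``independent check'' that you relegate to your last sentence \emph{is} the paper's entire proof, while your main argument --- exhibiting both sides as explicit Gaussian integral kernels (the conjugated heat kernel on the right, the Mehler kernel on the left) and matching the $xy$-coefficient, the $(x^2+y^2)$-coefficient and the prefactor --- does not appear in the paper at all. Your route buys a direct, self-contained verification at the level of bounded operators on $L^2(\bbR)$, at the cost of justifying the Hermite generating-function summation; the paper's route is much shorter, at the cost of leaving implicit both the rescaling reduction and the reason why matching $\mathfrak{sl}_2$-relations suffices to transfer an identity of matrix exponentials to exponentials of unbounded operators (the paper dismisses the remaining verification with ``this can be checked easily'').

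Your ``delicate point'' about the normalization of $\Delta$ deserves to be stated more bluntly: it is an actual inconsistency in the paper, and your method detects it. Example \ref{harm} defines $\Delta=-\frac{d^2}{dx^2}$; under that convention the commutator is $[x^2,\Delta]=4x\frac{d}{dx}+2$, not $2x\frac{d}{dx}+1$ as asserted in the paper's proof, the correct relations are $[C,A]=8A$, $[C,B]=-8B$, and your kernel matching (or the $2\times2$ computation redone for these relations) yields $r(\alpha)=\frac12\tanh(\alpha)$ and $s(\alpha)=\frac12\sinh(2\alpha)$. The constants displayed in the Lemma are exactly those belonging to the convention $\Delta=-\frac12\frac{d^2}{dx^2}$, which is the convention the paper's claimed commutation relations implicitly use. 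So the conventions cannot be ``fixed consistently with Example \ref{harm}'' as you suggest: under the paper's own definition of $\Delta$, carrying out your (correct) matching procedure would produce the corrected constants rather than the stated ones. This does not undermine your argument --- whichever normalization one fixes, your method gives the right constants for it --- but you should record that the Lemma as stated and Example \ref{harm} are incompatible, rather than assume they can be reconciled.
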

\begin{proof}
It suffices to show the following holds for any $t>0$:
\begin{equation*}
e^{-t(\Delta+x^2)}=e^{-r(t)x^2}e^{-s(t)\Delta}e^{-r(t)x^2} \\
\end{equation*} First, notice by letting $A=x^2$, $B=\Delta$, $C=[x^2, \Delta]=2x\frac{d}{dx}+1$, we have $[A, B]=C$, $[C, A]=4A$, $[C,B]=-4B$. On the other hand, the same algebraic relations hold when we set $A$ as $\begin{pmatrix} 
0 & \sqrt{2} \\
0 & 0 \\
\end{pmatrix} $, $B$ as $\begin{pmatrix} 
0 & 0 \\
 \sqrt{2} & 0 \\
\end{pmatrix} $ and $C$ as $\begin{pmatrix} 
2 & 0 \\
0 & -2 \\
\end{pmatrix} $. Hence, it suffices to check that the following holds for any $t>0$:
\begin{equation*}
e^{-t(B+A)}=e^{-r(t)A}e^{-s(t)B}e^{-r(t)A} \\
\end{equation*}
This can be checked easily, so we omit the rest of our calculations. 
\end{proof}

\begin{dfn}(Unbounded multiplier on a Hilbert module) (cf. \cite{HKT} APPENDIX A) Let $B$ be a $\Calg$, $\E$ be a Hilbert $B$-module and denote the inner product by $\langle\cdot,\cdot\rangle$. The notion of unbounded operators easily translates into this situation. However, since we don't have an analogue of Riesz Representation Theorem on a Hilbert space, there is some difference from the Hilbert space case. An essentially selfadjoint unbounded multiplier is a linear map $T$  defined on a dense $B$-submodule $D(T)$ of $\E$ which is symmetric (i.e. $\langle Tv, w\rangle=\langle v, Tw\rangle$ for $v,w$ in $D(T)$) and such that $(T\pm i)^{-1}$ have dense images in $\E$. Similarly to the Hilbert space case, one has the unique $\ast$-homomorphism from $C_0(\bbR)$ to $B(\E)$ sending functions $(x\pm i)^{-1}$ to $(T\pm i)^{-1}$ which we call functional calculus associated to $T$. We say $T$ has compact resolvent if this $\ast$-homomorphism takes $\K(\E)$ for its range. (i.e. if $(T\pm i)^{-1}$ is in $\K(\E)$). When the Hilbert module $\E$ is graded and the operator $T$ is odd, then the above defined $\ast$-homomorphism becomes a graded one. Note the functional calculus is necessarily nondegenerate ($(T\pm i)^{-1}$ have dense range). Conversely, given any nondegenerate $\ast$-homomorphism from $C_0(\bbR)$ to $B(\E)$, a symmetric unbounded multiplier $x$ on $\E$ can be defined on a subspace $C_c(\bbR)\E$ in an obvious way. The image of $(x\pm i)^{-1}$ contains $C_c(\bbR)\E$, and thus, is dense. 
\end{dfn}

\begin{example} Consider a graded Hilbert $\cS$-module $\E=\cS\hat\otimes\cS$. A symmetric unbounded multiplier $T=x\hat\otimes1+1\hat\otimes x$ on $\E$ is defined on a subspace $C_c(\bbR)\hat\otimes C_c(\bbR)$ (the tensor product here is the algebraic one). That the multipliers $(T\pm i)^{-1}$ have the dense ranges or that $T$ is essentially selfadjoint may not be easy to be seen. However, it is easy to check that we have a graded $\ast$-homomorphism from $\cS$ to $\cS\hat\otimes\cS=\K(\E)$ sending $e^{-x^2}$ to $e^{-x^2}\hat\otimes e^{-x^2}$ and $xe^{-x^2}$ to $xe^{-x^2}\hat\otimes e^{-x^2}+e^{-x^2}\hat\otimes xe^{-x^2}$. (Continuity can be checked by representing $\cS\hat\otimes\cS$ on $(L^2(\bbR)\oplus L^2(\bbR)^{\op})\hat\otimes(L^2(\bbR)\oplus L^2(\bbR)^{\op})$ for example: $L^2(\bbR)$ and $L^2(\bbR)^{\op}$ are an even space and an odd space respectively.) This representation is nondegenerate, which easily implies that $(T\pm i)^{-1}$ have the dense ranges. The functional calculus associated to $T$ is evidently, our already defined graded $\ast$-homomorphism.
\end{example}

\begin{example} The observation given in the above example can be pushed further. Let $B$ be a graded $\Calg$, and $\E_1$, $\E_2$ be graded Hilbert $B$-modules. Given any odd essentially selfadjoint unbounded multipliers $T_1$ and $T_2$ on $\E_1$ and on $\E_2$ respectively with domains $D(T_1)$ and $D(T_2)$, we can define an odd symmetric unbounded multiplier $T=T_1\hat\otimes1+1\hat\otimes T_2$ on a Hilbert $B$-module $\E_1\hat\otimes\E_2$ defined on $D(T_1)\hat\otimes D(T_2)$. Again, it may not be easy to see this multiplier $T$ is essentially selfadjoint at first look. However, we have a graded $\ast$-homomorpshim $\cS$ to $B(\E_1)\hat\otimes B(\E_2)\subset B(\E_1\hat\otimes\E_2)$ defined as the composition of the graded $\ast$-homomorpshism from $\cS$ to $\cS\hat\otimes\cS$ which appeared in the above example with a graded $\ast$-homomorphism from $\cS\hat\otimes\cS$ to $B(\E_1)\hat\otimes B(\E_2)$ which is the graded tensor product of two functional calculus associated to $T_1$ and $T_2$. This $\ast$-homomorpshim is nondegenerate; and thus, it follows that $T$ is essentially selfadjoint and that the functional calculus for $T$ is the graded $\ast$-homomorphism defined above.  
\end{example}

\begin{example} (cf. \cite{HigKas2}) We observed that the Bott-Dirac operator $B$ (depending on $\alpha>0$) on a graded Hilbert space (Hilbert $\bbC$-module) $\hill=L^2(\bbR, \Lambda^\ast(\bbR)\otimes\bbC)$ defines an odd essentially selfadjoint operator having compact resolvent, and so the functional calculus $\cS\to \K(\hill)$. Combining this with an odd multiplier $x$ on a graded $\cS$-module $\cS$, we now want to consider the functional calculus  $\cS\to\cS\hat\otimes\K(\hill)$ associated to the essentially selfadjoint odd unbounded multiplier $T=x\hat\otimes1+1\hat\otimes B$ on a graded Hilbert $\cS$-module $\cS\hat\otimes\hill$. To describe this functional calculus, we first decompose $\cS\hat\otimes\hill$ as a direct sum $\displaystyle\bigoplus_{n\geq0}\cS\hat\otimes\hill_n$ where $\hill_n$ is the eigenspace for $B^2$ corresponding to its eigenvalue $2n\alpha$ for $n\geq0$. We just need to see the functional calculus associated to the multipliers on each summands.  On the summand $\cS\hat\otimes\hill_0\cong\cS$, $T$ acts as $x\hat\otimes1$; thus the functional calculus is just $\id_{\cS}\colon\cS\to\cS=\K(\cS\hat\otimes\hill_0)$. For other ``two-dimensional'' summands $\cS\hat\otimes\hill_n$, $T$ acts as $x\hat\otimes1+1\hat\otimes\begin{pmatrix}
0 & \sqrt{2n\alpha}\\
\sqrt{2n\alpha} & 0
\end{pmatrix}$. For $n\geq1$, it may not be simple to describe the functional calculus associated to this odd unbounded multiplier on $\cS\hat\otimes\hill_n$. However, there are some other way of observing this functional calculus by regarding this graded $\ast$-homomorphism $\cS\to\cS\hat\otimes\K(\hill_n)$ as a $\ast$-homomorphism from $C_0(\bbR)$ to $C_0(\bbR)\otimes\K(\hill_n)\cong M_2(C_0(\bbR))$ i.e. by neglecting the grading information. Here, we use an isomorphism of (ungraded) $\Calgs$ $\cS\hat\otimes\K(\hill_n)$ and $C_0(\bbR)\otimes\K(\hill_n)$ sending a homogeneous element $f\hat\otimes T$ to $f\otimes \epsilon^{\partial f} T$ where $\epsilon$ is the grading operator on $\hill_n$. Then, we may view the functional calculus for $T$ on $\cS\hat\otimes\hill_n$ as the functional calculus associated to an unbounded multiplier $\begin{pmatrix}
x & \sqrt{2n\alpha} \\
\sqrt{2n\alpha} & -x \\
\end{pmatrix}$ on an ``ungraded'' Hilbert $C_0(\bbR)$-module $C_0(\bbR)\oplus C_0(\bbR)$. Indeed, one may use this observation on whole space $\cS\hat\otimes\hill$. Namely, using an isomorphism of ungraded $\Calgs$ $\cS\hat\otimes\K(\hill)$ and $C_0(\bbR)\otimes\K(\hill)$ (an isomorphism can be given in the same way as above), we can consider the functional calculus for $T$ as the functional calculus associated to an unbounded multiplier:
\begin{equation*}T'=\left(
\begin{array}{cccc}
x &0&0&\cdots\\
0& \left(\begin{array}{cc}
x&\sqrt{2\alpha}\\
\sqrt{2\alpha}&-x\\
\end{array} \right)
&0&\cdots\\
0&0&\left(\begin{array}{cc}
x&\sqrt{4\alpha}\\
\sqrt{4\alpha}&-x\\
\end{array} \right)&\cdots\\
\vdots&\vdots&\vdots&\ddots\\
\end{array}
\right) \end{equation*} on an ungraded Hilbert $C_0(\bbR)$-module $C_0(\bbR)\otimes\hill$. Note the functional calculus for $T'$ sends an even function $f$ to:
\begin{scriptsize}
\begin{equation*}f(T')=\left(
\begin{array}{cccc}
f(x) &0&0&\cdots\\
0& \left(\begin{array}{cc}
f(\sqrt{x^2+2\alpha})&0\\
0&f(\sqrt{x^2+2\alpha})\\
\end{array} \right)
&0&\cdots\\
0&0&\left(\begin{array}{cc}
f(\sqrt{x^2+4\alpha}) & 0\\
0 & f(\sqrt{x^2+4\alpha}) \\
\end{array} \right)&\cdots\\
\vdots&\vdots&\vdots&\ddots\\
\end{array}
\right). \end{equation*}  \end{scriptsize}
\end{example}

\newpage
\section{$K$-Theory and $K$-Homology of $\Calgs$}

We are going to see some definitions and basic results in $\Calg$ $K$-theory and $K$-homology. They are non-commutative generalizations of $K$-theory and $K$-homology of locally compact Hausdorff spaces. Our basic reference here is \cite{HigRoe}.

\begin{dfn} Let $A$ be a $\Calg$. Two projections $p,q$ in $M_n(A)$ (we call such projections as projections {over} $A$) are {unitary equivalent} if there exists a unitary $u\in M_n(A)$ which conjugates one to another: i.e.\ $upu^\ast=q$. We define a direct sum of two projections over $A$ by the following:
\begin{equation*}
p\oplus q=
\begin{pmatrix}
p & 0 \\
0 & q
\end{pmatrix}
\quad \text{for $p \in M_m(A)$ and $q \in M_n(A)$} 
\end{equation*}
\end{dfn}

\begin{dfn} Let $A$ be a unital $\Calg$. The $\tilde K_0$ group of $A$ is a group $\tilde K_0(A)$ generated by unitary equivalence classes of projections over $A$ subject to the relations $[p]+[q]=[p\oplus q]$ for projections $p,q$ over $A$ and $[0]=0$. This is an abelian group and a countable group if $A$ is separable.
\end{dfn}

\begin{example} An abelian group $\tilde K_0(\bbC)$ is isomorphic to the integer $\mathbb{Z}$ via a map which sends the unitary equivalent class $[p]$ of a projection $p \in M_n(\bbC)$ to the rank of $p$.
\end{example}

A unital $\ast$-homomorphism from $A$ to $B$ defines a group homomorphism from $\tilde K_0(A)$ to $\tilde K_0(B)$. In this way, we obtain a (covariant) functor $\tilde K_0$ from the category of unital $\Calgs$ and unital $\ast$-homomorphisms to the category of abelian groups.

\begin{dfn} Let $A$ be a $\Calg$. The $K_0$ group of $A$ is the kernel $K_0(A)$ of a homomorphism $\tilde K_0(\tilde A) \to \tilde K_0(\mathbb{C})$ associated to the unique unital $\ast$-homomorphism from $\tilde A$ to $\bbC$ with the kernel $A$. For a unital $\Calg$ $A$, the group $K_0(A)$ is isomorphic to the group $\tilde K_0(A)$ by the inclusion $\tilde K_0(A) \to \tilde K_0(\tilde A)$ which identifies a class defined by a projection over $A$ as a class defined by the same projection viewed as over $\tilde A$. Hence, we usually regard $K_0(A)$ as $\tilde K_0(A)$ for a unital $\Calg$ $A$. 
\end{dfn}

Any $\ast$-homomorphism from $A$ to $B$ extends uniquely to a unital $\ast$-homomorphism from $\tilde A$ to $\tilde B$; and using this, we obtain a group homomorphism from $K_0(A)$ to $K_0(B)$. In this way, $K_0$ becomes a functor from the category of $\Calgs$ and $\ast$-homomorphisms to the category of abelian groups.

A {homotopy} of $\ast$-homomorphisms from $A$ to $B$ is a $\ast$-homomorphism from $A$ to $B[0,1]$. Two $\ast$-homomorphisms from $A$ to $B$ are {homotopic} if there exists a homotopy whose evaluation at $0$ and $1$ gives the two $\ast$-homomorphisms.

A {stabilization} of a $\Calg$ $A$ is a $\ast$-homomorphism from $A$ to $A\otimes \K$ sending $a \in A$ to $a\otimes p \in \K$ where $p$ is a rank-one projection in $\K$.

Let $F$ be a functor from the category of $\Calgs$ to the category of abelian groups. A functor $F$ is called {homotopy invariant} if any two homotopic $\ast$-homomorphisms induce the same group homomorphism; {stable} if any stabilization for any $\Calg$ induces an isomorphism of groups; and {half-exact} if it sends a short exact sequence of $\Calgs$:
\[
\xymatrix{
0 \ar[r] & J \ar[r] & A \ar[r] & A/J \ar[r] & 0
}
\]
to a half exact sequence of groups:
\[
\xymatrix{
F(J) \ar[r] & F(A) \ar[r] & F(A/J)
}
\]
A functor $F$ is called {split-exact} if it sends a split exact sequence of $\Calgs$:
\[
\xymatrix{
0 \ar[r] & J \ar[r] & A \ar[r]_-{\longleftarrow} & A/J \ar[r] & 0
}
\]
to a split exact sequence of groups:
\[
\xymatrix{
0 \ar[r] & F(J) \ar[r] & F(A) \ar[r]_-{\longleftarrow} & F(A/J) \ar[r] & 0
}
\]
The above definitions for two kinds of exactness are written for a covariant functor $F$. In a contravariant case, the arrows for groups go in the reverse direction.

\begin{prop} (cf. \cite{HigRoe} Chapter 4) The functor $K_0$ is a homotopy invariant, stable and half-exact functor.
\end{prop}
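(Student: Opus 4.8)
The cleanest reduction is to note that a homotopy of $\ast$-homomorphisms $\phi_0,\phi_1\colon A\to B$ is a single $\ast$-homomorphism $\Phi\colon A\to B[0,1]$ with $\phi_j=\mathrm{ev}_j\circ\Phi$, so by functoriality it suffices to prove that the two evaluation maps $\mathrm{ev}_0,\mathrm{ev}_1\colon B[0,1]\to B$ induce the same homomorphism on $K_0$. A projection over $B[0,1]$ is exactly a norm-continuous path $t\mapsto p(t)$ of projections in some $M_n(B)$, and $\mathrm{ev}_0,\mathrm{ev}_1$ send its class to $[p(0)]$ and $[p(1)]$. The key lemma is that the endpoints of such a path are unitarily equivalent over $\tilde B$: nearby projections are conjugate by a unitary close to $1$ (via the standard formula built from $p(s)p(t)+(1-p(s))(1-p(t))$ and functional calculus), and a compactness argument on $[0,1]$ patches these into a single unitary $u\in M_n(\tilde B)$ with $up(0)u^\ast=p(1)$. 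Hence $[p(0)]=[p(1)]$ on generators, so $(\mathrm{ev}_0)_\ast=(\mathrm{ev}_1)_\ast$, and therefore $(\phi_0)_\ast=(\phi_1)_\ast$.

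\textbf{Stability.} I would use that $A\otimes\K$ is the inductive limit $\varinjlim_n M_n(A)$ along the upper-left corner inclusions, together with two facts. First, the corner inclusion $A\to M_n(A)$ induces an isomorphism $K_0(A)\cong K_0(M_n(A))$: this is immediate from the definition, since the identity $M_m(M_n(A))=M_{mn}(A)$ lets one regard a projection over $M_n(A)$ as a projection over $A$, and the relation $[p]=[p\oplus 0]$ shows the corner map and this identification are mutually inverse on classes. Second, $K_0$ is continuous, i.e.\ it commutes with inductive limits; this follows because any projection over an inductive limit is a norm-limit of, hence unitarily equivalent to, a projection coming from a finite stage, and the defining relations likewise already hold at a finite stage. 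Combining, $K_0(A\otimes\K)=\varinjlim_n K_0(M_n(A))=K_0(A)$, and one checks that this isomorphism is precisely the map induced by the stabilization $a\mapsto a\otimes e_{11}$.

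\textbf{Half-exactness.} This is the substantive part. Writing $i\colon J\to A$ and $q\colon A\to A/J$, the inclusion $\operatorname{im}i_\ast\subseteq\ker q_\ast$ is clear since $q\circ i=0$. For the reverse, I would first reduce to the unital case: since $J\subseteq A$ one has $\tilde A/J\cong\widetilde{A/J}$, so $0\to J\to\tilde A\to\widetilde{A/J}\to0$ is a short exact sequence with unital quotient map, and a diagram chase using the split injection $K_0(A)\hookrightarrow K_0(\tilde A)$ (coming from the splitting that defines $K_0$) reduces the original statement to half-exactness of this sequence. Thus assume $A$ is unital and $q$ is a unital surjection, and let $x\in\ker q_\ast$. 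Represent $x=[p]-[1_k]$ with $p$ a projection in $M_n(A)$ and $1_k$ a scalar projection, so that $q_\ast x=[\bar p]-[1_k]=0$ in $K_0(A/J)$. After adding a trivial projection we may assume $\bar p$ is unitarily equivalent in $M_n(A/J)$ to the scalar projection $1_k=\mathrm{diag}(1_k,0)$.

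The heart of the matter is to lift this unitary equivalence to $A$, and this is the step I expect to be the main obstacle. Replacing the implementing unitary $\bar u$ by $\bar u\oplus\bar u^\ast$ (the Whitehead trick) makes it homotopic to the identity, and a unitary in the connected component of $1$ in $M_N(A/J)$ lifts to a unitary in $M_N(A)$: one writes it as a finite product of exponentials $\exp(i\bar a_j)$ of self-adjoints, lifts each $\bar a_j$ to a self-adjoint $a_j\in M_N(A)$ across the surjection, and sets $u=\prod\exp(ia_j)$. Conjugating $p$ by such a lift, I may assume outright that $\bar p=1_k$, i.e.\ that $p-1_k\in M_N(J)$. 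Since $A$ is unital and $1\notin J$, the unital subalgebra $\tilde J=J+\bbC 1$ sits inside $A$, and now both $1_k$ and $p=1_k+(p-1_k)$ lie in $M_N(\tilde J)$ with equal image under $\tilde J\to\tilde J/J=\bbC$. Therefore $y:=[p]-[1_k]$ defines a class in $K_0(J)=\ker(\tilde K_0(\tilde J)\to\tilde K_0(\bbC))$, and by construction $i_\ast(y)=x$. The two devices above, the Whitehead rotation reaching the identity component and the exponential lifting across $M_N(A)\to M_N(A/J)$, are exactly what overcome the one genuine difficulty.
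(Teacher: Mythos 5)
The paper does not actually prove this proposition: it is stated with a citation to \cite{HigRoe} (Chapter 4) and no argument is given, so there is no internal proof to compare against. Your proposal is correct, and it consists of exactly the standard arguments found in the cited reference and in any $K$-theory textbook: the path-of-projections lemma with the unitary built from $p(s)p(t)+(1-p(s))(1-p(t))$ for homotopy invariance; continuity of $K_0$ under inductive limits together with the corner-embedding isomorphisms for stability; and, for half-exactness, the reduction to a unital quotient map, the Whitehead rotation $\bar u\mapsto \bar u\oplus\bar u^\ast$, and exponential lifting of unitaries in the identity component, followed by locating the class $[p]-[1_k]$ inside $K_0(J)=\ker\bigl(\tilde K_0(\tilde J)\to\tilde K_0(\bbC)\bigr)$. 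The only place where the sketch is slightly glib is the claim that $K_0(A)\cong K_0(M_n(A))$ is ``immediate from the definition'': for non-unital $A$ this needs a little extra bookkeeping because $\widetilde{M_n(A)}\neq M_n(\tilde A)$, but this is routine and does not affect correctness.
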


Let $F$ be a functor from the category of $\Calgs$ to the category of abelian groups, which is homotopy invariant, stable and half-exact. Denote by $S$ the $\Calg$ $C_0(\mathbb{R})$ of continuous functions on the real line which vanish at infinity. For each $n \in \mathbb{N}$, we define a functor $F_n$ from the category of $\Calgs$ to the category of abelian groups by $F_n(A)=F(S^n\otimes A)$ for a $\Calg$ $A$. Then, the functors $F_n$ satisfies the same property as $F$. Cuntz showed that there is always a natural Bott Periodicity isomorphism $F_n(A)\cong F_{n+2}(A)$ (see \cite{HigRoe}). Hence we can define functors $F_n$ for each $n \in \mathbb{Z}$, by extending the previous definition with the relations $F_n(A)=F_{n+2}(A)$. The sequence of functors $(F_n)_{n \in \mathbb{Z}}$ becomes a homology (cohomology) theory on $\Calgs$, i.e.\ for any short exact sequence of $\Calgs$:
\[
\xymatrix{
0 \ar[r] & J \ar[r] & A \ar[r] & A/J \ar[r] & 0
}
\]
there is a natural long exact sequence of abelian groups and group homomorphisms:
\[
\xymatrix{ 
\, & \ar[r] & F_n(J) \ar[r] & F_n(A) \ar[r] & F_n(A/J) \ar[r] & F_{n+1}(J) \ar[r] & F_{n+1}(A) \ar[r]  & \,
}
\]
The connecting maps $F_n(A/J)\to F_{n+1}(J)$ are called the {boundary maps} of the homology (cohomology) theory $(F_n)_{n \in \mathbb{Z}}$. As a corollary of this, one sees the functors $F_n$, and so $F$, are split-exact. In view of the periodicity, the long exact sequence above is nothing but the six-term exact sequence:
\[
\xymatrix{
F_1(J) \ar[r] & F_1(A) \ar[r] & F_1(A/J) \ar[d] \\
F_{0}(A/J) \ar[u] & \ar[l] F_{0}(A) & \ar[l] F_{0}(J)
}
\]
  
\begin{dfn} The {${K}$-theory} of $\Calgs$ is the homology theory $(K_n)_{n\in\mathbb{Z}}$ on $\Calgs$ defined by the functor $K_0$ from the category of $\Calgs$ to the category of abelian groups. 
\end{dfn}

M. Atiyah proposed how to (analytically) define the $K$-homology of $\Calgs$ which is dual to the $K$-theory of $\Calgs$. We will follow the treatment given by the book \cite{HigRoe}.  

\begin{dfn} Let $A$ be a separable $\Calg$. Let $n$ be a nonnegative integer. A $n$-multigraded Fredholm module over $A$ is a triple $(\hill, \rho, F)$, where $\hill$ is a separable graded Hilbert space, $\rho$ is a graded representation of $A\hat\otimes\bbC_n$ on $\hill$ and $F$ is an odd bounded operator on $H$ satisfying the following relations:
\begin{align}\label{eq:Fred}
\rho(x)(F^2-1) \sim 0,\,\,\, \rho(x)(F-F^\ast) \sim 0,\, \,\, [\rho(x),F] \sim 0 \,\,\,\,\,\text{for $x \in A\hat\otimes\bbC_n$ }
\end{align}
Here $[\,,\,]$ denotes the graded commutator; and for $T\in \BH,$ $T \sim 0$ means $T$ is compact.
\end{dfn}

A $n$-multigraded Fredholm module $(\hill, \rho, F)$ over $A$ is {degenerate} if all the relations in \eqref{eq:Fred} are exact (i.e. if they hold with $\sim 0$ replaced by $=0$).

An\ {operator homotopy}\, of\, $n$-multigraded\ Fredholm\ modules\, over\ $A$\, is \ a \ triple $(\hill, \rho, F_t)_{t\in[0,1]}$ where for each $t \in [0,1]$, $(\hill, \rho, F_t)$ is an $n$-multigraded Fredholm module over $A$ and a map $t \mapsto F_t$ is norm-continuous. We say $n$-multigraded Fredholm modules $(\hill, \rho, F_0)$ and $(\hill, \rho, F_1)$ over $A$ are {homotopic} if there exists an operator homotopy $(\hill, \rho, F_t)_{t\in[0,1]}$.

Let $(\hill_1, \rho_1, F_1)$ and $(\hill_2, \rho_2, F_2)$ be $n$-multigraded Fredholm modules over $A$. The {direct sum} $(\hill_1\oplus\hill_2, \rho_1\oplus\rho_2, F_1\oplus F_2)$ is a $n$-multigraded Fredholm module over $A$; we denote it by $(\hill_1, \rho_1, F_1)\oplus(\hill_2, \rho_2, F_2)$.

Two $n$-multigraded Fredholm modules $(\hill_1, \rho_1, F_1)$ and $(\hill_2, \rho_2, F_2)$ are said to be {unitary equivalent} if there exists a unitary $u$ from $\hill_1$ to $\hill_2$ of degree 0 (i.e.\ it intertwines two gradings) such that $\rho_2(x)=u\rho_1(x)u^\ast$ for $x \in A\hat\otimes\mathbb{C}_n$ and $F_2=uF_1u^\ast$.

\begin{dfn} Let $A$ be a separable $\Calg$. Let $n$ be a nonnegative integer. The (analytic) $K$-homology group $K^{-n}(A)$ of $A$ of degree $-n$ is a group generated by cycles of $n$-multigraded Fredholm modules over $A$ subject to the relations $[(\hill_1, \rho_1, F_1)]+[(\hill_2, \rho_2, F_2)]=[(\hill_1, \rho_1, F_1)\oplus (\hill_2, \rho_2, F_2)]$ and $[(\hill_1, \rho_1, F_1)]=[(\hill_2, \rho_2, F_2)]$ if $(\hill_1, \rho_1, F_1)$ and $(\hill_2, \rho_2, F_2)$ are homotopic for $n$-multigraded Fredholm modules $(\hill_i, \rho_i, F_i)$ over $A$. This is an abelian group. The zero class is represented by any degenerate cycle. Unitary equivalent cycles define the same class. The additive inverse of $[(\hill, \rho, F)]$ is $[(\hill^{op}, \rho^{op}, -F^{op})]$ where $\hill^{op}$ is a graded Hilbert space $\hill$ with two eigenspaces of the grading interchanged; and $F^{op}$ is the operator on $\hill^{op}$ corresponding to an operator $F$ on $\hill$; and $\rho^{op}$ is a (graded) representation of $A\hat\otimes \bbC_n$ on $\hill^{op}$ defined by $\rho^{op}(x)=(-1)^{\partial x}\rho(x)$ for homogeneous $x \in A\hat\otimes \bbC_n$ (here, we identified $\hill^{op}$ with $\hill$ by neglecting the gradings).  
\end{dfn}

\begin{example}\label{prop:KC} The group $K^0(\bbC)$ is isomorphic to $\bbZ$. The isomorphism sends $[(\hill,1,F)]$ to the graded index $\text{Index}(F)=\text{dim}(\text{Ker($F$)}^{(0)})-\text{dim}(\text{Ker($F$)}^{(1)})$ of $F$ ($1$ denotes the unique unital representation of $\bbC$). The cycle $[(\bbC,1,0)]$ corresponding to the integer $1$ is denoted as {$1$}.
\end{example}

\begin{prop} (The Formal Periodicity) (cf. \cite{HigRoe} THEOREM 8.2.13) Let $A$ be a separable $\Calg$. Let $n$ be a nonnegative integer. Then, there is a (formal) periodicity isomorphism $K^{-n}(A)\to K^{-n-2}(A)$ which sends an element $[(\hill, \rho, F)]$ to $[(\hill\oplus\hill^{op}, \rho\hat\otimes \text{id}, F\oplus F^{op})]$. Here, $\rho\hat\otimes \text{id}$ is a representation of $(A\hat\otimes \bbC_n)\hat\otimes \bbC_2$ on $\hill\oplus\hill^{op}=\hill\hat\otimes\bbC_1$ where $\bbC_2$ is identified with $M_2(\bbC)$ acting on a graded Hilbert space $\bbC_1$.
\end{prop}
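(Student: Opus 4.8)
The plan is to realise both the periodicity map and its inverse through the graded Morita equivalence between $\bbC_2$ and $\bbC$, reading the statement as the assertion that ``graded tensoring with the standard $\bbC_2$-module $\bbC_1$'' is invertible at the level of Fredholm modules. Throughout I use the Clifford periodicity $\bbC_{n+2}\cong\bbC_n\hat\otimes\bbC_2$ together with the graded isomorphism $\bbC_2\cong\operatorname{End}(\bbC_1)=M_2(\bbC)$, where $\bbC_1=\bbC\cdot 1\oplus\bbC\cdot e$ is the two-dimensional graded Hilbert space carrying the standard (graded-irreducible) representation $\phi$ of $\bbC_2$.

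First I would verify that the prescribed map $\beta$ is well defined. Given a cycle $(\hill,\rho,F)$ for $K^{-n}(A)$, the triple $(\hill\hat\otimes\bbC_1,\rho\hat\otimes\phi,F\hat\otimes 1)$ is an $(n+2)$-multigraded Fredholm module: the representation is of $(A\hat\otimes\bbC_n)\hat\otimes\bbC_2=A\hat\otimes\bbC_{n+2}$, the operator $F\hat\otimes 1=F\oplus F^{\op}$ is odd, and since $(F\hat\otimes 1)^2-1=(F^2-1)\hat\otimes 1$ and $\bbC_1$ is finite dimensional, the three relations \eqref{eq:Fred} for $F\hat\otimes 1$ reduce on elementary tensors to those for $F$ (as $T\hat\otimes 1$ is compact whenever $T$ is). Degenerate modules, direct sums, unitary equivalences and operator homotopies are manifestly preserved, so $\beta$ descends to a homomorphism $K^{-n}(A)\to K^{-n-2}(A)$.

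Next I would construct the inverse $\gamma$. For a cycle $(\hill',\rho',F')$ of $K^{-n-2}(A)$, the restriction $\rho'|_{1\hat\otimes\bbC_2}$ is a genuine graded representation of $\bbC_2=\operatorname{End}(\bbC_1)$, so graded Morita theory supplies a canonical graded unitary $\hill'\cong\bbC_1\hat\otimes\hill''$ with $\hill''=\Hom_{\bbC_2}(\bbC_1,\hill')$; under it $\rho'(A\hat\otimes\bbC_n)$, which graded-commutes with $1\hat\otimes\bbC_2$, becomes $\rho''\hat\otimes 1$ for a representation $\rho''$ of $A\hat\otimes\bbC_n$ on $\hill''$. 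The operator $F'$ commutes with $\rho'(\bbC_2)$ only modulo $\K$; averaging $F'$ over the unitary group of the finite-dimensional algebra $\bbC_2$ produces an odd operator in the commutant $1\hat\otimes B(\hill'')$, i.e.\ of the form $1\hat\otimes F''$, differing from $F'$ by a compact operator (the integral of the norm-continuous compact-valued family $uF'u^\ast-F'$). I would set $\gamma[(\hill',\rho',F')]=[(\hill'',\rho'',F'')]$ and check, exactly as for $\beta$, that it respects all relations, noting that a degenerate $F'$ already equals $1\hat\otimes F''$ on the nose.

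Finally I would show $\gamma\beta=\id$ and $\beta\gamma=\id$. These reduce to the natural graded unitary equivalences $\Hom_{\bbC_2}(\bbC_1,\bbC_1\hat\otimes\hill)\cong\hill$ and $\bbC_1\hat\otimes\Hom_{\bbC_2}(\bbC_1,\hill')\cong\hill'$: for $\gamma\beta$ the operator $F\hat\otimes 1$ is already central, so it is returned untouched; for $\beta\gamma$ the operator $F''\hat\otimes 1$ corresponds to $1\hat\otimes F''=F'+\K$, whence the straight-line path $(1\hat\otimes F'')+tK$ is a valid operator homotopy to $F'$ (the correction is compact and the relations hold modulo $\K$). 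The main obstacle is precisely this control of the operator under the Morita identification: proving that passing to the $\bbC_2$-central part of $F'$ is well defined on homotopy classes and genuinely inverse to $F\mapsto F\hat\otimes 1$, given that $F'$ is constrained only modulo $\K$. Pinning down the graded structure theorem for $\bbC_2\cong M_2(\bbC)$-modules and verifying that the compact corrections can be absorbed into operator homotopies is where the real work lies; the rest is bookkeeping.
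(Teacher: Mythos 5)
The paper itself gives no proof of this proposition: it is quoted from Higson--Roe (Theorem 8.2.13), and your Morita-compression strategy (reduce along $\bbC_2\cong M_2(\bbC)=\operatorname{End}(\bbC_1)$) is in essence the argument given in that cited source. The trouble is that the argument does not transfer to the definition of multigraded Fredholm module used in this paper, and at the crucial point your proposal asserts something false. You claim that $F'$ commutes with $\rho'(\bbC_2)$ modulo compacts, so that averaging yields a compact perturbation of $F'$ lying in the graded commutant. But in the paper's definition every relation has the form $\rho'(x)(\cdots)\sim 0$ or $[\rho'(x),F']\sim 0$ with $x$ ranging over the (generally non-unital) algebra $A\hat\otimes\bbC_{n+2}$; nothing at all constrains the interaction of $F'$ with the multiplier copy of $\bbC_2$. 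Concretely, take $A=\K(\ell^2)$, $n=0$, $\hill'=\ell^2\otimes\bbC_1$, $\rho'(k\hat\otimes c)=k\otimes c$ (a nondegenerate graded representation), and $F'=\begin{pmatrix}0&U\\U^*&0\end{pmatrix}$ with $U$ the bilateral shift. Then $F'$ is an odd selfadjoint unitary, and $[\rho'(x),F']$ is compact for every $x$ simply because $\rho'$ takes values in the compact operators, so this is a legitimate $2$-multigraded Fredholm module over $\K$; yet the graded commutator of $F'$ with $1\otimes\begin{pmatrix}0&1\\1&0\end{pmatrix}$ equals $(U+U^*)\oplus(U+U^*)$, which is not compact. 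Hence your averaging integral is not a compact perturbation, your $\gamma$ is undefined on this cycle, and the gap is essential: this cycle does become equivalent to one in the image of $\beta$ (rotate $U$ to $1$ through unitaries; all relations persist along the way since $\rho'$ is compact-valued), but producing such a homotopy in general is exactly the surjectivity one is trying to prove, which your proposal explicitly defers to as ``where the real work lies'' without supplying it.

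Two smaller points. First, for non-unital $A$ the subalgebra $1\hat\otimes\bbC_2$ does not even sit inside $A\hat\otimes\bbC_{n+2}$, so ``restricting $\rho'$'' requires first extending $\rho'$ to multipliers, i.e.\ reducing to nondegenerate representations; in this formulation that reduction is itself a small argument, because $F'$ need not preserve the essential subspace $\overline{\rho'(A\hat\otimes\bbC_{n+2})\hill'}$ (one must compress $F'$ to it and check that a straight-line homotopy stays within the relations). Second, your parenthetical claim that a degenerate cycle already has $F'=1\hat\otimes F''$ on the nose is false: $\rho'=0$ with arbitrary odd $F'$ is degenerate in the paper's sense. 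The clean repair is to work with Higson--Roe's definition, in which the multigrading operators are part of the data and are required to anticommute \emph{exactly} with $F$; there your compression argument is correct and is essentially their proof. If you insist on the paper's definition, you must first prove the bridging lemma that every class has a representative whose operator exactly graded-commutes with the Clifford multipliers --- that lemma, not the Morita bookkeeping, is the real content of the theorem.
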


Let $\phi$ be a $\ast$-homomorphism from $A$ to $B$. We obtain a group homomorphism $K^{n}(B)\to K^{n}(A)$ which sends $[(\hill, \rho, F)]$ to $[(\hill, \rho\circ(\phi\otimes \text{id}), F)]$. In this way, $K^{n}$ becomes a (contravariant) functor from the category of separable $\Calgs$ to the category of abelian groups. Functorial properties (stability, homotopy invariance and Bott periodicity) of $K$-homology are all beautifully proved by means of the Kasparov product.

\begin{prop} (cf. \cite{HigRoe} Section 9.2) Let $A_1$ and $A_2$ be separable $\Calgs$. There is a well-defined product (the Kasparov product) on $K$-homology:
\[
\xymatrix{
K^{-n_1}(A_1) \otimes K^{-n_2}(A_2) \ar[r] & K^{-n_1-n_2}(A_1\otimes A_2) \quad \text{for $n_1,n_2\geq0$}
}
\]
The Kasparov product is bilinear, associative and functorial. It is commutative in a suitable sense. The generator ${{1}}\in K^0(\bbC)$ is the multiplicative identity of the Kasparov product. The functoriality means that the right (or left) multiplication by an element $\alpha \in K^{-n}(B)$ defines a natural transformation between functors $A\mapsto K^{-m}(A)$ and $A\mapsto K^{-m-n}(A\otimes B)$; namely, for any $\ast$-homomorphism $\phi\colon A_2\to A_1$, the following diagram commutes:
\[
\xymatrix{
K^{-m}(A_1) \ar[d]_-{\times \alpha} \ar[r]^-{\phi^\ast} &K^{-m}(A_2) \ar[d]^-{\times \alpha} \\ 
K^{-m-n}(A_1\otimes B) \ar[r]_-{(\phi\otimes \text{id})^\ast} &K^{-m-n}(A_2\otimes B) 
}
\]
See \cite{HigRoe} for details. 
\end{prop}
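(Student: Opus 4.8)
The statement is the construction of the external Kasparov product on $K$-homology together with its formal properties, so my plan is to build, from a cycle representing a class in $K^{-n_1}(A_1)$ and one representing a class in $K^{-n_2}(A_2)$, a single cycle representing a class in $K^{-n_1-n_2}(A_1\otimes A_2)$, and then to check that this passes to homotopy classes and enjoys the listed properties. Rather than manipulate the bounded operators $F_i$ directly, I would pass to the \emph{unbounded} picture, where the external product is simply the graded sum of two Dirac-type operators; this is the route along which the paper's own Chapter~2 machinery does the heavy lifting. Concretely, using the description of Baaj and Julg, I would first represent each class $[(\hill_i,\rho_i,F_i)]$ by an unbounded cycle: an odd, essentially selfadjoint operator $D_i$ on $\hill_i$ with compact resolvent, commuting (modulo the usual requirements) with the Clifford action of $\bbC_{n_i}$ and having a dense set of elements $a$ with $[\rho_i(a),D_i]$ bounded, whose bounded transform $D_i(1+D_i^2)^{-1/2}$ recovers the class $[(\hill_i,\rho_i,F_i)]$.

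Given such $D_1,D_2$, I would form
\[
D \;=\; D_1\hat\otimes 1 \;+\; 1\hat\otimes D_2
\]
on $\hill_1\hat\otimes\hill_2$, with representation $\rho_1\hat\otimes\rho_2$ regarded as a representation of $(A_1\otimes A_2)\hat\otimes\bbC_{n_1+n_2}$ via $\bbC_{n_1}\hat\otimes\bbC_{n_2}\cong\bbC_{n_1+n_2}$ (Example~\ref{Clif}). The operator $D$ is odd, and the essential selfadjointness of such a sum is exactly the content of the examples following Lemma~\ref{lemselfad} (the graded tensor sum $T_1\hat\otimes1+1\hat\otimes T_2$), which produce the functional calculus $\cS\to B(\hill_1\hat\otimes\hill_2)$ directly. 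Since $D^2=D_1^2\hat\otimes1+1\hat\otimes D_2^2$, its eigenvalues are the sums of eigenvalues of $D_1^2$ and $D_2^2$; each stays below a given bound for only finitely many pairs, so $D^2$ has compact resolvent and hence so does $D$, by the diagonalizability argument used for the Bott--Dirac operator in Example~\ref{BottDirac}. Finally, $[\rho_1(a_1)\hat\otimes\rho_2(a_2),D]$ splits as $[\rho_1(a_1),D_1]\hat\otimes\rho_2(a_2)\pm\rho_1(a_1)\hat\otimes[\rho_2(a_2),D_2]$, which is bounded on a dense subalgebra. This is precisely why the unbounded picture is convenient: it is \emph{boundedness}, not compactness, of the commutator that is required, so the obstruction that plagues the naive bounded formula $F_1\hat\otimes1+1\hat\otimes F_2$ — namely that $[\rho_1(a_1),F_1]\hat\otimes\rho_2(a_2)$ need not be compact — simply does not arise.

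Taking the bounded transform $D(1+D^2)^{-1/2}$ then yields a genuine $(n_1+n_2)$-multigraded Fredholm module over $A_1\otimes A_2$, and I would define the product of the two classes to be its $K$-homology class. The formal properties follow transparently in this picture. Additivity over direct sums in each variable is immediate, giving bilinearity. The identity law is clear: the generator $1=[(\bbC,1,0)]$ is represented by the zero operator on $\bbC$, so $D_1\hat\otimes1+1\hat\otimes0=D_1\hat\otimes1$ returns the original cycle. Associativity is the associativity of the sum $D_1\hat\otimes1\hat\otimes1+1\hat\otimes D_2\hat\otimes1+1\hat\otimes1\hat\otimes D_3$; graded commutativity comes from conjugating $D$ by the flip $\hill_1\hat\otimes\hill_2\to\hill_2\hat\otimes\hill_1$, which carries $D_1\hat\otimes1+1\hat\otimes D_2$ to $1\hat\otimes D_1+D_2\hat\otimes1$ up to the Koszul sign and the identifications $A_1\otimes A_2\cong A_2\otimes A_1$, $\bbC_{n_1+n_2}\cong\bbC_{n_2+n_1}$; and functoriality is the naturality of $\phi\mapsto\phi^\ast$, which only precomposes the representation and leaves $D$ untouched.

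The hard part will be the two places where the unbounded picture has to be reconciled with the bounded theory actually defined in the paper. First, one must justify the Baaj--Julg passage in both directions: that every class in $K^{-n}(A)$ admits an unbounded representative and that the bounded transform induces a \emph{well-defined}, homotopy-respecting map on classes, so that the product does not depend on the chosen unbounded lifts. Second, one must check that a homotopy of unbounded cycles in either variable yields an operator homotopy (or at least a homotopy of cycles) of the bounded products; this is where continuity of the functional calculus and control of the bounded transform along paths become delicate. If instead one insists on staying entirely within the bounded Fredholm-module framework, the genuine obstacle reappears as the need to construct $F$ as a Kasparov \emph{connection} and to invoke Kasparov's technical theorem to produce and compare such connections — which is exactly the machinery that \cite{HigRoe} (Section~9.2) sets up and that the unbounded route is designed to circumvent.
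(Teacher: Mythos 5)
The paper itself contains no proof of this proposition: it is stated and deferred wholesale to \cite{HigRoe}, Section 9.2, where the external product is built in the bounded picture --- after normalizing $F_1$ to an odd selfadjoint contraction one forms $F_1\hat\otimes 1+((1-F_1^2)\hat\otimes 1)^{1/2}(1\hat\otimes F_2)$ (for the \emph{external} product, $1\hat\otimes F_2$ is already an $F_2$-connection, so this is the same shape of formula the paper records in Chapter 4 following \cite{Bla}) and then verifies the Fredholm axioms and homotopy invariance. Your proposal is correct in outline but takes a genuinely different route: the unbounded (Baaj--Julg) picture, in which the product is the graded sum $D_1\hat\otimes1+1\hat\otimes D_2$. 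For the external product this is a real simplification, exactly as you say: squaring, the Leibniz rule for commutators, associativity, the unit law, commutativity up to the flip, and functoriality all become algebraic identities, and essential selfadjointness of the sum is precisely what the paper's Chapter 2 technology (Lemma \ref{lemselfad} and the tensor-sum examples of unbounded multipliers) provides. What the bounded route buys is that one never leaves the framework in which $K^{-n}$ was defined, so no equivalence-of-pictures theorem is needed; what your route buys is that the damping/connection formalism disappears from the construction itself.

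Beyond the two deferrals you flag honestly (existence of unbounded lifts and homotopy invariance of the bounded transform --- this is the real analytic content, comparable in weight to the connection arguments of \cite{HigRoe} it replaces), there is one step that fails as literally written. For non-unital $A$, and the proposition allows arbitrary separable $\Calgs$, the Baaj--Julg theorem does \emph{not} produce representatives with compact resolvent; it produces only locally compact resolvent, i.e.\ $\rho(a)(1+D^2)^{-1}$ compact for every $a\in A$. For instance $-i\,d/dx$ on $L^2(\bbR)$ over $C_0(\bbR)$ has non-compact resolvent and no eigenvalues at all, so your eigenvalue-counting argument for compactness of the resolvent of $D$ has nothing to count. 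Two standard repairs, either of which completes your outline: (i) reduce to honestly compact resolvents via the unitalization --- split-exactness of $K^{-n}$ makes the restriction $K^{-n}(\tilde A)\to K^{-n}(A)$ surjective, over a unital algebra one may take $\rho$ unital (the complement of $\overline{\rho(1)\hill}$ carries the zero representation, and any module with zero representation is degenerate), and then local compactness at $a=1$ \emph{is} global compactness of the resolvent; or (ii) keep local compactness and replace counting by the operator inequality $(1+D^2)^{2}\geq(1+D_1^2)\hat\otimes(1+D_2^2)$ for the commuting positive summands, which dominates $\rho(a_1\hat\otimes a_2)(1+D^2)^{-2}\rho(a_1\hat\otimes a_2)^{\ast}$ by a graded tensor product of compact operators and hence, by the hereditary property of the compacts and the identity $TT^{\ast}$ compact iff $T$ compact, forces $\rho(a_1\hat\otimes a_2)(1+D^2)^{-1}$ to be compact. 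With either repair in place, your construction does yield the asserted product with all the listed properties.
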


\begin{prop}(homotopy invariance) (cf. \cite{HigRoe} Section 9.3) The $K$-homology functors $K^n$ are homotopy invariant.
\end{prop}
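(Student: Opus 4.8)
The plan is to strip off the multigrading, factor the homotopy through the two endpoint evaluations, and then use the external Kasparov product to collapse the whole statement into a single universal identity on the interval; the genuine analytic content is concentrated in one base-case computation.

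First I would record the reductions. By definition the assertion is that homotopic $\ast$-homomorphisms $\phi_0,\phi_1\colon A\to B$ induce equal maps $\phi_0^\ast=\phi_1^\ast\colon K^{-n}(B)\to K^{-n}(A)$. By formal periodicity it suffices to consider degrees $-n$ with $n\ge 0$, and since an $n$-multigraded Fredholm module over $A$ is exactly a degree-$0$ Fredholm module over $A\hat\otimes\bbC_n$, there is a canonical identification $K^{-n}(A)=K^0(A\hat\otimes\bbC_n)$ under which a homotopy $\Phi\colon A\to B[0,1]$ tensors to a homotopy $\Phi\hat\otimes\id\colon A\hat\otimes\bbC_n\to (B\hat\otimes\bbC_n)[0,1]$. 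Hence it is enough to treat $K^0$. Writing the homotopy as $\Phi\colon A\to B[0,1]$ with $\phi_t=\mathrm{ev}_t\circ\Phi$, functoriality gives $\phi_t^\ast=\Phi^\ast\circ\mathrm{ev}_t^\ast$, so the whole problem reduces to proving
\[
\mathrm{ev}_0^\ast=\mathrm{ev}_1^\ast\colon K^0(B)\longrightarrow K^0(B[0,1]),
\]
where $\mathrm{ev}_t\colon B[0,1]\to B$ is evaluation at $t$.

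Next I would bring in the external product. Identifying $B[0,1]=C[0,1]\otimes B$, the evaluation $\mathrm{ev}_t$ becomes $\mathrm{ev}_t\otimes\id_B$, where now $\mathrm{ev}_t\colon C[0,1]\to\bbC$ is the character $g\mapsto g(t)$. Set $\eta_t=\mathrm{ev}_t^\ast(1)\in K^0(C[0,1])$, the class of the one-dimensional point-evaluation module $(\bbC,\mathrm{ev}_t,0)$. Using the unit relation $1\times x=x$ together with the functoriality of the Kasparov product with respect to $\mathrm{ev}_t\otimes\id_B$, I obtain $\mathrm{ev}_t^\ast(x)=(\mathrm{ev}_t\otimes\id_B)^\ast(1\times x)=\eta_t\times x$ for every $x\in K^0(B)$. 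Consequently the two evaluation maps agree as soon as $\eta_0=\eta_1$ in $K^0(C[0,1])$, and the entire proposition collapses onto this single identity.

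It remains to prove the base case $\eta_0=\eta_1$, and this is where I expect the real difficulty to lie. Writing $-\eta_1$ as the class of the odd one-dimensional module, the claim becomes that $(\bbC^{(0)}\oplus\bbC^{(1)},\,\mathrm{ev}_0\oplus\mathrm{ev}_1,\,0)$ represents $0$ in $K^0(C[0,1])$. This cannot be achieved by finite-dimensional manoeuvres: over a finite-dimensional space every operator is compact, so operator homotopy merely permutes formal integer combinations of point classes and produces no relation among distinct points. An honest cobounding cycle must therefore be infinite-dimensional, and the natural source is the Dirac-type operator $-i\,\frac{d}{dx}$ on the interval: one represents $C[0,1]$ by multiplication on an $L^2$-space of sections and builds an odd operator $F$ from the bounded transform of this operator, the geometric point being that $[0,1]$ is a manifold whose boundary is $\{1\}-\{0\}$, so that the associated cycle should cobound $\eta_1-\eta_0$. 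The main obstacle is exactly the verification that this infinite-dimensional triple satisfies the Fredholm-module relations --- that each multiplication operator has compact commutator with $F$ and that $F^2-1$ is compact against $C[0,1]$ with the correct endpoint behaviour --- because the tempting pointwise constructions (tensoring a cycle with $L^2[0,1]$ and acting fibrewise) fail outright, a continuous field of compact operators over $[0,1]$ not being compact on the $L^2$-sections. These are precisely the pseudolocal/elliptic estimates that the Kasparov product packages (\cite{HigRoe}, Section 9.3); granting them, $\eta_0=\eta_1$ follows and homotopy invariance is established in full.
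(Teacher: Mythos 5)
Your proposal is correct and takes essentially the same route as the paper: both arguments reduce everything to the single identity $\mathrm{ev}_0^\ast(1)=\mathrm{ev}_1^\ast(1)$ in $K^0(C[0,1])$ and then propagate it to arbitrary classes via the unit relation and the functoriality of the Kasparov product. Like the paper, which cites \cite{Kas1} and \cite{HigRoe} for this point, you leave the interval base case as the cited analytic input (your Dirac-operator sketch is exactly the content of that reference), so the two proofs agree in structure and in level of detail.
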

\begin{proof} It can be shown that the evaluation maps $\text{ev}_0,\text{ev}_1\colon C[0,1]\to \bbC$ induce the same group homomorphism $K^0(\bbC)\to K^0(C[0,1])$, in particular $\text{ev}_0^\ast({\text{$1$}})=\text{ev}_1^\ast({\text{$1$}})$  (See also \cite{Kas1}). Using the functoriality of the Kasparov product, we have $(\text{id}_B\otimes \text{ev}_i)^\ast(\alpha)=(\text{id}_B\otimes \text{ev}_i)^\ast(\alpha\times \text{{1}})=\alpha \times \text{ev}_i^\ast(\text{{$1$}})$ for $\alpha \in K^{-n}(B)$ for $i=0,1$. This shows the desired homotopy invariance.
\end{proof}

It can be proven that the $K$-homology functors $K^{-n}$ satisfy the stability and Bott periodicity by using the Kasparov product and the homotopy invariance above. We only state the results here.

\begin{prop}(Stability) (cf. \cite{HigRoe} Section 9.4) The $K$-homology functors $K^{-n}$ are stable. In other words, a stabilization morphism $A \to A\otimes\K$ induces isomorphisms of abelian groups $K^{-n}(A\otimes\K)\to K^{-n}(A)$. The inverses are the Kasparov product by $[(\hill, \text{id}, 0)] \in K^0(\K)$ where $\hill$ is a separable infinite dimensional Hilbert space.      
\end{prop}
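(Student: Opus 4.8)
The plan is to deduce everything from the formal properties of the Kasparov product listed above (bilinearity, associativity, functoriality, commutativity, and the fact that $1\in K^0(\bbC)$ is a multiplicative unit) together with homotopy invariance; the only genuinely computational input is the identity $\iota^\ast(\beta)=1$, where $\iota\colon\bbC\to\K$ is the inclusion $1\mapsto p$ of the (even) rank-one projection used to build the stabilization. With this notation the stabilization of any separable $\Calg$ $D$ is $s_D=\id_D\otimes\iota\colon D\to D\otimes\K$, and I write $\beta=[(\hill,\id,0)]\in K^0(\K)$ and $s=s_A$.

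First I would verify $\iota^\ast(\beta)=1\in K^0(\bbC)$. Pulling back $\beta$ along $\iota$ replaces the identity representation of $\K$ by the representation $\rho_p(\lambda)=\lambda p$, giving the cycle $(\hill,\rho_p,0)$. Splitting $\hill=p\hill\oplus(1-p)\hill$, the summand $(1-p)\hill$ carries the zero representation and is therefore a degenerate cycle that may be discarded, while $p\hill$ is one-dimensional and yields precisely the cycle $(\bbC,1,0)$ representing $1$; here one checks that the grading conventions in the definition of $\beta$ are chosen so that $p$ is even and the graded index of $F=0$ on $p\hill$ is indeed $1$. Next, for an arbitrary separable $\Calg$ $D$ I would record the general identity
\[
s_D^\ast(\delta\times\beta)=(\id_D\otimes\iota)^\ast(\delta\times\beta)=\delta\times\iota^\ast(\beta)=\delta\times 1=\delta,\qquad \delta\in K^{-n}(D),
\]
where the middle equality is functoriality of the product in the second variable (which follows from the first-variable functoriality stated above together with commutativity) and the last step is the unit property. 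Taking $D=A$ this is exactly $s^\ast\circ(-\times\beta)=\id_{K^{-n}(A)}$, so $-\times\beta$ is a right inverse of $s^\ast$.

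It remains to show $(-\times\beta)\circ s^\ast=\id_{K^{-n}(A\otimes\K)}$, which is where the real work lies. For $\gamma\in K^{-n}(A\otimes\K)$, first-variable functoriality applied to $\phi=s\colon A\to A\otimes\K$ gives $s^\ast(\gamma)\times\beta=(s\otimes\id_\K)^\ast(\gamma\times\beta)$. I would then compare the two $\ast$-homomorphisms $A\otimes\K\to A\otimes\K\otimes\K$ given by $s\otimes\id_\K$ (inserting $p$ in the middle factor) and by the stabilization $s_{A\otimes\K}=\id_{A\otimes\K}\otimes\iota$ (inserting $p$ in the last factor): they are intertwined by the flip $\tau$ of the two copies of $\K$, i.e.\ $s_{A\otimes\K}=(\id_A\otimes\tau)\circ(s\otimes\id_\K)$. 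The flip $\tau=\Ad_U$ is implemented by the flip unitary $U$ on the underlying Hilbert space, which is connected to $1$ by a norm-continuous path of unitaries (the unitary group of the bounded operators on a separable Hilbert space is norm-connected); hence $\id_A\otimes\tau$ is homotopic to the identity and induces the identity on $K$-homology by homotopy invariance. Combining this with the general identity above applied to $D=A\otimes\K$ yields
\[
s^\ast(\gamma)\times\beta=(s\otimes\id_\K)^\ast(\gamma\times\beta)=s_{A\otimes\K}^\ast(\gamma\times\beta)=\gamma,
\]
as required, so $s^\ast$ and $-\times\beta$ are mutually inverse.

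The main obstacle is precisely this last direction: one must line up the functoriality of the product correctly and, above all, argue that the flip automorphism of $\K\otimes\K$ acts trivially on $K$-homology, for which homotopy invariance and the connectedness of the unitary group are the essential tools. A secondary, more bookkeeping-level point of care is fixing the grading conventions in the definition of $\beta$ so that the computation $\iota^\ast(\beta)=1$ comes out correctly and so that the flip unitary is compatible with the multigrading.
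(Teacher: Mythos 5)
Your proof is correct. Note that the paper itself offers no proof of this proposition---it only remarks that stability ``can be proven \ldots by using the Kasparov product and the homotopy invariance above'' and defers to Higson--Roe---and your argument is a correct implementation of exactly that outline: the unit computation $\iota^\ast(\beta)=1$, the stated functoriality and commutativity of the Kasparov product, and homotopy invariance applied to the flip automorphism of $\K\otimes\K$ (which is implemented by a self-adjoint unitary, hence norm-homotopic to the identity) are precisely the tools the paper has in place at this point.
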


\begin{prop}(Bott Periodicity) (cf. \cite{HigRoe} Section 9.5) The Dirac class $d$ in $K^{-1}(C_0(-1,1))$ is defined by \[d=\left[\left(L^2[-1,1]\oplus L^2[-1,1]^{op}, \rho\hat\otimes \text{id}, \begin{pmatrix}
0 & -i(2P-I) \\
i(2P-I) & 0
\end{pmatrix} \right)\right],\] where $\rho$ is the standard representation of $C_0(-1,1)$ on $L^2[-1,1]$ sending functions to multiplication operators; $\rho\hat\otimes \text{id}$ is a representation of $C_0(-1,1)\hat\otimes\bbC_1$ on $L^2[-1,1]\oplus L^2[-1,1]^{op}$ which ``sends'' the generator (an odd selfadjoint unitary) $\epsilon \in \bbC_1$ to $\begin{pmatrix}
0 & 1 \\
1 & 0
\end{pmatrix}$ and $P$ is the projection in $L^2[-1,1]$ onto the closed subspace spanned by functions $e^{in\pi x} (n\geq 0)$. The Kasparov product by the Dirac class induces Bott periodicity isomorphisms $K^{-n}(A)\cong K^{-n-1}(A\otimes S)$. Here, we identified $S=C_0(\bbR)$ with $C_0(-1,1)$ by using an arbitrary orientation preserving homeomorphism $\bbR\cong(-1,1)$.
\end{prop}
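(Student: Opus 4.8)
The plan is to show that the Kasparov product by $d$ coincides, up to sign, with the suspension isomorphism furnished by the long exact sequence, and then to pin down $d$ as a generator of $K^{-1}(S)\cong\bbZ$ by an index computation; invertibility of $\times d$ follows formally. Throughout I write $SA=A\otimes S$ and $CA=A\otimes C_0(0,1]$, and regard $\times d$ as a natural transformation from the functor $A\mapsto K^{-n}(A)$ to the functor $A\mapsto K^{-n-1}(SA)$.

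First I would construct the suspension isomorphism abstractly. The cone extension $0\to SA\to CA\to A\to 0$, where the quotient map is evaluation at the endpoint $s=1$, admits a completely positive contractive section (send $1_\bbC$ to a function $g$ with $0\leq g\leq1$, $g(1)=1$, $g(0)=0$), so the six-term exact sequence in $K$-homology applies to it. The cone $CA$ is contractible, since the homotopy $f_t(s)=f(ts)$ retracts it to $0$, so homotopy invariance gives $K^{-n}(CA)=0$ for all $n$. Feeding this into the six-term sequence forces the boundary maps $\partial\colon K^{-n}(A)\to K^{-n-1}(SA)$ to be isomorphisms for every separable $A$ and every $n$.

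Next I would identify $\partial$ with a Kasparov product. The extension above is the external tensor product of $A$ with the universal cone extension $0\to S\to C_0(0,1]\to\bbC\to0$, and the boundary homomorphism of such a tensored extension is computed as right multiplication by the class $\delta=\partial(1)\in K^{-1}(S)$ of the universal extension; this is the standard compatibility of the boundary maps with the functorial Kasparov product. Thus $\partial(\alpha)=\alpha\times\delta$ for all $\alpha$, and since $\partial$ is an isomorphism, $\delta$ must be a generator of $K^{-1}(S)\cong\bbZ$. The whole statement therefore reduces to showing that $d=\pm\delta$, that is, that $d$ too is a generator of $K^{-1}(S)$: for then $\times d=\pm(\times\delta)=\pm\partial$ is an isomorphism by bilinearity of the product.

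The remaining step, and the genuinely hard one, is to show that the explicit Dirac class $d$ generates $K^{-1}(S)\cong\bbZ$. I would establish this through the standard index pairing of the odd $K$-homology class $d$ with the Bott generator of $K_1(S)\cong\bbZ$. After the identification $S\cong C_0(-1,1)$, that generator is represented by a unitary $u$ in the unitization of winding number one; the off-diagonal part of the operator defining $d$ is built from the symmetry $2P-I$, where $P$ projects onto the nonnegative frequencies $e^{in\pi x}$ $(n\geq0)$, so the pairing computes the Fredholm index of the Toeplitz operator $PuP$ on the range of $P$. That index equals $\pm1$ (the negative of the winding number of $u$), so $d$ pairs to a generator of $\bbZ$ and is itself a generator of $K^{-1}(S)$. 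Combining this with the previous two paragraphs gives $\times d=\pm\partial\colon K^{-n}(A)\xrightarrow{\cong}K^{-n-1}(SA)$ for all separable $A$, as claimed. The main obstacle is precisely this Toeplitz index calculation, the analytic heart of Bott periodicity; everything else is formal manipulation of the exact sequence together with the functoriality of the Kasparov product.
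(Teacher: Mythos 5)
Your overall plan (suspension via the cone extension plus a Toeplitz index computation to normalize $d$) is a recognized classical route, but as written its pivotal step fails because of the variance of $K$-homology. $K$-homology is \emph{contravariant}, so the boundary maps of its exact sequence for an extension $0\to J\to B\to B/J\to 0$ go from the ideal to the quotient: $\partial\colon K^{-n}(J)\to K^{-n-1}(B/J)$. Applied to $0\to SA\to CA\to A\to 0$, contractibility of $CA$ gives an isomorphism $\partial\colon K^{-n}(SA)\to K^{-n-1}(A)$ --- the \emph{opposite} direction from the map $\times d\colon K^{-n}(A)\to K^{-n-1}(A\otimes S)$ you want to identify it with, so the identity ``$\partial(\alpha)=\alpha\times\delta$'' equates maps whose domains and codomains do not match. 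For the same reason your class $\delta=\partial(1)$ does not exist: for the universal cone extension $0\to S\to C_0(0,1]\to\bbC\to 0$ the boundary maps out of $K^{\ast}(S)$ into $K^{\ast}(\bbC)$, and there is no boundary map defined on $K^0(\bbC)$. (You have transplanted the covariant $K$-theory picture, where $\partial\colon K_n(A)\to K_{n-1}(SA)$, to the dual theory without reversing the arrows.) The argument is repairable: keep the isomorphism $\partial\colon K^{-n}(SA)\to K^{-n-1}(A)$, prove the compatibility $\partial(\alpha\times d)=\pm\,\alpha\times\partial_{\bbC}(d)$, where $\partial_{\bbC}\colon K^{-1}(S)\to K^{-2}(\bbC)\cong K^{0}(\bbC)$ is the boundary of the universal cone extension (this is the statement that boundary maps are given by Kasparov product with the extension class, compatibly with external products --- itself a nontrivial theorem), and then your Toeplitz computation shows $\partial_{\bbC}(d)=\pm1$, whence $\times d=\pm\,\partial^{-1}$ is an isomorphism. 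Note also that your route presupposes half-exactness (excision) of $K$-homology for cp-split extensions, which the paper never establishes: it proves half-exactness only for the $K$-theory functor $K_0$.

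For comparison, the paper proves this statement (in Chapter 4, in the $KK$-picture, where $K^{-1}(C_0(-1,1))=KK(C_0(-1,1)\hat\otimes\bbC_1,\bbC)$) with no exact-sequence machinery at all: it exhibits the explicit class $s=[(C_0(-1,1)\hat\otimes\bbC_1,1,x\hat\otimes\epsilon)]\in KK(\bbC,C_0(-1,1)\hat\otimes\bbC_1)$, computes the Kasparov product $s\otimes d$ directly and reduces it to $\mathrm{Index}\bigl(P+e^{-i\pi x}(I-P)\bigr)=-1$ --- essentially the same winding-number index you end with --- and then upgrades this one-sided inverse to invertibility by Atiyah's rotation trick (the flip lemma). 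So the analytic core of the two arguments coincides; the difference is that the paper's formal wrapper is self-contained, while yours leans on excision for $K$-homology and on the identification of boundary maps with Kasparov products, and in its present form has the boundary map pointing the wrong way.
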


\newpage
\section{Euivariant $KK$-Theory}

In this chapter, we will introduce Kasparov's equivariant $KK$-theory. Standard reference of Kasparov's bivariant theory is \cite{Kas1}, \cite{Kas2} and the book \cite{Bla}. 

Throughout this chapter, $G$ denotes a second countable locally compact group. 

\begin{dfn} Let $A$ and $B$ be separable graded $G$-$\Calgs$. A Kasparov $A$-$B$ module is a triple $(\E, \rho,  F)$, where $\E$ is a countably generated (as a Banach $B$-module) graded Hilbert  $G$-$B$-module; $\rho$ is a representation of a graded $G$-$\Calg$ $A$ on $\E$ (i.e.\ a $G$-equivariant graded $\ast$-homomorphism from $A$ to $B(\E)$) and $F$ is an odd adjointable $B$-linear map in $B(\E)$ satisfying the following relations:
\begin{align}\label{eq:Kas}
\rho(a)(F^2-1) \sim 0,\,\,\, \rho(a)(F-F^\ast) \sim 0,\,\,\,  [\rho(a),F] \sim 0,\\  \rho(a)(g(F)-F) \sim 0 \,\,\,\,\,\text{for $a \in A, g\in G$ } \nonumber
\end{align}
Here $[\,,\,]$ denotes the graded commutator; and for $T\in B(\E),$ $T \sim 0$ means $T$ is compact. In addition, $g\mapsto \rho(a)(g(F)-F)$ must be continuous for $a\in A, g\in G$.
\end{dfn}

If all the relations in \eqref{eq:Kas} are exact, we call a Kasparov $A$-$B$-module $(\E, \rho,  F)$ degenerate. A direct sum and unitary equivalence of Kasparov $A$-$B$-modules are defined similarly to those of Fredholm-modules. We will not distinguish between two unitary equivalent Kasparov $A$-$B$-modules.

Let $(\E, \rho,  F)$ be a Kasparov $A$-$B$-module. For a separable graded $G$-$\Calg$ $D$,\,\, using an exterior tensor product of Hilbert modules,\,\, we define a\,\, Kasparov $A\hat\otimes D$-$B\hat\otimes D$-module $\sigma_D(\E, \rho, F)$ to be $(\E\hat\otimes D, \rho\hat\otimes1, F\hat\otimes1)$. Let $\phi$ be an equivariant graded $\ast$-homomorphism from $D$ to $A$. We define a Kasparov $D$-$B$-module $\phi^\ast(\E, \rho, F)$ to be $(\E, \rho\circ\phi, F)$. \,\,If $\phi$ \,\,is\,\, an \,\,equivariant \,graded \,$\ast$-homomorphism from $B$ to $D$, we define a Kasparov $A$-$D$-module $\phi_\ast(\E, \rho, F)$ to be $(\E\hat\otimes_\phi D, \rho\hat\otimes1, F\hat\otimes1)$, here $\E\hat\otimes_\phi D$ is an interior tensor product of Hilbert modules.

A homotopy of Kasparov $A$-$B$-modules is a Kasparov $A$-$B[0,1]$-module. If there exists a Kasparov $A$-$B[0,1]$-module $(\E, \rho, F)$, we say two Kasparov $A$-$B$-modules ${\text{ev}_0}_\ast(\E, \rho, F)$ and ${\text{ev}_1}_\ast(\E, \rho, F)$ are {homotopic}. This homotopy is an equivalence relation.

\begin{dfn} Let $A$ and $B$ be separable graded $G$-$\Calgs$. The set $KK^G(A,B)$ is the set of (unitary equivalence classes of ) Kasparov $A$-$B$-modufles divided by the equivalence relation of homotopy. The set $KK^G(A,B)$ becomes a group with addition defined by direct sums of Kasparov $A$-$B$-modules. The zero class is represented by degenerate modules. The additive inverse of $[(\E, \rho, F)]$ is $[(\E^{op}, \rho^{op}, -F^{op})]$ (the latter module is defined analogously to the case of Fredholm modules). When $G=1$, we usually denote the Kasparov group by $KK(A,B)$ instead of $KK^G(A,B)$. 
\end{dfn}

We defined a map from the set of Kasparov $A$-$B$-modules to the set of Kasparov $D$-$B$-modules (resp.\ $A$-$D$-modules) for an equivariant graded $\ast$-homomorphism from $D$ to $A$ (resp.\ $B$ to $D$). One can check this defines a group homomorphism from $KK^G(A,B)$ to $KK^G(D,B)$ (resp.\ to $KK^G(A,D)$). In this way, $KK^G(\,,\,)$ becomes a bi-functor (contravariant in the first variable and covariant in the second) from the category of graded $G$-$\Calgs$ to the category of abelian groups. Similarly, we have a group homomorphism $\sigma_D$ from $KK^G(A,B)$ to $KK^G(A\hat\otimes D,B\hat\otimes D)$ which is natural in both variables.

Homotopy invariance is almost incorporated in the definition of the Kasparov groups.

\begin{prop} (cf. \cite{Bla} Proposition 17.9.1.) The bi-functor $KK^G(\,,\,)$ is homotopy invariant in both variables.
\end{prop}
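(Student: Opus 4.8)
The plan is to prove functorial homotopy invariance separately in each variable, reducing both cases to the single fact, built into the definition, that a Kasparov $A$-$B[0,1]$-module \emph{is} a homotopy, so that its two endpoint evaluations ${\text{ev}_0}_\ast$ and ${\text{ev}_1}_\ast$ define the same class in $KK^G(A,B)$. Throughout I assume the relevant homotopies of $\ast$-homomorphisms are through equivariant graded $\ast$-homomorphisms, and I use that pushforward is functorial, $(\psi\circ\phi)_\ast=\psi_\ast\circ\phi_\ast$, which is just associativity of the interior tensor product.

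For the covariant (second) variable, suppose $\phi_0,\phi_1\colon B\to B'$ are homotopic, witnessed by $\Phi\colon B\to B'[0,1]$ with $\text{ev}_t\circ\Phi=\phi_t$ for $t=0,1$. Given a Kasparov $A$-$B$-module $(\E,\rho,F)$, I would push it forward along $\Phi$ to obtain $\Phi_\ast(\E,\rho,F)$, which is a Kasparov $A$-$B'[0,1]$-module, i.e.\ precisely a homotopy of Kasparov $A$-$B'$-modules. Hence ${\text{ev}_0}_\ast\Phi_\ast(\E,\rho,F)$ and ${\text{ev}_1}_\ast\Phi_\ast(\E,\rho,F)$ define the same class; by functoriality ${\text{ev}_t}_\ast\Phi_\ast=(\text{ev}_t\circ\Phi)_\ast=(\phi_t)_\ast$, so $(\phi_0)_\ast=(\phi_1)_\ast$ on $KK^G(A,B)$. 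Since every class is represented by some $(\E,\rho,F)$, this is homotopy invariance in the second variable.

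For the contravariant (first) variable, suppose $\psi_0,\psi_1\colon A'\to A$ are homotopic, witnessed by $\Psi\colon A'\to A[0,1]$ with $\text{ev}_t\circ\Psi=\psi_t$. Given $(\E,\rho,F)$ over $A$-$B$, I would first apply $\sigma_{C[0,1]}$ to land in $KK^G(A[0,1],B[0,1])$, obtaining $(\E\hat\otimes C[0,1],\rho\hat\otimes1,F\hat\otimes1)$, and then pull back along $\Psi$ to get the Kasparov $A'$-$B[0,1]$-module $\Psi^\ast\sigma_{C[0,1]}(\E,\rho,F)$, again a homotopy. Its two endpoint evaluations therefore agree in $KK^G(A',B)$. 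The remaining point is the identification ${\text{ev}_t}_\ast\Psi^\ast\sigma_{C[0,1]}(\E,\rho,F)\cong\psi_t^\ast(\E,\rho,F)$: the interior tensor product $(\E\hat\otimes C[0,1])\hat\otimes_{\text{ev}_t}B$ is the fiber at $t$, canonically isomorphic to $\E$, and under this isomorphism the representation $(\rho\hat\otimes1)\circ\Psi$ becomes $a'\mapsto\rho(\Psi(a')(t))=\rho(\psi_t(a'))=(\rho\circ\psi_t)(a')$ while $F\hat\otimes1$ becomes $F$. Thus the endpoints are $\psi_0^\ast(\E,\rho,F)$ and $\psi_1^\ast(\E,\rho,F)$, giving $(\psi_0)^\ast=(\psi_1)^\ast$.

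The genuinely nontrivial content, which I expect to be the only real obstacle, is confirming that the intermediate objects are honest Kasparov modules, i.e.\ that the relations \eqref{eq:Kas} survive the passage to the coefficient algebra $B[0,1]$. This rests on the identification $\K(\E\hat\otimes C[0,1])\cong C([0,1],\K(\E))$, so that an adjointable operator which is compact in each fiber and norm-continuous in $t$ is genuinely compact on $\E\hat\otimes C[0,1]$. In the covariant case this is already subsumed in the granted well-definedness of $\Phi_\ast$ as an operation on $KK^G$, and in the contravariant case in the well-definedness of $\sigma_{C[0,1]}$ and of $\Psi^\ast$; so beyond carefully tracking the fiberwise identifications there is nothing hard, matching the remark that homotopy invariance is almost incorporated in the definition.
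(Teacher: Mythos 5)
Your proposal is correct and takes essentially the approach the paper intends: the paper gives no written proof of this proposition, merely citing Blackadar (Proposition 17.9.1) and remarking just beforehand that homotopy invariance is almost incorporated in the definition of the Kasparov groups. Your argument — pushing forward along the homotopy $\Phi$ to get a Kasparov $A$-$B'[0,1]$-module in the covariant case, and applying $\sigma_{C[0,1]}$ followed by pullback along $\Psi$ in the contravariant case, then identifying the endpoint evaluations with $(\phi_t)_\ast$ and $\psi_t^\ast$ respectively — is exactly the standard fleshing-out of that remark.
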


When $G=1$, the Kasparov group $KK(A\hat\otimes\bbC_n,\bbC)$ is nothing but the $K$-homology group $K^{-n}(A)$. The following proposition explains that the Kasparov group generalizes both $K$-theory and $K$-homology of $\Calgs$. 

\begin{prop} (cf. \cite{Bla} Proposition 17.5.5.) Let $G=1$ and $B$ be a separable ungraded $\Calg$. The Kasparov group $KK(\bbC, B)$ is isomorphic to the $K_0$ group $K_0(B)$ of $B$.   
\end{prop}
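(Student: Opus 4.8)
The plan is to exhibit mutually inverse group homomorphisms between $KK(\bbC,B)$ and $K_0(B)$, the real content being a version of the index of a Fredholm operator between Hilbert $B$-modules. First I would normalize the cycles. Given a Kasparov $\bbC$-$B$-module $(\E,\rho,F)$, the representation $\rho$ is determined by the even projection $\rho(1)\in B(\E)$. Since $[\rho(1),F]\sim 0$, the off-diagonal blocks $\rho(1)F(1-\rho(1))$ and $(1-\rho(1))F\rho(1)$ are compact, so an operator homotopy makes $F$ commute with $\rho(1)$; the summand on which $\rho$ vanishes is then degenerate and may be discarded. Hence I may assume $\rho(1)=1_{B(\E)}$. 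Now $F-F^\ast\sim 0$, so the straight-line homotopy $F_t=F+\tfrac{t}{2}(F^\ast-F)$ is an operator homotopy (each $F_t$ is a compact perturbation of $F$, hence still satisfies the Kasparov relations) ending at the odd self-adjoint operator $\tfrac12(F+F^\ast)$. Thus every class is represented by $(\E,\rho,F)$ with $\rho$ unital and $F=F^\ast$; writing $\E=\E^{(0)}\oplus\E^{(1)}$ and $F=\begin{pmatrix}0 & V^\ast\\ V & 0\end{pmatrix}$ with $V\colon\E^{(0)}\to\E^{(1)}$, the relation $F^2-1\sim0$ says exactly that $V^\ast V-1\sim0$ and $VV^\ast-1\sim0$, i.e.\ $V$ is a Fredholm operator of Hilbert $B$-modules.

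The easy direction is the map $\alpha\colon K_0(B)\to KK(\bbC,B)$. Given projections $p\in M_n(B)$, $q\in M_m(B)$, form the graded Hilbert $B$-module $\E=pB^n\oplus(qB^m)^{\op}$ (even part $pB^n$, odd part $qB^m$), let $\rho$ be the unital representation of $\bbC$, and take $F=0$. This is a legitimate cycle precisely because $\rho(1)(F^2-1)=-1_\E$ must be compact, which holds since $\E$ is finitely generated projective and hence $1_\E\in\K(\E)$. I then set $\alpha([p]-[q])=[(pB^n\oplus(qB^m)^{\op},\rho,0)]$, and routine checks (respecting $[p]+[q]=[p\oplus q]$, and a rotation homotopy showing the balanced module $pB^n\oplus(pB^n)^{\op}$ with $F=0$ is null) show $\alpha$ is a well-defined homomorphism.

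For the reverse map $\beta\colon KK(\bbC,B)\to K_0(B)$ I would use the index. On a normalized cycle I take $\beta([(\E,\rho,F)])=\operatorname{Index}(V)\in K_0(B)$, where $V$ is the Fredholm operator above. By the Mishchenko--Fomenko theory of Fredholm operators over Hilbert modules, $V$ can be perturbed by a compact operator to an operator $V'$ whose kernel and cokernel are finitely generated projective, and one puts $\operatorname{Index}(V)=[\ker V']-[\operatorname{coker}V']$; this is independent of the regularizing perturbation, additive under direct sums, vanishing when $V$ is invertible (degenerate cycles), and invariant under norm-continuous homotopies of $V$ through Fredholm operators. These properties make $\beta$ a well-defined homomorphism. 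The composite $\beta\circ\alpha=\mathrm{id}$ is then immediate: for the cycle $(pB^n\oplus(qB^m)^{\op},\rho,0)$ the operator is $V=0\colon pB^n\to qB^m$, already regular, with $\ker V=pB^n$ and $\operatorname{coker}V=qB^m$, so its index is $[p]-[q]$.

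For $\alpha\circ\beta=\mathrm{id}$ I would start from a normalized cycle, replace $V$ by a regular $V'$ via the compact (hence operator-homotopic, $KK$-preserving) perturbation above, and split $\E$ according to the decomposition in which $V'$ is the orthogonal direct sum of an invertible operator (between the complements of $\ker V'$ and $\operatorname{coker}V'$) and the zero map $\ker V'\to\operatorname{coker}V'$. The invertible part gives a degenerate summand that is discarded, and what remains is precisely the cycle $\alpha(\operatorname{Index}V)$. The main obstacle, and the only genuinely analytic point, is this Hilbert-module index theory: that a Fredholm operator of Hilbert $B$-modules admits a compact regularizing perturbation with finitely generated projective kernel and cokernel, and that the resulting class in $K_0(B)$ is well defined and homotopy invariant. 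Over $\bbC$ this is elementary linear algebra, but over a general $\Calg$ $B$ the kernel of an adjointable operator need not be complemented, which is exactly the difficulty the regularization circumvents; once it is in place, checking additivity, homotopy invariance, and vanishing on degenerate cycles completes the verification that $\alpha$ and $\beta$ are mutually inverse.
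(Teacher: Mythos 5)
Your outline is the standard Fredholm-index argument (the paper itself offers no proof of this proposition, deferring to \cite{Bla} Proposition 17.5.5, so I judge the proposal on its own terms), and for \emph{unital} $B$ it is essentially complete modulo the Mishchenko--Fomenko facts you cite. But the proposition allows $B$ to be an arbitrary separable $\Calg$, and for non-unital $B$ both halves of your construction genuinely fail. Take $B=C_0(\bbR^2)$. Since the rank of a norm-continuous projection-valued function is locally constant and must vanish at infinity, $M_n(B)$ and $B\otimes\K$ contain no nonzero projections at all, while $K_0(B)\cong\bbZ$. Hence the Bott generator of $K_0(B)$ is \emph{not} of the form $[p]-[q]$ with $p,q$ projections over $B$, so your $\alpha$ is not even defined on all of $K_0(B)$; and a Fredholm operator $V$ on $B\otimes\ell^2$ representing the Bott class admits \emph{no} compact perturbation whose kernel and cokernel are finitely generated projective in your sense (such a kernel and cokernel would be ranges of projections in matrix algebras over $B\otimes\K$, forcing the index to vanish), so the regularization underlying $\beta$ does not exist. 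To cover the stated generality you must either (i) prove the unital case and deduce the rest from the split exact sequence $0\to B\to\tilde B\to\bbC\to0$, using that both $K_0$ and $KK(\bbC,\cdot)$ are split exact, or (ii) replace the Mishchenko--Fomenko index by the purely $K$-theoretic one: $V$ gives an invertible element of the Calkin algebra $Q(\E)=B(\E)/\K(\E)$, hence a class in $K_1(Q(\E))$, whose image under the boundary map $K_1(Q(\E))\to K_0(\K(\E))\cong K_0(B)$ is the index; this is defined for every $B$.

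A second, smaller gap concerns well-definedness of $\beta$: you justify it by invariance of the index under norm-continuous (operator) homotopies of $V$, but in this paper $KK(\bbC,B)$ is defined using homotopy through Kasparov $\bbC$-$B[0,1]$-modules, which is a priori a coarser relation than operator homotopy plus addition of degenerate cycles. To close this you must either invoke the nontrivial theorem that the two equivalence relations generate the same quotient, or argue directly over the interval: normalize the homotopy as a cycle over $B[0,1]$, take the index of the resulting Fredholm operator in $K_0(B[0,1])$, observe that the index is natural with respect to the evaluation homomorphisms, and use that $\mathrm{ev}_0$ and $\mathrm{ev}_1$ induce the same map $K_0(B[0,1])\to K_0(B)$ by homotopy invariance of $K_0$. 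With these two repairs, your normalization of cycles, the computation $\beta\circ\alpha=\mathrm{id}$, and the polar-decomposition splitting argument for $\alpha\circ\beta=\mathrm{id}$ all go through.
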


\begin{dfn} Let $D$ and $B$ be a separable graded $G$-$\Calg$. Let $\E_1$ be a countably generated graded $G$-$D$-Hilbert module and $(\E_2, \rho, F_2)$ be a Kasparov $D$-$B$-module. Define an adjointable map $T_{e_1}\colon \E_2 \to \E_1\hat\otimes_\rho\E_2$ for $e_1$ in $\E_1$ by $T_{e_1}\colon e_2 \mapsto e_1\hat\otimes e_2$. An odd adjointable $B$-linear map $F$ in $B(\E_1\hat\otimes_\rho\E_2)$ is an $F_2$-connection if for any $e_1$ in $\E_1$, the following diagrams graded commute modulo compact operators.
\[
\xymatrix{
\E_2 \ar[d]_-{F_2} \ar[r]^-{T_{e_1}} & \E_1\hat\otimes_\rho\E_2 \ar[d]^-{F} & \E_2 \ar[d]_-{F_2^\ast} \ar[r]^-{T_{e_1}} & \E_1\hat\otimes_\rho\E_2 \ar[d]^-{F^\ast} \\
\E_2 \ar[r]_-{T_{e_1}} & \E_1\hat\otimes_\rho\E_2 & \E_2 \ar[r]_-{T_{e_1}} & \E_1\hat\otimes_\rho\E_2 
}
\]
\end{dfn} 

\begin{prop} (cf. \cite{Kas2} Theorem 2.14.) Let $A_1,A_2$, $D$ and $B_1,B_2$ be separable graded $G$-$\Calgs$. There is a bilinear pairing of the Kasparov groups:
\begin{align*}
KK^G(A_1,B_1\hat\otimes D)\times KK^G(D\hat\otimes A_2 ,B_2) \to KK^G(A_1\hat \otimes A_2,B_1\hat\otimes B_2)
\end{align*}
This pairing is associative, functorial in both variable and commutative when $D=\bbC$. We write the product of $\alpha \in KK^G(A_1,B_1\hat\otimes D)$ and $\beta \in KK^G(D\hat\otimes A_2,B_2)$ by  $\alpha\otimes_D\beta$. For any separable graded $G$-$\Calg$ $A$, $KK^G(A,A)$ is a ring with unit $1_A$ represented by a cycle $(A, \text{id}_A, 0)$. For further properties of the product, see \cite{Kas2}.
\end{prop}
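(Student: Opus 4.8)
The plan is to produce, for representative Kasparov modules of $\alpha \in KK^G(A_1, B_1 \hat\otimes D)$ and $\beta \in KK^G(D \hat\otimes A_2, B_2)$, an explicit Kasparov $A_1 \hat\otimes A_2$-$B_1\hat\otimes B_2$-module whose homotopy class is, by definition, $\alpha \otimes_D \beta$, and then to read off the asserted formal properties from the construction. Let $(\E_1, \rho_1, F_1)$ represent $\alpha$ and $(\E_2, \rho_2, F_2)$ represent $\beta$. First I would assemble the underlying module. Using the restriction $\rho_2|_D \colon D \to B(\E_2)$ I regard $B_1 \hat\otimes \E_2$ as a graded Hilbert $G$-$(B_1 \hat\otimes B_2)$-module on which $B_1 \hat\otimes D$ acts by $\id_{B_1}\hat\otimes(\rho_2|_D)$, and I form the interior tensor product
\[
\E = \E_1 \hat\otimes_{B_1 \hat\otimes D}\bigl(B_1 \hat\otimes \E_2\bigr).
\]
This is a countably generated graded Hilbert $G$-$(B_1 \hat\otimes B_2)$-module. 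It carries a representation $\rho$ of $A_1 \hat\otimes A_2$ in which $A_1$ acts through $\rho_1 \hat\otimes 1$ on the left leg and $A_2$ acts through $\rho_2|_{A_2}$ on the right leg; these two commute and $\rho$ is $G$-equivariant. This part of the argument is essentially formal.

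The substance of the proof is the construction of the operator $F$ on $\E$. The odd operator $F_1^\# = F_1 \hat\otimes 1$ is well defined on $\E$ because $F_1$ is $(B_1 \hat\otimes D)$-linear, and it carries the information of $\alpha$; by the general theory of connections one may also choose an odd $F_2$-connection $G$ on $\E$ carrying the information of $\beta$. The aim is to interpolate between $F_1^\#$ and $G$ so that the resulting $F$ is simultaneously an $F_2$-connection and a genuine cycle, that is, satisfies all the relations in \eqref{eq:Kas} for $\rho$ together with the $G$-continuity condition, and moreover meets the positivity requirement $\rho(a)\,[F_1^\#, F]\,\rho(a)^\ast \ge 0$ modulo $\K(\E)$ that singles out the product among all cycles satisfying \eqref{eq:Kas}. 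The key tool is Kasparov's Technical Theorem. Working inside $B(\E) = M(\K(\E))$ with $J = \K(\E)$, I would apply the theorem to the $\sigma$-unital, $G$-continuous subalgebra generated by $\rho(A_1 \hat\otimes A_2)$ together with the compact defects of $G$ (the operators $\rho(a)(G^2-1)$, $\rho(a)(G-G^\ast)$, $[\rho(a),G]$ and $\rho(a)(g(G)-G)$), to the complementary subalgebra built from the defects of $F_1^\#$, and to the separable derivation space spanned by $F_1^\#$ and $G$. The theorem furnishes an even, positive, $G$-continuous multiplier $M$ with $0 \le M \le 1$ that separates the two subalgebras and quasi-commutes with the derivation space modulo $J$; setting
\[
F = M^{1/2}F_1^\# + (1-M)^{1/2}G
\]
(for a suitable labelling of the two subalgebras) I would then verify the cycle conditions. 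Pushing the compactness relations through the square roots and the connection-commutation diagrams, and checking the positivity, is the step I expect to be the main obstacle and the most delicate computation.

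Next I would establish that the class of $(\E, \rho, F)$ does not depend on the auxiliary choices. The essential point is that the operators extracted from the theorem form, in an appropriate sense, a convex family: if $F_0$ and $F_1$ arise from two admissible multipliers $M_0, M_1$, then the straight-line path between them (after the same interpolation) is again admissible and provides an operator homotopy of Kasparov modules over $B_1 \hat\otimes B_2$. Similarly, replacing the chosen connection $G$ by another one gives a homotopic cycle. Together with additivity under direct sums this yields a well-defined biadditive pairing $\alpha \otimes_D \beta$ on homotopy classes.

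Finally I would deduce the formal properties. Bilinearity and functoriality in both variables follow by comparing the construction against the already-defined operations $\sigma_D$, $\phi^\ast$ and $\phi_\ast$ on Kasparov modules; commutativity when $D = \bbC$ reflects the symmetry of the interior tensor product in that case. The unit statement, that $1_A = [(A, \id_A, 0)]$ is neutral, reduces to the observation that interior-tensoring a module with $(A, \id_A, 0)$ returns it up to a canonical isomorphism, under which the interpolation recovers the original operator up to operator homotopy. Associativity is the remaining hard point: I would prove it by passing to a symmetric triple-product situation and showing that the two ways of bracketing the iterated interior tensor product produce operators joined by an operator homotopy, the connecting multiplier again being supplied by Kasparov's Technical Theorem. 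The complete technical details, which are lengthy, are those of \cite{Kas2}.
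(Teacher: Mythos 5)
Your proposal is correct and takes essentially the same approach as the paper: the paper states this proposition without proof, deferring to \cite{Kas2}, and records exactly the cycle-level description you reconstruct — the product represented on an interior tensor product $\E_1\hat\otimes_{\rho_2}\E_2$ by an $F_2$-connection $F$ satisfying the positivity condition $(\rho_1\hat\otimes 1)(a)\,[F_1\hat\otimes 1, F]\,(\rho_1\hat\otimes 1)(a)^\ast \geq 0$ modulo compacts, with the general case reduced to this one via $\sigma_{A_2}$ and $\sigma_{B_1}$. Your interpolation $F = M^{1/2}(F_1\hat\otimes 1) + (1-M)^{1/2}G$ via Kasparov's Technical Theorem, the convexity/operator-homotopy argument for well-definedness, and the treatment of associativity are precisely the standard construction of Kasparov that the paper cites.
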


When $B_1=A_2=\bbC$, at the level of cycles, a product of a Kasparov $A$-$D$-module $(\E_1, \rho_1, F_1)$ and a Kasparov $D$-$B$-module $(\E_2, \rho_2, F_2)$ is defined as a Kasparov $A$-$B$-module $(\E_1\hat\otimes_{\rho_2}\E_2, \rho_1\hat\otimes1, F)$ where the operator $F \in B(\E_1\hat\otimes_{\rho_2}\E_2)$ is an $F_2$-connection and satisfies $(\rho_1\hat\otimes1)(a)[F_1\hat\otimes1, F](\rho_1\hat\otimes1)(a)^\ast\geq0$ for $a$ in $A$. A general case is defined as above after applying $\sigma_{A_2}$ and $\sigma_{B_1}$.  

When $G$ acts trivially, there is a simple case where one has a good formula for the Kasparov product:

\begin{prop} (cf. \cite{Bla} Proposition 18.10.1.) Let $A$, $D$ and $B$ be separable graded $\Calgs$. Let $(\E_1, \rho_1, F_1)$ be a  Kasparov $A$-$D$-module with $F_1$ being selfadjoint and contractible and $(\E_2, \rho_2, F_2)$ be a Kasparov $D$-$B$-module. Let $F\in B(\E_1\hat\otimes_{\rho_2}\E_2)$ be an $F_2$-connection. Assume  $(\E_1\hat\otimes_{\rho_2}\E_2, \rho_1\hat\otimes1, F_1\hat\otimes1+((1-F_1^2)\hat\otimes1)^{\frac12}F)$ is a Kasparov $A$-$B$-module. Then, this Kasparov $A$-$B$-module defines the same class in $KK^G(A,B)$ as the Kasparov product of $(\E_1, \rho_1, F_1)$ and $(\E_2, \rho_2, F_2)$.
\end{prop}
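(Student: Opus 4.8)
The plan is to apply the cycle-level description of the Kasparov product given above: for the Hilbert module $\E_1\hat\otimes_{\rho_2}\E_2$ equipped with the representation $\rho_1\hat\otimes 1$, a Kasparov module represents the product $[(\E_1,\rho_1,F_1)]\otimes_D[(\E_2,\rho_2,F_2)]$ as soon as its operator is an $F_2$-connection and satisfies the positivity estimate $(\rho_1\hat\otimes 1)(a)[F_1\hat\otimes 1,\,\cdot\,](\rho_1\hat\otimes 1)(a)^\ast\geq 0$ modulo $\K(\E_1\hat\otimes_{\rho_2}\E_2)$ for all $a\in A$. Writing $\tilde F=F_1\hat\otimes 1+\bigl((1-F_1^2)\hat\otimes 1\bigr)^{1/2}F$, which is assumed to be a Kasparov module, the task is to identify its class with the product. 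Since $F_1=F_1^\ast$ and $\|F_1\|\leq 1$, the operator $1-F_1^2$ is positive with spectrum in $[0,1]$, so $(1-F_1^2)^{1/2}$ is a well-defined even element of $B(\E_1)$ and every term of $\tilde F$ is meaningful.

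First I would settle the positivity estimate, which is the clean half. Because $F_1\hat\otimes 1$ commutes with $(1-F_1^2)^{1/2}\hat\otimes 1$ (both act only in the $\E_1$-variable) and $F$ is odd, expanding the graded commutator gives
\begin{equation*}
[F_1\hat\otimes 1,\tilde F]=2\bigl(F_1^2\hat\otimes 1\bigr)+\bigl((1-F_1^2)^{1/2}\hat\otimes 1\bigr)\,[F_1\hat\otimes 1,F],
\end{equation*}
where the first summand is the graded self-commutator $[F_1\hat\otimes 1,F_1\hat\otimes 1]=2(F_1\hat\otimes 1)^2$. Conjugating by $(\rho_1\hat\otimes 1)(a)$ turns the first summand into $2\bigl(\rho_1(a)F_1\bigr)\bigl(\rho_1(a)F_1\bigr)^\ast\hat\otimes 1\geq 0$. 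For the cross term I would use that $(\E_1,\rho_1,F_1)$ is itself a Kasparov module, so $\rho_1(a)(1-F_1^2)\sim 0$; approximating $t\mapsto t^{1/2}$ uniformly on $[0,1]$ by polynomials without constant term then gives $\rho_1(a)(1-F_1^2)^{1/2}\sim 0$, whence $(\rho_1\hat\otimes 1)(a)\bigl((1-F_1^2)^{1/2}\hat\otimes 1\bigr)\in\K(\E_1)\hat\otimes 1\subseteq\K(\E_1\hat\otimes_{\rho_2}\E_2)$. The conjugated cross term is therefore compact, and the estimate collapses to the manifestly nonnegative $2(\rho_1(a)F_1)(\rho_1(a)F_1)^\ast\hat\otimes 1$.

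The remaining and genuinely delicate step, which I expect to be the main obstacle, is the connection condition. Running the defining diagram and using the connection property of $F$ together with the identity $(S\hat\otimes 1)T_e=T_{Se}$, one finds for $e\in\E_1$ that $\tilde F\,T_e\sim T_{F_1 e}+(-1)^{\partial e}T_{(1-F_1^2)^{1/2}e}F_2$. The subtle feature is the summand $T_{F_1 e}$: operators of the form $S\hat\otimes 1$ are not $0$-connections in general, so $\tilde F$ need not be an $F_2$-connection literally, and the characterization cannot be quoted naively. The way I would resolve this is to compare $\tilde F$ with an honest connection representative of the product: invoke Kasparov's technical theorem to produce a genuine $F_2$-connection $\hat F$ with the same positivity (hence representing the product by the characterization), and then show $[\tilde F]=[\hat F]$ via an operator homotopy of Kasparov modules. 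Building that homotopy — equivalently, showing that the discrepancy between $\tilde F$ and $\hat F$ is $KK$-negligible — is precisely where the contractivity of $F_1$, the compactness $\rho_1(a)(1-F_1^2)^{1/2}\sim 0$ obtained above, and the calculus of connections must be combined, and it is the heart of the proof.
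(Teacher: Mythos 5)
Two comments before the substance: the paper itself gives no proof of this proposition (it is quoted from Blackadar, Prop.\ 18.10.1), so your attempt has to be measured against the standard Connes--Skandalis argument; and your proposal, as written, contains one outright error and one acknowledged hole, which together mean the proof is not there yet.

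The error is in your positivity step: you invoke the inclusion $\K(\E_1)\hat\otimes 1\subseteq\K(\E_1\hat\otimes_{\rho_2}\E_2)$, and this is \emph{false} for interior tensor products. Take $\E_1=D$ (so $\K(\E_1)\cong D$) with $\rho_2$ nondegenerate: then $D\hat\otimes_{\rho_2}\E_2\cong\E_2$ and $k\hat\otimes 1=\rho_2(k)$, which is rarely compact. The failure of exactly this inclusion is the reason the theory of connections exists at all. The cross term is nevertheless compact, but for a different reason: a $T_e$-computation using the connection property of $F$ and the oddness of $F_1$ shows that $[F_1\hat\otimes 1,F]$ is a $0$-connection, and for $k\in\K(\E_1)$ and $N$ a $0$-connection one has $(k\hat\otimes 1)N\in\K(\E_1\hat\otimes_{\rho_2}\E_2)$ (reduce to $k=\theta_{e,f}$, write $\theta_{e,f}\hat\otimes 1=T_eT_f^{\ast}$, and note $T_f^{\ast}N=(N^{\ast}T_f)^{\ast}$ is compact). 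Your polynomial-approximation argument giving $\rho_1(a)(1-F_1^2)^{1/2}\in\K(\E_1)$ is fine and is indeed one of the ingredients.

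The hole is that the step you defer to the end is the entire theorem. Your instinct that $\tilde F$ is genuinely \emph{not} an $F_2$-connection is correct: e.g.\ with $A=D=B=\bbC$, $F_2$ an odd selfadjoint unitary, $F=1\hat\otimes F_2$, and $F_1$ odd selfadjoint with $F_1^2=\frac14$, every hypothesis of the proposition holds while the connection defect involves the non-compact $T_{F_1e}$; so no direct verification of the two defining conditions for $\tilde F$ can succeed, and a homotopy argument is unavoidable. But your proposal gives no construction of that homotopy, and this is where all the work lies. The standard completion: let $G$ be an operator satisfying the paper's two conditions (an $F_2$-connection with the compressed-commutator positivity), which exists by the Connes--Skandalis existence theorem, and prove $(\rho_1\hat\otimes 1)(a)[\tilde F,G](\rho_1\hat\otimes 1)(a)^{\ast}\geq 0$ modulo compacts; then the uniqueness (``rotation'') lemma --- two Kasparov operators on the same pair $(\E,\phi)$ whose compressed graded commutator is positive modulo compacts are operator homotopic --- gives $[\tilde F]=[G]$, i.e.\ the conclusion. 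In that positivity, the summand $[F_1\hat\otimes 1,G]$ is positive modulo compacts by the defining property of $G$ (note: what is needed is positivity of $[\tilde F,G]$, not of $[F_1\hat\otimes 1,\tilde F]$, so your computation, even once repaired, bounds the wrong commutator), and the cross term $[((1-F_1^2)^{1/2}\hat\otimes 1)F,\,G]$ is controlled by the connection calculus above together with two further facts: $[F,G]-2$ is a $0$-connection (both $F$ and $G$ are $F_2$-connections and $T_e(F_2^2-1)\in\K$ after factoring $e=e'd$ with $d\in D$), and $[G,k\hat\otimes 1]$, $[(\rho_1\hat\otimes 1)(a),G]$ are compact. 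Without the rotation lemma and these computations, your proposal stops exactly where the proof begins.
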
 

\begin{dfn} Separable graded $G$-$\Calgs$ $A$ and $B$ are {{$KK^G$}-equivalent} if there exist $\alpha \in KK^G(A,B)$ and $\beta \in KK^G(B,A)$ such that $\alpha \otimes_B \beta=1_A$ and $\beta \otimes_A \alpha=1_B$. In other words, $A$ and $B$ are $KK^G$-equivalent if they are isomorphic in the additive category of separable graded $G$-$\Calgs$ with morphisms $KK^G(A,B)$. We denote by $KK^G$ its full subcategory consisting of separable (trivially graded) $G$-$\Calgs$. We call this additive category $KK^G$ as Equivariant Kasparov's category.
\end{dfn}

\begin{example} Let $A$ and $B$ be separable graded $G$-$\Calgs$. An $A$-$B$ imprimitivity bimodule is a full graded Hilbert $G$-$B$-module $\E$ with a graded $G$-equivariant isomorphism $\rho_A\colon A\cong \K(\E)$. We say $A$ and $B$ are Morita-Rieffel equivalent if there exists an $A$-$B$ imprimitivity bimodule. This is an equivalence relation; if $\E$ is an $A$-$B$ imprimitivity bimodule, then $\E^\ast=\K(\E,B)$ with the left multiplication $\rho_B$ by $B$ becomes a $B$-$A$ imprimitivity bimodule. Morita-Rieffel equivalence implies $KK^G$-equivalence; an isomorphism is given by $[(\E, \rho_A, 0)]$ and $[(\E^\ast, \rho_B, 0)]$.
\end{example}

As a corollary, we see, for any separable graded $G$-Hilbert space $\hill$, $\K(\hill)$ is $KK^G$-equivalent to $\bbC$. This is more or less implying the stability of the bifunctor $KK^G(\,,\,)$. Take any projection $p\in \K(\hill)$ onto a one-dimensional even subspace of $\hill$ with trivial $G$-action. One sees the stabilization $\ast$-homomorphism $\rho$ from $\bbC$ to $\K(\hill)$ given by $p$ defines a element $\rho=[(\K(\hill), \rho, 0)] \in KK^G(\bbC, \K(\hill))$ which is a left inverse of the element $[(\hill,\text{id}_{\K(\hill)},0)] \in KK^G(\K(\hill),\bbC)$ implementing Morita-Rieffel equivalence between $\bbC$ and $\K(\hill)$; hence $\rho$ is invertible. A general stabilization $\sigma_A(\rho)$ is invertible by functoriality of the Kasparov product. 

Before going to prove Bott-periodicity in our quite general context, we state a lemma which generalizes the rotational argument used by M. Atiyah.

\begin{lemma} Let $A$ be a separable graded $G$-$\Calg$. Assume $A$ has the following property: the flip isomorphism $A\hat\otimes A\to A\hat\otimes A$ is $\pm1$ in the group $KK^G(A\hat\otimes A, A\hat\otimes A)$. Suppose one finds $\alpha \in KK^G(\bbC, A)$ and  $\beta \in KK^G(A, \bbC)$ such that $\alpha\otimes_A\beta=1_{\bbC}$. Then, $A$ is $KK^G$-equivalent to $\bbC$.
\end{lemma}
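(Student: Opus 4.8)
The plan is to run Atiyah's classical ``rotation'' argument inside $KK^G$, reducing the whole statement to the compatibility between the external and internal Kasparov products together with the single hypothesis on the flip. Throughout I write $x\hat\otimes y\in KK^G(A_1\hat\otimes A_2,B_1\hat\otimes B_2)$ for the external product of $x\in KK^G(A_1,B_1)$ and $y\in KK^G(A_2,B_2)$, i.e.\ the $D=\bbC$ instance of the Kasparov product recorded earlier, and I let $\Phi\colon A\hat\otimes A\to A\hat\otimes A$ be the flip, with class $[\Phi]\in KK^G(A\hat\otimes A,A\hat\otimes A)$, so that the hypothesis reads $[\Phi]=\pm 1_{A\hat\otimes A}$. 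The goal is to prove that $\theta:=\beta\otimes_\bbC\alpha\in KK^G(A,A)$ equals $1_A$; together with the given relation $\alpha\otimes_A\beta=1_\bbC$ this makes $\alpha$ and $\beta$ mutually inverse morphisms of $KK^G$, which is exactly the desired $KK^G$-equivalence of $A$ and $\bbC$.

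First I would record the two purely formal identities on which everything rests. By the standard compatibility of external and internal products, the internal product over $\bbC$ coincides with an external product, and the latter decomposes as an external-times-identity followed by an internal product:
\[
\theta=\beta\otimes_\bbC\alpha=\alpha\hat\otimes\beta=(\alpha\hat\otimes 1_A)\otimes_{A\hat\otimes A}(1_A\hat\otimes\beta).
\]
Second, the flip interchanges the two ways of externally tensoring $\alpha$ with $A$, since it merely swaps the two tensor factors of the target $A\hat\otimes A$:
\[
\alpha\hat\otimes 1_A=(1_A\hat\otimes\alpha)\otimes_{A\hat\otimes A}[\Phi].
\]
Both identities are instances of the associativity, functoriality and graded commutativity of the Kasparov product.

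Combining these, I would substitute the flip identity into the decomposition of $\theta$ and then invoke the hypothesis $[\Phi]=\pm 1_{A\hat\otimes A}$:
\[
\theta=(1_A\hat\otimes\alpha)\otimes_{A\hat\otimes A}[\Phi]\otimes_{A\hat\otimes A}(1_A\hat\otimes\beta)=\pm(1_A\hat\otimes\alpha)\otimes_{A\hat\otimes A}(1_A\hat\otimes\beta).
\]
Finally the interchange law collapses the remaining composite onto the second tensor factor, where it is precisely the given relation:
\[
(1_A\hat\otimes\alpha)\otimes_{A\hat\otimes A}(1_A\hat\otimes\beta)=1_A\hat\otimes(\alpha\otimes_A\beta)=1_A\hat\otimes 1_\bbC=1_A.
\]
Hence $\theta=\pm 1_A$.

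The main obstacle here is bookkeeping rather than a deep idea: one must be scrupulous about which tensor factor each map acts on, about the Koszul signs hidden in the graded flip, and about the exact form of the external/internal compatibility and interchange laws, since a single misplaced factor silently destroys the cancellation of $[\Phi]$. There remains the residual sign to dispose of. If $[\Phi]=+1$ we obtain $\theta=1_A$ directly and $(\alpha,\beta)$ is a $KK^G$-equivalence. If $[\Phi]=-1$, then $\theta=-1_A$, and feeding this back through associativity gives $\alpha=\alpha\otimes_A(\beta\otimes_\bbC\alpha)=\alpha\otimes_A(-1_A)=-\alpha$, so $2\alpha=0$ and therefore $2\cdot 1_\bbC=(2\alpha)\otimes_A\beta=0$ in the ring $KK^G(\bbC,\bbC)$, which does not occur in the situations to which the lemma is applied; thus in practice the sign is $+$ and $\beta\otimes_\bbC\alpha=1_A$ as required. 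In any event $\theta=\pm 1_A$ is a unit of $KK^G(A,A)$, so $\alpha$ is invertible in $KK^G$ and $A$ is $KK^G$-equivalent to $\bbC$.
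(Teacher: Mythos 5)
Your proposal is correct and takes essentially the same route as the paper: the identities you invoke (the interchange law giving $\theta=(\alpha\hat\otimes 1_A)\otimes_{A\hat\otimes A}(1_A\hat\otimes\beta)$, the flip-naturality $\alpha\hat\otimes 1_A=(1_A\hat\otimes\alpha)\otimes_{A\hat\otimes A}[\Phi]$, and the multiplicativity of $\sigma_A$) are precisely the commutativity statements of the diagram that constitutes the paper's entire proof, i.e.\ Atiyah's rotation trick transplanted into $KK^G$. Your closing discussion of the $\pm$ sign (an arrow with a left inverse and a right inverse is invertible) is a correct refinement of a point the paper leaves implicit.
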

\begin{proof} This follows from the fact that the following diagram commutes:
\[
\xymatrix{
\bbC \hat\otimes A \ar[d]_{1\hat\otimes\beta} \ar[r]^{\alpha\hat\otimes1} & A\hat\otimes A \ar[d]^{1\hat\otimes \beta} \ar[r]^{\text{flip}} & A\hat\otimes A \ar[d]^{\beta\hat\otimes1} \\
\bbC \hat\otimes \bbC \ar[r]_{\alpha\hat\otimes1} \ar@/_15pt/[rr]_{1\hat\otimes \alpha} & A\hat\otimes \bbC \ar[r]_{\text{flip}} & \bbC\hat\otimes A 
}
\]
The author would like to thank Nigel Higson for showing him this diagram.
\end{proof}

\begin{prop} (Bott periodicity) (cf. \cite{Bla} Section 19.2.) A (trivially graded) separable $\Calg$ $S^2$ is $KK^G$-equivalent to $\bbC$.
\end{prop}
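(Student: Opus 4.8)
The plan is to apply the rotation Lemma just established to the algebra $A = S^2$, viewed as a trivially graded $G$-$\Calg$ carrying the trivial $G$-action, where we identify $S^2 = C_0(\bbR)\otimes C_0(\bbR) \cong C_0(\bbR^2)$. By that Lemma it is enough to check two things: (i) the flip automorphism of $S^2\hat\otimes S^2$ represents $\pm 1$ in $KK^G(S^2\hat\otimes S^2, S^2\hat\otimes S^2)$, and (ii) there exist a Bott element $\alpha\in KK^G(\bbC, S^2)$ and a Dirac element $\beta\in KK^G(S^2,\bbC)$ with $\alpha\otimes_{S^2}\beta = 1_\bbC$. Once both hold, the Lemma produces the reverse product $\beta\otimes_\bbC\alpha = 1_{S^2}$ automatically, which is exactly the $KK^G$-equivalence we want.

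For (i), note that since $S^2$ is trivially graded we have $S^2\hat\otimes S^2 \cong C_0(\bbR^4)$ and the flip is the $\ast$-automorphism induced by the linear swap $T\colon(x,y)\mapsto(y,x)$ on $\bbR^4 = \bbR^2\oplus\bbR^2$. Writing $T$ in block form as $\left(\begin{smallmatrix} 0 & I_2 \\ I_2 & 0 \end{smallmatrix}\right)$, we see it is a permutation of the coordinates consisting of two disjoint transpositions, so $\det T = +1$ and $T$ lies in the path-connected group $GL_4^{+}(\bbR)$. A path $(T_t)$ from $T$ to the identity inside $GL_4^{+}(\bbR)$ induces a homotopy of $\ast$-automorphisms of $C_0(\bbR^4)$, all of them trivially $G$-equivariant, so the flip equals $+1$ in $KK^G$. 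It is precisely here that working with the trivially graded $S^2$ pays off: no Koszul signs appear, and the sign in the Lemma is unambiguously $+1$.

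For (ii), since $G$ acts trivially throughout, equipping ordinary Kasparov cycles with the trivial $G$-action defines a homomorphism $KK(\,\cdot\,,\,\cdot\,)\to KK^G(\,\cdot\,,\,\cdot\,)$ (the extra relation $\rho(a)(g(F)-F)\sim 0$ becomes $0=0$), and this homomorphism sends $1_\bbC$ to $1_\bbC$ and is compatible with Kasparov products. Thus it suffices to produce the classical one-sided inverse in non-equivariant $KK$. I would take $\alpha\in KK(\bbC,S^2)\cong K_0(C_0(\bbR^2))$ to be the Bott generator, represented by the Bott projection over $\bbR^2$, and $\beta\in KK(S^2,\bbC)\cong K^0(C_0(\bbR^2))$ to be the class of the bounded transform of the two-dimensional Bott--Dirac operator on $L^2(\bbR^2,\Lambda^\ast(\bbR^2)\otimes\bbC)$, obtained from Example \ref{BottDirac} by forming the odd essentially selfadjoint multiplier $B\hat\otimes 1 + 1\hat\otimes B$; its compact-resolvent property ensures that the Kasparov relations hold against multiplication operators, so this is a genuine Fredholm module over $C_0(\bbR^2)$.

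The \emph{main obstacle} is the remaining identity $\alpha\otimes_{S^2}\beta = 1_\bbC$ inside $KK(\bbC,\bbC)\cong\bbZ$. This Kasparov product is the index of the two-dimensional Dirac operator twisted by the Bott bundle, and the entire content is that this integer equals $1$ rather than $0$ or some larger value. I would evaluate it using the explicit spectral picture developed above: decomposing along the harmonic-oscillator eigenspaces and using Mehler's formula together with the eigenvalue bookkeeping of Example \ref{BottDirac}, the computation collapses to the index of a single one-dimensional kernel and yields $\pm 1$; replacing $\beta$ by $-\beta$ if necessary normalizes this to $1_\bbC$. With (i) and (ii) verified, the rotation Lemma gives at once that $S^2$ is $KK^G$-equivalent to $\bbC$.
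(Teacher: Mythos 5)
Your overall architecture coincides with the paper's: both proofs produce a one\-/sided inverse pair and feed it into the rotation lemma. Two of your three steps are sound. The flip on $S^2\otimes S^2\cong C_0(\bbR^4)$ is indeed implemented by a coordinate permutation of determinant $+1$, hence homotopic to the identity through $GL_4^+(\bbR)$, and this is in fact a cleaner verification than the graded one implicitly required when the lemma is applied to $S\hat\otimes\bbC_1$ as the paper does; likewise, inflating non-equivariant cycles along the trivial action does give a product- and unit-compatible map $KK\to KK^G$. The genuine gap is in the step you yourself single out as the main obstacle: the evaluation of $\alpha\otimes_{S^2}\beta$. With your choices --- $\alpha$ the Bott \emph{projection} class and $\beta$ the class of the Bott--Dirac operator $B$ --- the Kasparov product is the index pairing $\operatorname{Index}\bigl(p(F_B\otimes 1_2)p\bigr)-\operatorname{Index}\bigl(qF_Bq\bigr)$, where $F_B=B(1+B^2)^{-1/2}$, $p$ is the Bott projection and $q$ the constant rank-one projection (the Fredholm module extends to the unitization precisely because $B$ has compact resolvent). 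This is a \emph{twisted} index, and $p$ neither commutes with $B$ nor preserves its eigenspaces, so ``decomposing along the harmonic-oscillator eigenspaces and using Mehler's formula'' says nothing about it: that spectral picture computes only the untwisted index $\operatorname{Index}(B)=1$, i.e.\ the term $\operatorname{Index}(qF_Bq)$ alone. Indeed your proposed evaluation never uses $p$ at all, so it cannot distinguish $\langle[p]-[q],\beta\rangle$ from $\langle[q]-[p],\beta\rangle=-\langle[p]-[q],\beta\rangle$; no $p$-independent recipe can compute this pairing.

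The identity you want is true, but establishing it is exactly the honest work in this proposition, and it is missing. The operator whose index ``collapses to a single one-dimensional Gaussian kernel'' is the Kasparov product of the \emph{Clifford--Bott} cycle with the Dirac class --- this is what the paper computes, taking $s=[(C_0(-1,1)\hat\otimes\bbC_1,1,x\hat\otimes\epsilon)]$, forming the product with $d$ via a connection, and evaluating the resulting Fredholm index by a homotopy to the Toeplitz operator $P+e^{-i\pi x}(I-P)$ of index $-1$. Your pairing is instead the Bott-bundle-twisted index of $B$; to see that it equals $\pm1$ you would need either (a) to first prove $[B]=[D]$ in $K^0(C_0(\bbR^2))$ (say via the homotopy $D+tC$, which requires showing $\rho(f)(F_{D+tC}-F_D)$ is compact --- not automatic, since $C$ is an unbounded perturbation) and then invoke the classical index-one theorem for the Dirac operator twisted by the Bott bundle, or (b) to replace $\alpha$ by a Clifford--Bott-type cycle such as $\bigl(C_0(\bbR^2,\Lambda^\ast(\bbR^2)\otimes\bbC),1,c(x)(1+\|x\|^2)^{-1/2}\bigr)$, for which the product with $\beta$ really is represented by (the bounded transform of) a Bott--Dirac-type operator and the one-dimensional-kernel argument applies. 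As written, the crucial integer is asserted, not computed.
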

\begin{proof} It suffices to show that a graded $\Calg$ $S\hat\otimes\bbC_1$ is $KK^G$-equivalent to $\bbC$; and in view of the previous lemma, this follows once we have shown that the Dirac class $d \in K^{-1}(C_0(-1,1))=KK^G(C_0(-1,1)\hat\otimes\bbC_1,\bbC)$ is right invertible. Let $s=[(C_0(-1,1)\hat\otimes\bbC_1, 1, x\hat\otimes\epsilon)] \,\,\, \in KK^G(\bbC, C_0(-1,1)\hat\otimes\bbC_1)$. The product $s\otimes_{C(-1,1)\hat\otimes\bbC_1}d \in KK^G(\bbC, \bbC)=K^0(\bbC)$ is represented by a Fredholm module \[\left( L^2[-1,1]\oplus L^2[-1,1]^{op}, 1, \begin{pmatrix}
0 & x-i(1-x^2)^\frac12(2P-I) \\
x+i(1-x^2)^\frac12(2P-I) & 0
\end{pmatrix} \right).\] By Example \ref{prop:KC}, we must calculate the Fredholm index of the operator $x+i(1-x^2)^\frac12(2P-I) \in L^2[-1,1]$. Using the straight line homotopy between $x$ and $\sin\frac{\pi}{2}x$, we see that this is same as $\text{Index}(\sin\frac{\pi}{2}x+i\cos\frac{\pi}{2}x(2P-I))$ which in turn is same as $\text{Index}(P+e^{-i\pi x}(I-P))=-1$. This shows $d$ is right invertible.
\end{proof}

The proof above showed that the $\Calg$ $S$ of continuous functions on the real line is $KK^G$-equivalent to the first Clifford $\Calg$ $\bbC_1$. We define for any graded $G$-$\Calgs$ $A,B$, the even $G$-equivariant Kasparov group $KK^G_0(A,B)=KK^G(A,B)$ and the odd $G$-equivariant Kasparov group \[KK^G_1(A,B)=KK^G(A\hat\otimes\bbC_1,B)=KK^G(A,B\hat\otimes\bbC_1)=KK^G(A\hat\otimes S,B)=KK^G(A,B\hat\otimes S).\] Thanks to the Bott Periodicity, the odd group $KK^G_1(A\hat\otimes S,B)$ is naturally isomorphic to $KK^G_0(A,B)$ for any graded $G$-$\Calgs$ $A,B$.

In the following discussions, we will essentially consider the $G$-equivariant $KK$-theory of ungraded $G$-$\Calgs$. In this case, there is a different but useful description of even and odd $G$-equivariant Kasparov groups. 

\begin{dfn} Let $A$ and $B$ be separable (ungraded) $G$-$\Calgs$. An even Kasparov $A$-$B$ module is a triple $(\E, \rho,  F)$, where $\E$ is a countably generated (ungraded) Hilbert  $G$-$B$-module; $\rho$ is a representation of a $G$-$\Calg$ $A$ on $\E$ and $F$ is an adjointable $B$-linear map in $B(\E)$ satisfying the following relations:
\begin{align}\label{eq:evenKas}
\rho(a)(FF^\ast-1) \sim 0,\,\,\,  \rho(a)(F^\ast F-1) \sim 0,\,\,\,  [\rho(a),F] \sim 0,\\  \rho(a)(g(F)-F) \sim 0 \,\,\,\,\,\text{for $a \in A, g\in G$ }\nonumber
\end{align}
In addition, $g\mapsto \rho(a)(g(F)-F)$ must be continuous for $a\in A, g\in G$. Simply put, the map $F$ is essentially $G$-equivariant, essentially unitary and essentially commuting with the representation of $A$. An odd Kasparov $A$-$B$ module is a triple $(\E, \rho,  P)$, where $\E$ is a countably generated (ungraded) Hilbert  $G$-$B$-module; $\rho$ is a representation of a $G$-$\Calg$ $A$ on $\E$ and $P$ is an adjointable $B$-linear map in $B(\E)$ satisfying the following relations:
\begin{align}\label{eq:oddKas}
\rho(a)(P^\ast-P) \sim 0,\,\,\,  \rho(a)(P^2-P) \sim 0,\,\,\,  [\rho(a),P] \sim 0,\\  \rho(a)(g(P)-P) \sim 0 \,\,\,\,\,\text{for $a \in A, g\in G$ } \nonumber
\end{align}
In addition, $g\mapsto \rho(a)(g(P)-P)$ must be continuous for $a\in A, g\in G$. Simply put, the map $P$ is essentially $G$-equivariant, an essentially projection and essentially commuting with the representation of $A$. 
\end{dfn}

All the notion defined for Kasparov $A$-$B$-modules, namely addition, unitary equivalence, functoriality, homotopy e.t.c., are defined for even and odd Kasparov $A$-$B$-modules. We will not distinguish between two unitary equivalent Kasparov $A$-$B$-modules. The set of homotopy equivalence classes of even (odd) Kasparov $A$-$B$-modules is a group with the obvious addition (the direct sum) of even (odd) Kasparov $A$-$B$-modules. The following proposition gives the nice description of $G$-equivariant Kasparov groups for ungraded $G$-$\Calgs$ mentioned earlier.

\begin{prop} For any separable $G$-$\Calgs$ $A,B$, the group of homotopy equivalence classes of even Kasparov $A$-$B$-modules is naturally isomorphic to the even $G$-equivariant Kasparov group $KK^G_0(A,B)=KK^G(A,B)$. The isomorphism takes an even Kasparov $A$-$B$-module $(\E, \rho, P)$ to a Kasparov $A$-$B$-module $\left(\E\oplus\E^{op}, \rho\otimes1, \begin{pmatrix}
0 & F^\ast \\
F & 0
\end{pmatrix}\right)$. The group of homotopy equivalence classes of odd Kasparov $A$-$B$-modules is naturally isomorphic to the odd $G$-equivariant Kasparov group $KK^G_1(A,B)=KK^G(A\hat\otimes\bbC_1,B)$. The isomorphism takes an odd Kasparov $A$-$B$-module $(\E, \rho, F)$ to a Kasparov $A\hat\otimes\bbC_1$-$B$-module $\left(\E\oplus\E^{op}, \rho\hat\otimes\text{id$_{\bbC_1}$}, \begin{pmatrix}
0 & -i(2P-1) \\
i(2P-1) & 0
\end{pmatrix}\right)$.  Here, we are identifying the graded Hilbert $B$-module $\E\oplus\E^{op}$ as $\E\hat\otimes\bbC_1$.
\end{prop}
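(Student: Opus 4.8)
The plan is to treat the even and odd assertions in parallel: in each case I would first check that the stated assignment sends cycles to cycles and is compatible with direct sums, degeneracy and homotopy, so that it descends to a group homomorphism, and then prove this homomorphism is bijective by a normal-form reduction. For the even statement, write $\Phi(\E,\rho,F)=\bigl(\E\oplus\E^{op},\,\rho\oplus\rho,\,\widetilde F\bigr)$ with $\widetilde F=\begin{pmatrix}0 & F^\ast\\ F & 0\end{pmatrix}$, where $\E\oplus\E^{op}$ carries the grading with $\E$ even and $\E^{op}$ odd. I would verify the graded relations \eqref{eq:Kas} by a direct $2\times2$ computation. The operator $\widetilde F$ is automatically self-adjoint, so the $F-F^\ast$ relation is exact; one has $\widetilde F^{\,2}-1=\mathrm{diag}(F^\ast F-1,\,FF^\ast-1)$, so $\rho(a)(\widetilde F^{\,2}-1)\sim0$ is exactly the two essential-unitarity conditions in \eqref{eq:evenKas}; the graded commutator $[\rho\oplus\rho,\widetilde F]$ is off-diagonal with entries $[\rho(a),F]$ and $[\rho(a),F^\ast]$ (the latter compact because $[\rho(a),F]\sim0$ gives $[\rho(a^\ast),F]^\ast=[F^\ast,\rho(a)]\sim0$); and $g(\widetilde F)-\widetilde F$ is off-diagonal with entries built from $g(F)-F$, so both the equivariance relation and its continuity transfer from \eqref{eq:evenKas}. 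It is then routine that $\Phi$ respects direct sums, carries degenerate cycles to degenerate cycles, and, applied over $B[0,1]$, carries homotopies to homotopies, so that $\Phi$ induces a homomorphism $\Phi_\ast$ on the groups.

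For bijectivity I would show that every graded Kasparov $A$-$B$-module is homotopic to one of the balanced form produced by $\Phi$. First homotope $F$ to $\tfrac12(F+F^\ast)$, which is legitimate since $\rho(a)(F-F^\ast)\sim0$; relative to the grading $\E=\E^{(0)}\oplus\E^{(1)}$ the even representation splits as $\rho^{(0)}\oplus\rho^{(1)}$ and the odd self-adjoint operator takes the form $\begin{pmatrix}0 & T^\ast\\ T & 0\end{pmatrix}$ with $T\colon\E^{(0)}\to\E^{(1)}$. The goal is to arrange $\E^{(0)}\cong\E^{(1)}$ with $\rho^{(0)}\cong\rho^{(1)}$, after which $(\E^{(0)},\rho^{(0)},T)$ is an even Kasparov module whose image under $\Phi$ is the given class. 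This is achieved by adding a suitable degenerate graded cycle, which does not change the class, and invoking the equivariant Kasparov stabilization theorem to absorb both halves into a common standard module; injectivity is obtained by running the same reduction on a $B[0,1]$-homotopy witnessing that the image of a given even class vanishes, and reading off a homotopy of even cycles.

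The odd statement runs along identical lines with $\bbC_1$-bookkeeping. Set $\Psi(\E,\rho,P)=\bigl(\E\oplus\E^{op},\,\rho\hat\otimes\id_{\bbC_1},\,\begin{pmatrix}0 & -i(2P-1)\\ i(2P-1) & 0\end{pmatrix}\bigr)$, where the odd generator $\epsilon\in\bbC_1$ acts as $\begin{pmatrix}0&1\\1&0\end{pmatrix}$. I would check that the resulting $F$ exactly graded-commutes with the $\bbC_1$-action (so the triple is genuinely a module over $A\hat\otimes\bbC_1$), while the essential-projection relations \eqref{eq:oddKas} transfer directly: from $P^\ast\sim P$ on $\rho(a)$ one gets $\rho(a)(F-F^\ast)\sim0$, and from $(2P-1)^2-1=4(P^2-P)$ the relation $P^2\sim P$ gives $\rho(a)(F^2-1)\sim0$, with the commutator and equivariance relations transferring as in the even case. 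Well-definedness, additivity and bijectivity then follow by the identical normal-form argument.

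I expect the genuine obstacle to be this normal-form step, namely bringing an arbitrary graded cycle to balanced form with $\E^{(0)}\cong\E^{(1)}$ and matching representations: this is where the equivariant stabilization theorem and the careful manipulation of degenerate cycles are needed, and where one must check that the $G$-continuity conditions are preserved throughout. By contrast the matrix identities and the verification that $\Phi_\ast$ and $\Psi_\ast$ are \emph{natural} --- i.e.\ commute with the maps induced by equivariant graded $\ast$-homomorphisms in either variable --- are routine, being immediate from the pointwise nature of the constructions.
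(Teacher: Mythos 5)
The paper states this proposition without proof (it is quoted as a standard fact), so your proposal has to be measured against the standard argument rather than against anything in the text. Your verification half is correct and complete: the matrix computations transferring the relations \eqref{eq:evenKas} and \eqref{eq:oddKas} to the graded relations \eqref{eq:Kas}, the compactness of $[\rho(a),F^\ast]$ via adjoints, the exact graded-anticommutation of $\begin{pmatrix}0 & -i(2P-1)\\ i(2P-1) & 0\end{pmatrix}$ with the $\bbC_1$-generator, and the reduction to self-adjoint $F$ by the straight-line homotopy are all as they should be, equivariance conditions included.

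The gap is in the mechanism you propose for the normal-form step, which you yourself identify as the crux. Writing a graded cycle as $\bigl(\E^{(0)}\oplus\E^{(1)},\rho^{(0)}\oplus\rho^{(1)},\begin{pmatrix}0&T^\ast\\T&0\end{pmatrix}\bigr)$, the image of $\Phi$ consists of cycles in which the two graded pieces carry \emph{the same} representation of $A$, not merely isomorphic modules. The equivariant Kasparov stabilization theorem gives $G$-isomorphisms $\E^{(i)}\oplus(\hill_G\otimes B)\cong\hill_G\otimes B$ of Hilbert modules, but it says nothing about the representations: transporting $\rho^{(0)}\oplus 0$ and $\rho^{(1)}\oplus 0$ along these isomorphisms yields two representations of $A$ on the standard module that need not be unitarily equivalent (there is no Voiculescu-type absorption theorem for general coefficients $B$, let alone equivariantly), so after "absorbing both halves into a common standard module" the cycle is still not of the form $\Phi(\E,\rho,T)$. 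The step that actually works is an infinite-multiplicity swindle: add the \emph{degenerate} cycle whose even and odd parts are both $\bigl(\E^{(0)}\oplus\E^{(1)}\bigr)^{\oplus\infty}$ with representation $\bigl(\rho^{(0)}\oplus\rho^{(1)}\bigr)^{\oplus\infty}$ and whose operator is the flip. The enlarged cycle has even part $\E^{(0)}\oplus\bigl(\E^{(0)}\oplus\E^{(1)}\bigr)^{\oplus\infty}$ and odd part $\E^{(1)}\oplus\bigl(\E^{(0)}\oplus\E^{(1)}\bigr)^{\oplus\infty}$; re-indexing the countably many copies of $(\E^{(0)},\rho^{(0)})$ and of $(\E^{(1)},\rho^{(1)})$ on each side gives an equivariant unitary intertwining the two representations, so the sum is unitarily equivalent to a balanced cycle, i.e.\ one in the image of $\Phi$ (indeed of the form $\Phi$ applied to an even cycle, since the flip degenerate is $\Phi(\E,\rho,1)$). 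Running the same absorption over $B[0,1]$ gives injectivity, as you indicate. So your architecture is right, but stabilization should be replaced by (or supplemented with) this Cuntz-type degenerate absorption; without it the key step fails.
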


\begin{example}\label{exampleBott} Let $A,B$ be separable $G$-$\Calgs$ and $\Sigma=C_0(0, 1)\cong S$. We simply write $B\otimes\Sigma$ by $B\Sigma$ for example. Let $x=(\E, \phi, P)$ be an element of $KK^G_1(A, B)$. We would like to compute the element in $KK^G_0(A , B\Sigma)$ which corresponds to the element $x$ under the Bott Periodicity $KK^G_1(A, B)\cong KK^G_0(A, B\Sigma)$. In such computations, we frequently use the Formal Periodicity (or just Morita Equivalence) such as $KK^G(A, B)\cong KK^G(A\hat\otimes\bbC_2, B)$. Hence, it is always safer and easier to say that we compute in up to sign precision. We recall $x$ is represented as $\left(\E\oplus\E^{\op}, \phi\hat\otimes\id_{\bbC_1}, \begin{pmatrix}
0 & -i(2P-1) \\
i(2P-1) & 0
\end{pmatrix}\right)$ as the element in $KK^G(A\hat\otimes\bbC_1, B)$. The Bott Periodicity maps this element to the Kasparov product $x\otimes_\bbC\left(\Sigma\oplus\Sigma^{\op}, \id_{\bbC_1}, \begin{pmatrix}
0 & -i(2x-1) \\
i(2x-1) & 0
\end{pmatrix}\right)$ in $KK^G(A\hat\otimes\bbC_1\hat\otimes\bbC_1, B\Sigma)$ which can be computed as $\left((\E\oplus\E^{\op})\hat\otimes(\Sigma\oplus\Sigma^{\op}), \phi\hat\otimes\id_{\bbC_1}\hat\otimes\id_{\bbC_1}, T\right)$ where \begin{small}\[T=
\begin{pmatrix}
0 & -i(2P-1)  \\
i(2P-1) & 0 
\end{pmatrix} \hat\otimes\begin{pmatrix}
2(x-x^2)^{-\frac12} & 0  \\
0 & 2(x-x^2)^{-\frac12} 
\end{pmatrix} 
+1\hat\otimes\begin{pmatrix}
0 & -i(2x-1)  \\
i(2x-1) & 0
\end{pmatrix}.\]  \end{small}After the identification $KK^G(A\hat\otimes\bbC_2, B)\cong KK^G(A, B)$, this element can be represented by $(\E\otimes\Sigma, \phi\otimes1, 1\otimes(2x-1)+i(2P-1)\otimes2(x-x^2)^{-\frac12})$ in $KK^G_0(A, B\Sigma)$. Applying first the straight line homotpy between $x$ and $\sin^2\frac\pi2x$, next multiplying $-1$ and last multiplying a unitary $1\otimes e^{-i\pi x}$ (which is homotopic to 1), we see that the element can be written by the following (probably) simplest form $(\E\otimes\Sigma, \phi\otimes1, P\otimes e^{2\pi ix}+(1-P)\otimes1)$ in $KK^G_0(A, B\Sigma)$. Similarly, the element in $KK^G_1(A, B\Sigma)$ which corresponds to an element $(\E, \phi, F)$ in $KK^G_0(A, B)$ under the Bott Periodicity can be computed as $\left(\E\Sigma\oplus\E\Sigma, \phi\otimes1, \begin{pmatrix}
1\otimes x &  F^\ast\otimes(x-x^2)^{\frac12}\\
F\otimes(x-x^2)^{\frac12}   & 1\otimes(1-x) 
\end{pmatrix} \right)$. Finally, let us compute the element in $KK^G_0(A, B)$ which corresponds to an element $y$ in $KK^G_1(\Sigma A, B)$ under the Bott Periodicity. We suppose $y=(\E, \phi, P)$ where $\phi$ is a nondegenerate representation of $\Sigma A$ on $\E$; this ensures that we can write $\phi$ as $\phi_\Sigma\otimes\phi_A$ where $\phi_\Sigma$ and $\phi_A$ are commuting, nondegenerate representations of $\Sigma$ and $A$ respectively. We remark that any Kasparov module is homotopic to such one (such one is called an essential Kasparov module). Again, note that $y$ is represented as $\left(\E\oplus\E^{\op}, \phi\hat\otimes\id_{\bbC_1}, \begin{pmatrix}
0 & -i(2P-1) \\
i(2P-1) & 0
\end{pmatrix}\right)$ in $KK^G(\Sigma A\bbC_1, B)$. The Bott Periodicity maps this element to $\left(\Sigma\hat\otimes\bbC_1, 1, x\hat\otimes\epsilon 
\right)\otimes_{\Sigma\bbC_1}y$ ($\epsilon$ is the standard generator of $\bbC_1$). We compute this to get $(\E, \phi_A, (2x-1)+2i(x-x^2)^{\frac12}(2P-1))$ in $KK^G_0(A, B)$ where $x$ is an operator on $\E$ obtained by extending the nondegenerate representation $\phi_\Sigma$ of $\Sigma$ to that of $C_b(0,1)$. A similar calculation as above leads us to get probably the simplest form $(\E, \phi_A, Pe^{2i\pi x}+1-P)$. These computations will be used in Chapter 9.

\end{example}

We recall here Equivariant Kasparov's category $KK^G$. It is the additive category whose objects are separable $G$-$\Calgs$ and morphisms are the elements in the Kasparov groups $KK^G(A, B)$ for separable $G$-$\Calgs$ $A,B$.  Also, we frequently denote by $KK^G$ the bifunctor $(A, B)\mapsto KK^G(A, B)$ from the category of separable $G$ $\Calg$ to abelian groups. We mentioned that this functor is stable and homotopy invariant. It can be shown that the functor $KK^G$ is split-exact in both variables. In \cite{Mey}, R. Meyer elegantly showed the following universal property of the category $KK^G$. 

\begin{theorem}(cf. \cite{Mey} Theorem 6.6.)\label{universal} Let $F$ be any stable, homotopy invariant, split exact (covariant or contravariant) functor from the category of separable $G$-$\Calg$ to an additive category. Then, the functor $F$ uniquely factors through the category $KK^G$.
\end{theorem}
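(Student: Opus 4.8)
The plan is to follow the strategy of Higson and Cuntz, in the equivariant refinement due to Meyer. Observe first that the assignment sending a separable $G$-$\Calg$ to itself and an equivariant $\ast$-homomorphism $\phi\colon A\to B$ to its class $[\phi]=[(B,\phi,0)]\in KK^G(A,B)$ defines a canonical functor $j^G$ from the category of separable $G$-$\Calgs$ to $KK^G$; this functor is homotopy invariant and stable (both established above) and, as remarked, split exact. Given a functor $F$ as in the statement, I want to produce a unique additive functor $\bar F\colon KK^G\to\cC$ with $F=\bar F\circ j^G$, where $\cC$ is the target additive category. I treat the covariant case; the contravariant case is formally dual. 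On objects I must set $\bar F(A)=F(A)$, so the whole content is to define $\bar F$ on a class $x\in KK^G(A,B)$ as a morphism $F(A)\to F(B)$, and to check this is forced and well behaved.

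The first key step is to show that split exactness together with homotopy invariance makes $F$ \emph{functorial for quasi-homomorphisms}. A quasi-homomorphism from $A$ to $B$ is a pair of equivariant $\ast$-homomorphisms $\phi_\pm\colon A\to E$ into a $G$-$\Calg$ $E$ containing $B$ as an ideal, with $\phi_+(a)-\phi_-(a)\in B$ for all $a\in A$. Following Higson's difference construction, I form the $G$-$\Calg$ $D=\{\,e\in E\mid e\in\phi_-(A)+B\,\}$, giving a split $G$-extension $0\to B\to D\to A\to 0$ with section $\phi_-$, whence split exactness yields $F(D)\cong F(B)\oplus F(A)$. The second section $\phi_+$ produces another map $F(A)\to F(D)$, and composing the difference $F(\phi_+)-F(\phi_-)$ with the projection to $F(B)$ gives a morphism $F(\phi_+\!:\!\phi_-)\colon F(A)\to F(B)$ depending only on the equivariant homotopy class of the quasi-homomorphism.

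The second key step is the equivariant Cuntz picture: after stabilizing, every class in $KK^G(A,B)$ is represented by an equivariant quasi-homomorphism, so that $KK^G(A,B)$ is naturally the set of equivariant homotopy classes $[\![\,qA,\,B\otimes\K(\hill_G)\,]\!]$ of equivariant $\ast$-homomorphisms out of the Cuntz algebra $qA=\Ker(A\ast A\to A)$, equipped with its universal quasi-homomorphism $(\iota_+,\iota_-)$. Combining this with the previous step and with stability (which gives $F(B\otimes\K(\hill_G))\cong F(B)$), each class $x\in KK^G(A,B)$ determines a morphism $\bar F(x)\colon F(A)\to F(B)$: represent $x$ by a classifying map $f\colon qA\to B\otimes\K(\hill_G)$, push the universal quasi-homomorphism forward by $f$, apply the difference construction, and identify $F(B\otimes\K(\hill_G))$ with $F(B)$.

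It remains to verify that $\bar F$ is well defined on homotopy classes (immediate from homotopy invariance), additive (immediate from the biproduct description coming from split exactness), compatible with $j^G$ (the quasi-homomorphism attached to an honest $\ast$-homomorphism reproduces $F(\phi)$), and---this is the crux---multiplicative with respect to the Kasparov product, i.e.\ $\bar F(x\otimes_B y)=\bar F(y)\circ\bar F(x)$. Uniqueness then follows, since any factorization must agree with $F$ on $\ast$-homomorphisms and must respect the direct sums, stabilizations, and quasi-homomorphism differences out of which every $KK^G$-morphism is built, leaving no freedom. The main obstacle I expect is precisely the multiplicativity: it requires showing that the Kasparov product corresponds, under the Cuntz picture, to composition of classifying maps out of $qA$, which in the equivariant setting leans on Kasparov's technical theorem and on careful control of the continuity of the $G$-action on the relevant multiplier algebras. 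Establishing the equivariant Cuntz picture itself---that every equivariant Kasparov cycle is homotopic to one arising from a quasi-homomorphism after stabilization---is the other delicate ingredient.
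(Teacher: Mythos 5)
The paper does not actually prove Theorem \ref{universal}: it is quoted from \cite{Mey} (Theorem 6.6) and used as a black box, so there is no internal proof to compare yours against. Judged on its own terms, your outline follows the correct family of arguments --- Cuntz's quasi-homomorphism picture, in the equivariant refinement due to Meyer --- and your first step (split exactness plus homotopy invariance makes $F$ functorial for \emph{exactly} equivariant quasi-homomorphisms, via the difference construction) is sound. But there is a genuine gap, and it sits exactly at the step you defer as the ``delicate ingredient''; deferring it leaves essentially the whole content of the theorem unproven.

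The naive equivariant Cuntz picture you assert --- that $KK^G(A,B)$ is the set of homotopy classes of equivariant $\ast$-homomorphisms from $qA=\Ker(A\ast A\to A)$ to $B\otimes\K(\hill_G)$ --- is not correct for noncompact $G$, and this failure is precisely why Meyer's paper exists. In a Kasparov cycle $(\E,\phi,F)$ the operator $F$ is only \emph{essentially} invariant: $\phi(a)(g(F)-F)$ is compact, not zero. Extracting a quasi-homomorphism in the usual way (say $\phi_+=\phi_0$ and $\phi_-=v^\ast\phi_1 v$, with $v$ the off-diagonal part of $F$ in an even cycle) therefore yields a pair in which $\phi_-$ is only essentially equivariant, hence not a morphism of $G$-$\Calgs$ at all, hence no classifying map out of $qA$. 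For compact $G$ one can average $F$ over the group to achieve exact invariance, but for noncompact $G$ this is impossible in general; repairing it is the heart of the matter. Meyer's solution is to replace $qA$ by a doubly stabilized algebra $q_sA$, in which the stabilization by $\K(\hill_G)$ is inserted \emph{inside} the $q$-construction (applied to $A\otimes\K(\hill_G)$, not merely to the target $B$), and then to prove that equivariant morphisms out of $q_sA$ compute $KK^G(A,B)$ with composition matching the Kasparov product --- which is also the multiplicativity you defer. Note that the present paper implicitly respects this point: when it invokes Meyer's $qA$-picture in the proof of Proposition \ref{KasSka}, it first reduces to algebras of the form $A'\otimes\K(\hill_G)$ and $B'\otimes\K(\hill_G)$ before mentioning $qA$. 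With these two ingredients removed, what remains of your proposal is the formal shell (difference construction, additivity, well-definedness, uniqueness), which is fine but is not where the content of the theorem lies.
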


One may want to consider whether the bifunctor $KK^G(\cdot, \cdot)$ from the category of separable $G$-$\Calgs$ to the category of abelian groups is half-exact in either variable. Unfortunately, this is not true in general. However, we have a following.

\begin{prop}(cf. \cite{bolic} PROPOSITION 5.7.) Let $A$ be a proper, nuclear $G$-$\Calg$, then the functor $KK^G(A, \cdot)$ is half-exact.
\end{prop}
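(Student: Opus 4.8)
The plan is to establish exactness of $KK^G(A,J)\xrightarrow{\iota_\ast}KK^G(A,B)\xrightarrow{\pi_\ast}KK^G(A,B/J)$ at the middle term, for an arbitrary extension $0\to J\xrightarrow{\iota}B\xrightarrow{\pi}B/J\to 0$ of separable $G$-$\Calgs$. The inclusion $\operatorname{im}(\iota_\ast)\subseteq\ker(\pi_\ast)$ is immediate from functoriality: since $\pi\circ\iota=0$ and $KK^G(A,\cdot)$ sends the zero algebra to the zero group, $\pi_\ast\circ\iota_\ast=(\pi\iota)_\ast=0$. All the content lies in the reverse inclusion $\ker(\pi_\ast)\subseteq\operatorname{im}(\iota_\ast)$.

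For the reverse inclusion I would first pass to the mapping cone $C_\pi=\{(b,f)\in B\oplus C_0((0,1],B/J):f(1)=\pi(b)\}$, a separable $G$-$\Calg$ carrying canonical $G$-equivariant $\ast$-homomorphisms $\theta\colon J\hookrightarrow C_\pi$, $j\mapsto(j,0)$, and $C_\pi\twoheadrightarrow B$. Since $KK^G(A,\cdot)$ is stable, homotopy invariant and split-exact (all recorded earlier), the Puppe-type sequence $KK^G(A,C_\pi)\to KK^G(A,B)\to KK^G(A,B/J)$ is exact by the standard formal argument for such functors, and half-exactness of $KK^G(A,\cdot)$ becomes equivalent to the assertion that $\theta$ induces an isomorphism $KK^G(A,J)\cong KK^G(A,C_\pi)$. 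It therefore suffices to invert $\theta_\ast$, that is, to show that every cycle over $C_\pi$ can be pushed, up to homotopy, into one over $J$.

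The substantive step, where the hypotheses on $A$ are consumed, is the construction of this inverse, which amounts to a $G$-equivariant lifting problem across $\pi$. At the level of cycles a class in $\ker(\pi_\ast)$ is represented by a Kasparov $A$-$B$ module whose pushforward to $B/J$ is homotopic to a degenerate one, and one must lift the bounding homotopy back through $\pi$ in a $G$-equivariant fashion. Non-equivariantly such a lift is assembled from a completely positive contractive cross-section of $\pi$ together with the Kasparov technical theorem; nuclearity of $A$ is what makes this go through, since it guarantees both that tensoring by $A$ preserves exactness of the extension and that the maximal and minimal tensor products entering the connection agree, so that the comparison arguments producing the necessary compact perturbations are unambiguous. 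The failure of the cross-section to be $G$-equivariant is then repaired by properness: using a cut-off function on the base space $X$ of $A$ and the equivariant stabilization $A\cong\K(A)\to\K(A\otimes\hill_G)\cong A\otimes\K(\hill_G)$ of Proposition \ref{prop:stab}, one averages the non-equivariant lift against the $G$-action, the properness of $X$ ensuring that the averaging integrals converge and yield a genuinely $G$-equivariant lift of the modules and operators in play.

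I expect the principal difficulty to be reconciling these two mechanisms: the Kasparov technical theorem must be run in the equivariant, proper setting so that the positivity and compactness estimates it outputs survive the cut-off averaging. Concretely, after averaging one must re-verify that the resulting operator is $G$-continuous and still satisfies the Kasparov relations modulo $\K(\mathcal{E})$, and that nuclearity keeps the error terms compact in $B(\mathcal{E})$ itself rather than in some larger completion. Granting these compatibility checks, $\theta_\ast$ is invertible and the formal mapping-cone reduction above then yields half-exactness.
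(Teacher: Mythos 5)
A preliminary remark: the paper itself contains no proof of this proposition --- it is stated with a citation to Kasparov--Skandalis (\cite{bolic}, Proposition 5.7) --- so your proposal has to be measured against that argument. Your skeleton (reduce to exactness at the middle term, pass to the mapping cone $C_\pi$, then solve an equivariant lifting problem using nuclearity and properness) has the right shape, but the decisive step contains a genuine gap: you assemble the non-equivariant lift from ``a completely positive contractive cross-section of $\pi$'', and for an arbitrary extension of separable $G$-$\Calgs$ no such section exists. A c.p.c.\ section of $\pi\colon B\to B/J$ is guaranteed only when the \emph{quotient} $B/J$ is nuclear (Choi--Effros), and here $B/J$ is completely arbitrary; the failure of such sections to exist is precisely why $KK^G(A,\cdot)$ is not half-exact for general $A$. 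The hypothesis that $A$ is nuclear is consumed in a different way than the two ways you propose (exactness of $\otimes A$, max $=$ min in connections): by the Choi--Effros lifting theorem, every c.c.p.\ map \emph{from} a separable nuclear algebra into a quotient lifts to a c.c.p.\ map into $B$, so what gets lifted through $\pi$ are the completely positive maps out of $A$ furnished by the cycles (Stinespring compressions of their representations), never a section of $\pi$ itself. Properness then repairs equivariance of \emph{those} lifts: for a cut-off function $c$ on the base space $X$ of $A$, an average such as $\int_G g\bigl(s(c\,g^{-1}(a)\,c)\bigr)\,d\mu(g)$ converges because $g\mapsto g(c)a'$ has compact support for $a'\in A_c(X)$, and $\int_G g(c)^2\,d\mu=1$ ensures the average is still a lift. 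Your proposal instead averages data living on the $B/J$ side (``the bounding homotopy''), where there is no $C_0(X)$-structure and hence no convergent averaging; that step cannot be carried out as written.

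A secondary but real error is the justification of the Puppe step: exactness of $KK^G(A,C_\pi)\to KK^G(A,B)\to KK^G(A,B/J)$ is not a formal consequence of stability, homotopy invariance and split-exactness --- the standard Puppe machinery runs on half-exactness, which is the very property at stake. The step is nevertheless salvageable, for every separable $A$: the mapping cone extension $0\to\Sigma(B/J)\to C_\pi\to B\to0$ and the mapping cylinder extension $0\to C_\pi\to Z_\pi\to B/J\to0$ admit the explicit $G$-equivariant c.p.c.\ sections $b\mapsto\bigl(b,\,t\mapsto t\pi(b)\bigr)$ and $x\mapsto\bigl(0,\,t\mapsto(1-t)x\bigr)$ respectively, and $KK^G(A,\cdot)$ is half-exact with respect to equivariantly semisplit extensions by Kasparov's theorem; applying this to the cylinder extension and using the equivariant homotopy equivalence $Z_\pi\simeq B$ yields exactly the exact sequence you want. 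With that repair, your reduction to surjectivity of $KK^G(A,J)\to KK^G(A,C_\pi)$ is sound, but the heart of the proposition --- producing that surjection --- still has to be rebuilt on nuclear liftability of c.p.\ maps out of $A$ combined with cut-off averaging, as in \cite{bolic}.
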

 
It is also true that the functor $KK^G(\cdot, A)$ is half-exact for any proper, nuclear $G$-$\Calg$. Thanks to the above proposition, we have for any proper $G$-$\Calg$ $A$ and for any $G$-extension:
\begin{align*}
\xymatrix{
0 \ar[r] & J \ar[r] & B \ar[r] & B/J \ar[r] & 0 \\
}
\end{align*} the six-term exact sequence in Equivariant $KK$-Theory:
\begin{align*}
\xymatrix{
KK^G_0(A, J) \ar[r] & KK^G_0(A, B) \ar[r] & KK^G_0(A, B/J) \ar[d] \\
KK^G_1(A,  B/J) \ar[u] & KK^G_1(A,  B) \ar[l] & KK^G_1(A, J) \ar[l] \\  
}
\end{align*} 

\newpage
\section{Asymptotic Morphisms and Equivariant $E$-Theory}

This chapter introduces asymptotic morphisms which are now been regarded as another fundamental tool for calculating the $K$-theory of $\Calgs$. The importance of this notion comes from the fact that associated to any extension of $\Calgs$, there is a canonical asymptotic morphism called a central invariant which is unique up to suitable equivalence relation (i.e. homotopy). We follows the treatment given in \cite{GHT}.

\begin{dfn}(asymptotic algebra) Let $B$ be a separable $G$-$\Calg$. The $G$-$\Calg$ $\T_0(B)=C_0([1, \infty), B)$ of continuous functions from the interval $[0, \infty)$ to $B$ which vanish at infinity sits as a $G$-invariant ideal in the $\Calg$ of $C_b([1, \infty), B)$ of bounded functions from $[0, \infty)$ to $B$ with a natural pointwise $G$-action. We denote by $\T(B)$ the subalgebra of $G$-continuous elements in $C_b([1, \infty), B)$; this is a $G$-$\Calg$ containing $\T_0(B)$ as a $G$-equivariant ideal. The asymptotic algebra $\A(B)$ of $B$ is the quotient $G$-$\Calg$ $\T(B)/\T_0(B)$.
\end{dfn}

\begin{dfn}(asymptotic morphisms)\label{dfn:asym} For separable $G$-$\Calgs$ $A,B$, an equivariant asymptotic morphism from $A$ to $B$ is an equivariant $\ast$-homomorphism from $A$ to the asymptotic algebra $\A(B)$. An equivariant asymptotic morphism $\phi$ from $A$ to $B$ is denoted by $\phi\colon A\to\to B$. A homotopy of equivariant asymptotic morphisms from $A$ to $B$ is an equivariant asymptotic morphism from $A$ to $B[0,1]$. We denote by $[[A, B]]_G$ the set of homotopy equivalence classes of equivariant asymptotic morphisms from $A$ to $B$. Any element of $[[A, B]]_G$ can be represented as a family of continuous maps $\phi_t\colon A\to B$ $(t\geq1)$ satisfying the following conditions (such a map is called an equicontinuous equivariant asymptotic morphism from $A$ to $B$ with a slight abuse of language).
\begin{itemize}
\item for any $a$ in $A$, the map $t\mapsto\phi_t(a)$ is in $\T(B)$;
\item $(g,a)\mapsto g(\phi_t((a)))$ is a continuous map from $G\times A$ to $B$ uniformly in $t \in [0, \infty)$;
\item the map $A\to \T(B) \to \A(B)$ given by composition of the map $a\mapsto (\phi_t(a))$ with the quotient map from $\T(B)$ to $\A(B)$ is an equivariant $\ast$-homomorphism.
\end{itemize}
Hence, we frequently write an element of $[[A, B]]_G$ as an equicontinuous equivariant asymptotic morphism $(\phi_t)_{t\geq1}$ without any fear of confusion. Any continuous family $(\phi_t)_{t\geq1}$ of equivariant $\ast$-homomorphisms from $A$ to $B$ defines an equivariant asymptotic morphism from $A$ to $B$ in an obvious way. More generally, a continuous family $(\phi_t)_{t\geq}$ of $\ast$-homomorphisms from $A$ to $B$ defines an  equivariant asymptotic morphism from $A$ to $B$, if the family is asymptotically equivariant (meaning for any $a$ in $A$ and for any $g$ in $G$, $\phi_t(g(a))-g(\phi_t(a))$ converges to $0$ as $t$ goes to infinity). There is a well-defined ``composition'' operation $[[A, B]]_G\times[[B, C]]_G\to[[A, C]]_G$ given by a composition after a reparametrization of asymptotic morphisms: more specifically, for given two equicontinuous equivariant asymptotic morphisms $(\phi_t)_{t\geq1}\colon A\to\to B$ and $(\psi_t)_{t\geq1}\colon B\to\to C$, there is a strictly increasing continuous function $r$ from $[1, \infty)$ onto $[1, \infty)$ such that the composition $(\psi_{s(t)}(\phi_{t}))_{t\geq1}$ defines an equivariant asymptotic morphism from $A$ to $C$ for any reparametrization in $t$ (strictly increasing continuous functions from $[1, \infty)$ to $[1, \infty)$) satisfying $s(t)\geq r(t)$ for all $t$; and this defines a well-defined operation $[[A, B]]_G\times[[B, C]]_G\to[[A, C]]_G$. We write the composition of two asymptotic  morphisms $\phi\colon A\to\to B$ and $\psi\colon B\to\to C$ by $\psi \circ \phi$ as long as it makes no confusion. The set $[[A, B\K(\hill_G)]]_G$ can be endorsed with an abelian semigroup structure by the following way. (Here, $B\K(\hill_G)=B\otimes\K(\hill_G)$.) The addition operation comes from an (equivariant) embedding of $\K(\hill_G)\oplus\K(\hill_G)\subset\K(\hill_G\oplus\hill_G)$ into $\K(\hill_G)$ induced from an (equivariant) embedding of $\hill_G\oplus\hill_G$ into $\hill_G$. We may use any embedding here, since any pair of such embeddings can be connected through a homotopy of embeddings. With these in mind, associativity and commutativity of this operation is clear. The zero element is represented by $0$ morphism. The semigroup $[[\Sigma A, B\K(\hill_G)]]_G$ becomes a group thanks to the presence of $\Sigma$: the inverse operation is defined by the composition with a $\ast$-homomorphism $h^\ast\otimes\id_A\colon\Sigma A\to \Sigma A$ where $h^\ast\colon \Sigma\to \Sigma$ is induced from an order reversing homeomorphism $h\colon s\mapsto 1-s$ on $(0,1)$.
\end{dfn}

\begin{dfn}(Equivariant $E$-Theory) Let\, $A$ \, and\, $B$\, be\, separable\, $G$-$\Calgs$.\, The\, Equivariant\, $E$-theory\, group\, $E^G(A, B)$\,\,\, is\,,\ defined\,\, to\, be\, \,the\, \,abelian\, \,group\, \,$[[\Sigma A\K(\hill_G), \Sigma B\K(\hill_G)]]_G$. The composition of asymptotic morphisms defines a bilinear map $E^G(A, B)\times E^G(B,C)\to E^G(A, C)$. We define an additive category $E^G$ to be the category which has separable $G$-$\Calgs$ as objects and an $E$-theory group $E^G(A, B)$ as the morphism group from $A$ to $B$. We call the category $E^G$ as the Equivariant $E$-Theory category.
\end{dfn}

We have a functor from category of separable $G$-$\Calg$ to $E^G$ which is identity on objects and sends an equivariant-$\ast$-homomorphism $\phi$ to the class of equivariant asymptotic morphisms $\id_\Sigma\otimes\phi\otimes\id_{\K(\hill_G)}$ in $E^G(A, B)$. For any nuclear $G$-$\Calg$ $D$, we have a tensor product functor $\sigma_D$ on the category $E^G$ coming from the operation $\sigma_D\colon [[A, B]]_G\to [[A\otimes D, B\otimes D]]_G$ which is the tensor product by $\id_D$ at the level of cycles. There is a The bifunctor $(A,B)\mapsto E^G(A,B)$ from the category of separable $G$-$\Calgs$ to the category of abelian groups are homotopy invariant and stable. Moreover, it is half-exact in both variable with respect to any $G$-extension. We are mostly interested in half-exactness in the second variable. We first define an canonical equivariant asymptotic morphism associated to a $G$-extension.

\begin{dfn} For any $G$-extension: \begin{align}\label{gext}
\xymatrix{
0 \ar[r] & J \ar[r] & B \ar[r]^-{\pi} & B/J \ar[r] & 0 \\
}
\end{align}an approximate unit for the $G$-extension \eqref{gext} is a continuous approximate unit $(u_t)_{t\geq1}$ of $J$ satisfying the following: \begin{itemize}
\item it is asymptotically equivariant: that is, for any $g$ in $G$, $g(u_t)-u_t\to0$ as $t\to0$ uniformly over compact subsets of $G$;
\item it asymptotically commutes with elements in $B$: that is, for any $b\in B$, $[a, u_t]=bu_t-u_tb\to0$ as $t\to0$.
\end{itemize} Such an approximate unit always exists, and any two can be connected through the obvious straight line path. We call an approximate unit having the second property above as quasicentral with respect to $B$; and by an approximate unit for the pair of $\Calgs$ $B_1\subset B_2$, we usually mean an approximate unit of $B_1$ which is quasicentral with respect to $B_2$.
\end{dfn}

\begin{dfn} For any $G$-extension \eqref{gext}, a central invariant for the $G$-extension \eqref{gext} is an equivariant asymptotic morphism from $\Sigma (B/J)$ to $J$ defined by $f\otimes x\mapsto f(u_t)s(x)$ for $t\geq1$ using any set-theoretic section $s$ of the quotient map $\pi\colon B\to B/J$. A central invariant for the $G$-extension \eqref{gext} defines a unique element in the set $[[\Sigma (B/J), J]]_G$ independent of choices of an approximate unit and of a section $s$ which we also call as the central invariant for the $G$-extension \eqref{gext}.
\end{dfn}

Central invariants are natural in the following sense. Suppose we have a following digram of $G$-extension:
\begin{align*}
\xymatrix{
0 \ar[r] & J \ar[d]^-{q}\ar[r] & B \ar[d]\ar[r] & B/J \ar[d]^{p}\ar[r] & 0 \\
0 \ar[r] & J' \ar[r] & B' \ar[r] & B'/J' \ar[r] & 0 \\
}
\end{align*} Then, if we denote the central invariant for the first row and for the second row by $x\in [[\Sigma (B/J), J]]_G$ and $x' \in [[\Sigma (B'/J'),  J']]_G$ respectively, we have $x'\circ\Sigma p=q\circ x$ in $[[\Sigma (B/J), J']]_G$.

\begin{example} Let $A$ be a separable $G$-$\Calg$. Consider the following $G$-extension:\begin{align}\label{gext2}
\xymatrix{
0 \ar[r] & \Sigma A \ar[r] & A(0,1] \ar[r] & A \ar[r] & 0 \\
}
\end{align} The central invariant associated to the $G$-extension \eqref{gext2} and the class defined by $\id_{\Sigma A}$ coincide in the group $[[\Sigma A, \Sigma A]]_G$.
\end{example}

We now state the important property of the bifunctor $E^G$.
\begin{theorem} (cf. \cite{GHT} THEOREM 6.20.) The bifunctor $(A,B) \mapsto E^G(A, B)$ is half-exact in both variables.
\end{theorem}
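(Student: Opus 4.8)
The strategy is to funnel half-exactness through a single excision statement about mapping cones, so that all the analytic content is concentrated in one place. I treat the covariant (second) variable first and indicate the modifications for the first variable at the end. Fix a $G$-extension $0 \to J \xrightarrow{i} B \xrightarrow{\pi} B/J \to 0$ and a separable $G$-$\Calg$ $A$, and aim for exactness of $E^G(A, J) \xrightarrow{i_*} E^G(A, B) \xrightarrow{\pi_*} E^G(A, B/J)$ at the middle. The inclusion $\operatorname{im} i_* \subseteq \ker \pi_*$ is immediate from functoriality, since $\pi \circ i = 0$ gives $\pi_* \circ i_* = (\pi i)_* = 0$; all the work is in the reverse inclusion.

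The key object is the mapping cone $C_\pi = \{(b, f) \in B \oplus C_0((0,1], B/J) : f(1) = \pi(b)\}$, which sits in a $G$-extension $0 \to \Sigma(B/J) \to C_\pi \xrightarrow{\mathrm{ev}} B \to 0$ via the evaluation $(b,f)\mapsto b$ (its kernel being the functions also vanishing at $1$, i.e. $C_0((0,1),B/J)=\Sigma(B/J)$), together with the equivariant inclusion $\iota\colon J \to C_\pi$, $j \mapsto (j,0)$, satisfying $\mathrm{ev}\circ\iota = i$. Now take $x \in \ker\pi_*$, represented by an asymptotic morphism $\phi\colon \Sigma A \K \to\to \Sigma B \K$. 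The hypothesis $\pi_* x = 0$ supplies a homotopy, i.e. an asymptotic morphism into $(\Sigma(B/J)\K)[0,1]$ interpolating between $(\id\otimes\pi\otimes\id)\circ\phi$ and $0$. Reparametrizing this homotopy over $(0,1]$ and pairing it with $\phi$ produces an asymptotic morphism $\tilde\psi\colon \Sigma A\K \to\to \Sigma C_\pi\K$ (here one uses that the mapping cone is compatible with tensoring by the nuclear, hence exact, algebra $\Sigma\otimes\K$) with $\mathrm{ev}_*[\tilde\psi] = x$. Granting the excision statement below, $\iota_*\colon E^G(A, J)\to E^G(A, C_\pi)$ is an isomorphism, so $[\tilde\psi] = \iota_* y$ for some $y \in E^G(A, J)$, whence $x = \mathrm{ev}_*\iota_* y = i_* y \in \operatorname{im} i_*$.

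The crux is the \emph{excision} statement: $\iota\colon J \to C_\pi$ is an isomorphism in $E^G$. This is where the central invariant machinery enters. To build an inverse I would construct an asymptotic morphism $\rho\colon C_\pi \to\to J$ from a quasicentral (with respect to $B$), asymptotically equivariant continuous approximate unit $(u_t)_{t\geq1}$ of $J$ — precisely the data underlying the central invariant of the extension — sending $(b,f)$ to an expression assembled from $u_t$, $b$, and a set-theoretic section of $\pi$ evaluated along the path $f$, so that the output lies asymptotically in $J$ while the path $f$ carries the homotopy data fixing its class. Verifying that $\rho$ is a two-sided inverse to $\iota$ up to homotopy is the main obstacle: one must show $\rho\circ\iota \simeq \id_J$ and $\iota\circ\rho \simeq \id_{C_\pi}$ in $E^G$, using quasicentrality to kill the commutators $[u_t, b]$ and asymptotic equivariance to kill $g(u_t)-u_t$ in the limit. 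The second homotopy is the delicate one, since it requires contracting the cone direction of $C_\pi$ against the approximate unit; I expect this verification, rather than any formal bookkeeping, to absorb essentially all of the effort.

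For the contravariant (first) variable the very same excision equivalence $J \simeq C_\pi$ applies, since it is a genuine isomorphism of objects in $E^G$ and therefore induces isomorphisms of both $E^G(A,-)$ and $E^G(-,A)$. Exactness of $E^G(B/J, A) \xrightarrow{\pi^*} E^G(B,A) \xrightarrow{i^*} E^G(J, A)$ at $E^G(B,A)$ then follows by the dual assembly: from $\mathrm{ev}\circ\iota = i$ and the isomorphism $\iota^*$ one identifies $\ker i^*$ with $\ker \mathrm{ev}^*$, and then caps off the nullhomotopy of $\phi\circ i$ against the cone coordinate of $C_\pi$ to factor the representing asymptotic morphism through $B/J$. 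Throughout, homotopy invariance and stability of the bifunctor $E^G$ (already recorded above) are used freely to legitimize the reparametrizations and the passage between $C_\pi$ and $\Sigma C_\pi\K$. Combining the two variables yields half-exactness of $(A,B)\mapsto E^G(A,B)$ in both arguments.
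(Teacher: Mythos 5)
You should know at the outset that the paper never proves this statement: it is quoted from \cite{GHT} (Theorem 6.20) and used as a black box, so your proposal has to be judged on its own merits rather than against an argument in the text. Its architecture is the standard one and the formal reductions are sound: functoriality gives $\operatorname{im} i_\ast\subseteq\ker\pi_\ast$; lifting an element of $\ker\pi_\ast$ into the mapping cone along the extension $0\to\Sigma(B/J)\to C_\pi\xrightarrow{\mathrm{ev}}B\to0$ is the usual Puppe argument (your parenthetical appeal to exactness of $-\otimes\Sigma\K$ is exactly what identifies the cone of $\id_\Sigma\otimes\pi\otimes\id_\K$ with $\Sigma C_\pi\K$); and the contravariant variable does follow formally once $\iota\colon J\to C_\pi$ is invertible in $E^G$.

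The genuine gap is your plan for proving that invertibility. You propose to exhibit an honest, unsuspended asymptotic morphism $\rho\colon C_\pi\to\to J$ built from a quasicentral approximate unit $(u_t)$ and a set-theoretic section $s$ of $\pi$, and to verify $\rho\circ\iota\simeq\id_J$ and $\iota\circ\rho\simeq\id_{C_\pi}$. No formula of this type is asymptotically multiplicative, and the failure is structural rather than a matter of effort. The Connes--Higson recipe $f\otimes q\mapsto f(u_t)s(q)$ works only because $f$ ranges over $\Sigma=C_0(0,1)$: vanishing at $0$ forces $f(u_t)\in J$, while vanishing at $1$ kills the curvature of the section, since $(f_1f_2)(u_t)\bigl(s(q_1)s(q_2)-s(q_1q_2)\bigr)\to(f_1f_2)(1)\bigl(s(q_1)s(q_2)-s(q_1q_2)\bigr)$. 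On the mapping cone the path coordinate lives on $(0,1]$ and is glued to $b$ at the endpoint $1$, so precisely this error survives. Concretely, the natural candidate $\rho_t(b,g\otimes q)=b-g(1-u_t)s(q)$ does satisfy $\rho_t\circ\iota=\id_J$, but with $g(\theta)=\theta$ and $q=\pi(b)$ its multiplicativity defect contains the term $u_t(1-u_t)s(q)s(q')$ (up to commutators, which quasicentrality does kill); making $\|u_t-u_t^2\|\to0$ while keeping quasicentrality would essentially force the extension to be quasidiagonal, which already fails for the Toeplitz extension. The deeper point is that the inverse of $\iota$ is a class in $E^G(C_\pi,J)=[[\Sigma C_\pi\K(\hill_G),\Sigma J\K(\hill_G)]]_G$, and the suspension there cannot in general be stripped off: unsuspended classes such as $[[C_\pi,J\otimes\K(\hill_G)]]_G$ form only an abelian monoid --- compare $[[\bbC,\K]]\cong\bbZ_{\geq0}$ against $E(\bbC,\bbC)\cong\bbZ$ --- and, as the paper itself emphasizes, it is ``the presence of $\Sigma$'' that turns these semigroups into groups. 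For this reason the proof in \cite{GHT} never constructs an unsuspended inverse: it establishes bijectivity of $\iota_\ast$ on suspended, stabilized homotopy classes, playing the Puppe sequence of $\pi$ against the sequence induced by the central invariant $\Sigma(B/J)\to\to J$. To repair your argument you must rebuild the excision step at that suspended level; the rest of your outline, in both variables, then goes through as written.
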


In the same way as for the $K$-theory functor, the half-exactness together with the stability and the homotopy invariance of the functor $E^G$ automatically implies Bott-Periodicity that is, an isomorphism between $\Sigma^2$ and $\bbC$ in the category $E^G$. For any $G$-extension \eqref{gext}, and for any separable $G$-$\Calg$ $A$, we have six-term exact sequences:
\begin{align*}
\xymatrix{
E^G(A, J) \ar[r] & E^G(A, B) \ar[r] & E^G(A, B/J) \ar[d] \\
E^G(A, \Sigma (B/J)) \ar[u] & E^G(A, \Sigma B) \ar[l] & E^G(A, \Sigma J) \ar[l] \\  
}
\end{align*}
and,
\begin{align*}
\xymatrix{
E^G(J, A) \ar[d] &  E^G(B, A) \ar[l] & E^G(B/J, A) \ar[l] \\
E^G(\Sigma (B/J), A) \ar[r] & E^G(\Sigma B, A) \ar[r] & E^G(\Sigma J, A) \ar[u] \\  
}
\end{align*}
In the above sequences, the boundary maps are given by the composition by the central invariant associated to the extension \eqref{gext}.

Just as in the case of the functor $KK^G$, the bifunctor $E^G$ has a following universal property which can be shown purely categorically using the property of $E^G$ listed so far.

\begin{theorem}(cf. \cite{AsympE} Theorem 1.13., \cite{GHT})\label{universalE} Let $F$ be any stable, homotopy invariant, half-exact (covariant or contravariant) functor from the category of separable $G$-$\Calg$ to an additive category. Then, the functor $F$ uniquely factors through the category $E^G$.
\end{theorem}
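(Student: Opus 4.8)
The plan is to construct an additive functor $\bar F\colon E^G\to\mathcal D$ into the target additive category $\mathcal D$ with $\bar F\circ\iota=F$, where $\iota$ is the canonical functor described above, and then to prove that $\bar F$ is unique. Since $\iota$ is the identity on objects, $\bar F$ is forced to agree with $F$ on objects, $\bar F(A)=F(A)$; all the content lies in defining $\bar F$ on the morphism groups $E^G(A,B)=[[\Sigma A\K(\hill_G),\Sigma B\K(\hill_G)]]_G$ and in checking functoriality, after which uniqueness is essentially formal. I treat the covariant case; the contravariant case is entirely analogous with all arrows reversed. The heart of the argument is to show that $F$, which a priori only acts on genuine equivariant $\ast$-homomorphisms, extends canonically and functorially to homotopy classes of asymptotic morphisms.

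The main device is that $F$ annihilates the ideal $\T_0(B)=C_0([1,\infty),B)$: this algebra is a cone, hence contractible, so homotopy invariance gives $F_0(\T_0(B))=F_1(\T_0(B))=0$. Given an asymptotic morphism $\phi\colon A\to\to B$, regarded as a $\ast$-homomorphism $\phi\colon A\to\A(B)=\T(B)/\T_0(B)$, I would form the pullback $G$-$\Calg$ $E_\phi=\{(a,f)\in A\times\T(B)\mid \phi(a)=q(f)\}$, where $q\colon\T(B)\to\A(B)$ is the quotient map. This sits in a $G$-extension $0\to\T_0(B)\to E_\phi\xrightarrow{\pi_A}A\to0$ with $\pi_A$ surjective (because $q$ is), so the six-term sequence together with $F_*(\T_0(B))=0$ forces $F(\pi_A)\colon F(E_\phi)\xrightarrow{\cong}F(A)$. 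Writing $\pi_B\colon E_\phi\to\T(B)$ for the other projection and $\mathrm{ev}_1\colon\T(B)\to B$ for evaluation at $t=1$, I would define $\hat F(\phi)=F(\mathrm{ev}_1)\circ F(\pi_B)\circ F(\pi_A)^{-1}\colon F(A)\to F(B)$. A short computation with the splitting $a\mapsto(a,\mathrm{const}_{\psi(a)})$ shows that when $\phi=j_B\circ\psi$ comes from a genuine homomorphism $\psi\colon A\to B$ (with $j_B\colon B\to\A(B)$ the inclusion of constants) one recovers $\hat F(\phi)=F(\psi)$, so $\hat F$ extends $F$. Note that $F(\mathrm{ev}_1)$ is independent of the chosen evaluation point, since any two evaluations $\T(B)\to B$ are homotopic through the restriction maps $\T(B)\to C([s,t],B)$ and $F$ is homotopy invariant; this independence is what makes the construction canonical.

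It then remains to verify that $\hat F$ is well defined on homotopy classes, additive, and --- the crux --- functorial for the composition of asymptotic morphisms. Homotopy invariance of $\hat F$ follows by running the same pullback construction over $B[0,1]$ and using that the two endpoint evaluations $B[0,1]\to B$ induce the same map on $F$; additivity with respect to direct sums is immediate. The real obstacle is the identity $\hat F(\psi\circ\phi)=\hat F(\psi)\circ\hat F(\phi)$ for $\phi\colon A\to\to B$ and $\psi\colon B\to\to C$. Here the composition is only defined after a reparametrization of $[1,\infty)$ and is naturally expressed through the iterated asymptotic algebra $\A(\A(C))$ and a further pullback; I would reduce the claim to the naturality of the central invariant (stated above for $G$-extensions) applied to the map of pullback extensions induced by $\psi$, together with the fact that all admissible reparametrizations are homotopic, so that $F$, being homotopy invariant, cannot distinguish them. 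I expect this compatibility with composition, and the careful bookkeeping of the pullback diagrams it requires, to be by far the most technical step.

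Finally I would assemble $\bar F$ on $E^G(A,B)=[[\Sigma A\K(\hill_G),\Sigma B\K(\hill_G)]]_G$ from $\hat F$. Stability of $F$ removes the factor $\K(\hill_G)$, and Bott periodicity --- which is automatic for a stable, homotopy invariant, half-exact functor --- supplies the natural isomorphism $F(\Sigma^2(\cdot))\cong F(\cdot)$. Applying $\hat F$ to the once-suspended morphism $\id_\Sigma\otimes\alpha\colon\Sigma^2A\K(\hill_G)\to\to\Sigma^2B\K(\hill_G)$ and then using these two natural isomorphisms matches the parities and produces a map $\bar F(\alpha)\colon F(A)\to F(B)$, independent of the number of extra suspensions by periodicity. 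On the image of $\iota$ this returns $F$ exactly, because $\iota(\psi)=\id_\Sigma\otimes\psi\otimes\id_{\K(\hill_G)}$ is a genuine homomorphism and all identifications used are natural; functoriality of $\bar F$ follows from that of $\hat F$ together with the naturality of the stability and Bott isomorphisms. Uniqueness is then formal: any factorization must send each $\iota(\psi)$ to $F(\psi)$ and must send to isomorphisms exactly those homomorphisms that $F$ inverts (stabilizations, homotopy equivalences, and the cone-collapse maps underlying the six-term sequences); since every morphism of $E^G$ is built from $\iota$-images and formal inverses of such maps, the value of the factorization on every $E^G$-morphism is forced, so $\bar F$ is unique.
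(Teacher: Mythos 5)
First, a point of comparison: the paper does not actually prove Theorem \ref{universalE}; it cites it (\cite{AsympE} Theorem 1.13, \cite{GHT}) and remarks only that it ``can be shown purely categorically using the properties of $E^G$ listed so far'' --- that is, by exhibiting every morphism of $E^G$ as a fraction $[j]^{-1}\circ[\sigma]$ of classes of genuine equivariant $\ast$-homomorphisms, in which the denominator $j$ (a mapping-cone excision map, a stabilization) is inverted by every stable, homotopy invariant, half-exact functor; existence and uniqueness of the factorization are then formal. Your proposal instead attempts the direct Connes--Higson-style extension of $F$ to asymptotic morphisms. The parts you work out are sound: $F$ vanishes on the cone $\T_0(B)$, the long exact sequences (which do follow from half-exactness, homotopy invariance and stability by Cuntz's argument) make $F(\pi_A)\colon F(E_\phi)\to F(A)$ invertible, the splitting argument shows $\hat F$ extends $F$, and your checks of evaluation-point independence and homotopy invariance of $\hat F$ are correct.

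The genuine gap is the step you yourself flag as the crux --- compatibility of $\hat F$ with composition --- and the fix you sketch does not work. The naturality of central invariants, as stated in the paper, applies to honest morphisms of $G$-extensions; but $\psi$ is only an asymptotic morphism, so it induces no $\ast$-homomorphism $\T(B)\to\T(C)$, hence no morphism of extensions $E_\phi\to E_\psi$ or $E_{\psi\circ\phi}\to E_\psi$ to which that naturality could be applied. Applying $\psi_{s(t)}$ pointwise to paths $f\in\T(B)$ is not multiplicative; making it asymptotically multiplicative along the separable subalgebra $\pi_B(E_\phi)\subset\T(B)$ requires exactly the reparametrization and iterated-asymptotic-algebra machinery ($\A(\A(C))$, $n$-fold asymptotic morphisms) that \cite{GHT} develop in order to define composition at all, and even then the resulting double pullback compares naturally with $E_{\psi\circ\phi}$ but not with $\hat F(\psi)\circ\hat F(\phi)$, since a path $f$ has no single value in $B$ to feed into $E_\psi$. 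Separately, your uniqueness argument quietly assumes the structural fact that every $E^G$-morphism is a composition of images of $\ast$-homomorphisms and formal inverses of homomorphisms that $F$ must invert; this is true (it is the fact used in the proof of Proposition \ref{KasSka}, with $C$ a mapping cone of a $G$-extension), but it is nowhere proved in your proposal --- and once it is available, both existence and uniqueness follow by the categorical route, rendering the unproven composition step unnecessary. I would restructure the argument around that structural fact rather than around a direct construction of $\hat F$ on asymptotic morphisms.
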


Now, it is time to relate the Equivariant $KK$-Theory with Equivariant $E$-Theory. Note, since $E^G$ is stable, homotopy invariant and split exact, by the universal property of $KK^G$, the canonical functor from the category of separable $G$-$\Calgs$ to the Equivariant $E$-Theory category $E^G$ uniquely factors through Kasparov's category $KK^G$. We first describe this unique functor from the category $KK^G$ to the category $E^G$. Next, we see the important fact that when $A$ is a separable $G$-$\Calg$ such that the functor $B\mapsto KK^G(A, B)$ is half exact, the abelian groups $KK^G(A, B)$ and $E^G(A,B)$ is isomorphic for any separable $G$-$\Calg$ $B$ via this unique functor from $KK^G$ to $E^G$.
 
 Let $A, B$ be any separable $G$-$\Calgs$. We define a homomorphism from $KK^G_1(A, B)$ to $E^G(\Sigma A, B)=[[\Sigma^2 A\K(\hill_G), \Sigma B\K(\hill_G)]]_G$ by the following way. Take any odd Kasparov $A$-$B$-module $x=(\E, \phi, P)$. We have a canonical pullback extension associated to $x$:\begin{align}\label{extpicture2}
\xymatrix{
0 \ar[r] & \K(\E) \ar@{=}[d]\ar[r] & E_{\phi'} \ar[d]\ar[r] & A \ar[d]^-{\phi'}\ar[r] & 0\\
0 \ar[r] & \K(\E) \ar[r] & M(B) \ar[r]  & Q(B) \ar[r] & 0\\
}
\end{align} Denote by $c$, the central invariant in the group $[[\Sigma A, \K(\E)]]_G$ associated to the $G$-extension \eqref{extpicture2}. We tensor this element $c$ with $\id_\Sigma$ and $\id_{\K(\hill_G)}$ to obtain the element $\id_{\Sigma}\otimes c \otimes\id_{\K(\hill_G)}$ in the group $[[\Sigma^2 AK(\hill_G), \Sigma \K(\E\otimes\hill_G)]]_G$. Finally, using any $G$-embedding $\E\otimes\hill_G\to B\otimes\hill_G$, we map this element to $[[\Sigma^2 AK(\hill_G), \Sigma B\K(\hill_G)]]_G=E^G(\Sigma A, B)$. This procedure obviously respects homotopy of Kasparov-modules. In this way, we obtained the homomorphism from $KK^G_1(A, B)$ to $E^G(\Sigma A, B)$. It is easy to see, for any fixed $B$, the homomorphisms $KK^G_1(A, B)$ to $E^G(\Sigma A, B)$ defines a natural transformation between two functors $KK^G_1(\cdot, B)$ and $E^G(\Sigma\cdot, B)$ from the category of separable $G$-$\Calgs$ to the category of abelian groups. Using the Bott Periodicity $KK^G(A, B)\cong KK^G_1(A, \Sigma B)$ and the above homomorphisms $KK^G_1(A, \Sigma B)\to E^G(\Sigma A, \Sigma B)\cong E^G(A, B)$, we have a natural transformation of the functors $KK^G(\cdot, B)$ to $E^G(\cdot, B)$ which sends $\id_B$ to $\id_B$. Each homomorphism $KK^G(A,  B)\to E^G(A, B)$  must coincide with the homomorphisms from $KK^G(A, B)$ to $E^G(A, B)$ of the canonical functor from $KK^G$ to $E^G$. To see this, since the homomorphisms $KK^G(A, B) \to E^G(A, B)$ commute with $\sigma_{\K(\hill_G)}$, we may assume $A= A'\K(\hill_G)$ and $B=B'\K(\hill_G)$ for some separable $G$-$\Calgs$ $A',B'$. According to \cite{Mey}, there is a $G$-$\Calgs$ $qA$ and $qB$ such that $qA$ (resp. $qB$) is isomorphic to $A$ (resp. $B$) in $KK^G$ via a canonical $\ast$-homomorphism $qA\to A$ (resp. $qB\to B$) and that any element in $KK^G(A, B)$ correspond to the element in $KK^G(qA, B)$ defined by a $\ast$-homomorphism (up to tensoring $\K$) via the $\ast$-homomorphism $qA\to A$. Since our two homomorphisms $KK^G(A, B)\to E^G(A, B)$ coincide on elements defined by $\ast$-homomorphisms and natural in first variable, they must coincide on whole elements. It follows that the homomorphisms $KK^G_1(A, B)$ to $E^G(\Sigma A, B)$ defined above are in fact, natural in both variables.

The following important fact is proven by Kasparov and Skandalis in \cite{bolic}.

\begin{prop}\label{KasSka} (cf. \cite{bolic} PROPOSITION A.3.) Let $A$ be a separable $G$-$\Calg$ such that $KK^G(A, \cdot)$ is half-exact (for example, when $A$ is proper and nuclear). Then, the canonical homomorphism from $KK^G(A, B)\to E^G(A, B)$ is an isomorphism for any separable $G$-$\Calg$ $B$.
\end{prop}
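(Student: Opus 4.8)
The plan is to realize the comparison map as a natural transformation of representable-type functors on the category $E^G$ and to invert it by a Yoneda argument. Write $\eta_B\colon KK^G(A,B)\to E^G(A,B)$ for the canonical homomorphism; by construction it is the map $F_{A,B}$ induced by the canonical functor $F\colon KK^G\to E^G$ (this $F$ exists because $E^G$ is stable, homotopy invariant and split exact, so the canonical functor to $E^G$ factors through $KK^G$). Since $KK^G$ is stable and homotopy invariant, and since by hypothesis $KK^G(A,\cdot)$ is half-exact, Theorem \ref{universalE} applies to the functor $B\mapsto KK^G(A,B)$: it factors uniquely as $\Phi\circ\iota$ for a unique additive functor $\Phi\colon E^G\to\mathrm{Ab}$, where $\iota$ is the canonical functor from separable $G$-$\Calgs$ to $E^G$, with $\Phi(B)=KK^G(A,B)$ on objects and $\Phi(\iota(f))=f_*$ for $\ast$-homomorphisms $f$.

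The first real step is to identify $\Phi\circ F$. The composite $\Phi\circ F\colon KK^G\to\mathrm{Ab}$ and the representable functor $KK^G(A,\cdot)\colon KK^G\to\mathrm{Ab}$ (right Kasparov product) both restrict, along the canonical functor from $G$-$\Calgs$ to $KK^G$, to the functor $B\mapsto KK^G(A,B)$ on $G$-$\Calgs$. By the uniqueness clause of the universal property of $KK^G$ (Theorem \ref{universal}), these two functors on $KK^G$ coincide, so $\Phi(F(x))=(\,\cdot\,\otimes_{B}x)$ for every $x\in KK^G(B,C)$; in particular $\Phi(F(x))(1_A)=1_A\otimes_A x=x$ for $x\in KK^G(A,B)$. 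Now the element $1_A\in KK^G(A,A)=\Phi(A)$ determines, via the Yoneda lemma in the additive category $E^G$, a natural transformation $\theta\colon E^G(A,\cdot)\Rightarrow\Phi$ with $\theta_B(\alpha)=\Phi(\alpha)(1_A)$. The identity $\theta_B\circ\eta_B=\mathrm{id}$ is then immediate: $\theta_B(\eta_B(x))=\theta_B(F(x))=\Phi(F(x))(1_A)=x$. This already shows each $\eta_B$ is split injective.

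For the reverse composite $\eta_B\circ\theta_B=\mathrm{id}$ I would argue by Yoneda again, after upgrading naturality from $G$-$\Calgs$ to $E^G$. The point is that $\eta$, a priori only a natural transformation $KK^G(A,\cdot)\Rightarrow E^G(A,\cdot)$ of half-exact functors on $G$-$\Calgs$, is in fact natural with respect to all morphisms of $E^G$; this is the two-sided (categorical) form of the universal property of $E$-theory, identifying natural transformations between stable, homotopy invariant, half-exact functors on $G$-$\Calgs$ with natural transformations between their factorizations through $E^G$ (see \cite{Mey}, \cite{GHT}). Granting this, $\eta\colon\Phi\Rightarrow E^G(A,\cdot)$ and $\theta\colon E^G(A,\cdot)\Rightarrow\Phi$ are natural transformations of functors on $E^G$, so $\eta\circ\theta$ is a natural endomorphism of the representable functor $E^G(A,\cdot)$. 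By Yoneda such an endomorphism is determined by its value on $\mathrm{id}_A$, and $(\eta\circ\theta)_A(\mathrm{id}_A)=\eta_A(\theta_A(\mathrm{id}_A))=\eta_A(1_A)=F(1_A)=\mathrm{id}_A$, since the functor $F$ carries the unit $1_A$ to the identity of $A$ in $E^G$. Hence $\eta\circ\theta=\mathrm{id}$, and combined with $\theta\circ\eta=\mathrm{id}$ this shows $\eta_B$ is an isomorphism for every separable $G$-$\Calg$ $B$.

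The main obstacle is precisely the third paragraph: promoting the comparison map to a natural transformation over $E^G$ rather than merely over $G$-$\Calgs$. Equivalently, one must control $\Phi(\alpha)$ for asymptotic-morphism classes $\alpha$ not in the image of $F$, and this is not delivered by the bare factorization statement of Theorem \ref{universalE}; it requires the stronger fact that the universal property induces an equivalence of functor categories. This is the technical heart of \cite{bolic} PROPOSITION A.3, while the identification of $\Phi\circ F$ and the two Yoneda computations are formal bookkeeping.
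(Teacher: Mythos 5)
Your left-inverse (injectivity) half is correct and is, in substance, the paper's own argument: the paper also feeds the half-exact functor $KK^G(A,\cdot)$ into Theorem \ref{universalE}, obtains a transformation $E^G(A,\cdot)\Rightarrow KK^G(A,\cdot)$ sending $\id_A$ to $\id_A$, and then proves that the composite endomorphism of $KK^G(A,\cdot)$ is the identity. Where you invoke the uniqueness clause of Theorem \ref{universal} to identify $\Phi\circ F$ with the representable functor given by the Kasparov product, the paper runs the underlying mechanism by hand: naturality with respect to $\ast$-homomorphisms forces the composite to fix classes of $\ast$-homomorphisms, and by Meyer's $q$-algebra structure theorem every element of $KK^G(A,B)$ is a composite of such classes and inverses of such classes in $KK^G$. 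Same content, different packaging.

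The surjectivity half, however, contains the gap you yourself flagged, and it is a genuine one: neither the factorization statement of Theorem \ref{universalE} nor anything else stated in the paper lets you treat $\eta$ as a natural transformation over $E^G$, and the Yoneda computation $(\eta\circ\theta)_A(\id_A)=\id_A$ proves nothing until that upgrade is established. The fact that makes the upgrade true --- and which the paper uses directly, bypassing any functor-category enhancement --- is the mapping-cone structure of $E$-theory classes: after reducing via compatibility of $\eta$ with $\sigma_\Sigma$ and $\sigma_{\K}$ to the case $A=\Sigma A'\K(\hill_G)$, $B=\Sigma B'\K(\hill_G)$, every $x\in E^G(A,B)$ can be written as $x=(\phi'_\ast)^{-1}([\phi])$, where $\phi\colon A\to C$ and $\phi'\colon B\to C$ are honest $\ast$-homomorphisms, $C$ is the mapping cone of a $G$-extension, and $\phi'_\ast$ is invertible on $E^G(A,\cdot)$. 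Since a half-exact, homotopy invariant, stable functor sends such an excision map $\phi'$ to an isomorphism, the hypothesis makes $\phi'_\ast$ invertible on $KK^G(A,\cdot)$ as well, and then ordinary naturality of $\eta$ with respect to $\ast$-homomorphisms exhibits the preimage $y=(\phi'_\ast)^{-1}([\phi])\in KK^G(A,B)$ of $x$. This same factorization of $E^G$-morphisms into $\ast$-homomorphism classes and inverses of such is precisely what would prove your ``two-sided universal property,'' so your third paragraph is not deferrable bookkeeping but is where the actual content lives; once you insert the mapping-cone factorization, your argument and the paper's become the same proof, with yours establishing $\eta\circ\theta=\id$ abstractly where the paper simply produces the preimage.
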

\begin{proof} Showing the surjectivity is easier.  Since the homomorphisms $KK^G(A, B) \to E^G(A, B)$ commute with $\sigma_\Sigma$ and $\sigma_{\K(\hill_G)}$, we may assume $A=\Sigma A'\K(\hill_G)$ and $B=\Sigma B'\K(\hill_G)$ for some separable $G$-$\Calgs$ $A',B'$. By our assumption, for any $x$ in $E^G(A, B)$, there is a separable $G$-$\Calg$ $C$ and a $\ast$-homomorphism $\phi$ from $A$ to $C$ and another $\ast$-homomorphism $\phi'$ from $B$ to $C$ which defines an invertible map $\phi'_\ast\colon E^G(A,B)\to E^G(A,C)$ such that $\phi'^{-1}_\ast(\phi)=x$ in $E^G(A, B)$: here, $C$ is a mapping cone of some $G$-extension. Half-exactness of $KK^G(A, \cdot)$ implies that $\phi'$ induce the isomorphism $\phi'_\ast\colon KK^G(A, B) \to KK^G(A, C)$ too. Since our homomorphisms $KK^G(A, B)\to E^G(A,B)$ are natural and send an element defined by a $\ast$-homomorphism to the element defined by the same $\ast$-homomorphism, we see that they are surjective. We now turn to the injectivity. Thanks to the universal property of $E^G$, the assumption that the functor $KK^G(A, \cdot)$ is half-exact implies there is a natural transformation from $E^G(A, \cdot)$ to $KK^G(A, \cdot)$ sending $\id_A$ to $\id_A$. Composing this with the homomorphisms $KK^G(A, B)$ to $E^G(A, B)$, we have a natural transformation from $KK^G(A, \cdot)$ to $KK^G(A, \cdot)$ which send $\id_A$ to $\id_A$. We show this natural transformation gives an isomorphism (actually the identity) $KK^G(A, B) \to KK^G(A, B)$ for any $B$. The homomorphisms $KK^G(A, B)\to KK^G(A,B)$ are natural in both  variables (as long as we consider $A$ such that $KK^G(A, \cdot)$ is half-exact) and send $\id_A$ to $\id_A$. Thus, it is the identity on elements defined by $\ast$-homomorphisms. As above, according to \cite{Mey}, any element in $KK^G(A, B)$ is a composition of $\ast$-homomorphisms and inverses of $\ast$-homomorphisms in the category $KK^G$. Thus, the homomorphisms $KK^G(A, B)\to KK^G(A, B)$ are identities on whole morphisms. We conclude the canonical homomorphism $KK^G(A, B)$ to $E^G(A, B)$ is an isomorphism for all $B$.
\end{proof}

The following is an immediate corollary of this:
\begin{cor}\label{important} Let $A$ be a separable $G$-$\Calg$ such that $KK^G(A, \cdot)$ is half-exact (for example, when $A$ is proper and nuclear). Then, the canonical homomorphism from $KK^G_1(A, B)\to E^G(\Sigma A, B)$ is an isomorphism for any separable $G$-$\Calg$ $B$.
\end{cor}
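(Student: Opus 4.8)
The plan is to reduce the statement to Proposition \ref{KasSka}, applied not to $A$ but to its suspension $\Sigma A$. Write $\kappa\colon KK^G_1(A, B)\to E^G(\Sigma A, B)$ for the canonical homomorphism in question and $\mathcal{F}$ for the canonical comparison map of the functor $KK^G\to E^G$. The key point is that, under the standard suspension isomorphism $KK^G_1(A, B)\cong KK^G(\Sigma A, B)$, the homomorphism $\kappa$ is exactly $\mathcal{F}$ evaluated at $(\Sigma A, B)$: the central-invariant construction used to define $\kappa$ is precisely how the canonical functor acts on a degree-one class, and this compatibility is essentially what was set up in the construction of $\kappa$ preceding Proposition \ref{KasSka}. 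Granting it, I would factor
\[
\kappa\colon KK^G_1(A, B)\xrightarrow{\ \cong\ } KK^G(\Sigma A, B)\xrightarrow{\ \mathcal{F}\ } E^G(\Sigma A, B),
\]
and reduce everything to showing that $\mathcal{F}$ is an isomorphism.

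For this I must check that Proposition \ref{KasSka} is available for $\Sigma A$, i.e.\ that $KK^G(\Sigma A, \cdot)$ is half-exact. When $A$ is proper and nuclear this is immediate, since $\Sigma A=C_0(0,1)\otimes A$ is again proper and nuclear (its base space is the proper $G$-space $(0,1)\times X$ with $G$ acting trivially on the interval, and nuclearity is preserved under tensoring with $C_0(0,1)$), so the half-exactness criterion for proper, nuclear $G$-$\Calgs$ applies. For the general hypothesis I would argue that half-exactness is inherited: one has $KK^G(\Sigma A, B)\cong KK^G(A, \Sigma B)$, and since $C_0(\bbR)$ is nuclear, tensoring a short exact sequence $0\to J\to B\to B/J\to 0$ by $\Sigma$ again yields a short exact sequence, so half-exactness of $KK^G(A, \cdot)$ transports to $KK^G(\Sigma A, \cdot)$. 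With this in hand, Proposition \ref{KasSka} gives that $\mathcal{F}\colon KK^G(\Sigma A, B)\to E^G(\Sigma A, B)$ is an isomorphism for every separable $G$-$\Calg$ $B$, and hence so is $\kappa$.

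The main obstacle is precisely the identification of $\kappa$ with $\mathcal{F}$ under the suspension isomorphism; everything else is routine. The construction preceding Proposition \ref{KasSka} only literally exhibits the canonical map $KK^G(A, B')\to E^G(A, B')$ as the composite of $\kappa$ at the suspended coefficient $\Sigma B'$ with two Bott periodicity isomorphisms, so one must confirm that the central-invariant recipe for $\kappa$ agrees with the canonical functor on $KK^G(\Sigma A, B)$ itself, and not merely up to a further suspension. If one prefers to sidestep this first-variable bookkeeping, a second route reads the same factorization directly: from it and Proposition \ref{KasSka} one obtains that $\kappa$ is an isomorphism on all suspended coefficients $\Sigma B'$. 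One then promotes this to arbitrary $B$ using that $\kappa$ is a natural transformation between the half-exact, homotopy invariant, stable functors $KK^G_1(A, \cdot)$ and $E^G(\Sigma A, \cdot)$ that commutes with the connecting maps of the cone extension $0\to\Sigma B\to B(0,1]\to B\to 0$ (these being given by central invariants, which $\kappa$ respects). Since $B(0,1]$ is contractible, both functors vanish on it, so the connecting maps are the suspension isomorphisms $KK^G_1(A, B)\cong KK^G_1(A, \Sigma^2 B)$ and $E^G(\Sigma A, B)\cong E^G(\Sigma A, \Sigma^2 B)$; as $\Sigma^2 B=\Sigma(\Sigma B)$ is itself a suspension, $\kappa$ is an isomorphism there, and a two-out-of-three argument lowers the conclusion from $\Sigma^2 B$ to $B$. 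Either way, the crux is this compatibility of $\kappa$ with the canonical functor and with Bott periodicity, and once it is pinned down the corollary is genuinely immediate from Proposition \ref{KasSka}.
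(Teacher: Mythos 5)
Your proposal is correct in substance, and the comparison with the paper is instructive, because the paper offers no argument at all for Corollary \ref{important} beyond the word ``immediate'' --- and the only thing that is literally immediate from the construction preceding Proposition \ref{KasSka} is the suspended-coefficient case. Indeed, the paper realizes the canonical map $KK^G(A,B')\to E^G(A,B')$ as the composite of Bott periodicity, the central-invariant map $\kappa\colon KK^G_1(A,\Sigma B')\to E^G(\Sigma A,\Sigma B')$, and Bott periodicity again; Proposition \ref{KasSka} plus two-out-of-three therefore yields that $\kappa$ is an isomorphism exactly when the coefficient algebra is a suspension $\Sigma B'$, which happens to be the only case the paper ever uses (Theorem \ref{Result} assumes $B\cong\Sigma B'$). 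Your proposal proves the statement for arbitrary $B$, and both of your routes are viable, but one caution on your preferred Route 1: the identification of $\kappa$ with the canonical functor at $(\Sigma A,B)$ cannot be obtained by simply rerunning the paper's Meyer-type uniqueness argument, because that argument requires first-variable naturality against the morphism $q(\Sigma A)\to\Sigma A$, which is not of the form $\Sigma(\,\cdot\,)$ applied to a morphism into $A$; the naturality of $\kappa$ established in the paper (in $A$, not in arbitrary algebras mapping to $\Sigma A$) therefore does not reach it. So completing Route 1 needs either a cycle-level computation, in the spirit of the $\eta$/Bott-periodicity compatibility lemmas of Chapter 8 and Example \ref{exampleBott}, or else your Route 2, which is the cleaner completion: both $KK^G_1(A,\cdot)$ and $E^G(\Sigma A,\cdot)$ are stable, homotopy invariant and half-exact (half-exactness of the first transfers from $KK^G(A,\cdot)$ since tensoring an extension by the nuclear algebra $C_0(0,1)$ preserves exactness, as you note), the Cuntz--Bott periodicity isomorphism $F(B)\cong F(\Sigma^2 B)$ for such functors is built functorially from values on $\ast$-homomorphisms and inverses of excision maps, hence is intertwined by any natural transformation, and your two-out-of-three argument then lowers the isomorphism from $\Sigma^2 B$ to $B$. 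Your side remarks are also correct: $\Sigma A$ is again proper (base space $(0,1)\times X$ with trivial action on the interval) and nuclear, so Route 1's appeal to Proposition \ref{KasSka} for $\Sigma A$ is legitimate once the flagged identification is in place.
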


\newpage
\section{The Baum-Connes Conjecture and \\ \,the Higson-Kasparov Theorem}

Let $G$ be a second countable, locally compact group. The Baum-Connes conjecture proposes the formula for calculating $K$-theory of the reduced group algebra $\C_{\text{red}}(G)$, which is highly analytic object (it is a $\C$-completion of the convolution algebra $C_c(G)$ or $L^1(G)$), in terms of $G$-equivariant $K$-homology (with $G$-compact supports) of a universal proper $G$-space $\underline{E}G$, which is certainly more geometric in nature.  Following \cite{Val}, we will first quickly introduce the most current form of the conjecture using the equivariant $KK$-theory. After that, we will introduce the Higson-Kasparov Theorem which is one of the most general results concerning the Baum-Connes Conjecture and discuss some of the technical issues one must overcome when proving this theorem which will be explored in detail in later chapters.

\begin{dfn} A Hausdorff, paracompact topological space $X$ with a continuous $G$-action is a proper $G$-space if it is covered by $G$-invariant open subsets $U$ such that there exists a compact subgroup $H$ of $G$ and a $G$-equivariant map from $U$ to $G/H$. A proper $G$-space $X$ is universal if for any proper $G$-space $Y$, there exists a $G$-equivariant continuous map from $Y$ to $X$ unique up to $G$-homotopy.
\end{dfn}

It it known that properness of a locally compact $G$-space coincides with the usual notion of properness of $G$-actions. A universal proper $G$-space exists; and it is unique up to $G$-homotopy; we denote it by $\underline{E}G$. See \cite{BCH} and also \cite{CEM} for a detailed exposition of these notions. A proper $G$-space is called $G$-compact if it is covered by translates of a compact subset $K$ over $G$. A $G$-compact proper $G$-space is locally compact; and its quotient by $G$ is compact. Given a universal proper $G$-space $\underline{E}G$, $G$-invariant $G$-compact proper subsets of $\underline{E}G$ form an inductive system under (proper) inclusion. Hence, we obtain an inductive system of $G$-equivariant $K$-homology groups.

\begin{dfn} A $K$-homology group $RK^G_{\ast}(\underline{E}G)$ of a universal proper $G$-space $\underline{E}G$ with $G$-compact supports is defined by:
\begin{align*}
RK^G_{\ast}(\underline{E}G) = \lim_{\substack{X\subseteq \underline{E}G \\ \text{$X$:$G$-inv.\,$G$-cp.}}}K_\ast^G(X) = \lim_{\substack{X\subseteq \underline{E}G \\ \text{$X$:$G$-inv.\,$G$-cp.}}}KK_\ast^G(C_0(X), \bbC)
\end{align*}
\end{dfn}

G. Kasparov defined in \cite{Kas2} a descent homomorphism for separable $G$-$\Calgs$:
\[
\xymatrix{
KK_\ast^G(A,B) \ar[r]^-{j_G} & KK_\ast(\C_{\text{max}}(G,A), \C_{\text{max}}(G,B));
}
\]
and its reduced version:
\[
\xymatrix{
KK_\ast^G(A,B) \ar[r]^-{j_{G,\text{red}}} & KK_\ast(\C_{\text{red}}(G,A), \C_{\text{red}}(G,B)). 
}
\]
On the other hand, for any proper $G$-compact space $X$, there is a distinguished class $[\mathcal{L}_X]$ in the $K$-theory group of the reduced group $\Calg$ $K_\ast(\C_{\text{red}}(G,X))=KK_\ast(\bbC,\C_{\text{red}}(G,X))$ (see \cite{Val}). The assembly map $\mu_{G,\text{red}}^X$ for a proper $G$-compact space $X$ is defined by a descent homomorphism followed by the Kasparov product with $[\mathcal{L}_X]$:
\[
\xymatrix{
\mu_{G,\text{red}}^X\colon KK_\ast^G(C_0(X),\bbC) \ar[r]^-{j_{G,\text{red}}} & KK_\ast(\C_{\text{red}}(G,X),\C_{\text{red}}(G)) \ar[r]^-{[\mathcal{L}_X]\times} & KK_\ast(\bbC,\C_{\text{red}}(G)).
}
\]
By fixing a universal proper $G$-space $\underline{E}G$, the Baum-Connes assembly map $\mu_{G,\text{red}}$ is defined as the inductive limit of above defined assembly maps $\mu_{G,\text{red}}^X$ for $G$-invariant, $G$-compact subsets $X$ of $\underline{E}G$:
\[
\xymatrix{
\mu_{G,\text{red}}\colon RK^G_{\ast}(\underline{E}G) = \lim_{\substack{X\subseteq \underline{E}G \\ \text{$X$:$G$-inv.\,$G$-cp.}}}KK_\ast^G(C_0(X),\bbC) \ar[r]^-{\mu_{G,\text{red}}^X} & KK_\ast(\bbC,\C_{\text{red}}(G)) = K_\ast(\C_{\text{red}}(G)).
}
\]
Shintaro  Nishikawa
\begin{conj}\label{thm:BC} (Baum-Connes conjecture) The assembly map $\mu_{G,\text{red}}$ is always an isomorphism.
\end{conj}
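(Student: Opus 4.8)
The statement that $\mu_{G,\text{red}}$ is \emph{always} an isomorphism is the Baum--Connes conjecture in full generality, which is a major open problem; accordingly what follows is a strategy, valid for a large class of groups, together with an honest account of where a uniform argument breaks down. The plan is to use the Dirac--dual-Dirac method of Kasparov. First I would fix a proper $G$-$\Calg$ $A$ --- in practice the algebra $A(\hill)$ associated to a $G$-Hilbert space $\hill$, or the section algebra of a Clifford bundle over $\underline{E}G$ --- and produce two $KK^G$-classes: a Dirac element $\alpha\in KK^G(A,\bbC)$ and a dual-Dirac element $\beta\in KK^G(\bbC,A)$. Because the descent homomorphism $j_{G,\text{red}}$ and the Kasparov product are functorial, these classes act compatibly on both sides of the assembly map, and everything is governed by the \emph{gamma element} $\gamma=\beta\otimes_A\alpha\in KK^G(\bbC,\bbC)$.

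Next I would separate the two halves of the isomorphism. Split injectivity of $\mu_{G,\text{red}}$ is the robust part: it follows once $\beta$ exists and $\gamma$ acts as the identity on the image of the assembly map, using only that $A$ is proper and that $KK^G(A,\cdot)$ is half-exact (Proposition \ref{KasSka} and the six-term sequence it supplies). Surjectivity, however, reduces to the single equation $\gamma=1$ in the ring $KK^G(\bbC,\bbC)$. Granting $\gamma=1$, descent and the naturality of the Kasparov product promote $\alpha$ and $\beta$ to mutually inverse maps relating $K_\ast(\red)$ to $RK^G_\ast(\underline{E}G)$, and $\mu_{G,\text{red}}$ is then forced to be an isomorphism.

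The construction of $\alpha,\beta$ and the verification that $\gamma=1$ is exactly where the hypotheses on $G$ must be fed in, and it is the technical heart of this thesis. For an a-$T$-menable $G$ one has an affine isometric action on an infinite-dimensional Euclidean space $\hill$; the dual-Dirac $\beta$ is the infinite-dimensional Bott element on $A(\hill)$, and the Dirac $\alpha$ is its inverse, obtained by lifting the formal Dirac class along the homomorphism $\eta$ of Theorem \ref{Theo}. The equality $\gamma=1$ is then the infinite-dimensional Bott periodicity statement, established by Fell's absorption technique.

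The hard part --- and the reason the statement as worded remains unproven in general --- is precisely the equation $\gamma=1$: it genuinely fails for groups with Kazhdan's property (T), where $\gamma$ is a nontrivial idempotent. For such groups the dual-Dirac construction still produces an idempotent acting on $K$-theory and hence yields injectivity, but it offers no route to surjectivity. A uniform proof would therefore demand a fundamentally new idea outside the Dirac--dual-Dirac framework for the property (T) regime; this is why the present work must restrict to the a-$T$-menable case, where $\gamma=1$ can be forced by the infinite-dimensional Bott periodicity above.
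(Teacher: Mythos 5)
You are right that this statement is a conjecture, not a theorem: the paper offers no proof of it (and explicitly notes it is open in general), and your account correctly identifies this and accurately reproduces the paper's actual strategy for the special case it does handle --- the Dual-Dirac method of Theorem \ref{DD}, the Bott element $b$, the Dirac element $d$ obtained by lifting along $\eta$ via Theorem \ref{Result}, and the restriction to a-$T$-menable groups where property (T) obstructs $\gamma=1$. One small correction: in this paper Fell's absorption technique is used only in the final chapter to prove Bott periodicity for affine actions whose linear part is an isometry times a scalar; the equality $b\otimes_{A(\hill)}d=1_\bbC$ in the main argument is instead established $E$-theoretically (Theorems \ref{theorem:comp}, \ref{HigKasTech} and \ref{KKcomp}), not by Fell's absorption.
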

For general groups $G$, Conjecture \ref{thm:BC} is still open. Nonetheless, it has been verified for quite large classes of groups. See \cite{Val} for a list of groups satisfying Conjecture \ref{thm:BC}.

For any separable $G$-$\Calg$ $A$, there is a more general assembly map $\mu_{G,\text{red}}^A$ with coefficient $A$ which can be defined by almost the same way as above:\begin{scriptsize}
\[
\xymatrix{
\mu_{G,\text{red}}^A\colon RKK^G_{\ast}(\underline{E}G, A)  =  \lim_{\substack{X\subseteq \underline{E}G \\ \text{$X$:$G$-inv.\,$G$-cp.}}}KK_\ast^G(C_0(X), A)  \ar[r] &KK_\ast(\bbC,\C_{\text{red}}(G,A)) = K_\ast(\C_{\text{red}}(G,A))
}
\] \end{scriptsize}
\begin{conj}\label{thm:BCC} (Baum-Connes Conjecture with coefficients) The assembly map $\mu_{G,\text{red}}^A$ is an isomorphism for any separable $G$-$\Calg$ $A$.
\end{conj}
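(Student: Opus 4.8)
The plan is to attack the conjecture by the Dirac--dual-Dirac method (the Dual-Dirac method, Theorem \ref{DD}), which reduces the injectivity and surjectivity of $\mu_{G,\text{red}}^A$ for \emph{all} coefficient algebras $A$ to a single statement in Equivariant Kasparov's category: that $\bbC$ is $KK^G$-equivalent to a proper $G$-$\Calg$. So the first step is to produce a proper $G$-$\Calg$ $P$ together with a \emph{dual-Dirac} class $\beta \in KK^G(\bbC, P)$ and a \emph{Dirac} class $\alpha \in KK^G(P, \bbC)$, and then to verify the two product relations $\alpha \otimes_{\bbC} \beta = 1_P$ and $\gamma := \beta \otimes_P \alpha = 1_{\bbC}$. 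Here $\gamma \in KK^G(\bbC,\bbC)$ is the \emph{gamma element}; by naturality of the descent homomorphism $j_{G,\text{red}}$ and of the Kasparov product with the classes $[\mathcal{L}_X]$ it acts as an idempotent on $K_\ast(\C_{\text{red}}(G,A))$ through which the image of the assembly map factors, while it always acts as the identity on the source $RKK^G_\ast(\underline{E}G, A)$ since those computations take place over a proper $G$-space. Thus $\mu_{G,\text{red}}^A$ is automatically split injective, and it is an isomorphism for every $A$ precisely when $\gamma = 1_{\bbC}$; once $\bbC \cong P$ in $KK^G$ is established, Theorem \ref{DD} delivers the conjecture with arbitrary separable coefficients all at once.

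To make $\alpha$ and $\beta$ accessible I would model $P$ on a geometric object carrying a proper action of $G$, the canonical choice being the $\Calg$ of a (possibly infinite dimensional) Hilbert space on which $G$ acts by affine isometries; the dual-Dirac $\beta$ is then the infinite dimensional Bott element and $\alpha$ its Dirac partner, the formal inverse. Because the natural home for $\alpha$ is an asymptotic-morphism construction, I would pass through Equivariant $E$-Theory: for the proper and nuclear $P$ at hand, Proposition \ref{KasSka} identifies $KK^G(P, \cdot)$ with $E^G(P, \cdot)$, and the lifting of the Dirac element from a group of the form $\{\Sigma P, B\}_G$ into a genuine class in $KK^G_1(P, B)$ is governed exactly by the isomorphism $\eta$ of Theorem \ref{Theo}. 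The two product relations then become Kasparov products of Bott-type and Dirac-type cycles; on each finite dimensional piece these reduce to the one dimensional Bott periodicity established earlier (the $KK^G$-equivalence of $S^2$ with $\bbC$), and they are assembled over the infinite dimensional space by a continuous-field argument.

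The hard part — and the reason the statement is posed as a conjecture — is the identity $\gamma = 1_{\bbC}$, which is where the scheme can fail for a general $G$. The construction of $\beta$ as a Bott element presupposes a \emph{metrically proper} affine isometric action on a Hilbert space, i.e.\ a-$T$-menability of $G$; for groups with Kazhdan's property $(T)$ no such action exists, $\gamma$ is then a nontrivial idempotent, and the method produces only split injectivity of $\mu_{G,\text{red}}^A$, not surjectivity. I therefore expect that no uniform proof valid for \emph{every} second countable locally compact $G$ can be obtained along this route: the strategy succeeds exactly when $G$ admits the required proper action, in which case $P$ is $KK^G$-equivalent to $\bbC$ and the conjecture holds with all coefficients, and otherwise a genuinely different mechanism for forcing $\gamma = 1$ would be needed. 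The technical contribution that the present thesis supplies to the successful case is the clean construction of the Dirac element $\alpha$ through the isomorphism $\eta$ of Theorem \ref{Theo}, which replaces the delicate completely positive cross-section argument of \cite{HigKas2}.
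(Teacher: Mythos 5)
Your proposal matches the paper's own treatment of this statement: the paper deliberately leaves it as a conjecture (and notes, citing \cite{HigGuen}, that it is false in general), proving it only for a-$T$-menable groups (Theorem \ref{thm:BCCa-T}) by precisely the Dual-Dirac pipeline you outline --- Theorem \ref{DD}, the Bott element $b\in KK^G_1(\bbC, A(\hill))$, the Dirac element $d$ lifted through the isomorphism $\eta$ of Theorem \ref{Result}, the $E$-theoretic computation, and the product identity of Theorem \ref{KKcomp} --- so your diagnosis that the strategy succeeds exactly when a metrically proper affine isometric action exists is the paper's conclusion as well. Two minor corrections: Theorem \ref{DD} requires only that $1_{\bbC}$ factor through the proper algebra, i.e.\ $b\otimes_{A(\hill)}d=1_{\bbC}$, so the second relation $\alpha\otimes_{\bbC}\beta=1_P$ you posit is neither needed nor in fact established in $KK^G$ by this method (full invertibility is obtained only in $E^G$); and for a property $(T)$ group the failure is merely that this construction of $\beta$ is unavailable --- it does not by itself follow that a gamma element exists as a nontrivial idempotent.
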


Conjecture \ref{thm:BCC} has a virtue of being hereditary to closed subgroups. Although this conjecture is known to be false in general (See \cite{HigGuen}), it is believed that this conjecture should be valid for a reasonably large class of groups. Concerning Conjecture \ref{thm:BCC}, N. Higson and G. Kasparov proved in \cite{HigKas2} a quite general result which states that Conjecture \ref{thm:BCC} holds for any (second countable) groups satisfying a certain geometric condition.

Let $G$ be a locally compact group. An affine isometric action of $G$ on a real Hilbert space $\hill$ will be denoted by $(\pi, b)$. It means we have a continuous group homomorphism $\pi\colon G\to O(\hill)$ (the infinite orthogonal group $O(\hill)$ of $\hill$ is equipped with the strong operator topology) and a (norm) continuous map $b\colon G\to \hill$ satisfying the cocycle condition $b(gg')=\pi(g)b(g')+b(g)$ for any $g,g'$ in $G$. It is called metrically proper if $\displaystyle \lim_{g\to \infty}\|b(g)\|=\infty$.  

\begin{dfn}\label{dfn:a-t} A second countable, locally compact group is called {a-${T}$-menable} if it admits a metrically proper, affine isometric action on a Hilbert space.
\end{dfn}

A-$T$-menable groups are also called as groups with the Haagerup property. The class of a-$T$-menable groups contains all second countable amenable groups; and an a-$T$-menable group $G$ has Kazhdan's property $(T)$ if and only if $G$ is compact. The prefix a-$T$ means ``not'' having property $(T)$.

\begin{theorem}\label{thm:BCCa-T} (cf. \cite{HigKas2} Theorem 9.1.) The Baum-Connes conjecture with coefficients holds for all a-$T$-menable groups.
\end{theorem}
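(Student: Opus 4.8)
The plan is to deduce the theorem from the Dual-Dirac method (Theorem~\ref{DD}), which reduces the Baum--Connes conjecture with coefficients for $G$ to the following purely $KK^G$-theoretic task: produce a \emph{proper} $G$-$\Calg$ $A$ that is isomorphic to $\bbC$ in the category $KK^G$. Concretely, I would produce a proper $G$-$\Calg$ $A$ together with a Bott (dual-Dirac) element $\beta\in KK^G(\bbC,A)$ and a Dirac element $\alpha\in KK^G(A,\bbC)$, show that the composition $\gamma=\beta\otimes_A\alpha$ equals $1_\bbC$ in $KK^G(\bbC,\bbC)$, and then upgrade this one-sided identity to a genuine $KK^G$-equivalence. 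Once $A$ is $KK^G$-equivalent to $\bbC$, Theorem~\ref{DD} gives that the assembly map $\mu^B_{G,\text{red}}$ is an isomorphism for every separable coefficient algebra $B$, which is exactly the assertion.

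Since $G$ is a-$T$-menable (Definition~\ref{dfn:a-t}), I would first fix a metrically proper affine isometric action $(\pi,b)$ of $G$ on a real Hilbert space $\hill$ and build from it the $G$-$\Calg$ $A(\hill)=\cS\hat\otimes\mathcal{C}(\hill)$, where $\mathcal{C}(\hill)$ is the inductive limit over the finite-dimensional affine subspaces $V\subset\hill$ of the graded $\Calgs$ $C_0(V)\hat\otimes\Cliff(V)$, the connecting maps being the finite-dimensional Bott maps along orthogonal complements, and the $G$-action being induced by $(\pi,b)$. Two structural facts are the key inputs: the metric properness of $b$ is what yields the proper $G$-$\Calg$ $A(\hill)$ referred to in the introduction, and $A(\hill)$ is nuclear, since each fibre $C_0(V)\hat\otimes\Cliff(V)$ is a graded tensor product of a commutative and a finite-dimensional algebra, and inductive limits of nuclear algebras are nuclear. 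Properness and nuclearity are precisely the hypotheses that license the half-exactness results (Proposition~\ref{KasSka}, Corollary~\ref{important}) and the thesis's main theorem for $A(\hill)$.

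Next I would produce the two elements. The Bott element $\beta\in KK^G(\bbC,A(\hill))$ is the infinite-dimensional Bott class, obtained as the limit of the finite-dimensional Bott classes and, as developed in the final chapter, realized through Fell's absorption for the affine action. The Dirac element is the delicate one. Rather than constructing it directly in $KK^G$, I would first build a Dirac class in the Higson--Kasparov group $\{\Sigma A(\hill),S\Sigma\}_G$ from the Bott--Dirac operator at the level of asymptotic morphisms, so that the analytic estimates live where they are natural, and then lift it through the comparison map $\eta\colon KK^G_1(A(\hill),S\Sigma)\to\{\Sigma A(\hill),S\Sigma\}_G$. Here I invoke the thesis's main result, Theorem~\ref{Theo}: because $A(\hill)$ is nuclear and proper and the target $S\Sigma$ is a suspension, $\eta$ is an isomorphism, so the lift exists and is canonical. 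This is exactly the point at which we bypass the technical extension argument of \cite{HigKas2}, Theorem~8.1. Applying Bott periodicity to this lift yields the desired $\alpha\in KK^G(A(\hill),\bbC)$.

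Finally I would verify $\gamma=\beta\otimes_{A(\hill)}\alpha=1_\bbC$ and upgrade. Since both $\alpha$ and $\beta$ arise as limits of their finite-dimensional counterparts, I would compute the Kasparov product on each $V$, where it reduces to the composition of the finite-dimensional Bott and Dirac classes; the finite-dimensional Bott periodicity established earlier in the excerpt identifies this composition with the identity, and passing to the inductive limit gives $\gamma=1$. To upgrade this to a $KK^G$-equivalence I would apply the lemma generalizing Atiyah's rotation argument, for which I must check that $A(\hill)$ has the flip property, namely that the flip on $A(\hill)\hat\otimes A(\hill)\cong A(\hill\oplus\hill)$ is $\pm1$ in $KK^G$; this holds because the swap of the two summands of $\hill\oplus\hill$ is homotopic to the identity through orthogonal transformations in infinite dimensions. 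I expect the main obstacle to be this penultimate step: controlling the asymptotic-morphism estimates uniformly along the inductive system and checking that the Kasparov products are compatible with the connecting maps, so that the finite-dimensional equalities genuinely assemble to $\gamma=1$ in the infinite-dimensional setting. This is the analytic heart of the argument, now organized cleanly precisely because Theorem~\ref{Theo} delivers the Dirac element on the nose. With $A(\hill)$ shown to be $KK^G$-equivalent to $\bbC$, Theorem~\ref{DD} completes the proof.
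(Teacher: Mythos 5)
Your overall skeleton agrees with the paper: the Dual--Dirac method (Theorem \ref{DD}), the proper nuclear algebra $A(\hill)$, the Bott element, and the Dirac element obtained by lifting the class $\alpha\in\{\Sigma A(\hill),S\Sigma\}_G$ through $\eta$ via Theorem \ref{Result}. But there is a genuine gap at the step you yourself flag as the ``analytic heart'': your plan to verify $b\otimes_{A(\hill)}d=1_\bbC$ by computing the Kasparov product fibrewise over finite-dimensional subspaces $V$ and ``passing to the inductive limit'' is not a viable mechanism. The Dirac element $d$ has no presentation as an inductive limit of finite-dimensional Dirac classes: it is defined only as the $\eta$-preimage of the central invariant of the extension \eqref{ext}, which is built from the continuous field over the deformation parameter $\alpha\in[0,\infty)$ (the harmonic-oscillator perturbation $(1+th)(B_\alpha)$), not from a system of classes indexed by $V$. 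Moreover the bifunctor $KK^G$ is not continuous with respect to inductive limits of algebras, so finite-dimensional equalities do not ``assemble''; this failure is precisely the reason the Higson--Kasparov proof exists in its asymptotic-morphism form rather than as a limit of finite-dimensional Bott periodicity.

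What the paper does instead, and what your proposal is missing, is a three-part bridge. First, Theorem \ref{theorem:comp} computes the composition of asymptotic morphisms $\alpha\circ\Sigma\beta$ and identifies it with the flip; the key devices there are the homotopy of the affine action to its linear part (along $b_y=yb$), under which the Bott map at $t=1$ becomes an honest equivariant $\ast$-homomorphism, the naturality of central invariants under pullback, and the rescaling algebra $\mathcal{D}$ showing the central invariant of the complementary extension \eqref{nullext} vanishes. Second, the proposition computing $-\eta(b)=\beta$ identifies the $KK$-theoretic Bott element with the asymptotic one. Third, and most importantly, Theorem \ref{HigKasTech} (the commuting diagram \eqref{diagram:tech} relating $\eta$, $\rho$, the Kasparov product, and composition of asymptotic morphisms) is what permits the translation of the $E$-theoretic identity into the $KK$-theoretic identity $b\otimes_{A(\hill)}d=1_\bbC$ of Theorem \ref{KKcomp}; without it the asymptotic computation says nothing about the Kasparov product. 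Two smaller points: your final ``upgrade to a $KK^G$-equivalence'' via the rotation lemma is unnecessary, since Theorem \ref{DD} requires only that $1_\bbC$ factor through a proper algebra, which the one-sided identity already gives (and the equivariance of the rotation homotopy for an affine action would itself need an argument); and Fell's absorption is used in the paper only for the non-isometric actions of the final chapter, not in the proof of this theorem.
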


In the rest of this chapter, we roughly summarize the proof of Theorem \ref{thm:BCCa-T} given by N. Higson and G. Kasparov in \cite{HigKas2} and discuss some of the technical issues surrounding the proof. Our reference includes \cite{HigKas1} \cite{HigGuen} and \cite{Julg}. 

Recall for a second countable, locally compact group $G$, $KK^G$ denotes the additive category of separable $G$-$\Calgs$ whose morphism groups are the Kasparov groups $KK^G(A,B)$. A standard approach to Conjecture \ref{thm:BCC} is so-called the Dual-Dirac method which may be summarized in the following way. See \cite{Kas2} \cite{Tu} and \cite{MeNe}.

\begin{theorem} (cf. \cite{MeNe} Theorem 8.3.) Let $G$ be a second countable, locally compact group. Suppose one finds a separable proper $G$-$\Calg$ $A$, an element $D$ (a Dirac morphism) in $KK^G(A, \bbC)$ and an element $\beta$ (a dual Dirac morphism) in $KK^G(\bbC, A)$ such that $\beta\circ D=1_A$. Then, $\gamma=D\circ\beta$ (a gamma element for $G$) is an idempotent in a ring $KK^G(\bbC,\bbC)$; and the Baum-Connes assembly map $\mu_{G,\text{red}}^A$ is split-injective for any separable $G$-$\Calg$ $A$. Moreover, a gamma element $\gamma$ is unique if it exists. If $\gamma=1 \in KK^G(\bbC, \bbC)$, then the assembly map $\mu_{G,\text{red}}^A$ is also surjective for any $A$: in other words, the Baum-Connes conjecture with coefficients holds for $G$.  
\end{theorem}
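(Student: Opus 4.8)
The plan is to isolate one algebraic fact — that a gamma element becomes the unit after exterior multiplication with any proper coefficient algebra — and to deduce every assertion from it, together with two structural inputs already available in this theory: naturality of the reduced assembly map in the coefficient variable, and the known fact that $\mu_{G,\text{red}}^{P}$ is an isomorphism whenever the coefficient $P$ is proper. I write all Kasparov products explicitly: the hypotheses read $1_A=D\otimes_{\bbC}\beta\in KK^G(A,A)$ (this is $\beta\circ D$) and $\gamma=\beta\otimes_A D\in KK^G(\bbC,\bbC)$ (this is $D\circ\beta$), and I use that $KK^G(\bbC,\bbC)$ is a commutative unital ring and that each $\sigma_P$ is a functor, so $\sigma_P(u\otimes_Z v)=\sigma_P(u)\otimes_{Z\otimes P}\sigma_P(v)$. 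Idempotency is then immediate from associativity: $\gamma\otimes_{\bbC}\gamma=\beta\otimes_A(D\otimes_{\bbC}\beta)\otimes_A D=\beta\otimes_A 1_A\otimes_A D=\gamma$.

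The crucial step — and the one I expect to be the main obstacle — is the \emph{Key Lemma}: for every proper $G$-$\Calg$ $P$ one has $\sigma_P(\gamma)=1_P$ in $KK^G(P,P)$. Applying $\sigma_P$ to $1_A=D\otimes_{\bbC}\beta$ yields only the one-sided relation $\sigma_P(D)\otimes_P\sigma_P(\beta)=1_{A\otimes P}$; the content of the lemma is that over a proper base this upgrades to the reverse composite $\sigma_P(\beta)\otimes_{A\otimes P}\sigma_P(D)=1_P$, and this upgrade is not formal — it uses properness of $P$ essentially. The mechanism I would use is to pass to the representable theory of $C_0(\underline{E}G)$-algebras (equivalently, to exploit a cut-off function on the proper base space underlying $P$) and to run a rotation homotopy there, of the type generalizing Atiyah's rotational argument, showing that $\sigma_P(\gamma)-1_P$ is homotopic to zero. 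Constructing this homotopy — or, in the Meyer--Nest formulation cited, verifying that $\gamma$ restricts to the unit on the localizing subcategory generated by compactly induced algebras — is the genuine technical heart.

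Granting the Key Lemma, uniqueness is a two-line computation. If $\gamma_i=\beta_i\otimes_{A_i}D_i$ arise from proper $A_1,A_2$, then since $\gamma_2$ acts on $D_1\in KK^G(A_1,\bbC)$ through $\sigma_{A_1}(\gamma_2)=1_{A_1}$,
\[\gamma_1\otimes_{\bbC}\gamma_2=\beta_1\otimes_{A_1}(D_1\otimes_{\bbC}\gamma_2)=\beta_1\otimes_{A_1}(\sigma_{A_1}(\gamma_2)\otimes_{A_1}D_1)=\beta_1\otimes_{A_1}D_1=\gamma_1,\]
and symmetrically $\gamma_2\otimes_{\bbC}\gamma_1=\gamma_2$; commutativity of the ring $KK^G(\bbC,\bbC)$ gives $\gamma_1=\gamma_2$.

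Finally, split-injectivity and the conditional surjectivity follow by a diagram chase. Write $B$ for the arbitrary separable coefficient algebra (the statement's second use of the letter $A$), and set $x=\sigma_B(\beta)\in KK^G(B,A\otimes B)$ and $y=\sigma_B(D)\in KK^G(A\otimes B,B)$, so $x\otimes_{A\otimes B}y=\sigma_B(\gamma)$. Since $RKK^G_{\ast}(\underline{E}G,\cdot)=\varinjlim_X KK^G(C_0(X),\cdot)$ over $G$-compact proper $X$ and each $C_0(X)$ is proper, the Key Lemma shows $(\sigma_B\gamma)_{\ast}$ is the identity on the topological side, hence $y_{\ast}\circ x_{\ast}=\mathrm{id}$ there. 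As $A\otimes B$ is again proper, $\mu_{G,\text{red}}^{A\otimes B}$ is an isomorphism; writing $j=j_{G,\text{red}}$ and using naturality of $\mu$ in the coefficient (so $(jx)_{\ast}\circ\mu_{G,\text{red}}^{B}=\mu_{G,\text{red}}^{A\otimes B}\circ x_{\ast}$, and likewise for $y$), one checks that $r:=y_{\ast}\circ(\mu_{G,\text{red}}^{A\otimes B})^{-1}\circ (jx)_{\ast}$ satisfies $r\circ\mu_{G,\text{red}}^{B}=y_{\ast}\circ x_{\ast}=\mathrm{id}$, so $\mu_{G,\text{red}}^{B}$ is split-injective. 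If moreover $\gamma=1$, then $\sigma_B(\gamma)=1_B$ and $(jy)_{\ast}\circ(jx)_{\ast}=\mathrm{id}$ on $K_{\ast}(\C_{\text{red}}(G,B))$ as well, whence the same naturality gives $\mu_{G,\text{red}}^{B}\circ r=(jy)_{\ast}\circ(jx)_{\ast}=\mathrm{id}$; thus $\mu_{G,\text{red}}^{B}$ is an isomorphism for every $B$, which is exactly the Baum--Connes conjecture with coefficients for $G$.
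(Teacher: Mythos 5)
First, a point of comparison that matters for this review: the thesis contains no proof of this statement at all --- it is imported as a black box from Meyer--Nest (and in the sequel only Theorem \ref{DD} is actually used) --- so your proposal has to stand on its own. Its formal skeleton does stand: idempotency by associativity is correct; and, \emph{granting} your Key Lemma together with the two structural inputs you name (naturality of the assembly map in the coefficient variable, and the fact that $\mu_{G,\text{red}}^{P}$ is an isomorphism for proper $P$), the uniqueness computation and the diagram chase producing the splitting $r=y_*\circ(\mu_{G,\text{red}}^{A\otimes B})^{-1}\circ(jx)_*$ are both correct. This is the classical Kasparov--Tu route; Meyer--Nest themselves argue differently, identifying the assembly map with the map induced by the Dirac morphism $P(\bbC)\otimes B\to B$ after descent, so that split injectivity falls out of the existence of a section without ever invoking the proper-coefficient isomorphism theorem.

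The genuine gap is the Key Lemma itself, and it is worse than ``the technical heart left to be verified'': as you have formulated the hypotheses, the Key Lemma is false, so no rotation homotopy or localisation argument can establish it. Take $A=0$: the zero algebra is a separable proper $G$-$\Calg$, and $KK^G(0,0)=0$ forces $1_A=0$, so $D=0$ and $\beta=0$ satisfy $\beta\circ D=1_A$; then $\gamma=0$ is idempotent but $\sigma_P(\gamma)\neq 1_P$ for any proper $P$ that is nonzero in $KK^G$ (e.g.\ $P=C_0(G)$), and the uniqueness and split-injectivity conclusions collapse with it. What this exposes is that the parenthetical ``(a Dirac morphism)'' in the statement is not decoration but an undefined term carrying a missing hypothesis: in Meyer--Nest, $D$ must be a \emph{weak equivalence}, i.e.\ $\mathrm{Res}_K(D)$ is invertible in $KK^K$ for every compact subgroup $K\leq G$ (equivalently, the cone $N$ of $D$ is weakly contractible); Kasparov and Tu demand instead that $\mathrm{Res}_K(\gamma)=1$ for all compact $K$. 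Any proof must use this. Once it is restored, your Key Lemma follows by the second mechanism you mention, not the first: $N\otimes P$ is weakly contractible (because $N$ is), and it lies in the localizing subcategory generated by compactly induced algebras because $N\otimes P$ is proper and every proper algebra lies in that subcategory (the Chabert--Echterhoff/Meyer--Nest input); since these two subcategories of a complementary pair intersect in $0$, we get $N\otimes P\cong 0$, i.e.\ $\sigma_P(\gamma)=1_P$. An Atiyah-type rotation homotopy cannot substitute for this in the abstract setting: rotation requires the flip on $A\hat\otimes A$ to equal $1$ in $KK^G$, which is a property of the concrete Hilbert-space algebras of Chapters 7--8, not of an arbitrary proper $A$.
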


There is a way of seeing a Dirac morphism as an analogue of simplicial approximation of topology; and in that sense it is known a Dirac morphism (which can be defined in a suitable way) always exists, see \cite{MeNe}. What we actually use is the following.

\begin{theorem}\label{DD} (cf. \cite{MeNe} Theorem 8.3., \cite{Tu} Theorem 2.2.) Let $G$ be a second countable, locally compact group. Suppose the identity $1_\bbC \in  KK^G(\bbC, \bbC)$ factors through a separable proper $G$-$\Calg$ $A$. Then, a gamma element $\gamma$ for $G$ exists and $\gamma=1$. Hence, the Baum-Connes conjecture with coefficients holds for $G$.
\end{theorem}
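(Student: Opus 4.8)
The plan is to upgrade the one-sided factorization in the hypothesis into a genuine Dirac / dual-Dirac pair to which the preceding Dual-Dirac theorem applies, and then to compute the resulting $\gamma$ and find it equal to $1_\bbC$. Throughout I write the Kasparov product left-to-right as $\alpha\otimes_D\beta$, so that the condition ``$\beta\circ D=1_A$'' of the preceding theorem reads $D\otimes_\bbC\beta=1_A$ and the gamma element ``$\gamma=D\circ\beta$'' reads $\gamma=\beta\otimes_A D\in KK^G(\bbC,\bbC)$. In this notation the hypothesis that $1_\bbC$ factors through the separable proper $G$-$\Calg$ $A$ means precisely that there are classes $\beta\in KK^G(\bbC,A)$ and $D\in KK^G(A,\bbC)$ with $\beta\otimes_A D=1_\bbC$.

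First I would form $e:=D\otimes_\bbC\beta\in KK^G(A,A)$ and observe that it is idempotent: using associativity of the Kasparov product and the hypothesis,
\[ e\otimes_A e=D\otimes_\bbC(\beta\otimes_A D)\otimes_\bbC\beta=D\otimes_\bbC 1_\bbC\otimes_\bbC\beta=e. \]
The heart of the argument is then to split this idempotent by a separable proper $G$-$\Calg$: I want an object $A'$ together with $\pi\in KK^G(A,A')$ and $\iota\in KK^G(A',A)$ satisfying $\iota\otimes_A\pi=1_{A'}$ and $\pi\otimes_{A'}\iota=e$. Granting such a splitting, set $D':=\iota\otimes_A D\in KK^G(A',\bbC)$ and $\beta':=\beta\otimes_A\pi\in KK^G(\bbC,A')$.

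A direct computation, using $e=\pi\otimes_{A'}\iota$ and $\iota\otimes_A\pi=1_{A'}$, gives
\[ D'\otimes_\bbC\beta'=\iota\otimes_A(D\otimes_\bbC\beta)\otimes_A\pi=\iota\otimes_A\pi\otimes_{A'}\iota\otimes_A\pi=1_{A'}. \]
Thus $(A',D',\beta')$ is exactly a Dirac / dual-Dirac pair of the kind required by the preceding theorem, which therefore produces the gamma element $\gamma=\beta'\otimes_{A'}D'$ and yields split-injectivity of $\mu_{G,\text{red}}^A$ for all $A$. Next I would compute, using $e=D\otimes_\bbC\beta$ and again the hypothesis $\beta\otimes_A D=1_\bbC$,
\[ \gamma=\beta'\otimes_{A'}D'=\beta\otimes_A e\otimes_A D=(\beta\otimes_A D)\otimes_\bbC(\beta\otimes_A D)=1_\bbC. \]
Since $\gamma=1$, the last clause of the preceding theorem gives surjectivity of the assembly map as well, so $\mu_{G,\text{red}}^A$ is an isomorphism for every separable $G$-$\Calg$ $A$; that is, the Baum-Connes conjecture with coefficients holds for $G$.

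The step I expect to be the main obstacle is the splitting of the idempotent $e$ by a \emph{proper} $G$-$\Calg$. That $KK^G$ admits splittings of idempotents is a formal consequence of its being an idempotent-complete additive (indeed triangulated) category; the delicate point is that the splitting object $A'$ can again be taken separable and proper, so that the preceding theorem actually applies to it. Conceptually this is the statement that the localizing subcategory of $KK^G$ generated by the proper $G$-$\Calgs$ is thick, hence closed under $KK^G$-retracts: the hypothesis exhibits $\bbC$ as a retract of the proper algebra $A$, so $\bbC$ lies in this subcategory, which is exactly the condition forcing $\gamma=1$. I would therefore invoke the localization machinery of Meyer--Nest (\cite{MeNe}) to realize $A'$, either as an honest proper algebra or as an object of the proper localizing subcategory for which the split-injectivity and surjectivity arguments of the preceding theorem go through verbatim; the computations of $D'\otimes_\bbC\beta'$ and of $\gamma$ above are then purely formal.
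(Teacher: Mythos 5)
First, a point of comparison: the paper does not prove Theorem \ref{DD} at all — it is quoted as a black box from the literature (Meyer--Nest Theorem 8.3, Tu Theorem 2.2) — so your proposal has to stand entirely on its own.

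Your Kasparov-product computations (idempotency of $e$, the identities $D'\otimes_\bbC\beta'=1_{A'}$ and $\beta'\otimes_{A'}D'=1_\bbC$) are all correct, but the step you defer to the end — splitting $e=D\otimes_\bbC\beta$ through a separable \emph{proper} $G$-$\Calg$ $A'$ — is not a ``delicate point'' that idempotent-completeness of $KK^G$ can supply; it is circular. Note first that $e$ already splits canonically through $\bbC$ itself: take $\pi=D$ and $\iota=\beta$, so that $\pi\otimes_\bbC\iota=e$ and $\iota\otimes_A\pi=\beta\otimes_A D=1_\bbC$. Moreover your own computations show that \emph{any} splitting datum $(A',\pi,\iota)$ forces both $D'\otimes_\bbC\beta'=1_{A'}$ and $\beta'\otimes_{A'}D'=1_\bbC$, i.e.\ $A'$ is automatically $KK^G$-equivalent to $\bbC$. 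Hence ``split $e$ by a proper algebra'' is exactly the assertion that $\bbC$ is $KK^G$-\emph{equivalent} to a separable proper $G$-$\Calg$ — which is precisely the two-sided hypothesis of the preceding Dual--Dirac theorem that Theorem \ref{DD} is designed to weaken to a one-sided retract. That assertion does not follow formally from the retract hypothesis: $\bbC$ itself is proper only when $G$ is compact, and the categorical splitting constructions (B\"okstedt--Neeman telescopes, cones of $KK^G$-morphisms) do not stay inside proper algebras. Your fallback — replace ``proper'' by ``object of the localizing subcategory generated by proper algebras'' and claim the preceding theorem ``goes through verbatim'' — is not available either: that theorem is itself only quoted in the paper, only for honest proper algebras, and extending its conclusions to all objects of the localizing subcategory is exactly the content of the Meyer--Nest theorem you would be invoking, i.e.\ of the statement to be proved.

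For what it is worth, the standard proofs avoid manufacturing any new proper algebra. Tu's argument: for an arbitrary separable $G$-$\Calg$ $B$, tensor the factorization by $B$, giving $\sigma_B(\beta)\in KK^G(B, A\otimes B)$ and $\sigma_B(D)\in KK^G(A\otimes B, B)$ with composition $1_B$; the algebra $A\otimes B$ is again proper (over the same proper $G$-space as $A$); then two cited facts — naturality of the assembly map in the coefficient algebra with respect to $KK^G$-morphisms, and the theorem that assembly is an isomorphism for \emph{proper} coefficient algebras — exhibit $\mu_{G,\text{red}}^B$ as a retract of the isomorphism $\mu_{G,\text{red}}^{A\otimes B}$, hence an isomorphism, for every $B$; the identification $\gamma=1$ comes along with the case $B=\bbC$. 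Meyer--Nest's argument runs your ``thick subcategory'' remark in full: $\bbC$ lies in the localizing subcategory generated by the proper (compactly induced) algebras because that subcategory is closed under retracts, and assembly is proven to be an isomorphism for every object of that subcategory — not by representing such objects as proper algebras, which is what your reduction would require.
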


We fix a second countable, locally compact group $G$ which acts properly, and affine isometrically on a fixed separable Hilbert space $\hill$. In view of Theorem \ref{DD}, in order to prove Theorem \ref{thm:BCCa-T}, we need to find a natural candidate $A$ through which the identity $1_\bbC$ factors through. The candidate $A$ is the $\Calg$ $A(\hill)$ which is now being called the $\Calg$ of the Hilbert  space $\hill$. 

\begin{dfn} ($\Calg$ of Hilbert space) (cf. \cite{HigKas2} Section. 4.) We define a graded $\Calg$ $A(\hill)$ of the Hilbert space $\hill$ by the following way. For each finite dimensional affine subspace $V$ of $\hill$, denote by $V_0$ the linear part of $V$ which is naturally regarded as a linear subspace of $\hill$. The complexified exterior algebra  $\Lambda^\ast(V_0)\otimes\bbC$ is naturally regarded as a graded Hilbert space (see Example \ref{Clif}). We simply denote the graded $\Calg$ $B(\Lambda^\ast(V_0)\otimes\bbC)$ by $L(V)$. The graded $\Calg$ $\cC(V)$ is defined by $\cC(V)=C_0(V\times V_0, L(V))$. The graded $\Calg$ $A(V)$ is a graded tensor product $\cS\hat\otimes\cC(V)$: recall $\cS$ is the $\Calg$ $C_0(\bbR)$ graded by reflection at the origin. For finite dimensional affine subspaces  $V \subseteq V'=V\oplus W$ ($W$ is defined as $V'_0\ominus V_0$ which is a finite dimensional linear subspace of $\hill$), we have an isomorphism of graded $\Calgs$  $\cC(V^\prime)\cong\cC(V)\hat\otimes\cC(W)$. We define an inclusion $A(V)\xhookrightarrow{}A(V^\prime)$ by tensoring the inclusion $\cS\xhookrightarrow{}A(W)$ which we will define soon below, with the identity on $\cC(V)$. For a finite dimensional linear subspace $W$ of $\hill$, the Bott operator $\cB_W$ for $W$ is an odd unbounded multiplier on $\cC(W)$ defined as $(w_1,w_2)\mapsto i\overline{c}(w_1)+c(w_2)$ on the subspace of compactly supported functions ($c(w)$ and $\overline{c}(w)$ are Clifford multiplication operators defined in Example \ref{Clif}). We recall here that associated to an (odd) multiplier $T$ on $A$, there is a unique (graded) functional calculus homomorphism from $\cS$ to the multiplier algebra $M(A)$ sending resolvent functions $(x\pm i)^{-1}$ to the resolvents $(T\pm i)^{-1}$. We use an odd multiplier $X\hat\otimes1+1\hat\otimes\cB_W$ on $A(W)=\cS\hat\otimes\cC(W)$ to define the inclusion $\cS\xhookrightarrow{}A(W)$ for any finite dimensional linear subspace $W$ of $\hill$. The $\Calg$ $A(\hill)$ of the Hilbert space $\hill$ is defined as an inductive limit of the $\Calg$ $A(V)$ for all finite dimensional affine subspace $V$ of $\hill$ using the inclusions we defined above.
\end{dfn}

The $\Calg$ $A(\hill)$ naturally becomes a $G$-$\Calg$. We note here that for any increasing sequence of affine subspaces $V_n$ whose union is dense in $\hill$, an inductive limit of $A(V_n)$ is canonically isomorphic to $A(\hill)$. The next proposition says that the $\Calg$ $A(\hill)$ is a proper $G$-$\Calg$.

\begin{prop} (cf. \cite{HigKas2} Theorem 4.9.) The $\Calg$ $A(\hill)$ of the Hilbert space $\hill$ is a proper $G$-$\Calg$. 
\end{prop}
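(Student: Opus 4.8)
The plan is to read off the required proper structure directly from the inductive-limit presentation $A(\hill)=\varinjlim_V A(V)$, where $V$ runs over the finite dimensional affine subspaces of $\hill$ and $A(V)=\cS\hat\otimes\cC(V)$ with $\cC(V)=C_0(V\times V_0,L(V))$. Scalar-valued functions sit centrally inside each $\cC(V)$, and the affine isometric action of $G$ on $\hill$ induces exactly the action one wants on the assembled commutative algebra; the upshot will be that the $G$-space underlying the proper structure is built from the affine and linear coordinates in $\hill$, and that properness of the $G$-action is precisely the metric properness $\lim_{g\to\infty}\|b(g)\|=\infty$ of the cocycle $b$.

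First I would isolate, for each finite dimensional $V$, the commutative subalgebra $C_0(V\times V_0)\cdot 1_{L(V)}$ of scalar multiplication operators inside $Z(M(\cC(V)))$; tensoring with the even part of $\cS$ produces central multipliers of $A(V)$. The key computation is to follow these central functions along the Bott inclusion $A(V)\hookrightarrow A(V')$ for $V'=V\oplus W$. Since that inclusion applies the functional calculus of the odd multiplier $X\hat\otimes 1+1\hat\otimes\cB_W$ to the $\cS$-factor, and $\cB_W^2$ is the scalar function $(w_1,w_2)\mapsto\|w_1\|^2+\|w_2\|^2$ on $\cC(W)=C_0(W\times W)$, an even function $f$ of the $\cS$-generator is carried to $f\big(\sqrt{x^2+\|w_1\|^2+\|w_2\|^2}\big)$. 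Thus the scalar central functions form an inductive system of commutative $\Calgs$; its limit maps into $Z(M(A(\hill)))$, and I would identify the underlying $G$-space $X$ as the space of data $(x,\xi,\eta)$ — the $\cS$-variable together with the affine coordinate $\xi$ and the linear coordinate $\eta$ in $\hill$ — on which $G$ acts by $g\cdot(x,\xi,\eta)=(x,\pi(g)\xi+b(g),\pi(g)\eta)$. Checking that the resulting $\ast$-homomorphism $C_0(X)\to Z(M(A(\hill)))$ is $G$-equivariant then just matches this action with the given action on $A(\hill)$, the affine part coming from the coordinates of $\cC$ and the isometric part from the action on the Clifford factor $L(V)$.

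It remains to verify the density of $C_0(X)A(\hill)$ and the properness of the $G$-action. For density, every element of $A(\hill)$ is approximated by elements of some $A(V)=\cS\hat\otimes C_0(V\times V_0,L(V))$, and such an algebra has central approximate units drawn from the even part of $\cS$ tensored with $C_0(V\times V_0)$; compatibility of these approximate units with the connecting maps lets one take them from the image of $C_0(X)$, so that multiplication by suitable central functions recovers any element up to arbitrarily small error. For properness, on a set of points with bounded coordinates one has $\|\pi(g)\xi+b(g)\|\ge\|b(g)\|-\|\xi\|\to\infty$ as $g\to\infty$; hence $(g,p)\mapsto(gp,p)$ has compact preimages, which is exactly properness of the $G$-action, and it uses nothing beyond $\lim_{g\to\infty}\|b(g)\|=\infty$.

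The main obstacle is that $\hill$ is infinite dimensional, so the base space $X$ assembled above is not literally the spectrum of a finite dimensional, manifestly locally compact algebra; the honest content of the proposition is to organize the finite dimensional approximations — each living over the locally compact space $V\times V_0$ — into a genuine proper $G$-$\Calg$ structure on the limit. The delicate feature is precisely the radius bookkeeping: the connecting maps mix the $\cS$-coordinate with the tail norms of the Hilbert-space coordinates through $\cB_W$, so the correct topology on $X$ and the proof that the assembled homomorphism is well defined across all $V$ both hinge on controlling this mixing uniformly. Once the limiting base space and its topology are set up with sufficient care, each of the remaining verifications — centrality, equivariance, density, and properness — is a levelwise statement that passes to the limit.
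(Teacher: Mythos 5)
Your proposal follows essentially the same route as the paper's proof: both identify the central subalgebra of $A(\hill)$ as the inductive limit of the algebras $C_0([0,\infty)\times V\times V_0)$ under the connecting maps dual to $(t,v_1',v_2')\mapsto\bigl(\sqrt{t^2+\|w_1\|^2+\|w_2\|^2},\,v_1,\,v_2\bigr)$, realize the resulting base space as $[0,\infty)\times\hill\times\hill$ with $G$ acting trivially on the first factor, affinely on the second and by the linear part $\pi$ on the third, and then verify properness from metric properness of the cocycle together with density of the central multiples. The one point you defer --- pinning down the topology that makes this space locally compact --- is exactly what the paper supplies, namely the weak topology induced by the embedding $(t,v_1,v_2)\mapsto\bigl(\sqrt{t^2+\|v_1\|^2+\|v_2\|^2},\,v_1,\,v_2\bigr)$ into $[0,\infty)\times\hill\times\hill$ with the two Hilbert-space factors carrying their weak topologies.
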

\begin{proof} The center $Z(\hill)$ of $A(\hill)$ is an inductive limit of the center $Z(V)\cong C_0([0,\infty)\times V\times V_0)$ of $A(V)$. The inclusion $A(V)\xhookrightarrow {}A(V^\prime)$ descends to an inclusion $Z(V)\xhookrightarrow {}Z(V^\prime)$; and this corresponds to `projections' \[[0,\infty)\times V^\prime\times V^\prime_0 \ni (t, v^\prime_1, v^\prime_2) \mapsto (\sqrt{\mathstrut t^2+\|w_1\|^2+\|w_2\|^2}, v_1, v_2) \in [0,\infty)\times V\times V_0\] where $v^\prime_i=v_i+w_i$ in the decomposition $V^\prime=V\oplus V^\perp$ or $V^\prime_0=V_0\oplus V^\perp_0$. Therefore, the Gelfand spectrum of $Z(\cH)$ is identified with the second countable, locally compact Hausdorff space $[0,\infty)\times \cH\times\cH$ whose topology is the weak topology defined by inclusion $[0,\infty)\times \cH\times\cH \ni (t,v_1,v_2) \mapsto (\sqrt{\mathstrut t^2+\|v_1\|^2+\|v_2\|^2}, v_1, v_2) \in [0,\infty)\times \cH\times\cH$ where two $\hill$ in the right-hand side are endowed with the weak topology of the Hilbert space. The $G$-action on $A(\cH)$ corresponds to a $G$-action on $[0,\infty)\times \cH\times\cH$ which is identity on the first factor, the affine action of $G$ on $\hill$ on the second and the linear part of the affine action of $G$ on the third. The properness of this $G$-action is easily verified. One can also check $Z(\cH)A(\cH)$ is dense in $A(\cH)$. This shows $A(\hill)$ is a proper $G$-$\Calg$.
\end{proof}

The Bott operator $\cB_W$ for each finite dimensional linear subspace $W$ of $\hill$ can be assembled together to define a single odd unbounded multiplier $\cB$ on $A(\hill)$ which we call the Bott operator for $\hill$. Using the functional calculus for $\cB$, we obtain an element $F=\cB(1+\cB^2)^{-\frac{1}2}$ in $M(A(\hill))$. This element is selfadjoint and essentially unitary and essentially equivariant (meaning $F^2-I, \,g(F)-F \in A(\hill) \,\,\text{for $g \in G$}$); thus $\frac{F+1}{2}$ is an essentially projection which is essentially equivariant. It defines an element $b$ in $KK^G_1(\bbC, A(H))$.

\begin{dfn} In the same notation as above, we call the element $b=(A(\hill), 1, \frac{F+1}{2})$ in $KK^G_1(\bbC, A(\hill))$ as the Bott element or the dual Dirac element.
\end{dfn}


To find the Dirac element which inverts the Bott element $b$, we need to find a certain ``Dirac operator'' which defines an extension of $A(\hill)$ by ``compact operators'' because a natural Dirac element should lie in the boundary of such an extension (just like Toeplitz extension inverts the classical Bott element). The approach given in \cite{HigKas2} is slightly different from this. They constructed a certain $G$-continuous field $(A_\alpha(\hill))_{\alpha\in[0,\infty)}$ of $G$-$\Calgs$ over the interval $[0,\infty)$ with $A_0(\hill)=A(\hill), A_{\alpha}(\hill)= \cS\hat\otimes\K(H_\alpha(\cH))$ for $\alpha$ in $(0,\infty)$ where $(H_\alpha(\cH))_{\alpha\in(0,\infty)}$ is a certain continuous field of $G$-Hilbert spaces (the details are given in the following chapter). The $G$-$\Calg$ $\mathcal{F}$ of continuous sections of the field $(A_\alpha(\hill))_{\alpha\in[0,\infty)}$ which vanish at infinity (by evaluating at $0$) would give us an extension of $G$-$\Calgs$:
\begin{equation}\label{ext}
\xymatrix{
0 \ar[r] & \K(\cS\hat\otimes\E) \ar[r] & \mathcal{F} \ar[r] & A(\hill) \ar[r] & 0
}
\end{equation}
where $\E$ is a continuous sections of the field $(H_\alpha(\cH))_{\alpha\in(0,\infty)}$ of $G$-Hilbert spaces which vanish at infinity and where $\cS\hat\otimes\E$ is regarded as a $G$-$S\Sigma$-Hilbert module (we identify $\Sigma=C_0(0, 1)$ as $C_0(0, \infty)$ as far as it makes no confusion). We note that the extension \eqref{ext} is isomorphic to
\begin{equation*}
\xymatrix{
0 \ar[r] & S\K\Sigma \ar[r] & \mathcal{F} \ar[r] & A(\hill) \ar[r] & 0
}
\end{equation*}
if we disregard the $G$-actions.
  
Unfortunately, we would not be able to directly associate an element in $KK^G_1(A(\hill),S\Sigma)\\\cong KK^G_1(A(\hill),\bbC)$ to the extension \eqref{ext} since it is not clear that the extension \eqref{ext} admits an $G$-equivariant completely positive section. Nonetheless, there is an element in $KK^G_1(A(\hill),S\Sigma)$ which serves as an approximation of an ``element'' associated to the extension \eqref{ext}. The precise meaning of this approximation is as follows. For separable $G$-$\Calgs$ $A$ and $B$, one can define an abelian semigroup $\{A, B\}_G$ of homotopy equivalence classes of equivariant asymptotic morphisms from $A$ to $\K(\E)$ where $\E$ is a countably generated $G$-$B$-Hilbert module. The semigroup $\{\Sigma A, B\}_G$ is an abelian group; and associated to any extension of separable $G$-$\Calgs$:
\[
\xymatrix{
0 \ar[r] & \K(\E) \ar[r] & \mathcal{F} \ar[r] & A \ar[r] & 0
}
\]
there is a uniquely determined class of the group $\{\Sigma A, B\}_G$. Now, we denote by $\alpha$ the class in $\{\Sigma A(\hill), S\Sigma\}_G$ uniquely associated to the extension \eqref{ext}.

There are naturally defined group homomorphisms $\eta$ from $KK^G_1(A,B)$ to $\{\Sigma A, B\}_G$ and from $KK^G_0(A,B)$ to $\{\Sigma^2 A, B\}_G$ (\cite{HigKas2} Definition 7.4.). In the paper \cite{HigKas2}, N. Higson and G. Kasparov constructed a canonical element $d$ in $KK^G_1(A(\hill),S\Sigma)\cong KK^G_1(A(\hill),\bbC)$ such that $\eta(d)=\alpha \in \{\Sigma A(\hill), S\Sigma\}_G$. We call the element $d$ as the Dirac element. The conclusion is as follows.

\begin{theorem}\label{conc} (cf. \cite{HigKas2} Theorem 8.5.) The Dirac element $d \in KK^G_1(A(\hill), \bbC)$ is a (right) inverse of the dual Dirac element $b \in KK^G_1(\bbC, A(\hill))$.  In other words, we have $b\otimes_{A(\hill)}d=1_\bbC$. 
\end{theorem}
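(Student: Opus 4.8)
The goal is to evaluate the Kasparov product $b\otimes_{A(\hill)}d$, which by Bott periodicity lies in $KK^G_0(\bbC,\bbC)=KK^G(\bbC,\bbC)$, and to identify it with the generator $1_\bbC$. The plan is to represent both factors by concrete (asymptotic) morphisms, compute the product by localizing onto finite-dimensional subspaces, and finish with an explicit index computation; the serious difficulty is purely one of gluing the finite-dimensional pieces equivariantly.

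First I would transport the whole computation into Equivariant $E$-Theory. The $\Calg$ $A(\hill)$ was shown above to be a proper $G$-$\Calg$, and it is nuclear, being an inductive limit of graded tensor products of $\cS$, commutative algebras and Clifford algebras, each of which is nuclear; hence $KK^G(A(\hill),\cdot)$ is half-exact and Corollary \ref{important} applies. Thus the Dirac class $d\in KK^G_1(A(\hill),\bbC)$ is faithfully recorded by $\eta(d)=\alpha$, the central invariant of the continuous-field extension \eqref{ext}, and in this picture $d$ is represented by the asymptotic morphism $f\otimes x\mapsto f(u_t)s(x)$ attached to the deformation $(A_\alpha(\hill))_{\alpha\in[0,\infty)}$ running from $A(\hill)$ at $\alpha=0$ to the compacts $\cS\hat\otimes\K(H_\alpha(\hill))$ at $\alpha>0$. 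The dual-Dirac class $b$ is represented through the Bott operator $\cB$ via $F=\cB(1+\cB^2)^{-\frac12}$. In this framework the product $b\otimes_{A(\hill)}d$ becomes the composition of the Bott morphism with the central invariant, which the groups $E^G$ and $\{\Sigma A(\hill),S\Sigma\}_G$ manipulate cleanly.

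Next I would localize the composition onto the finite-dimensional pieces $A(V)=\cS\hat\otimes\cC(V)$, where $L(V)=\Cliff(V_0)\hat\otimes\overline{\Cliff}(V_0)$. For a fixed finite-dimensional $V$, the algebra $A(V)$ is $KK$-equivalent to $\bbC$: this is finite-dimensional Bott periodicity, obtained by iterating the one-dimensional periodicity $S^2\sim_{KK}\bbC$ proved above together with the Clifford identification $\Cliff(V_0)\hat\otimes\overline{\Cliff}(V_0)\cong\K$. The restrictions of the Bott and Dirac data to $A(V)$ are mutually inverse, and the corresponding product in $KK(\bbC,\bbC)\cong\bbZ$ is the graded index of the finite-dimensional Bott--Dirac operator $B$ of Example \ref{BottDirac}. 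Since $B^2$ has a one-dimensional kernel, spanned by the ground state $e^{-x^2/2\alpha}$ tensored with the Clifford vacuum, and is strictly positive elsewhere, this index equals $1$, i.e.\ the generator. Mehler's formula supplies the analytic control here: it governs the harmonic-oscillator semigroup $e^{-(\alpha^2\Delta+x^2)}$ uniformly, which is exactly what is needed to check that the relevant families of functions of $\cB$ and of $B$ assemble into genuine asymptotic morphisms.

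The main obstacle, and the reason the bare $KK$-product will not suffice, is that $G$ acts on $\hill$ by affine isometries and preserves no finite-dimensional subspace $V$, so the finite-dimensional equivalences $A(V)\sim_{KK}\bbC$ are not $G$-equivariant. The crux is therefore to glue the finite-dimensional computations over an increasing exhaustion $V_1\subset V_2\subset\cdots$ with dense union in $\hill$ in a way that is only asymptotically equivariant, displacing the failure of equivariance into the asymptotic parameter. This is precisely what the deformation variable $\alpha$ (equivalently the asymptotic time $t$) in \eqref{ext} provides: as one moves to larger $V_n$ and larger $\alpha$, the non-equivariance of finite-dimensional Bott periodicity becomes negligible, and the uniform estimates from Mehler's formula guarantee that the assembled object is an honest $G$-equivariant asymptotic morphism homotopic to the identity of $\bbC$. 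Establishing these uniform, $G$-equivariant estimates is the technical heart of the argument; granting them, $b\otimes_{A(\hill)}d=1_\bbC$ follows.
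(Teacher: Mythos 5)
Your proposal shares the paper's top-level strategy (compute the composite of the Bott and Dirac asymptotic morphisms, with the identity contribution coming from the one-dimensional kernel of the Bott--Dirac operator), but it has two genuine gaps, and the first is fatal to the statement as posed. Passing to $E$-theory only computes the image of $b\otimes_{A(\hill)}d$ under the canonical functor $KK^G\to E^G$. Corollary \ref{important} does apply to $A(\hill)$ (proper and nuclear) and legitimately identifies $d$ with $\alpha$, but the product itself lives in $KK^G(\bbC,S\Sigma)$, and in the first variable $\bbC$ none of this machinery is available: $\bbC$ is a proper $G$-$\Calg$ only when $G$ is compact (a proper $G$-space has no $G$-fixed point, hence admits no equivariant character $C_0(X)\to\bbC$), so Proposition \ref{KasSka} gives no injectivity of $KK^G(\bbC,S\Sigma)\to E^G(\bbC,S\Sigma)$. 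Consequently, showing that the composite asymptotic morphism is the identity proves only the $E$-theoretic statement; it does not yield $b\otimes_{A(\hill)}d=1_\bbC$ in $KK^G$, which is what the Dual-Dirac method (Theorem \ref{DD}) requires. The paper closes exactly this gap in Theorem \ref{KKcomp} with ingredients absent from your proposal: the explicit homomorphism $\rho$ of Definition \ref{dfnrho}, the fact that $\rho\circ\eta$ is the Bott periodicity map (Lemma \ref{rhoinverse}), and above all Theorem \ref{HigKasTech}, which says the Kasparov product is recovered, via $\rho$, from the composition of the asymptotic morphisms $\eta(b)$ and $\eta(d)$. The phrase ``which the groups manipulate cleanly'' is precisely where this nontrivial content is hidden.

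The second gap is in your computation of the composite itself. Gluing finite-dimensional index computations over an exhaustion $V_1\subset V_2\subset\cdots$, with the failure of equivariance ``becoming negligible,'' cannot work as stated: for a metrically proper affine action $\|b(g)\|\to\infty$, so group elements translate every finite-dimensional affine subspace arbitrarily far away, and the non-equivariance of finite-dimensional Bott periodicity is not small in any norm. The uniform estimates of Proposition \ref{property} (where the Mehler-type analysis and the adapted operator $h$ of Lemma \ref{adapt} enter) make the Dirac morphism $\alpha$ well defined and asymptotically equivariant, but they do not identify the composite class. The paper's Theorem \ref{theorem:comp} handles equivariance structurally rather than analytically: first, homotopy invariance applied to the field of actions $b_y=yb$ over $Y=[0,1]$ (this is why the whole chapter is parametrized over a space $Y$) reduces to the linear action, for which the Bott map at $t=1$ is an honest equivariant $\ast$-homomorphism; then naturality of central invariants converts $\alpha\circ\Sigma\beta$ into the central invariant of the pullback extension \eqref{pulled}; finally the projection onto the kernel of $B_\alpha$ splits this into the suspension extension \eqref{flipext}, whose invariant is the identity class (this is where your ``one-dimensional kernel gives index one'' observation lives, but as a single infinite-dimensional step, not a limit of finite-dimensional ones), plus the extension \eqref{nullext}, whose invariant is shown to vanish by a null-homotopy through the cone algebra $\mathcal{D}$. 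The content you propose to ``grant'' is exactly these steps.
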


As is implied in the construction of the Dirac element $d$, the proof of Theorem \ref{conc} takes a somewhat indirect approach. It is based on an $E$-theoretic argument. One first calculates the composition of asymptotic morphisms $\eta(d)$ and $\eta(b)$, and next translates this calculation to one for the Kasparov bivariant theory $KK^G$. 
This ends our brief summary of the Higson-Kasparov Theorem.  In the following chapters, we are going to give details of the proof of the Higson-Kasparov Theorem.

\newpage
\section{$E$-theoretic Part of the Higson-Kasparov Theorem}

In this chapter, we will give the detail of $E$-theoretic theoretic part of Higson-Kasparov Theorem. We will consider a second countable group $G$ which acts affine isometrically on a separable infinite dimensional (real) Hilbert space $\hill$. The goal of this chapter is to define the canonical $G$-extension \eqref{ext} of the $\Calg$ $A(\hill)$ and to compute the composition (of asymptotic morphisms) of the Bott element and the central invariant associated to this extension.

\begin{dfn} (cf. \cite{HigKas2} Definition 2.6.) For any positive real number $\alpha>0$, we will later define the canonical graded (complex!) Hilbert space $H_\alpha(\hill)$ associated to the (real) Hilbert space $\hill$ and the canonical unbounded operator on $H_\alpha(\hill)$. Fix $\alpha>0$. We first define for any finite dimensional affine subspace $V$ of $\hill$, the graded (complex Hilbert space) $H(V)=L^2(V, \Lambda^\ast(V_0)\otimes\bbC)$. Here, we use the usual Lebesgue measure on $V$. For any finite dimensional linear subspace $W$ of $\hill$, the Bott-Dirac operator on $W$ (for a fixed $\alpha$) is an odd symmetric unbounded operator\begin{equation}\label{BottDiracW}
 B_{W,\alpha}=\sum^m_{j=1}\alpha \overline{c}(w_j)\frac{\partial}{\partial x_j}+c(w_j)x_j \end{equation} defined on the subspace $s(W)$ of Schwarts functions of $H(W)$ where $m=\dim W$, $x_j$ are coordinate functions for some fixed orthonormal system for $W$ and $w_j$ are its dual basis. One can check the Bott-Dirac operator $B_{W,\alpha}$ is defined independently of choices of an orthonormal system (basis) for $W$. When $W=W_1\oplus W_2$, $H(W)=H(W_1)\hat\otimes H(W_2)$ naturally; and we have $B_{W_\alpha}=B_{W_1,\alpha}\hat\otimes1 + 1\hat\otimes B_{W_2,\alpha}$.\end{dfn}

\begin{prop} (cf. \cite{HigKas2} Definition 2.6.) The Bott Dirac operator $B_{W,\alpha}$ is an essentially selfadjoint odd unbounded operator having compact resolvent with one-dimensional kernel.
\end{prop}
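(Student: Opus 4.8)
The plan is to reduce everything to the one-dimensional case treated in Example~\ref{BottDirac} by means of the orthogonal tensor product decomposition recorded in the definition, and then to lift essential selfadjointness from the square $B_{W,\alpha}^2$ to $B_{W,\alpha}$ using Lemma~\ref{lemselfad}. Concretely, fix an orthonormal basis $w_1,\dots,w_m$ of $W$ and write $W=\bbR w_1\oplus\cdots\oplus\bbR w_m$. The stated additivity of the Bott--Dirac operator under orthogonal direct sums gives
\begin{equation*}
B_{W,\alpha}=\sum_{j=1}^m 1\hat\otimes\cdots\hat\otimes B_j\hat\otimes\cdots\hat\otimes 1,
\end{equation*}
where $B_j$ is the one-dimensional Bott--Dirac operator $\alpha\overline c(w_j)\frac{d}{dx_j}+c(w_j)x_j$ acting in the $j$-th tensor factor $H(\bbR w_j)$. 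Each $B_j$ is odd, and by Example~\ref{BottDirac} it is essentially selfadjoint with compact resolvent, diagonalizable with eigenvalues $\pm\sqrt{2n\alpha}$ ($n\geq0$), the eigenvalue $0$ occurring with multiplicity one.

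First I would compute $B_{W,\alpha}^2$. Writing $\tilde B_j$ for the $j$-th summand above, the graded multiplication rule $(a\hat\otimes b)(c\hat\otimes d)=(-1)^{\partial b\partial c}(ac)\hat\otimes(bd)$ forces the mixed terms to cancel: for $i\neq j$ the two orderings of $\tilde B_i$ and $\tilde B_j$ differ by the sign $(-1)^{\partial B_i\partial B_j}=-1$, so $\tilde B_i\tilde B_j+\tilde B_j\tilde B_i=0$. Hence
\begin{equation*}
B_{W,\alpha}^2=\sum_{j=1}^m 1\hat\otimes\cdots\hat\otimes B_j^2\hat\otimes\cdots\hat\otimes 1 .
\end{equation*}
By Example~\ref{BottDirac} each $B_j^2$ is diagonalizable with compact resolvent and eigenvalues $2n\alpha$ ($n\geq 0$), so $B_{W,\alpha}^2$ is diagonalizable with eigenvalues $2(n_1+\cdots+n_m)\alpha$; each such eigenvalue has finite multiplicity, since a given integer admits only finitely many representations as $n_1+\cdots+n_m$ with $n_j\geq0$. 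In particular $B_{W,\alpha}^2$ is essentially selfadjoint and has compact resolvent.

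Next I would apply Lemma~\ref{lemselfad}. The operator $B_{W,\alpha}$ is a first-order differential operator with polynomial coefficients, hence preserves the Schwartz domain $s(W)$; thus $D(B_{W,\alpha})=D(B_{W,\alpha}^2)=s(W)$, and $B_{W,\alpha}$ is symmetric and odd. The lemma then transfers essential selfadjointness and compactness of the resolvent from $B_{W,\alpha}^2$ to $B_{W,\alpha}$. For the selfadjoint operator $B_{W,\alpha}$ one has $\ker B_{W,\alpha}=\ker B_{W,\alpha}^2$, and since $B_{W,\alpha}^2$ is a sum of commuting nonnegative operators its kernel is the graded tensor product $\ker B_1^2\hat\otimes\cdots\hat\otimes\ker B_m^2$. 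Each $\ker B_j^2=\ker B_j$ is the one-dimensional even space spanned by the ground state $e^{-x_j^2/2\alpha}$ (Example~\ref{harm}), so the kernel of $B_{W,\alpha}$ is one-dimensional, spanned by the even Gaussian $\prod_j e^{-x_j^2/2\alpha}$.

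The main obstacle is the essential selfadjointness of $B_{W,\alpha}$: one cannot simply invoke ``a sum of essentially selfadjoint operators is essentially selfadjoint,'' which is false for unbounded operators. The whole point of passing to the square is that the graded anticommutation of the Clifford parts makes $B_{W,\alpha}^2$ a genuine sum of commuting harmonic-oscillator-type operators with no cross terms, after which Lemma~\ref{lemselfad} performs the lifting. The remaining points---invariance of the Schwartz domain, finiteness of the multiplicities, and the computation of the kernel---are routine once this decomposition is in hand.
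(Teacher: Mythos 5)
Your proof is correct and follows essentially the same route as the paper: decompose $W$ into one-dimensional subspaces, observe that the graded cross terms cancel so that $B_{W,\alpha}^2$ is a sum of commuting harmonic-oscillator-type operators, deduce that $B_{W,\alpha}^2$ is essentially selfadjoint with compact resolvent and one-dimensional kernel spanned by the Gaussian, and then lift these properties to $B_{W,\alpha}$ via Lemma~\ref{lemselfad}. Your write-up is somewhat more detailed than the paper's (explicit cancellation of mixed terms, finiteness of multiplicities, verification of the domain hypothesis $D(T)=D(T^2)$), but the underlying argument is identical.
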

\begin{proof} If $W$ is one-dimensional, the Bott Dirac operator $B_{W,\alpha}$ on $W$ is nothing but the one which we described in Example \ref{BottDirac}; and we know it is an essentially selfadjoint odd unbounded operator having compact resolvent with one-dimensional kernel. In general case, decompose $W$ into one-dimensional subspace. Then, $B_{W, \alpha}^2$ may be written as $B_{W_1,\alpha}^2\hat\otimes1\hat\otimes\cdots\hat\otimes1 + 1\hat\otimes B_{W_2,\alpha}^2\hat\otimes\cdots\hat\otimes1  + \cdots + 1\hat\otimes1\hat\otimes\cdots\hat\otimes B_{W_m,\alpha}^2$ with $m=\dim(W)$ and $W_i$ are mutually orthogonal one-dimensional subspace of $W$ for $i=1,2,\dots,m$. It is now clear that $B_{W,\alpha}^2$ is an essentially selfadjoint operator having compact resolvent with one dimensional kernel; and so is $B_{W,\alpha}$ by Lemma \ref{lemselfad}. We note here that the kernel of $B_{W,\alpha}$ (hence of $B_{W,\alpha}^2$) is spanned by a normalized vector $\xi_{W,\alpha}(x)=(\alpha\pi)^{-\frac{m}{4}}\exp(-\frac{||x||^2}{2\alpha}).$
\end{proof}

\begin{dfn}(The Hilbert space $H_\alpha(\hill)$ and the Bott-Dirac operator $B_\alpha$ on $\hill$) (cf. \cite{HigKas2} Definition 2.8.) We still implicitly fix $\alpha>0$. The graded Hilbert space $H_\alpha(\hill)$ is defined as an inductive limit of graded Hilbert spaces $H(V)$ where $V$ runs through all finite dimensional affine subspaces of $\hill$: given finite dimensional subspaces $V$ and $ V'=V\oplus W$, we define an inclusion $H(V) \xhookrightarrow{} H(V')=H(V)\otimes H(W)$ by $\xi\mapsto\xi\otimes\xi_{W,\alpha}$. When a group $G$ acts on $\hill$ by affine isometries, it naturally acts on $H_\alpha(\hill)$.  For finite dimensional linear subspaces $W\subseteq W'$ of $\hill$, we have the following commutative diagram.
\begin{align}\label{diag:bottdirac}
\xymatrix{
s(W) \ar[d]^-{B_{W,\alpha}} \ar[r] & s(W') \ar[d]^-{B_{W',\alpha}} \\
s(W) \ar[r] & s(W')\\
}
\end{align}
The Bott-Dirac operator on $\hill$ is an odd symmetric unbounded operator $B_\alpha$ defined on a subspace $\displaystyle s_\alpha(\hill)=\lim_{\substack{W\subseteq\hill \\ \text{$W$:f.n.dim.\,linear}}} s(W)$ of $\displaystyle H_\alpha(\hill)\cong\lim_{\substack{W\subseteq\hill \\ \text{$W$:f.n.dim.\,linear}}} H(W)$ which is defined as an inductive limit of the Bott-Dirac operator $B_{W,\alpha}$. We note here that we have a continuous field $(H_\alpha(\hill))_{\alpha\in(0, \infty)}$ of (graded) Hilbert spaces over the interval $(0, \infty)$: basic sections are defined by vectors in $H(V)$ for any finite dimensional affine subspace $V$ of $\hill$.  When $G$ acts on $\hill$ by affine isometries, this becomes a continuous field of (graded) $G$-Hilbert spaces naturally. 
\end{dfn}

\begin{prop} (cf. \cite{HigKas2} Definition 2.8.) The Bott-Dirac operator $B_{\alpha}$ is an essentially selfadjoint odd unbounded operator having one-dimensional kernel. When a group $G$ acts on $\hill$ by linear isometries, it is $G$-equivariant.
\end{prop}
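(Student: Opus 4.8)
The plan is to reduce every assertion to the finite-dimensional statements already established, using the inductive-limit description of $B_\alpha$ together with Lemma \ref{lemselfad}. Fix an orthonormal basis $(e_i)_{i\geq1}$ of $\hill$ and set $W_n=\spa\{e_1,\dots,e_n\}$; since $\bigcup_n W_n$ is dense, $H_\alpha(\hill)=\varinjlim_n H(W_n)$ with inclusions $H(W_n)\hookrightarrow H(W_{n+1})$, $\xi\mapsto\xi\otimes\xi_{\bbR e_{n+1},\alpha}$, and $s_\alpha(\hill)=\varinjlim_n s(W_n)$. The first thing I would record is that $B_\alpha$ is well defined and preserves its domain: because $B_{W_{n+1},\alpha}=B_{W_n,\alpha}\hat\otimes1+1\hat\otimes B_{\bbR e_{n+1},\alpha}$ and the stabilizing vector $\xi_{\bbR e_{n+1},\alpha}$ lies in the kernel of the one-dimensional Bott-Dirac operator, the operator $B_{W_{n+1},\alpha}$ agrees with $B_{W_n,\alpha}$ on the image of $H(W_n)$ — this is exactly the commuting diagram \eqref{diag:bottdirac} — so $B_\alpha$ restricts to $B_{W_n,\alpha}$ on $s(W_n)$. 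As each $B_{W_n,\alpha}$ maps the Schwartz space $s(W_n)$ into itself, we get $B_\alpha s_\alpha(\hill)\subseteq s_\alpha(\hill)$, hence $D(B_\alpha)=D(B_\alpha^2)=s_\alpha(\hill)$ and $B_\alpha^2$ restricts to $B_{W_n,\alpha}^2$ on $s(W_n)$.

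The heart of the argument is then the essential self-adjointness of $B_\alpha^2$. Since the summands $1\hat\otimes\cdots\hat\otimes B_{\bbR e_j,\alpha}\hat\otimes\cdots\hat\otimes1$ anticommute in the graded tensor product, all cross terms cancel and $B_{W_n,\alpha}^2=\sum_j 1\hat\otimes\cdots\hat\otimes B_{\bbR e_j,\alpha}^2\hat\otimes\cdots\hat\otimes1$ is a sum of commuting positive operators. Choosing for each $i$ an orthonormal eigenbasis $(\psi_{i,k})_{k\geq0}$ of $H(\bbR e_i)$ for $B_{\bbR e_i,\alpha}^2$ with $\psi_{i,0}=\xi_{\bbR e_i,\alpha}$, the homogeneous vectors $\psi_{1,k_1}\hat\otimes\cdots\hat\otimes\psi_{n,k_n}$ with all but finitely many $k_i=0$ form an orthonormal basis of $H_\alpha(\hill)$ consisting of eigenvectors of $B_\alpha^2$, each lying in $s_\alpha(\hill)$, with nonnegative real eigenvalue $\sum_i\mu_{k_i}$ (where $\mu_k\geq0$ and $\mu_k=0$ iff $k=0$). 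Thus $B_\alpha^2$ is diagonalizable with diagonal entries in $\bbR$, so it is essentially selfadjoint by the remark following the definition of unbounded operators, and Lemma \ref{lemselfad} then yields that $B_\alpha$ is essentially selfadjoint. Here I would deliberately not claim compact resolvent, since in infinite dimensions each positive eigenvalue of $B_\alpha^2$ has infinite multiplicity; this is why the proposition drops that conclusion. Oddness of $B_\alpha$ is inherited from the oddness of each $B_{W_n,\alpha}$, the inclusions respecting the gradings because the stabilizing vectors $\xi_{\bbR e_i,\alpha}$ are even.

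For the kernel I would use that the selfadjoint closure satisfies $\ker\overline{B_\alpha}=\ker(\overline{B_\alpha})^2=\ker\overline{B_\alpha^2}$, the middle equality holding for any selfadjoint operator and the last because essential self-adjointness of $B_\alpha^2$ forces the selfadjoint operator $(\overline{B_\alpha})^2$, which extends $B_\alpha^2$, to equal $\overline{B_\alpha^2}$. In the eigenbasis above a vector lies in $\ker\overline{B_\alpha^2}$ precisely when its eigenvalue $\sum_i\mu_{k_i}$ vanishes, i.e. when every $k_i=0$; this singles out the single global vacuum vector $\varinjlim_n\xi_{W_n,\alpha}$, so the kernel is one-dimensional. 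Finally, for $G$-equivariance when $G$ acts by linear isometries I would run the whole construction over the directed system of all finite-dimensional \emph{linear} subspaces, which $G$ permutes. Since the Bott-Dirac operator $B_{W,\alpha}$ is defined independently of the chosen orthonormal system and the ground states $\xi_{W,\alpha}$ (Gaussians in $\|x\|^2$) are isometry-invariant, the unitaries $U_g\colon H(W)\to H(gW)$ satisfy $U_g\xi_{W,\alpha}=\xi_{gW,\alpha}$ and $U_gB_{W,\alpha}U_g^\ast=B_{gW,\alpha}$; these assemble into a unitary $G$-action on $H_\alpha(\hill)$ preserving $s_\alpha(\hill)$ and commuting with $B_\alpha$, which is exactly equivariance. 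The main obstacle is the essential self-adjointness, and the plan's decisive device for it is the reduction to $B_\alpha^2$ via Lemma \ref{lemselfad} together with the graded-anticommutation computation that makes $B_\alpha^2$ manifestly diagonalizable.
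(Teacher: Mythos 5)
Your proof is correct and follows essentially the same route as the paper: both reduce essential self-adjointness to that of $B_\alpha^2$ via Lemma \ref{lemselfad}, diagonalize $B_\alpha^2$ through the graded tensor-product decomposition of $\hill$ into finite-dimensional pieces, read off the one-dimensional kernel from the vacuum vector, and obtain equivariance from the basis-independence of $B_{W,\alpha}$ together with the commuting diagram \eqref{diag:bottdirac}. The differences are only expository — you use one-dimensional subspaces and make explicit the eigenbasis, the identity $\ker\overline{B_\alpha}=\ker\overline{B_\alpha^2}$, and the intertwining unitaries, steps the paper leaves implicit.
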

\begin{proof} The first part is similar to the finite dimensional case. Taking any decomposition of $\hill=\displaystyle\bigoplus_{i=1}^\infty W_i$ with finite dimensional subspaces $W_i$ for $i=1,2,\dots$, we may write $B_\alpha$ as an infinite sum $B_{W_1, \alpha}\hat\otimes1\hat\otimes1\hat\otimes\cdots+1\hat\otimes B_{W_2, \alpha}\hat\otimes1\hat\otimes\cdots+\cdots$. Then, we have $B_\alpha^2=B_{W_1, \alpha}^2\hat\otimes1\hat\otimes1\hat\otimes\cdots+1\hat\otimes B_{W_2, \alpha}^2\hat\otimes1\hat\otimes\cdots+\cdots$. It is clear that $B_\alpha^2$ is an essentially selfadjoint (diagonalizable) operator having one-dimensional kernel, and so is $B_\alpha$ by Lemma \ref{lemselfad}. That it is $G$-equivariant follows from that for finite dimensional subspace $W$, $B_{W, \alpha}$ is well-defined by the expression \eqref{BottDiracW} independently of choices of a basis for $W$ and that the diagram \eqref{diag:bottdirac} commutes for any $W$ and $W'$.
\end{proof}

Unfortunately, the Bott-Dirac operator $B_\alpha$ does not have compact resolvent. In \cite{HigKas2}, N. Higson and G. Kasparov introduced a non-commutative functional calculus for the operator $B_\alpha$ in order to perturb $B_\alpha$ to make it having compact resolvent in a very tractable way. \\

For any (not necessarily bounded, but densely defined) operator $h$ on (real) Hilbert space $\hill$, we define an (unbounded) operator $h(B_\alpha)$ defined on a subspace $\displaystyle s_\alpha(\hill_{h})=\lim_{\substack{W\subseteq\hill_h \\ \text{$W$:f.n.dim.\,linear}}} s(W)$ of $s_\alpha(\hill)$ where we denote the domain of $h$ by $\hill_h$.  For any finite dimensional linear subspaces $W$ of $\hill_h$ and $V\supseteq W+hW$, we denote by $s(W,V)$  the space of Schwarts functions from $W$ to $\Lambda^\ast(V)\otimes\bbC$ naturally regarded as a subspace of  $L^2(W,\Lambda^\ast(V)\otimes\bbC)\subseteq H_\alpha(\hill)$ (Note $s(W)=s(W,W)$); and we define an (unbounded) operator $h(B_{W,\alpha})$ from $s(W)$ to $s(W,V)\subseteq s(\hill)$ by the following formula: 
\begin{align}
h(B_{W,\alpha}) &= \sum^m_{j=1}\alpha \overline{c}(h(w_j))\frac{\partial}{\partial x_j}+c(h(w_j))x_j
\label{def:cal}
\end{align}
here, $m=\dim{W}$ and $x_j$ and $w_j$ are the same as before. This is again defined independently of a choice of a basis for $W$.\\
Now, we consider whether for any finite dimensional linear subspaces $W\subseteq W'$ and $V\supseteq W'+hW'$, the following diagram is commutative or not.
\begin{align}
\xymatrix{
s(W) \ar[d]^-{h(B_{W,\alpha})} \ar[r] & s(W') \ar[d]^-{h(B_{W',\alpha})}\\
s(W,V) \ar[r] & s(W',V)\\
}
\label{diagram:cal}
\end{align}
\begin{prop}\label{prop:cal1} Let $W''=W'\ominus W$. The diagram \eqref{diagram:cal} commutes if and only if $hW''$ is orthogonal to $W$. In particular, when $h$ is symmetric, the diagram \eqref{diagram:cal} commutes if and only if $hW$ is orthogonal to $W''$.
\end{prop}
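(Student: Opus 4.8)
The plan is to choose an orthonormal basis adapted to the decomposition $W'=W\oplus W''$ and to compute both ways around the square \eqref{diagram:cal} explicitly on a Schwartz function $\xi\in s(W)$. Write $w_1,\dots,w_k$ for an orthonormal basis of $W$ and $w_{k+1},\dots,w_m$ for one of $W''$, with dual coordinates $x_1,\dots,x_m$ on $W'$, so that $h(B_{W,\alpha})=\sum_{j=1}^{k}\alpha\,\overline{c}(h(w_j))\frac{\partial}{\partial x_j}+c(h(w_j))x_j$ while $h(B_{W',\alpha})$ is the same expression summed up to $j=m$. The horizontal inclusion $s(W)\hookrightarrow s(W')$ is $\xi\mapsto\xi\otimes\xi_{W'',\alpha}$, tensoring with the ground state $\xi_{W'',\alpha}(y)=(\alpha\pi)^{-(m-k)/4}\exp(-\|y\|^2/2\alpha)$ of $B_{W'',\alpha}$, which is scalar-valued (it sits in degree $0$ of $\Lambda^\ast(W'')$).

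First I would split $h(B_{W',\alpha})$ into its terms with $j\le k$ and those with $j>k$ and apply it to $\xi\otimes\xi_{W'',\alpha}$. For $j\le k$ the operators $\frac{\partial}{\partial x_j}$ and $x_j$ act only on the $\xi$-factor, since the ground state does not depend on $x_1,\dots,x_k$; these terms therefore reproduce exactly $\bigl(h(B_{W,\alpha})\xi\bigr)\otimes\xi_{W'',\alpha}$, which is precisely the image of $\xi$ under the down-then-right path. Hence the two paths agree if and only if the remaining $j>k$ terms vanish. Writing $\phi(y)=\xi_{W'',\alpha}(y)$ and using $\frac{\partial\phi}{\partial x_j}=-\frac{x_j}{\alpha}\phi$ together with the identity $c(v)-\overline{c}(v)=2\inte(v)$, the $j>k$ contribution collapses to
\[
2\,\phi(y)\sum_{j=k+1}^{m} x_j\,\inte\bigl(h(w_j)\bigr)\,\xi(x),
\]
where $\inte(h(w_j))$ acts on the exterior-algebra value $\xi(x)\in\Lambda^\ast(W)$.

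The final step is the vanishing analysis. Because $\inte(v)$ annihilates the part of $\Lambda^\ast(W)$ built from the component of $v$ orthogonal to $W$, the restriction $\inte(h(w_j))|_{\Lambda^\ast(W)}$ depends only on the orthogonal projection $P_W h(w_j)$. Since the functions $y\mapsto x_j(y)\phi(y)$ for $j=k+1,\dots,m$ are linearly independent and $\xi\in s(W)$ is arbitrary, the displayed obstruction vanishes identically if and only if $\inte(P_W h(w_j))=0$, i.e. $P_W h(w_j)=0$, for every $j>k$; equivalently $hW''$ is orthogonal to $W$. The symmetric case then follows formally: if $h=h^\ast$ then $\langle h w'',w\rangle=\langle w'',h w\rangle$ for $w''\in W''$ and $w\in W$, so $hW''\perp W$ is equivalent to $hW\perp W''$. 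The one point demanding care — and the main obstacle — is the bookkeeping of the Clifford operators on the exterior-algebra factor: one must track that the ground state lives in degree zero, so that after the Gaussian differentiation the combination $c-\overline{c}$ produces precisely an interior product, and that this interior product, restricted to $\Lambda^\ast(W)$, records only the $W$-component of $h(w_j)$.
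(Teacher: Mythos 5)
Your proof is correct and follows essentially the same route as the paper: both split $h(B_{W',\alpha})$ into its $W$-part (which reproduces the other path around the square) and its $W''$-part, apply the latter to the Gaussian ground state so that only the interior-product terms survive with a factor $2x_j\xi_{W'',\alpha}$, and then use the linear independence (orthogonality) of the functions $x_j\xi_{W'',\alpha}$ to reduce commutativity to the vanishing of $\inte(h(w_j))$ on $\Lambda^\ast(W)$, i.e.\ to $hW''\perp W$. The only cosmetic difference is that the paper organizes the computation via the decomposition of $h(B_{W'',\alpha})$ into $\exte(\cdot)(\alpha\partial+x)$ and $\inte(\cdot)(-\alpha\partial+x)$ parts and spans $s(W)$ by elementary tensors, whereas you differentiate the Gaussian directly and invoke $c-\overline{c}=2\inte$; these are the same calculation.
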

\begin{proof} We fix orthonormal bases for $W$ and for $W''$ and denote the corresponding coordinate functions and dual bases by $x_j, w_j \, (j=1,\dots,m)$ and by $x''_k, w''_k \, (k=1,\dots,l)$.  We first note that vectors of the form $\xi\otimes (w_{j_1}\wedge\dots\wedge w_{j_s})$ spans $s(W)$ where $\xi$ is a (complex valued) Schwarts function on $W$. Therefore, the diagram \eqref{diagram:cal} is commutative if and only if 
\begin{align}
h(B_{W,\alpha})(\xi\otimes (w_{j_1}\wedge\dots\wedge w_{j_s}))\otimes\xi_{W'',\alpha}=h(B_{W',\alpha})(\xi\otimes\xi_{W'',\alpha}\otimes (w_{j_1}\wedge\dots\wedge w_{j_s}))
\label{cal1}
\end{align}
holds for any $\xi$ and $j_1,\dots,j_s$. By considering a natural decomposition $h(B_{W',\alpha})=h(B_{W,\alpha})+h(B_{W'',\alpha})$, we see the equation \eqref{cal1} holds if and only if
\begin{align}
h(B_{W'',\alpha})(\xi\otimes\xi_{W'',\alpha}\otimes (w_{j_1}\wedge\dots\wedge w_{j_s}))=0
\label{cal2}
\end{align}
By a further decomposition 
\begin{align*}
h(B_{W'',\alpha})=\sum^l_{k=1}\exte(h(w''_k))(\alpha\frac{\partial}{\partial x''_k}+x''_k)+\sum^l_{k=1}\inte(h(w''_k))(-\alpha\frac{\partial}{\partial x''_k}+x''_k)
\end{align*}
we see the equation \eqref{cal2} holds if and only if
\begin{align}
\sum^l_{k=1}\inte(h(w''_k))(-\alpha\frac{\partial}{\partial x''_k}+x''_k)(\xi\otimes\xi_{W'',\alpha}\otimes (w_{j_1}\wedge\dots\wedge w_{j_s}))=0
\label{cal3}
\end{align}
since $\xi_{W'',\alpha}=(\alpha\pi)^{-\frac{l}{4}}\exp(-\frac{||x''||^2}{2\alpha})$ is in the kernels of differential operators $\alpha\frac{\partial}{\partial x''_k}+x''_k$. In sum, by further calculating the equation \eqref{cal3}, the diagram \eqref{diagram:cal} is commutative if and only if for any Schwarts function $\xi$ on $W$ and $j_1,\dots,j_s \in \{1,\dots,m\}$ the following equation holds.
\begin{align}
\sum^l_{k=1}\xi\otimes2x''_k\xi_{W'',\alpha}\otimes\inte(h(w''_k))(w_{j_1}\wedge\dots\wedge w_{j_s})=0
\label{cal4}
\end{align}
Now, note that $2x''_k\xi_{W'',\alpha}$ are mutually orthogonal vectors in $L^2(W)$, hence we conclude the diagram \eqref{diagram:cal} is commutative if and only if
\begin{align*}
&\inte(h(w''_k))(w_{j_1}\wedge\dots\wedge w_{j_s})=0 \,\,\, \text{for any $k$ and $j_1,\dots,j_s$}\\
\iff& \text{$hW''$ is orthogonal to W} 
\end{align*} 
\end{proof}

For any finite dimensional linear subspaces $W\subseteq W'\subseteq W''$ and $V\supseteq W''+hW''$, let us now consider the following slightly different diagram from \eqref{diagram:cal}:
\begin{align}
\xymatrix{
s(W) \ar[r] & s(W') \ar[d]^-{h(B_{W',\alpha})} \ar[r] & s(W'') \ar[d]^-{h(B_{W'',\alpha})}\\
& s(W',V) \ar[r] & s(W'',V)\\
}
\label{diagram:cal2}
\end{align}
Let us say that the diagram \eqref{diagram:cal2} eventually commutes if there exists a finite dimensional subspace $W'\supseteq W$ of $\hill_h$ such that for any finite dimensional subspace $W''\supseteq W'$ of $\hill_h$, the diagram \eqref{diagram:cal2} commutes. The following is an immediate corollary.
\begin{cor}\color{black} Let $W'^\perp=\hill_h\ominus W'$. The diagram \eqref{diagram:cal2} eventually commutes if and only if there exists a finite dimensional subspace $W'$ of $\hill_h$ such that $hW'^\perp$ is orthogonal to $W$. In particular, when $h$ is symmetric, the diagram \eqref{diagram:cal} eventually commutes if and only if $hW\subseteq \hill_h$.
\end{cor}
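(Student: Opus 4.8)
The plan is to read this off from Proposition \ref{prop:cal1}, the only care being that in \eqref{diagram:cal2} commutativity is tested only on the image of $s(W)$ inside $s(W')$, not on all of $s(W')$. Concretely, I would rerun the reduction from the proof of Proposition \ref{prop:cal1} for the pair $W'\subseteq W''$ (so the ``new'' directions are $W''\ominus W'$), noting that a general element of $s(W)$, pushed forward into $s(W')$, is of the form $\xi\otimes\xi_{W'\ominus W,\alpha}\otimes\omega$ with $\xi$ a Schwarts function on $W$ and $\omega\in\Lambda^\ast(W)$. Since the annihilation argument leading to \eqref{cal4} only uses that $\xi_{W''\ominus W',\alpha}$ lies in the kernels of the operators $\alpha\frac{\partial}{\partial x_k}+x_k$ in the $W''\ominus W'$ directions and that the resulting vectors $2x_k\xi_{W''\ominus W',\alpha}$ are mutually orthogonal, it applies verbatim with this base function. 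The resulting condition is $\inte(h(u_k))\omega=0$ for every basis vector $u_k$ of $W''\ominus W'$ and every $\omega\in\Lambda^\ast(W)$; as $\inte(v)\omega=0$ for all $\omega\in\Lambda^\ast(W)$ precisely when $v\perp W$, the right-hand square of \eqref{diagram:cal2} commutes on $s(W)$ if and only if $h(W''\ominus W')$ is orthogonal to $W$.

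To pass to eventual commutativity I would eliminate the quantifier over $W''$. As $W''$ ranges over the finite-dimensional subspaces of $\hill_h$ containing $W'$, the difference $W''\ominus W'$ ranges over all finite-dimensional subspaces of ${W'}^\perp=\hill_h\ominus W'$; hence ``the square commutes for every $W''\supseteq W'$'' is equivalent to $h{W'}^\perp\perp W$. By the definition of eventual commutativity this gives the first assertion: \eqref{diagram:cal2} eventually commutes if and only if there is a finite-dimensional $W'\subseteq\hill_h$ with $h{W'}^\perp\perp W$. (One may always take $W'\supseteq W$, since replacing $W'$ by $W'+W$ only shrinks ${W'}^\perp$ and so preserves the orthogonality.)

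For the symmetric case I would prove the two implications separately. If $hW\subseteq\hill_h$, set $W'=W+hW$, a finite-dimensional subspace of $\hill_h$; then for $v\in{W'}^\perp\cap\hill_h$ and $w\in W$ symmetry gives $\langle hv,w\rangle=\langle v,hw\rangle=0$, because $hw\in hW\subseteq W'$ while $v\perp W'$, so $h{W'}^\perp\perp W$ and the diagram eventually commutes. Conversely, assume $h{W'}^\perp\perp W$ for some finite-dimensional $W'\subseteq\hill_h$. For $w\in W$ and $v\in{W'}^\perp\cap\hill_h$ symmetry gives $\langle hw,v\rangle=\langle w,hv\rangle=\langle hv,w\rangle=0$, so $hw\perp({W'}^\perp\cap\hill_h)$.

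The only genuinely analytic point — and the step I expect to be the main obstacle — is upgrading this last orthogonality to the conclusion $hw\in\hill_h$. The plan is to show that ${W'}^\perp\cap\hill_h$ is dense in the full orthogonal complement ${W'}^\perp$ of $W'$ in $\hill$. Writing $P=I-P_{W'}$ with $P_{W'}$ the finite-rank orthogonal projection onto $W'\subseteq\hill_h$, one has $P(\hill_h)\subseteq\hill_h\cap{W'}^\perp$ (as $P_{W'}$ maps into $W'\subseteq\hill_h$) while $P(\hill_h)$ is dense in ${W'}^\perp$ (as $\hill_h$ is dense in $\hill$ and $P$ is continuous and surjective onto ${W'}^\perp$). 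Since $hw$ is orthogonal to the dense subspace ${W'}^\perp\cap\hill_h$ of ${W'}^\perp$, it is orthogonal to all of ${W'}^\perp$, whence $hw\in({W'}^\perp)^\perp=W'$ because $W'$ is finite-dimensional, hence closed. Thus $hw\in W'\subseteq\hill_h$ for every $w\in W$, i.e.\ $hW\subseteq\hill_h$, completing the proof.
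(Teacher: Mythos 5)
Your proof is correct and is exactly the derivation the paper leaves implicit: the paper presents this as an immediate consequence of Proposition \ref{prop:cal1}, and your argument is that derivation spelled out — rerunning the computation of Proposition \ref{prop:cal1} on the image of $s(W)$ (so that interior products act only on $\Lambda^\ast(W)$, which is precisely why the condition is orthogonality to $W$ rather than to $W'$), and then eliminating the quantifier over $W''$, since $W''\ominus W'$ ranges over all finite dimensional subspaces of $\hill_h\ominus W'$. The one step carrying genuine additional content is your density argument that $\hill_h\cap W'^{\perp}$ is dense in the orthogonal complement of $W'$ in $\hill$, which is needed to upgrade $hw\perp(\hill_h\ominus W')$ to $hw\in W'\subseteq\hill_h$ in the symmetric case; this is correct, and it is the detail hidden behind the paper's ``immediate corollary.''
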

\color{black}
The above corollary says that when trying to define an inductive limit of $h(B_{W,\alpha})$, one needs to be careful more than merely observing whether the diagram \eqref{diagram:cal2} eventually commutes. This is the point which is not mentioned in the paper \cite{HigKas2}. Fortunately, as the next proposition and its corollary says, even though for any finite dimensional subspace $W$ of $\hill_h$, the diagram \eqref{diagram:cal2} may not eventually commute, if $h$ has its adjoint defined on $\hill_h$, it always asymptotically commutes in the following sense: we say that the diagram\eqref{diagram:cal2} asymptotically commutes if for any vector in $s(W)$ and for any $\epsilon>0$, there exists a finite dimensional subspace $W'\supseteq W$ of $\hill_h$ such that for any finite dimensional subspace $W''\supseteq W'$ of $\hill_h$, the difference between two vectors gained by two ways in the diagram \eqref{diagram:cal2} is within $\epsilon$ in the norm of $s_\alpha(\hill)\subseteq H_\alpha(\hill)$.

\begin{prop}\color{black} The diagram \eqref{diagram:cal2} asymptotically commutes if and only if $h$ has its adjoint defined on $W$.
\end{prop}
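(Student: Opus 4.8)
The plan is to reduce the question to the explicit error term already extracted in the proof of Proposition \ref{prop:cal1}, and then to recognize that error as the norm of a projection of $h^\ast w_j$ onto a distant subspace. First I would fix a spanning vector $v=\xi\otimes(w_{j_1}\wedge\cdots\wedge w_{j_s})\in s(W)$ (with $\xi$ a Schwartz function on $W$ and the $w_{j_i}$ a fixed orthonormal family in $W$) and, for $W\subseteq W'\subseteq W''$, compare the two vectors produced by the diagram \eqref{diagram:cal2} after embedding $v$ into $s(W')$. Writing $v'$ for this embedding and using the orthogonal decomposition $h(B_{W'',\alpha})=h(B_{W',\alpha})\hat\otimes1+1\hat\otimes h(B_{W''\ominus W',\alpha})$, exactly the computation leading to \eqref{cal4} (now applied with the complement $W''\ominus W'$ carrying an orthonormal basis $w_1,\dots,w_l$) shows that the difference of the two paths equals
\begin{align*}
v'\hat\otimes\sum_{k=1}^{l}2x_k\,\xi_{W''\ominus W',\alpha}\;\inte(h(w_k))(w_{j_1}\wedge\cdots\wedge w_{j_s}).
\end{align*}
Since the functions $2x_k\,\xi_{W''\ominus W',\alpha}$ are mutually orthogonal in $L^2(W''\ominus W')$ with a common norm $c_\alpha>0$, and since $\inte(h(w_k))(w_{j_1}\wedge\cdots\wedge w_{j_s})=\sum_i(-1)^{i-1}\langle h(w_k),w_{j_i}\rangle\,w_{j_1}\wedge\cdots\widehat{w_{j_i}}\cdots\wedge w_{j_s}$, the squared norm of this difference is $\|v'\|^2\,c_\alpha^2\sum_{i=1}^{s}\sum_{k=1}^{l}|\langle h(w_k),w_{j_i}\rangle|^2$.

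For the sufficiency direction I would assume $W\subseteq D(h^\ast)$, so that each $h^\ast w_{j_i}$ is a genuine vector of $\hill$. Then $\langle h(w_k),w_{j_i}\rangle=\langle w_k,h^\ast w_{j_i}\rangle$, and because $w_1,\dots,w_l$ is an orthonormal basis of $W''\ominus W'\subseteq (W')^{\perp}$ we get $\sum_{k}|\langle h(w_k),w_{j_i}\rangle|^2=\|P_{W''\ominus W'}h^\ast w_{j_i}\|^2\le\|P_{(W')^\perp}h^\ast w_{j_i}\|^2$. Given $\epsilon>0$, choosing $W'\supseteq W$ large enough that each of the finitely many vectors $h^\ast w_{j_1},\dots,h^\ast w_{j_s}$ lies within distance $\epsilon$ of $W'$ forces every such projection, hence the error, to be smaller than a fixed multiple of $\epsilon$, uniformly over all $W''\supseteq W'$; this is exactly asymptotic commutativity.

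For the necessity direction I would argue contrapositively: if some $w_j\in W$ is not in $D(h^\ast)$, the functional $u\mapsto\langle h(u),w_j\rangle$ is unbounded on $\hill_h$, and it remains unbounded on $(W')^{\perp}\cap\hill_h$ for every finite-dimensional $W'\supseteq W$ (the restriction to the finite-dimensional $W'$ is automatically bounded, so the unboundedness is carried by the complement). Taking $v=\xi\otimes w_j$, then given any candidate $W'$ I can pick a unit vector $u\in(W')^{\perp}\cap\hill_h$ with $|\langle h(u),w_j\rangle|$ as large as desired and set $W''=W'\oplus\mathbb{R}u$; the error formula then gives a difference of norm $\|v'\|\,c_\alpha\,|\langle h(u),w_j\rangle|$, defeating any proposed $\epsilon$. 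Hence asymptotic commutativity fails.

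The main obstacle I expect is the bookkeeping in the necessity direction: one must be sure that the obstruction cannot be ``absorbed'' by enlarging $W'$, which is why isolating the unboundedness onto the infinite-dimensional complement $(W')^{\perp}\cap\hill_h$ and realizing the large coefficient through a single honest basis vector $u$ of $W''\ominus W'$ is the delicate point; the sufficiency direction is then a routine tail estimate for the vectors $h^\ast w_{j_i}$.
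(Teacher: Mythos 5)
Your proof is correct and follows essentially the same route as the paper's: it runs the Gaussian error-term computation of Proposition \ref{prop:cal1} for the pair $W'\subseteq W''$ to reduce asymptotic commutativity to smallness of $\sum_{i,k}|\langle h(w_k),w_{j_i}\rangle|^2$ over bases $w_k$ of $W''\ominus W'$, and then identifies this with the condition $W\subseteq D(h^\ast)$. The only difference is that you spell out both directions of that last equivalence (the tail estimate $\|P_{(W')^\perp}h^\ast w_{j_i}\|$ for sufficiency, and the unboundedness of $u\mapsto\langle h(u),w_j\rangle$ on $(W')^\perp\cap\hill_h$ for necessity), which the paper simply declares to be clear.
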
 
\color{black}
\begin{proof} Let us still denote a fixed coordinates and basis $w_1,\dots,w_m$ for $W$. We do a similar calculation as in Proposition \ref{prop:cal1}. We see that the diagram \eqref{diagram:cal2} asymptotically commutes if and only if for any $j_1,\dots, j_s$ and for any $\epsilon>0$, there exists a finite dimensional subspace $W'\supseteq W$ of $\hill_h$, such that for any finite dimensional subspace $W''\supseteq W'$ of $\hill_h$, 
\begin{align*}
\sum^l_{k=1}||\inte(h(w''_k))(w_{j_1}\wedge\dots\wedge w_{j_s})||^2<\epsilon
\end{align*}
where $w''_1,\dots,w''_l$ is some (arbitrary) basis for $W''\ominus W'$ and the norm is computed in $L^2(\Lambda^\ast(W)\otimes\bbC)$. Considering each one-dimensional subspace of $L^2(\Lambda^\ast(W)\otimes\bbC)$ we see that the diagram \eqref{diagram:cal2} asymptotically commutes if and only if for any $\epsilon>0$ there exists $W'\supseteq W$ such that for any $W''\supseteq W'$,
\begin{align*}
\sum^l_{k=1}|\langle w_j, hw''_k \rangle|^2<\epsilon
\end{align*}
for any $j=1,\dots,m$. It is now clear this is equivalent to that the adjoint of $h$ is defined on $W$.
\end{proof}
\begin{cor}\label{cor:cal}\color{black} The diagram \eqref{diagram:cal2} asymptotically commutes for any finite dimensional subspace $W$ of $\hill_h$ if and only if $h$ has its adjoint defined on $\hill_h$. In particular, when $h$ is symmetric, the diagram \eqref{diagram:cal} always asymptotically commutes.
\end{cor}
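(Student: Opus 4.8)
The plan is to obtain this Corollary as a purely formal consequence of the preceding Proposition, since all of the analytic work has already been carried out there. The Proposition identifies, for a single fixed finite dimensional $W\subseteq\hill_h$, the asymptotic commutativity of \eqref{diagram:cal2} with the condition that $h$ has its adjoint defined on $W$; so the only task remaining is to pass from this per-subspace statement to the global statement and to unwind the ``in particular'' clause.

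First I would fix the terminology. To say that \emph{$h$ has its adjoint defined on a subspace $V\subseteq\hill$} means precisely that $V\subseteq D(h^\ast)$, i.e.\ for each $v\in V$ the functional $x\mapsto\langle v,hx\rangle$ is bounded on the dense domain $\hill_h$. Under this reading the Proposition asserts that, for a fixed finite dimensional $W\subseteq\hill_h$, the diagram \eqref{diagram:cal2} asymptotically commutes if and only if $W\subseteq D(h^\ast)$.

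Next I would quantify over $W$. By the Proposition, \eqref{diagram:cal2} asymptotically commutes for every finite dimensional $W\subseteq\hill_h$ if and only if $W\subseteq D(h^\ast)$ for every such $W$, and it remains only to record the elementary equivalence
\[
\bigl(W\subseteq D(h^\ast)\ \text{for every finite dimensional}\ W\subseteq\hill_h\bigr)\iff\hill_h\subseteq D(h^\ast).
\]
Here the implication $\Leftarrow$ is immediate, while $\Rightarrow$ follows by taking $W=\spa\{v\}$ for each $v\in\hill_h$ and using that $\hill_h$ is the union of its finite dimensional subspaces. This establishes the first assertion of the Corollary.

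Finally, for the ``in particular'' clause I would simply invoke the definition of symmetry: if $h$ is symmetric then $h\subseteq h^\ast$, so $\hill_h=D(h)\subseteq D(h^\ast)$, and the first assertion applies verbatim to yield asymptotic commutativity for every finite dimensional $W$. I do not expect any genuine obstacle here, since the content lives entirely in the Proposition (the Bessel-type estimate matching the tail condition $\sum_{k}|\langle w_j,hw''_k\rangle|^2\to0$ against boundedness of $x\mapsto\langle w_j,hx\rangle$); the Corollary is merely the promotion of that per-subspace equivalence to a statement about the whole domain by an elementary quantifier argument.
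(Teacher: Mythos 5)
Your proposal is correct and follows the same route as the paper, which states this result without proof as an immediate consequence of the preceding proposition: passing from the per-subspace equivalence to the whole domain $\hill_h$ is exactly the quantifier argument you give, and the ``in particular'' clause is exactly the observation that symmetry yields $\hill_h=D(h)\subseteq D(h^\ast)$. Your explicit unwinding of ``adjoint defined on $V$'' as $V\subseteq D(h^\ast)$ matches the paper's definition of the adjoint domain, so there is nothing missing.
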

\color{black}
\begin{dfn}\label{dfn:cal}\color{black}(a fixed non-commutative functional calculus) (cf. \cite{HigKas2} Definition 3.5.) Let $h$ be a densely defined operator on $\hill$ whose adjoint is defined on the domain $\hill_h$ of $h$, we define a densely defined operator $h(B_\alpha)$ on $H_\alpha(\hill)$ defined on its subspace $\displaystyle s_\alpha(\hill_h)=\lim_{\substack{W\subseteq\hill_h \\ \text{$W$:f.n.dim.\,linear}}} s(W)$ by the following:
\begin{align*}
h(B_\alpha)(\xi):= \displaystyle\lim_{\substack{W\subseteq W' \subseteq \hill_h \\ \text{$W'$: f.n. dim. \,linear}}} h(B_{W',\alpha})(\xi\otimes\xi_{W'\ominus W,\alpha})
\end{align*}
for a finite dimensional subspace $W$ of $\hill_h$, $\xi$ in $s(W)$. The limit is taken in the Hilbert space $H_\alpha(\hill)$. This is well-defined thanks to Corollary  \ref{cor:cal}.
\end{dfn}
\color{black}
We note for diagonalizable operators $h$, our fixed non-commutative functional calculus is essentially the same as defined in the paper \cite{HigKas2}. Since, the arguments following the definition of a non-commutative functional calculus in \cite{HigKas2} are, fundamentally, about the diagonalizable operators, they are still valid without any change. Hence, we give here the important properties of a non-commutative functional calculus without any proof as is proven in \cite{HigKas2}.

\begin{prop}\label{property} A non-commutative functional calculus \ref{dfn:cal} has the following properties. (cf. \cite{HigKas2} Section 3.)
\begin{itemize}
\item For any $h$ satisfying the assumption of Definition \ref{dfn:cal}, $h(B_\alpha)$ is a symmetric operator defined on $s_\alpha(\hill_h)$;
\item The assignment $h\mapsto h(B_\alpha)$ is ``$\bbR$-linear'' (on the domain where the sum makes sense);
\item if $h$ is diagonalizable and $h=\sum^\infty_{k=1}\lambda_kP_{W_i}$, $h(B_\alpha)=\sum^\infty_{k=1}\lambda_kB_{W_k,\alpha}$; hence $h(B_\alpha)$ is diagonalizable and in particular, essentially selfadjoint; if $h$ has compact resolvent, so is $h(B_\alpha)$;
\item if $h$ is diagonalizable and $h^2\geq1$, $||h(B_\alpha)\xi||\geq||B_\alpha\xi||$ for any $\xi$ in $s_\alpha(\hill_h)$; hence the selfadjoint domain of $h(B_\alpha)$ is contained in that of $B_\alpha$, and this inequality extends to the selfadjoint domain of $h(B_\alpha)$;
\item if $h$ is an bounded operator, $||h(B_\alpha)\xi||\leq||h||||B_\alpha\xi||$ for any $\xi$ in $s_\alpha(\hill)=s_\alpha(\hill_h)$;
\item if $h_1, h_2$ are positive, diagonalizable operators which differ by a bounded operator (hence have their common domain $\hill_h$), and if $h_1^2,h_2^2\geq1$, $||h_1(B_\alpha)\xi-h_2(B_\alpha)\xi||\leq||h_1-h_2||||B_\alpha\xi||$ for any $\xi$ in $s_\alpha(\hill_h)$; and this inequality extend to the selfadjoint domain of $h_1(B_\alpha)$ or of $h_2(B_\alpha)$;
\item For two positive, diagonalizable operators $h_1,h_2$ having compact resolvent which differ by a bounded operator, if we set $B_{\alpha,1,t}=(1+th_1)(B_\alpha),  B_{\alpha, 2, t}=(1+th_2)(B_\alpha)$ for $t>0$, we have for any $f$ in $C_0(\bbR)$,
\begin{align}
\displaystyle \lim_{t\to0}\sup_{s>0,\alpha>0}||f(sB_{\alpha,1,t})-f(sB_{\alpha,2,t})||=0
\label{eq:perturb}
\end{align}
\item When a group $G$ acts on $\hill$ by linear isometries and if $h$ is a positive, diagonalizable operator having compact resolvent whose domain is $G$-invariant 
and if $g(h)-h$ is bounded for any $g$ in $G$, we set as above $B_{\alpha,t}=(1+th)(B_\alpha)$. Then we have for any $f$ in $C_0(\bbR)$ and for any $g$ in $G$,
\begin{align}
\displaystyle \lim_{t\to0}\sup_{s>0,\alpha>0}||f(sB_{\alpha,t})-g(f(sB_{\alpha,t}))||=0
\label{eq:perturb2}
\end{align}
\end{itemize}
\end{prop}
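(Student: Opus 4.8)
The plan is to reduce every assertion to a statement about the finite-dimensional operators $h(B_{W,\alpha})$ on $s(W)$ and then pass to the inductive limit, the latter being legitimate exactly because of the asymptotic commutativity established in Corollary \ref{cor:cal} (which is what makes $h(B_\alpha)$ well-defined in Definition \ref{dfn:cal}). The symmetry of $h(B_\alpha)$ and the $\bbR$-linearity of $h\mapsto h(B_\alpha)$ are read off directly from the defining formula \eqref{def:cal}: each $\overline{c}(v)$ is skew-adjoint and each $c(v)$ self-adjoint (Example \ref{Clif}), while $\alpha\partial_j$ is skew-adjoint and $x_j$ self-adjoint on $L^2(W)$ and the two families commute, so every summand $\alpha\overline{c}(hw_j)\partial_j+c(hw_j)x_j$ is symmetric and depends linearly on $h$; both properties then survive the limit of Definition \ref{dfn:cal}.

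For the diagonal assertions I would fix an orthogonal eigendecomposition $h=\sum_k\lambda_k P_{W_k}$ and work with subspaces $W'$ adapted to $\hill=\bigoplus_k W_k$. On such $W'$ the formula \eqref{def:cal} splits as $\sum_k\lambda_k B_{W_k\cap W',\alpha}$, so in the limit $h(B_\alpha)=\sum_k\lambda_k B_{W_k,\alpha}$ as a graded tensor sum of the finite-dimensional Bott--Dirac operators of Example \ref{BottDirac}. Because distinct $B_{W_k,\alpha}$ are odd and hence anticommute, squaring kills the cross terms and $h(B_\alpha)^2=\sum_k\lambda_k^2 B_{W_k,\alpha}^2$ is diagonalizable; by Lemma \ref{lemselfad} $h(B_\alpha)$ is essentially selfadjoint, and when $\lambda_k\to\infty$ (compact resolvent of $h$) only finitely many eigenvalue configurations lie below any bound, so $h(B_\alpha)$ has compact resolvent. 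The relative bound for diagonalizable $h$ with $h^2\geq1$ is then termwise: $\|h(B_\alpha)\xi\|^2=\sum_k\lambda_k^2\|B_{W_k,\alpha}\xi\|^2\geq\sum_k\|B_{W_k,\alpha}\xi\|^2=\|B_\alpha\xi\|^2$, since $\lambda_k^2\geq1$.

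The technical heart is the estimate $\|h(B_\alpha)\xi\|\leq\|h\|\,\|B_\alpha\xi\|$ for a bounded, not necessarily diagonalizable, $h$; from it and $\bbR$-linearity the difference bound for positive diagonalizable $h_1,h_2$ with $h_1^2,h_2^2\geq1$ is immediate, since $h_1(B_\alpha)-h_2(B_\alpha)=(h_1-h_2)(B_\alpha)$ with $h_1-h_2$ bounded. To prove the bounded estimate I would square the symmetric operator $D_S:=h(B_{W,\alpha})$, writing $S=h|_W$, using the Clifford relations of Example \ref{Clif}; the cross terms organize into a second-order part $-\alpha^2\sum_{j,k}(S^\ast S)_{kj}\partial_j\partial_k$, a potential part $\sum_{j,k}(S^\ast S)_{kj}x_jx_k$, and a zeroth-order Clifford part $\alpha\sum_j\overline{c}(hw_j)c(hw_j)$. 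The first two are dominated in the form sense by $\|h\|^2$ times the corresponding terms of $B_{W,\alpha}^2$ because $S^\ast S\leq\|h\|^2$, and this is where the factor $\|h\|^2$ enters. The delicate point, and the step I expect to be the main obstacle, is controlling the zeroth-order Clifford term uniformly: for the standard operator it supplies exactly the $\alpha(2N-m)$ spectral shift producing the one-dimensional kernel, and for a general $S$ it is not dominated by the principal terms on the nose. This is precisely the analysis of \cite{HigKas2}; since our corrected calculus of Definition \ref{dfn:cal} agrees with theirs on the diagonalizable operators to which everything reduces, the estimate carries over without change.

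Finally I would derive the two uniform limits \eqref{eq:perturb} and \eqref{eq:perturb2}, which are what is actually used later. Setting $A=sB_{\alpha,1,t}$ and $B=sB_{\alpha,2,t}$, both essentially selfadjoint by the diagonal case, the resolvent identity gives $(A\pm i)^{-1}-(B\pm i)^{-1}=(A\pm i)^{-1}(B-A)(B\pm i)^{-1}$ with $B-A=st\,(h_2-h_1)(B_\alpha)$. For $\eta=(B\pm i)^{-1}\xi$ the lower relative bound applied to $1+th_2$ gives $\|B_\alpha\eta\|\leq\|B_{\alpha,2,t}\eta\|=s^{-1}\|B\eta\|\leq s^{-1}\|\xi\|$, while the difference bound gives $\|(B-A)\eta\|\leq st\,\|h_1-h_2\|\,\|B_\alpha\eta\|\leq t\,\|h_1-h_2\|\,\|\xi\|$; since $\|(A\pm i)^{-1}\|\leq1$ we get $\|f(A)-f(B)\|\leq t\,\|h_1-h_2\|$ for the resolvents $f=(x\pm i)^{-1}$, a bound independent of $s$ and $\alpha$. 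The crucial feature is exactly this cancellation of the unbounded difference $B-A$ against the resolvents, which forces uniformity in the scaling $s$ and the parameter $\alpha$; by Stone--Weierstrass the resolvents generate $C_0(\bbR)$ and the estimate propagates to all $f\in C_0(\bbR)$ (Mehler's formula gives an alternative explicit route through the Gaussian $f=e^{-x^2}$). The equivariance limit \eqref{eq:perturb2} is then the special case $h_1=h$, $h_2=g(h)=\pi(g)h\pi(g)^\ast$: the calculus is covariant under the isometric action, so $g\bigl(f(sB_{\alpha,t})\bigr)=f\bigl(s\,(1+t\,g(h))(B_\alpha)\bigr)$, and $h-g(h)$ is bounded by hypothesis, whence \eqref{eq:perturb2} reduces to \eqref{eq:perturb}.
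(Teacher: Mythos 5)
Your proposal should first be measured against what the paper actually does here: nothing. The thesis states these properties explicitly ``without any proof as is proven in \cite{HigKas2}'', on the grounds that the fixed calculus of Definition \ref{dfn:cal} agrees with the Higson--Kasparov calculus on diagonalizable operators and that the arguments of \cite{HigKas2} only ever involve such operators. So your reconstruction --- symmetry and $\bbR$-linearity from the defining formula, the diagonalizable case via $h(B_\alpha)=\sum_k\lambda_kB_{W_k,\alpha}$ with cross terms killed in the square, and the uniform limits \eqref{eq:perturb}, \eqref{eq:perturb2} derived from the resolvent identity, the relative bounds, covariance of the calculus under isometries, and a Stone--Weierstrass approximation --- is genuinely more than the paper supplies, and all of those steps are sound. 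Two corrections are needed, one substantive and one cosmetic. The substantive one: the single step you defer to \cite{HigKas2}, the bound $\|h(B_\alpha)\xi\|\leq\|h\|\,\|B_\alpha\xi\|$ for bounded $h$, is exactly the item for which your stated justification for deferring (``everything reduces to diagonalizable operators'') fails, since it is applied to $h_1-h_2$, which is bounded but need not be diagonalizable. Fortunately the obstacle you flag is illusory: it is an artifact of the symmetric splitting of the square, which produces the indefinite shift $\alpha(2N-m)$. If instead you normal-order, setting $a_j=\alpha\partial_j+x_j$ so that $h(B_{W,\alpha})=\sum_j\exte(hw_j)a_j+\inte(hw_j)a_j^{\ast}$, the ext-ext and int-int sums vanish by symmetry and one gets
\begin{equation*}
h(B_{W,\alpha})^2=\sum_{j,k}\langle hw_j,hw_k\rangle\,a_k^{\ast}a_j
+2\alpha\sum_j\exte(hw_j)\inte(hw_j),
\end{equation*}
a sum of two \emph{positive} quadratic forms, each dominated by $\|h\|^2$ times the corresponding term of $B_{W,\alpha}^2=\sum_ja_j^{\ast}a_j+2\alpha N$ (for the first term because the Gram matrix $\bigl(\langle hw_j,hw_k\rangle\bigr)_{j,k}$ has norm at most $\|h\|^2$, for the second by expanding $\inte(hw_j)$ in an orthonormal basis of $W+hW$ and noting that on $\Lambda^{\ast}(W)$-valued functions only the $W$-components of interior multiplications survive). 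Passing through vectors $\xi\otimes\xi_{W'\ominus W,\alpha}$ and taking the limit of Definition \ref{dfn:cal} then gives the estimate for $h(B_\alpha)$ itself, making your argument self-contained. The cosmetic one: the vanishing of cross terms in $\bigl(\sum_k\lambda_kB_{W_k,\alpha}\bigr)^2$ is not because odd operators anticommute (they need not); it is because the $B_{W_k,\alpha}$ act on distinct factors of a graded tensor product, where the sign rule $(B_1\hat\otimes1)(1\hat\otimes B_2)=-(1\hat\otimes B_2)(B_1\hat\otimes1)$ does the work. With these two repairs your proof is complete, and strictly stronger than the citation the paper offers in its place.
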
 
We remark here that the equation \eqref{eq:perturb} describes the ``asymptotic behaviors'' of the perturbations $(1+th_1)(B_\alpha)$ and $(1+th_2)(B_\alpha)$ of $B_\alpha$ (which has compact resolvent) are ``close'' in some strong sense when $h_1$ and $h_2$ differ by a bounded operator. Also, the equation \eqref{eq:perturb2} says that such perturbations can be made to be ``asymptotically $G$-equivariant'' in some strong sense when one finds a good operator $h$  and uses it for the perturbation. As is proven in \cite{HigKas2} this is alway possible.

\begin{lemma}\label{adapt}(cf. \cite{HigKas2} Lemma 5.7.) Let $G$ be a second countable, locally compact group. Suppose $G$ acts on a real separable Hilbert space $\hill$ by affine isometries. Write the action of $G$ by $(\pi, b)$. Then, there exists a positive, diagonalizable operator $h$ on $\hill$ having compact resolvent whose domain is $G$-invariant and $\pi(g)h-h\pi(g)$ is bounded for any $g$ in $G$. We say such an operator $h$ is adapted to the action of $G$.
\end{lemma}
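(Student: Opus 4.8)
The plan is to construct $h$ as a block-diagonal operator adapted to a filtration of $\hill$ by finite-dimensional subspaces which is ``asymptotically invariant'' under $\pi$ on compact subsets of $G$. Observe first that only the linear part $\pi$ of the action enters the statement (the condition ``$g(h)-h$ bounded'' means $\pi(g)h\pi(g)^\ast-h$ bounded, i.e.\ $\pi(g)h-h\pi(g)$ bounded), so the cocycle $b$ plays no role here.

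First I would fix a symmetric exhaustion $K_1\subseteq K_2\subseteq\cdots$ of $G$ by compact sets with $K_n=K_n^{-1}$ and $\bigcup_n K_n=G$, together with a countable dense subset $\{\xi_1,\xi_2,\dots\}$ of $\hill$. Since $\pi$ is strongly continuous and uniformly bounded, the map $(g,v)\mapsto\pi(g)v$ is jointly continuous, so for any finite-dimensional subspace $V$ the set $\{\pi(g)v:g\in K_n,\ v\in V,\ \|v\|\le1\}$ is norm-compact in $\hill$. Hence, given a finite-dimensional $V_n$ and any $\epsilon_n>0$, I can choose a finite-dimensional $V_{n+1}\supseteq V_n$ which also contains $\xi_{n+1}$ and satisfies $\|(I-Q_{n+1})\pi(g)Q_n\|\le\epsilon_n$ for all $g\in K_n$, where $Q_n$ is the orthogonal projection onto $V_n$. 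Running this inductively from $V_0=0$ yields an increasing filtration with dense union, controlling the ``far upper tails'' of $\pi(g)$; by the symmetry $K_n=K_n^{-1}$ and passage to adjoints one simultaneously controls $\|Q_n\pi(g)(I-Q_{n+1})\|\le\epsilon_n$, i.e.\ the far lower tails. The $\epsilon_n$ are to be fixed in advance, small enough that $\sum_n n^2\epsilon_n<\infty$.

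Next I would set $W_n=V_n\ominus V_{n-1}$, let $P_n$ be the projection onto $W_n$, and define the positive self-adjoint operator $h=\sum_n\lambda_n P_n$ with $\lambda_n=n$; this is diagonalizable with eigenvalues tending to infinity and finite multiplicities $\dim W_n$, hence has compact resolvent, and $\lambda_n\ge1$ gives positivity (and $h^2\ge1$). The block matrix of the commutator is $P_m[\pi(g),h]P_k=(\lambda_k-\lambda_m)P_m\pi(g)P_k$. For fixed $g$ the blocks with $|m-k|\le1$ form a bi-diagonal operator with block norms at most $|\lambda_n-\lambda_{n\pm1}|\,\|\pi(g)\|=1$, hence are bounded. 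For a far block with, say, $m\ge k+2$ and $m\ge N(g)+2$ (where $g\in K_n$ for all $n\ge N(g)$), choosing the threshold $n=m-2$ gives $P_m\pi(g)P_k=P_m(I-Q_{m-1})\pi(g)Q_{m-2}P_k$, so $\|(\lambda_k-\lambda_m)P_m\pi(g)P_k\|\le m\,\epsilon_{m-2}$; the symmetric estimate handles $k\ge m+2$. Summing block norms, the far region contributes at most $\sum_n n^2\epsilon_{n-2}<\infty$, and the finitely many blocks with $\max(m,k)<N(g)+2$ give a bounded finite correction. Thus $[\pi(g),h]$ extends to a bounded operator $C_g$ for every $g$, and a standard closedness argument (approximating $\xi\in D(h)$ by $Q_N\xi$ and using $h\pi(g)Q_N\xi=\pi(g)hQ_N\xi-C_gQ_N\xi$) shows $\pi(g)$ preserves $D(h)$, giving the $G$-invariance of the domain.

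The main obstacle, and the reason exact invariance is unavailable, is that $\bigcup_{g\in K_n}\pi(g)V_n$ typically spans an infinite-dimensional subspace, so one cannot demand $\pi(g)V_n\subseteq V_{n+1}$ and must settle for approximate containment. The substantive work is then the quantitative bookkeeping of the final step: balancing the decay rate of $\epsilon_n$ against the growth of $\lambda_n$ so the off-diagonal tails are summable, treating both tails via $K_n=K_n^{-1}$, and ensuring boundedness of $[\pi(g),h]$ for \emph{every} $g\in G$ (not merely on a dense set) by exhausting $G$ with the $K_n$ so that each $g$ eventually lies in every $K_n$.
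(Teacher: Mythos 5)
Your proof is correct and takes essentially the same approach as the paper's: an increasing sequence of finite-rank projections with tail estimates $\|(1-P_{n+1})\pi(g)P_n\|\le\epsilon_n$ over a compact exhaustion of $G$, the operator $h=\sum_n n(P_n-P_{n-1})$, and boundedness of $[\pi(g),h]$ via splitting into near-diagonal blocks (eigenvalue gaps equal to $1$) and far off-diagonal blocks controlled by the decay (the paper simply fixes $\epsilon_n=2^{-n}$ and handles the lower tails with the terms $P_{n-2}xQ_n$, which implicitly uses the same adjoint/inverse control you get from $K_n=K_n^{-1}$). Your extra details — compactness of orbit sets justifying the choice of $V_{n+1}$, the summability bookkeeping, and the closedness argument for $G$-invariance of the domain — merely make explicit what the paper leaves to the reader.
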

\begin{proof} We follow the argument as in \cite{HigKas2}. It actually proves the following: let $X$ and $Y$ be $\sigma$-compact subsets of $O(\hill)$ and $\hill$ respectively. Then there exists a positive, diagonalizable operator $h$ on $\hill$ having compact resolvent whose domain contains $Y$ and is $X$-invariant and $xh-hx$ is bounded for any $x\in X$. It is clear that this implies our stated claim. Now, we prove this. Write $X$ and $Y$ as increasing unions of compact sets $X_n$ and $Y_n$ ($n\geq1$) respectively. Take an increasing sequence of finite rank projections $(P_n)_{n\geq1}$ such that $\|(1-P_n)y\|\leq2^{-n}$ for $y$ in $Y_n$ and $\|(1-P_{n+1})xP_{n}\|\leq2^{-n}$ for $x\in X_n$. Set $P_0=P_{-1}=0$ and $Q_n=P_n-P_{n-1}$ for $n\geq0$. Define, a positive, diagonalizable operator $h$ by $h=\sum_{n\geq1}^{\infty}nQ_n$. It is clear that $h$ has compact resolvent and that the (selfadjoint) domain of $h$ contains $Y$. Take $x\in X$; we claim $xh-hx=\sum_{n\geq1}^{\infty}n(xQ_n-Q_nx)$ is a bounded operator. Write $xh-hx$ as the sum of $\sum_{n\geq1}^{\infty}n((1-P_{n+1})xQ_n-Q_nx(1-P_{n+1}))$ and $\sum_{n\geq1}^{\infty}n(P_{n-2}xQ_n-Q_nxP_{n-2})$ and $\sum_{n\geq1}^{\infty}n(Q_{n+1}xQ_n+Q_{n-1}xQ_n-Q_nxQ_{n+1}-Q_nxQ_{n-1})$. It is now clear each of them are bounded.
\end{proof} 

We now go to the definition of a continuous field of $\Calgs$ which is the key component of the construction of $G$-extension of the $\Calg$ $A(\hill)$ of Hilbert space. We will consider a bit more general situation than affine isometric actions of $G$.

\begin{dfn} Let $G$ be a second countable, locally compact group, $Y$ be a second countable, locally compact $G$-space, $\hill$ be a separable real Hilbert space. A continuous field of affine isometric actions of $G$ on $\hill$ (parametrized) over $Y$ is a pair $(\pi, (b_y)_{y\in Y})$ where $\pi\colon G\to O(\hill)$ is a continuous group homomorphism from $G$ to $O(\hill)$ and $(b_y)_{y\in Y}$ is a continuous map $(b_y) \colon G\times Y\to \hill$ satisfying a (twisted) cocycle condition $b_y(gg')=b_y(g)+\pi(g)b_{g^{-1}y}(g')$ for any $g,g'$ in $G$ and $y$ in $Y$. Given such a field $(\pi, (b_y)_{y\in Y})$, the $\Calg$ $A(\hill)(Y)=C_0(Y, A(\hill))$ becomes a $G$-$\Calg$ naturally: we set for any $g\in G$ and for $f\colon Y\to A(\hill)$, $g(f)(y)=(\pi(g), b_y(g))_\ast f(g^{-1}y)$ for $y\in Y$ where $(\pi(g), b_y(g))_\ast$ is an action on $A(\hill)$ induced by an affine isometric action $(\pi(g), b(g))$.  Also, we have a continuous field $(C_0(Y, H_\alpha(\hill)))_{\alpha\in (0, \infty)}$ of (graded) $G$-$C_0(Y)$-Hilbert modules.
\end{dfn}

Note, we may allow the linear part $\pi$ also vary along $Y$, but we will stick to the above simple case. For example, for any affine isometric action $(\pi, b)$ of $G$ on $\hill$, taking $Y$ as a (trivial) $G$-space $[0, 1]$, we have a continuous field $(\pi, (b_y)_{y\in[0,1]})$ of affine isometric actions of $G$ on $\hill$ over $[0, 1]$ with $b_y(g)=yb(g)$ which gives us a homotopy between the affine isometric action $(\pi, b)$ and the liner isometric action $(\pi, 0)$. More generally, for any continuous filed $(\pi, (b_y)_{y\in Y})$ of affine isometric actions of $G$ on $\hill$ over $Y$, we have a homotopy between $(\pi, (b_y)_{y\in Y})$ and $(\pi, (0)_{y\in Y})$.

In the following discussion of this chapter, we fix one continuous field $(\pi, (b_y)_{y\in Y})$ of a second countable, locally compact group $G$ on a separable real Hilbert space $\hill$ over a second countable, locally compact $G$-space $Y$.

\begin{dfn}(continuous field $(A_\alpha(\hill)(Y))_{\alpha\in[0,\infty)}$) (cf. \cite{HigKas2} Section 5.) We define a continuous field $(A_\alpha(\hill)(Y))_{\alpha\in[0,\infty)}$ of $G$-$\Calgs$ \,with fibers $A_0(\hill)(Y)=A(\hill)(Y),\,\, A_{\alpha}(\hill)(Y)= \cS\hat\otimes\K(H_\alpha(\cH))(Y)$ for $\alpha$ in $(0,\infty)$. For any finite dimensional affine subspace $V$ of $\hill$, a continuous field $(C_\alpha(V))_{\alpha\in[0,\infty)}$ of graded $\Calgs$ with fibers $C_0(V)=C(V)=C_0(V\times V_0)\hat\otimes L(V), C_{\alpha}(V)=\K(H(V))=\K(L^2(V))\hat\otimes L(V)$ for $\alpha$ in $(0,\infty)$ is defined by a (graded) tensor product of a (trivially graded) continuous field $(C^\ast_\alpha(V_0,C_0(V)))_{\alpha\in[0,\infty)}$ by a graded $\Calg$ $L(V)$: the continuous field $(C^\ast_\alpha(V_0,C_0(V)))_{\alpha\in[0,\infty)}$ is obtained as a  (reduced) crossed product of a ``constant'' field $(C_0(V))_{\alpha\in[0,\infty)}$ by an additive group $V_0$ whose action on the fiber $C_0(V)$ at $\alpha$ is induced from the translation action of $V_0$ on $V$ defined by $v_0\cdot v=v+\alpha v_0$. A continuous field $(A_\alpha(V)(Y))_{\alpha\in[0,\infty)}$ of $\Calgs$ with fibers $A_\alpha(V)(Y)=\cS\hat\otimes C_\alpha(V)(Y)$ for $\alpha$ in $[0,\infty)$ is obtained by a graded tensor product of the above continuous field $(C_\alpha(V))_{\alpha\in[0,\infty)}$ by a graded $\Calg$ $\cS$ and by a (ungraded) $\Calg$ $C_0(Y)$. With these in mind, continuous sections of $(A_\alpha(\hill)(Y))_{\alpha\in[0,\infty)}$ are defined as follows. Fix a positive, selfadjoint compact operator $h$ on $\hill$ which has compact resolvent and adapted to the ``actions'' $(\pi, b(y))$ for all $y$ in $Y$: this is possible; see the proof of Lemma \ref{adapt}. Denote as before the domain of $h$ by $\hill_h$. For any finite dimensional affine subspace $V$ of $\hill_h$, any continuous sections $(T_\alpha)_{\alpha\in[0,\infty)}$ of the continuous field $(C_\alpha(V)(Y))_{\alpha\in[0,\infty)}$ and for any $f$ in $\cS$, a basic section of $(A_\alpha(\hill)(Y))_{\alpha\in[0,\infty)}=(A_\alpha(V^\perp)\hat\otimes C_\alpha(V)(Y))_{\alpha\in[0,\infty)}$ associated to $(T_\alpha)_{\alpha\in[0,\infty)}$ and $f$ is defined as $f(X\hat\otimes1+1\hat\otimes \cB_{V^\perp})\hat\otimes T_0$ at $\alpha=0$ and $f(X\hat\otimes1+1\hat\otimes (1+\alpha h_V)(B_{V^\perp,\alpha}))\hat\otimes T_\alpha$ at $\alpha>0$. Here $V^\perp=\hill\ominus V_0$; $\cB_{V^\perp}$ and $B_{V^\perp,\alpha}$ are the Bott operator on $A(V^{\perp})$ and the Bott-Dirac operator on $V^\perp$ respectively; and $h_V$ is the compression of $h$ to $V^\perp$. A section of $(A_\alpha(\hill)(Y))_{\alpha\in[0,\infty)}$ is defined to be continuous if it is a uniform limit over compact subsets of basic sections. We denote by $\mathcal{F}_h$ the $\Calg$ of the continuous sections of $(A_\alpha(\hill)(Y))_{\alpha\in[0,\infty)}$ which vanish at infinity. On the one hand, the evaluation of the section algebra $\mathcal{F}_h$ at $\alpha=0$ gives a surjective homomorphism from $\mathcal{F}_h$ onto $A(\hill)(Y)$. On the other hand, the $\Calg$ of the continuous sections of $(A_\alpha(\hill)(Y))_{\alpha\in[0,\infty)}$ which vanish at $0$ and at infinity, i.e. the kernel of the evaluation of $\mathcal{F}_h$ at $\alpha=0$, is naturally isomorphic to the $\Calg$ $\cS\hat\otimes\K(\E)$ where $\E$ is a graded Hilbert $\Sigma(Y)$-module of the continuous sections of $(H_\alpha(\hill)(Y))_{\alpha\in(0,\infty)}$ which vanish at infinity. Hence, we have a following extension of $\Calgs$:
\begin{equation}\label{Gext}
\xymatrix{
0 \ar[r] & \cS\hat\otimes\K(\E) \ar[r] & \mathcal{F}_h \ar[r] & A(\hill)(Y) \ar[r] & 0
}
\end{equation}
As is proven in \cite{HigKas2}, the $\Calg$ $\mathcal{F}_h$ becomes a $G$-$\Calg$ naturally; though we are in a bit general situation, the proof goes verbatim. Hence, the extension \eqref{Gext} becomes a $G$-extension of $\Calgs$. We have a natural isomorphism $\cS\hat\otimes\K(\E)\cong S\otimes\K(\E)$; and this is even an isomorphism of $G$-$\Calgs$. Hence, we have actually a following extension ($G$-extension):
\begin{equation}\label{Gext2}
\xymatrix{
0 \ar[r] & S\otimes\K(\E) \ar[r] & \mathcal{F}_h \ar[r] & A(\hill)(Y) \ar[r] & 0
}
\end{equation}
\end{dfn}

We now come to the definition of two important asymptotic morphisms. As in \cite{HigKas2} (Definition 6.4.), we define for separable $G$-$\Calgs$ $A,B$, the abelian semigroup $\{A, B\}_G$ as a set of homotopy equivalence classes of asymptotic morphisms from $A$ to $\K(\E)$ for a countably generated Hilbert $G$-$B$-module $\E$. Here, the homotopy means the asymptotic morphism from $A$ to $\K(\E')$ for a countably generated Hilbert $G$-$B[0,1]$-module. Addition law for $\{A, B\}_G$ is induced from the direct sum operation for Hilbert $G$-$B$-module. As in Definition \ref{dfn:asym}, the semigroup $\{\Sigma A, B\}_G$ is an abelian group thanks to the presence of $\Sigma$.

\begin{dfn}(cf. \cite{HigKas2} Definition 6.6.) The dual Dirac element $\beta$ is the class in the group $\{S(Y), A(\hill)(Y)\}_G$ of the $G$-equivariant asymptotic morphism $(\phi_t)\colon S(Y)\to\to A(\hill)(Y)$ defined by $\phi_t(f\otimes f'):=f(t^{-1}\cB)\otimes f'$ for $t$ in $[1,\infty)$, for $f$ in $S$ and $f'$ in $C_0(Y)$.
\end{dfn}

\begin{dfn}(cf. \cite{HigKas2} Definition 6.7.) The Dirac element $\alpha$ is the class in the group $\{\Sigma A(\hill)(Y),S\Sigma(Y)\}_G$ defined by a central invariant of the extension \eqref{Gext2} (recall this asymptotic morphism is defined using some asymptotically equivariant continuous approximate unit of $S\otimes\K(\E)$ but its class is independent of choices).  
\end{dfn}

We also call the asymptotic morphisms defining the dual Dirac element $\beta$ and the Dirac element $\alpha$ as the dual Dirac element and the Dirac element respectively and even write them as $\alpha$ or $\beta$.

\begin{theorem}\label{theorem:comp}(cf. \cite{HigKas2} Theorem 6.10.) The composition of $G$-equivariant asymptotic morphisms $\Sigma\beta\colon\Sigma S(Y)\rightarrow\rightarrow \Sigma A(\hill)(Y)$ and $\alpha\colon\Sigma A(\hill)(Y)\rightarrow\rightarrow S\K(\E)$ represents the same class in the group $\{\Sigma S(Y), S\Sigma(Y)\}_G$ as the flip isomorphism $\Sigma S\to S\Sigma$ tensored with the identity $\id_{C_0(Y)}$.
\end{theorem}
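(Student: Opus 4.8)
The plan is to compute the composite asymptotic morphism $\alpha\circ\Sigma\beta$ explicitly and then to deform it, through a controlled family of functional-calculus expressions, onto the flip. Both $\Sigma\beta$ and $\alpha$ are built entirely from the functional calculus of (perturbed) Bott and Bott--Dirac operators, so the composite is again given by functional calculus, and the whole argument is really the identification of the limiting behaviour of these maps. Before any computation I would reduce to the simplest field of actions: using the homotopy between $(\pi,(b_y)_{y\in Y})$ and $(\pi,(0)_{y\in Y})$ recorded after the definition of a continuous field of affine isometric actions, together with homotopy invariance of $\{\,\cdot\,,\,\cdot\,\}_G$, I may assume the translation cocycles vanish and $G$ acts on $A(\hill)(Y)$ through the \emph{linear} isometric action $\pi$ alone. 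This is exactly the hypothesis under which the equivariance estimate \eqref{eq:perturb2} of Proposition \ref{property} is stated, and the flip is manifestly insensitive to this deformation, so no generality is lost.

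Next I would write out the composite. Applying $\Sigma\beta=\id_\Sigma\otimes\beta$ to an elementary tensor $f\otimes g\otimes f'\in\Sigma\otimes S\otimes C_0(Y)$ yields $f\otimes\big(g(t^{-1}\cB)\otimes f'\big)$ inside $\Sigma\otimes A(\hill)(Y)=\Sigma A(\hill)(Y)$, and then $\alpha$ is applied. I would realise $\alpha$ through the continuous field $\mathcal{F}_h$: by the very definition of its basic sections, $g(t^{-1}\cB)$ at the fibre $\alpha=0$ lifts to the section of $\mathcal{F}_h$ whose value at $\alpha>0$ is governed by $g\big(t^{-1}(1+\alpha h)(B_\alpha)\big)$, coupled to the outer coordinate of $\cS$ through the multiplier $X\hat\otimes1+1\hat\otimes(\cdots)$. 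Choosing the quasicentral, asymptotically $G$-equivariant approximate unit for the extension \eqref{Gext2} from the functional calculus $\rho\big(s\,(1+\alpha h)(B_\alpha)\big)$ of the perturbed Bott--Dirac operators --- which is legitimate because these operators have compact resolvent by Proposition \ref{property}, so that $\K(\E)$-valued functional calculus is available --- the central-invariant formula turns $\alpha\circ\Sigma\beta$ into a single functional-calculus expression in the two commuting multipliers $X$ and $(1+\alpha h)(B_\alpha)$, multiplied against $f$, $g$ and $f'$. The estimates \eqref{eq:perturb} and \eqref{eq:perturb2} guarantee this is a genuine equicontinuous, asymptotically equivariant asymptotic morphism, independent up to homotopy of the auxiliary choices of section, approximate unit and $h$.

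The heart of the matter is then a spectral computation on the fibres $\alpha>0$, where $A_\alpha(\hill)(Y)=\cS\hat\otimes\K(H_\alpha(\hill))(Y)$ and $B_\alpha$ has one-dimensional kernel spanned by the Gaussian $\xi_\alpha$. As the parameters $s$, $t$, $\alpha$ are driven to their limits, the functional calculus of $B_\alpha$ concentrates onto this kernel, where $\K(H_\alpha)$ collapses to a rank-one sub and the field $\E$ descends to the copy of $S\Sigma(Y)$ present in the target. Mehler's formula, applied exactly as in the one-dimensional harmonic-oscillator model of Example \ref{harm}, supplies the uniform control needed to pass to this limit (the higher summands of $B_\alpha^2$ are suppressed, only the kernel survives). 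The surviving data are the $\Sigma$-coordinate carried by $f$ and the $S$-coordinate carried by $g$, reassembled into $S\Sigma(Y)$ with the two tensor factors interchanged, while $f'$ is transported untouched; this is precisely $\text{(flip)}\otimes\id_{C_0(Y)}$.

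I expect the main obstacle to be the uniform control of the functional calculus in the three coupled parameters $s$, $t$ and $\alpha$: one must show that the composite is a \emph{bona fide} asymptotic morphism rather than a mere family of maps, and that the concentration onto $\ker B_\alpha$ occurs uniformly enough to survive the reparametrisation intrinsic to the composition operation on asymptotic morphisms. This is exactly where the two limiting estimates \eqref{eq:perturb} and \eqref{eq:perturb2}, both underwritten by Mehler's formula, are indispensable. Once that uniformity is secured, the final identification of the limiting morphism with the flip is a direct, essentially algebraic matter.
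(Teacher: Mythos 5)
Your opening reduction (homotoping the cocycle $(b_y)_{y\in Y}$ to $0$ and working with the linear action) matches the paper, but after that your argument has a genuine gap, and it sits exactly at the crucial point. Once the action is linear, each $\phi_t\colon f\mapsto f(t^{-1}\cB)$ is an honest equivariant $\ast$-homomorphism, so $\beta$ is homotopic to the constant morphism $\phi_1$; by naturality of central invariants, $\alpha\circ\Sigma\beta$ is then literally the central invariant of the single pullback extension \eqref{pulled} --- no coupled parameters $s,t,\alpha$, no reparametrization issues. You skip this step and try to control the composite directly, which is why you end up facing the ``three coupled parameters'' problem; but more importantly, your proposed resolution of it is not correct. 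The paper splits the pullback extension, using the projection onto $\ker B_\alpha$ and its complement $P$, into the extension \eqref{flipext} (whose central invariant is the flip) plus the extension \eqref{nullext}, and then must \emph{prove} that the central invariant of \eqref{nullext} vanishes. Your claim that this part disappears because ``the functional calculus of $B_\alpha$ concentrates onto the kernel'' as the parameters go to their limits, with Mehler's formula supplying uniform control, is false as a norm statement: the field runs over all $\alpha\in(0,\infty)$, and for any value of the asymptotic parameter $t$ there are fibers (small $\alpha$) where the spectrum of $t^{-1}(1+\alpha h)(B_\alpha)$ on the range of $P_\alpha$ is nearly dense in the support of the test function, so the compression to the complement of the kernel has essentially full norm. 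No spectral concentration happens in the composite itself; the non-kernel contribution is killed only at the level of homotopy classes.

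The mechanism that actually kills it is the opposite scaling from the one you invoke: one introduces an auxiliary parameter $s\in(0,1]$ and the rescaled operators $(1+\alpha h)(s^{-1}B_\alpha)$, so that as $s\to 0$ the nonzero spectrum (which is bounded below by $s^{-1}\sqrt{2\alpha}$ on $P_\alpha H_\alpha(\hill)$) runs off to infinity and the compressed functional calculus tends to $0$. This exhibits \eqref{nullext} as the evaluation at $s=1$ of an extension whose ideal is the cone $PS\K(\E)P(0,1]$; contractibility of the cone forces that extension's central invariant to vanish, and naturality then forces the central invariant of \eqref{nullext} to vanish. (Equivariance of this auxiliary field is where \eqref{eq:perturb2} of Proposition \ref{property} is genuinely needed; Mehler's formula enters only in establishing those perturbation estimates, not as a concentration principle.) Without this cone argument --- or some replacement for it --- your proof identifies only the kernel summand with the flip and leaves the complementary summand unaccounted for, so the main assertion is not established.
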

\begin{proof} A homotopy of continuous fields of affine isometric actions between $(\pi, (b_y)_{y\in Y})$ and $(\pi, (0)_{y \in Y})$ evidently produce homotopy between the compositions of the dual Dirac elements and the Dirac elements corresponding to the two continuous fields of affine isometric actions. Hence, we can assume the affine part $(b_y)_{y\in Y}$ is $0$. In this case, $\phi_1\colon S(Y)\to A(\hill)(Y)$ is a $G$-equivariant homomorphism which, viewed as an equivariant asymptotic morphism, is homotopic to $(\phi_t)$. Therefore, by the naturality of central invariants, $\alpha\circ\Sigma\beta$ in $\{\Sigma S(Y), S\Sigma(Y\}_G$ is represented by the central invariant of the following pullback $G$-extension:
 \begin{align}\label{pulled}
 \xymatrix{
 0 \ar[r] & S\K(\E) \ar@{=}[d] \ar[r] & \mathcal{F}_{h,S} \ar[d] \ar[r] & S(Y) \ar[d]^{\phi_1} \ar[r] & 0 \\
  0 \ar[r] & S\K(\E) \ar[r] & \mathcal{F}_{h} \ar[r] & A(\hill)(Y) \ar[r] & 0  
 }
 \end{align}
 Here, $\mathcal{F}_{h, S}$ is the $G$-$\C$-subalgebra of $\mathcal{F}$ consisting of continuous sections $(a_\alpha)_{\alpha \in [0, \infty)}$ of the continuous field $(A_\alpha(\hill)(Y))_{\alpha \in [0, \infty)}$ vanishing at infinity taking values in $S(Y)\subset A(\hill)(Y)$ at $\alpha=0$. Modulo null sections, that is, the elements in $S\K(\E)$, this algebra is generated by basic sections associated to continuous sections $(T_\alpha)_{\alpha \in [0, \infty)}$ of the constant field $(C_0(Y))_{\alpha \in [0, \infty)}$ vanishing at infinity and $f\in S$. The functional calculus $f\mapsto f(X\hat\otimes1+1\hat\otimes (1+\alpha h)(B_{\alpha}))$ decomposes into the identity $f\mapsto f$ and the other part similarly to the one explained in Chapter 2. Therefore, the central invariant associated to the extension \eqref{pulled} is the sum of central invariants associated to the following two $G$-extensions of $S(Y)$:
 \begin{align}\label{flipext}
 \xymatrix{
 0 \ar[r] & S(0,\infty)(Y) \ar[r] &S[0, \infty)(Y) \ar[r] & S(Y) \ar[r] & 0 
 }
 \end{align}
and
 \begin{align}\label{nullext}
 \xymatrix{
 0 \ar[r] & PS\K(\E)P  \ar[r] & P\mathcal{F}_{h,S}P \ar[r] & S(Y)  \ar[r] & 0 
 }
 \end{align}
 Here, $P=(P_\alpha)$ denotes the (pointwise) orthogonal projection of the Hilbert space $H_\alpha(\hill)$ onto the subspace orthogonal to the one dimensional kernel of the Bott-Dirac operator $B_\alpha$ of $\hill$. Therefore, it suffices to show the central invariant associated to the extension \eqref{flipext} is $0$. As in \cite{HigKas2}, we define a $G$-$\Calg$ $\mathcal{D}$. To produce this, we consider a continuous field $(D_\alpha)_{\alpha\in [0, \infty)}$ with fibers $D_\alpha=P_\alpha A_\alpha(\hill)(Y)P_\alpha(0, 1]$ for $\alpha$ in $(0, \infty)$ and $D_0=S(Y)$. Continuous sections are generated by continuous sections of $(D_\alpha)_{\alpha \in (0 \infty)}$ vanishing at $0$ and infinity and by basic sections associated to continuous sections $(T_\alpha)_{\alpha \in [0, \infty)}$ of the constant field $(C_0(Y))_{\alpha \in [0, \infty)}$ and $f$ in $S$ which in tern defined as a section which is $f\otimes T_0$ at $\alpha=0$ and is a function $s\mapsto P_\alpha f(X\hat\otimes1+1\hat\otimes (1+\alpha h)(s^{-1}B_{\alpha}))P_\alpha\otimes T_\alpha$ at $\alpha>0$. Thanks to the last property listed in Proposition \ref{property}, the $\Calg$ $\mathcal{D}$ of continuous sections of $(D_\alpha)_{\alpha\in[0, \infty)}$ naturally becomes a $G$-$\Calg$. Moreover, we have a following diagram of $G$-extension:
 \begin{align*}
\xymatrix{ 
  0 \ar[r] & PS\K(\E)P(0, 1] \ar[d] \ar[r] & \mathcal{D} \ar[d] \ar[r] & S(Y) \ar@{=}[d] \ar[r] & 0 \\
   0 \ar[r] & PS\K(\E)P  \ar[r] & P\mathcal{F}_{h,S}P \ar[r] & S(Y)  \ar[r] & 0 
 }
 \end{align*} Here, the vertical arrows are the (fiberwise) evaluation at $s=1$ of $C_0(0, 1]$. By the naturality of central invariants, we see the central invariant of the $G$-extension \eqref{nullext} is $0$. 
 \end{proof}

As in \cite{HigKas2}, \,\,we \,want to\, compute\,\, ``the composition of asymptotic morphisms'' $\Sigma\beta\colon\Sigma S(Y)\to\to \Sigma A(\hill)(Y)$ and $\alpha\colon \Sigma A(\hill)(Y)\to\to SK(\E)$ in the other order to conclude $A(\hill)(Y)$ and $S(Y)$ are isomorphic in the equivariant $E$-Theory category $E^G$. We consider another continuous field over $[0, \infty)$ with fibers $A(\hill)\hat\otimes A_\alpha(\hill)(Y)$ for $\alpha$ in $(0, \infty)$ and $A(\hill\times\hill)(Y)$ at $\alpha=0$. The continuous sections of this field are generated by continuous sections of the field $(A(\hill)\hat\otimes A_\alpha(\hill)(Y))_{\alpha \in (0, \infty)}$ which vanish at $0$ and infinity and by basic sections associated to $f$ in $S$, $T$ in $C(V)$ and a continuous section $(T_\alpha)_{\alpha \in [0, \infty)}$ of the field $(C_\alpha(V)(Y))_{\alpha \in [0, \infty)}$ which vanish at infinity for a finite dimensional subspace $V$ of $\hill$ which are defined analogously as before. Denote by $\mathcal{F}$ the $G$-$\Calg$ of continuous sections of this field. Evaluation at $\alpha=0$ produces the following $G$-extension:
\begin{align}\label{hillhill}
\xymatrix{
0 \ar[r] & \K(\E') \ar[r] & \mathcal{F} \ar[r] & A(\hill\times\hill)(Y) \ar[r] & 0 
}
\end{align} Here, $\E'$ is a Hilbert $G$-$A(\hill)\Sigma(Y)$-module of continuous sections of $(A(\hill)\hat\otimes H_\alpha(\hill)(Y))_{\alpha\in(0,\infty)}$ which vanish at infinity. We denote a central invariant of the extension \eqref{hillhill} by $\zeta$. If we consider the equivariant asymptotic morphism $(\phi'_t)\colon A(\hill)(Y)\to\to A(\hill\times\hill)(Y)$ associated to the embedding of $\hill$ into the second factor of $\hill\times\hill$, a similar argument as before shows the composition $\zeta\circ\Sigma(\phi'_t)\colon \Sigma A(\hill)(Y)\to\to \K(\E')$ in the group $\{\Sigma A(\hill)(Y), A(\hill)\Sigma(Y)\}_G$ is the same as the one defined by the flip tensored with the identity $\id_{C_0(Y)}$: as explained in \cite{HigKas2}, one uses Atiyah's rotation trick flipping the Hilbert space $\hill\times\hill$. Now, we return to the asymptotic morphism $\alpha\colon \Sigma A(\hill)(Y)\to\to S\hat\otimes\K(\E)$. We can ``compose'' this with the equivariant asymptotic morphism $\beta_{p}\colon S\to\to A(\hill)$ (the dual Dirac element for $Y=$ point) tensored with $\id_{\K(\E)}$ to get an asymptotic morphism $\beta_{p}\hat\otimes\id_{\K(\E)}\circ\alpha\colon \Sigma A(\hill)(Y)\to\to\K(\E')$: note, here, we are using an isomorphism $\K(\E')\cong A(\hill)\hat\otimes\K(\E)$ of $\Calgs$ not of $G$-$\Calg$. It is easy to see the two asymptotic morphisms $\zeta\circ\Sigma(\phi'_t)$ and $\beta_{p}\hat\otimes\id_{\K(\E)}\circ\alpha$ are homotopic. Hence, we have a following result. 

\begin{theorem}(cf. \cite{HigKas2} Theorem 6.11.) The\ dual\, Dirac\, element $\alpha\colon S(Y) \to A(\hill)(Y)$\,\,\, defines\,\, an\, invertible\, morphism\,\, \[\id_\Sigma\otimes\alpha\otimes\id_{\K(\hill_G)}\colon \Sigma S(Y)\K(\hill_G)\to\to\Sigma A(\hill)(Y)\K(\hill_G)\] in $E^G(S(Y), A(\hill)(Y))$. Its inverse is $\beta\otimes\id_{\K(\hill_G)}\colon\Sigma A(\hill)(Y)\K(\hill_G)\to\to \K(\E)\K(\hill_G)\cong \Sigma S(Y)\K(\hill_G)$ defined by the the Dirac element $\beta\colon \Sigma A(\hill)(Y)\to\to \K(\E)$. In particular, $S(Y)$ and $A(\hill)$ are isomorphic in the Equivariant $E$-Theory category $E^G$.
\end{theorem}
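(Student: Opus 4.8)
The plan is to exhibit the stabilized Dirac element $\beta$ (the central invariant of the extension \eqref{Gext2}) as a two-sided inverse, in the category $E^G$, of the stabilized dual Dirac morphism $\id_\Sigma\otimes\alpha\otimes\id_{\K(\hill_G)}$; this is precisely what the asserted invertibility means. Since $E^G(S(Y),A(\hill)(Y))=[[\Sigma S(Y)\K(\hill_G),\Sigma A(\hill)(Y)\K(\hill_G)]]_G$ and composition in $E^G$ is composition of asymptotic morphisms, it suffices to verify that both composites — the dual Dirac followed by the central invariant, and the central invariant followed by the dual Dirac — represent invertible classes in the respective endomorphism groups $E^G(S(Y),S(Y))$ and $E^G(A(\hill)(Y),A(\hill)(Y))$. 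Having an invertible element on each side forces $\alpha$ to possess both a left and a right inverse in $E^G$, hence to be invertible, with the two one-sided inverses then agreeing with $\beta$ up to the (invertible) flip.

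The first composite is already settled by Theorem \ref{theorem:comp}: the composition of the suspended dual Dirac with the central invariant, as an asymptotic morphism $\Sigma S(Y)\to\to S\K(\E)$, represents the same class as the flip isomorphism $\Sigma S\to S\Sigma$ tensored with $\id_{C_0(Y)}$. As the flip is a genuine $\ast$-isomorphism of $\Calgs$, it represents an invertible class in $E^G$; transporting along the canonical identification $\K(\E)\cong\Sigma S(Y)$ and stabilizing by $\K(\hill_G)$, this realizes $\beta\circ\alpha$ as an invertible endomorphism of $S(Y)$, so that $\alpha$ acquires a left inverse.

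For the second composite I would carry out the infinite-dimensional version of Atiyah's rotation trick, following the discussion after Theorem \ref{theorem:comp}. First I would form the doubled continuous field over $[0,\infty)$, with fibers $A(\hill)\hat\otimes A_\alpha(\hill)(Y)$ for $\alpha>0$ and $A(\hill\times\hill)(Y)$ at $\alpha=0$, yielding the $G$-extension \eqref{hillhill} with central invariant $\zeta\colon\Sigma A(\hill\times\hill)(Y)\to\to\K(\E')$. Composing $\zeta$ with the suspension of the asymptotic morphism $\phi'$ induced by the embedding of $\hill$ as the second factor of $\hill\times\hill$, one shows — by rotating the two copies of $\hill$ into one another — that $\zeta\circ\Sigma\phi'$ is once more a flip tensored with $\id_{C_0(Y)}$. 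Simultaneously, using the $\Calg$-isomorphism $\K(\E')\cong A(\hill)\hat\otimes\K(\E)$, one identifies $\zeta\circ\Sigma\phi'$ with the composite $(\beta_p\hat\otimes\id_{\K(\E)})\circ\beta$, where $\beta_p$ is the dual Dirac for $Y$ a point. This exhibits the other composition $\alpha\circ\beta$ as an invertible flip class, so $\alpha$ acquires a right inverse.

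The hard part is this second composite, and specifically turning the rotation homotopy into a bona fide $G$-equivariant asymptotic morphism. The analytic heart of the matter is the perturbation of the Bott-Dirac operators: one replaces $B_\alpha$ by $(1+\alpha h)(B_\alpha)$ for an operator $h$ adapted to the action (Lemma \ref{adapt}) and must control the associated functional calculus uniformly as the rotation parameter and the scaling vary, which is exactly the content of the estimates \eqref{eq:perturb} and \eqref{eq:perturb2} in Proposition \ref{property}. Granting these, both composites are flip isomorphisms and therefore invertible in $E^G$, so $\alpha$ has the two-sided inverse $\beta$; consequently $S(Y)$ and $A(\hill)(Y)$ are isomorphic in the Equivariant $E$-Theory category $E^G$.
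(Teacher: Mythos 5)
Your proposal is correct and follows essentially the same route as the paper: one composite is exactly Theorem \ref{theorem:comp}, and the other is handled via the doubled field \eqref{hillhill}, its central invariant $\zeta$, Atiyah's rotation trick, and the identification of $\zeta\circ\Sigma(\phi'_t)$ with the Dirac element composed with the point dual-Dirac tensored by $\id_{\K(\E)}$ (using the non-equivariant isomorphism $\K(\E')\cong A(\hill)\hat\otimes\K(\E)$), after which invertibility follows from having one-sided inverses on both sides. The reliance on the adapted operator $h$ and the estimates \eqref{eq:perturb}, \eqref{eq:perturb2} that you flag is likewise where the paper places the analytic weight.
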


\newpage
\section{Technical Part of the Higson-Kasparov Theorem}

In this chapter, we are going to discuss the technical part of the Higson-Kasparov Theorem. Throughout this chapter, we assume an additional assumption that the action of the group $G$ on the Hilbert space $\hill$ is metrically proper. Hence, the $\Calg$ $A(\hill)$ of the Hilbert space is a proper $G$-$\Calg$. What we will be concerned is how to lift the Dirac element $\alpha$ in the group $\{\Sigma A(\hill), S\Sigma\}_G$ to the group $KK^G_1(A(\hill), S\Sigma)$. In view of Proposition \ref{KasSka}, there is one obvious candidate in the group $KK^G_1(A(\hill), S\Sigma)_G$. In this chapter, $\K$ denotes the $G$-$\Calg$ $\K(\hill_G)$ of compact operators on the standard $G$-Hilbert space $\hill_G=L^2(G)\otimes l^2$.

N. Higson and G. Kasparov defined very natural group homomorphisms\,\, $\eta$ from $KK^G_0(A, B)$ to $\{\Sigma^2A, B\}_G$ and $KK^G_1(A, B)$ to $\{\Sigma A, B\}_G$ for any separable $G$-$\Calgs$ $A$ and $B$ (we use the same notation $\eta$ for these two homomorphisms). Also, they defined left inverses $\rho$ of the homomorphisms $\eta$ when $A=\bbC$.  We first recall the definition of the homomorphisms $\eta$ and $\rho$.

\begin{dfn}\label{eta} (cf. \cite{HigKas2} Definition 7.2.) We define the homomorphism $\eta$ from $KK^G_0(A, B)$ to $\{\Sigma^2 A, B\}_G$ as follows. Let $x$ be an element in the group $KK^G_0(A, B)$. Suppose $x$ is represented by a cycle $(\E, \phi, F)$; recall that $\E$ is a countably generated Hilbert $G$-$B$-module, $\phi$ is an equivariant $\ast$-homomorphism from $A$ to $B(\E)$, and $F$ is an operator in $B(\E)$ which is essentially unitary, essentially equivariant and essentially commuting with elements in $A$. We obtain an equivariant $\ast$-homomorphism $\phi'$ from $\Sigma A$ to $Q(\E)$ defined by $\phi'\colon f\otimes a \mapsto f(F)\phi(a)$ for $f$ in $\Sigma\cong C_0(S^1-\{1\})$ and $a$ in $A$ (We omit to write the quotient map from $B(\E)$ to $Q(\E)$). We define $\eta(x)$ to be an element in the group $\{\Sigma^2A, B\}_G$ represented by a central invariant for the following pullback extension of $\Sigma A$ by $\K(\E)$ defined by $\phi'$:
\begin{align*}
\xymatrix{
0 \ar[r] & \K(\E) \ar[d]\ar[r] & E_{\phi'} \ar[d]\ar[r] & \Sigma A \ar[d]^-{\phi'}\ar[r] & 0\\
0 \ar[r] & \K(\E) \ar[r] & B(\E) \ar[r]   & Q(\E) \ar[r] & 0\\
}
\end{align*}
The definition of the homomorphism $\eta$ from $KK^G_1(A, B)$ to $\{\Sigma A, B\}_G$ is similar but simpler.  Let $x$ be an element in the group $KK^G_1(A, B)$ which is represented by a cycle $(\E, \phi, P)$; recall this time, $P$ is an operator in $B(\E)$ which is essentially an projection, essentially equivariant and essentially commuting with elements in $A$. We obtain an equivariant $\ast$-homomorphism $\phi'$ from $A$ to $Q(\E)$ defined by $\phi'\colon a\mapsto \phi(a)P$ for $a$ in $A$. We define $\eta(x)$ to be an element in the group $\{\Sigma A, B\}_G$ represented by a central invariant for the following pullback extension of $A$ by $\K(\E)$ defined by $\phi'$:
\begin{align*}
\xymatrix{
0 \ar[r] & \K(\E) \ar[d]\ar[r] & E_{\phi'} \ar[d]\ar[r] & A \ar[d]^-{\phi'}\ar[r] & 0\\
0 \ar[r] & \K(\E) \ar[r] & B(\E) \ar[r]   & Q(\E) \ar[r] & 0\\
}
\end{align*}
\end{dfn}

The defined homomorphisms $\eta$ behave well with the Bott-Periodicity, tensor products with the identity morphisms and Stabilization.

\begin{lemma} (cf. \cite{HigKas2} Lemma 7.3.) For any separable $G$-$\Calgs$ $A,B$, the following diagram commutes up to sign.
\begin{align*}
\xymatrix{
KK^G_1(\Sigma A, B) \ar[d]^{\eta} \ar@{=}[r] & KK^G_0(A, B) \ar[d]^{\eta} \\
\{\Sigma^2A, B\}_G \ar@{=}[r] & \{\Sigma^2A, B\}_G
}
\end{align*}
Here, the top horizontal equality means the natural isomorphism by the Bott Periodicity which is unique up to sign.
\end{lemma}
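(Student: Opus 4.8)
The plan is to show that the two equivariant $\ast$-homomorphisms into the Calkin algebra $Q(\E)$ that enter the definition of $\eta$ on the two sides of the square actually coincide, after the standard identification of the two ``inner'' suspensions. Since by Definition \ref{eta} the value of $\eta$ is nothing but the central invariant of the pullback extension determined by such a homomorphism, equality of the homomorphisms forces equality of the two values of $\eta$, up to the sign inherent in the top Bott isomorphism.

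First I would reduce to an essential cycle. Every class in $KK^G_1(\Sigma A, B)$ is represented by some $y=(\E,\phi,P)$ in which $\phi$ is nondegenerate, so that $\phi=\phi_\Sigma\otimes\phi_A$ for commuting nondegenerate representations of $\Sigma=C_0(0,1)$ and of $A$, while $P$ is essentially a projection essentially commuting with $\phi(\Sigma A)$. By the final computation of Example \ref{exampleBott}, the class in $KK^G_0(A,B)$ corresponding to $y$ under the Bott periodicity isomorphism is represented by the even cycle $(\E,\phi_A,F)$ with $F=Pe^{2\pi i x}+(1-P)$, where $x$ denotes the odd multiplier obtained by extending $\phi_\Sigma$ to $C_b(0,1)$.

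The heart of the matter is to compute the homomorphism $\phi'\colon\Sigma A\to Q(\E)$ attached to each side and compare them. On the odd side $\phi'_{\mathrm{odd}}(f\otimes a)$ is the image in $Q(\E)$ of $\phi_\Sigma(f)\phi_A(a)P$. On the even side $\Sigma$ is realized as $C_0(S^1-\{1\})$ and $\phi'_{\mathrm{even}}(f\otimes a)=f(F)\phi_A(a)$. The key observation is that $e^{2\pi i x}-1$ lies in $C_0(0,1)=\Sigma$, as it vanishes at both endpoints; hence the image $\bar P$ of $P$ commutes in $Q(\E)$ with the image $u$ of the unitary $\phi_\Sigma(e^{2\pi i x})$. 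Since $\bar P$ is a projection and $f(1)=0$, functional calculus gives $f(\bar F)=\bar P f(u)=\overline{\phi_\Sigma(\tilde f)P}$, where $\tilde f=f\circ\theta$ and $\theta\colon(0,1)\to S^1-\{1\}$, $\theta(x)=e^{2\pi i x}$. Consequently $\phi'_{\mathrm{even}}(f\otimes a)$ equals $\phi'_{\mathrm{odd}}(\tilde f\otimes a)$ modulo compacts. Thus, once the inner copy of $\Sigma$ produced by the even $\eta$ is identified with the inner $\Sigma$ of $\Sigma A$ via $\theta$, the two homomorphisms $\phi'$ literally agree, their pullback extensions agree, and therefore so do their central invariants in $\{\Sigma^2 A, B\}_G$.

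The main obstacle is not conceptual but bookkeeping: one must keep the two suspensions rigorously separated (the inner $\Sigma$ belonging to $\Sigma A$ versus the outer $\Sigma$ produced by the central invariant) and track every orientation convention. Reversing the orientation of $\theta$, or the sign built into the Bott identification $KK^G_1(\Sigma A, B)\cong KK^G_0(A,B)$, multiplies the answer by the order-reversing flip on the outer $\Sigma$, which is precisely the inverse operation in the group $\{\Sigma^2 A, B\}_G$; this is the source of, and fully accounts for, the ``up to sign'' in the statement. I would finish by checking that the reduction to essential cycles and the passage to $Q(\E)$ respect $G$-equivariance and the homotopy invariance already built into $\eta$, so that the identity of central invariants descends to the asserted commutativity of the square.
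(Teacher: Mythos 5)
Your overall strategy is exactly the paper's: reduce to an essential cycle $(\E,\phi_\Sigma\otimes\phi_A,P)$, quote Example \ref{exampleBott} to represent the Bott image by $(\E,\phi_A,F)$ with $F=e^{2\pi ix}P+1-P$, and then show that the two $\ast$-homomorphisms $\Sigma A\to Q(\E)$ defining the two values of $\eta$ agree under the homeomorphism $\theta\colon x\mapsto e^{2\pi ix}$, so that the pullback extensions, and hence the central invariants, coincide. The handling of the sign via orientation reversal is also consistent with the paper.

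However, the step you label the ``key observation'' is a genuine gap. From the fact that $e^{2\pi ix}-1$ lies in $C_0(0,1)$ you infer that $\bar P$ commutes with $u=\overline{\phi_\Sigma(e^{2\pi ix})}$ in $Q(\E)$, and you also treat $\bar P$ as a projection in $Q(\E)$; neither follows from the hypotheses. An odd Kasparov $\Sigma A$-$B$ module only guarantees that $[P,\phi(c)]$, $\phi(c)(P^2-P)$ and $\phi(c)(P^\ast-P)$ are compact for $c\in\Sigma A$, i.e.\ commutators against \emph{products} $\phi_\Sigma(g)\phi_A(a)$ are controlled; nondegeneracy of $\phi_A$ does not upgrade this to control of $[P,\phi_\Sigma(g)]$ alone, since strict limits of compact operators need not be compact. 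The failure is not hypothetical: let $\Sigma A=C_0(\bbR^2)$ act by multiplication on $\E=L^2(\bbR^2)$ and put $P=\sum_{n\geq 2}\langle\xi_n,\cdot\rangle\,\eta_n$, where $\xi_n$ and $\eta_n$ are normalized indicator functions of $[n,n+1]\times[0,1]$ and $[0,1]\times[n,n+1]$ respectively. Every product of $P$ or $P^\ast$ with a multiplication operator $M_c$, $c\in C_0(\bbR^2)$, is compact (because $c$ decays along both families of rectangles), so all the odd Kasparov conditions hold and $\phi$ is essential; yet for $g,a$ compactly supported with $g=a=1$ on $[0,1]$ one computes $[P,\phi_\Sigma(g)]\phi_A(a)=PM_{ga}-P$, which equals $-P$ modulo compacts and is therefore not compact. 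For this representative $\bar P$ and $u$ do not commute in $Q(\E)$, and indeed $F=e^{2\pi ix}P+1-P$ is not even a legitimate operator for an even Kasparov $A$-$B$ module, since $[\phi_A(a),F]$ fails to be compact. The identity one actually needs, and the one the paper records, keeps the factor $\phi_A(a)$ throughout: $f(F)\phi_A(a)=\phi_\Sigma(f\circ\theta)\phi_A(a)P$ modulo $\K(\E)$. Establishing it (and the validity of the formula from Example \ref{exampleBott} that both you and the paper invoke) requires first normalizing the representative up to homotopy so that, e.g., $[P,\phi_\Sigma(\Sigma)]\phi_A(A)\subseteq\K(\E)$; that normalization is precisely the work your argument, as written, skips.
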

\begin{proof} Lex $x=(\E, \phi, P)$ be an element of $KK^G_1(\Sigma A, B)$ with $\phi=\phi_\Sigma\otimes\phi_A$ a nondegenerate representation of $\Sigma A$ on $\E$. As is explained in Example \ref{exampleBott}, the Bott periodicity maps this element to $y=(\E, \phi_A, e^{2i\pi x}P+1-P)$ in $KK^G_0(A, B)$ where $x$ is $\phi_\Sigma(x)$ (recall that we extend the nondegenerate representation $\phi_\Sigma$ of $\Sigma$ to that of $C_b(0,1)$). That the homomorphisms $\eta$ send two elements to the same class in $\{\Sigma^2A, B\}_G$ can be seen as follows. The $\ast$-homomorphism from $\Sigma A$ to $Q(\E)$ defining $\eta(x)$ is $f\otimes a\mapsto \phi_\Sigma(f)\phi_A(a)P$. On the other hand, $\ast$-homomorphism defining $\eta(y)$ is $f\otimes a\mapsto f(e^{2i\pi x}P+1-P)\phi_A(a)$ where $f$ is in $C_0(S^1-1)\cong\Sigma$. For any $f$ in $C_0(S^1-1)$ and $a$ in $A$, we have $f(e^{2i\pi x}P+1-P)\phi_A(a)=f(e^{2i\pi x})P$ in the Calkin algebra $Q(\E)$.  We can now see that the two $\ast$-homomorphisms are actually the same via the identification $\Sigma\cong C_0(S^1-1)$ given by a homeomorphism $x\mapsto e^{2i\pi x}$ from $(0,1)$ to $S^1-1$.
\end{proof}

\begin{lemma} For any separable $G$-$\Calgs$ $A,B$ and $C$, the following diagram commutes for $\ast=0,1$.
 \begin{align*}
\xymatrix{
KK^G_\ast(A, B) \ar[d]^{\eta} \ar[r]^-{\sigma_C} & KK^G_\ast(A\otimes C, B\otimes C) \ar[d]^{\eta} \\
\{\Sigma^{2-\ast}A, B\}_G \ar[r]^-{\sigma_C} & \{\Sigma^{2-\ast}A\otimes C, B\otimes C\}_G
}
\end{align*}
\end{lemma}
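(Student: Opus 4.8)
The plan is to unwind both vertical maps of the square as central invariants and then to relate the two relevant $G$-extensions by a single natural $\ast$-homomorphism, after which the naturality of central invariants (recorded just after the definition of central invariant) does the work. I will carry out the odd case $\ast=1$; the even case is entirely parallel, replacing the essential projection $P\otimes 1$ by the essential unitary $F\otimes 1$ and using $f(F\otimes 1)=f(F)\otimes 1$, or it may simply be deduced from the $\ast=1$ case together with the preceding lemma on the compatibility of $\eta$ with Bott periodicity and the evident compatibility of $\sigma_C$ with Bott periodicity.

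First I would fix a cycle $x=(\E,\phi,P)$ for $KK^G_1(A,B)$, with $P$ normalized to a selfadjoint contraction, so that $\eta(x)\in\{\Sigma A,B\}_G$ is the central invariant of the pullback $G$-extension $E_{\phi'}$ determined by the $\ast$-homomorphism $\phi'\colon A\to Q(\E)$, $\phi'(a)=\phi(a)P$. Applying $\sigma_C$ to the cycle gives $\sigma_C(x)=(\E\hat\otimes C,\phi\otimes 1,P\otimes 1)$, whose associated homomorphism is $\psi'\colon A\otimes C\to Q(\E\otimes C)$, $\psi'(a\otimes c)=(\phi(a)P)\otimes c$. The key observation is that $\psi'$ factors as $A\otimes C\xrightarrow{\phi'\otimes\id_C}Q(\E)\otimes C\xrightarrow{\,j\,}Q(\E\otimes C)$, where $j$ is induced by the canonical inclusion $B(\E)\otimes C\hookrightarrow B(\E\otimes C)=M(\K(\E)\otimes C)$ together with the identification $\K(\E)\otimes C\cong\K(\E\otimes C)$.

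Next I would build a morphism of $G$-extensions. Tensoring $0\to\K(\E)\to E_{\phi'}\to A\to 0$ with $C$ produces a row with ideal $\K(\E)\otimes C$ and quotient $A\otimes C$; this row is again exact because $E_{\phi'}\to A$ is semisplit: the compression $a\mapsto P\phi(a)P$ is a completely positive contractive lift of $\phi'$ (using that $P$ is essentially selfadjoint, an essential projection, and essentially commutes with $\phi$), and a semisplit sequence stays exact under $\otimes_{\min}C$. The inclusion $B(\E)\otimes C\hookrightarrow B(\E\otimes C)$ paired with $\id_{A\otimes C}$ then defines a $\ast$-homomorphism $\Theta\colon E_{\phi'}\otimes C\to E_{\psi'}$ which is the identity on the common ideal $\K(\E\otimes C)$ and covers $\id_{A\otimes C}$; note $\Theta$ is typically not surjective, since $B(\E)\otimes C$ sits properly inside $B(\E\otimes C)$, so the two extensions are genuinely different and this comparison map is essential. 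Applying the naturality of central invariants to
\[
\xymatrix{
0 \ar[r] & \K(\E)\otimes C \ar@{=}[d]\ar[r] & E_{\phi'}\otimes C \ar[d]^-{\Theta}\ar[r] & A\otimes C \ar@{=}[d]\ar[r] & 0\\
0 \ar[r] & \K(\E\otimes C) \ar[r] & E_{\psi'} \ar[r]  & A\otimes C \ar[r] & 0
}
\]
(with $p=\id_{A\otimes C}$ and $q$ the ideal identification) shows that the central invariant of the bottom row, which is exactly $\eta(\sigma_C(x))$, equals the central invariant of the top row.

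It then remains to identify the central invariant of the top row with $\sigma_C(\eta(x))$, and this is the step I expect to be the main obstacle, since functional calculus does not distribute across a tensor product. Writing $(u_t)$ for an asymptotically equivariant, quasicentral approximate unit of $\K(\E)$ in $E_{\phi'}$ and $(v_t)$ for such an approximate unit of $C$, one checks that $(u_t\otimes v_t)$ is an admissible (quasicentral, asymptotically equivariant) approximate unit for the top row, so that with the section $a\otimes c\mapsto s(a)\otimes c$ its central invariant is $f\otimes a\otimes c\mapsto f(u_t\otimes v_t)\,(s(a)\otimes c)$; by contrast $\sigma_C(\eta(x))$ is $f\otimes a\otimes c\mapsto f(u_t)s(a)\otimes c$. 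To match them in $\{\Sigma(A\otimes C),B\otimes C\}_G$ I would reduce, by uniform approximation of $f\in\Sigma=C_0(0,1)$ on $[0,1]$ by polynomials vanishing at $0$, to monomials $f(\lambda)=\lambda^{n}$; for these $f(u_t\otimes v_t)(s(a)\otimes c)=u_t^{n}s(a)\otimes v_t^{n}c$, which differs from $u_t^{n}s(a)\otimes c=f(u_t)s(a)\otimes c$ by $u_t^{n}s(a)\otimes(v_t^{n}c-c)$, and this tends to $0$ in norm as $t\to\infty$ because $(v_t)$ is an approximate unit of $C$. Hence the two asymptotic morphisms are asymptotically equal and define the same class, so the square commutes. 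All genuine care is concentrated in this last bookkeeping (admissibility of $u_t\otimes v_t$ and the uniform-in-$t$ control of the functional calculus); the conceptual content is exhausted by the factorization of $\psi'$ and the naturality of central invariants.
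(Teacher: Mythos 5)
Your proposal is correct, but it follows a genuinely different and longer route than the paper's proof. The paper argues in one stroke: it represents $\sigma_C(\eta(x))$ by $f\otimes a\otimes c\mapsto (f(u_t)\otimes 1)(\phi(a)P\otimes c)$ and $\eta(\sigma_C(x))$ by $f\otimes a\otimes c\mapsto f(v_t)(\phi(a)P\otimes c)$, where $(u_t)$ is an approximate unit for the pair $\K(\E)\subset E_{\phi'}$ and $(v_t)$ one for $\K(\E)\otimes C\subset E_{\psi'}$ (the paper writes $E_{\phi'\otimes\id_C}$ for your $E_{\psi'}$), and then connects the two asymptotic morphisms by the straight-line homotopy between $u_t\otimes 1$ and $v_t$; in effect it treats $u_t\otimes 1$ as a second, multiplier-valued, quasicentral approximate unit for the single extension $E_{\psi'}$ and runs the usual independence-of-approximate-unit argument. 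You never form this homotopy. Instead you interpose the tensored extension $0\to\K(\E)\otimes C\to E_{\phi'}\otimes C\to A\otimes C\to 0$ (its exactness justified by the c.p.c.\ section $a\mapsto (P\phi(a)P,a)$ and the fact that semisplit sequences stay exact under $\otimes_{\min}$), transfer its central invariant to $E_{\psi'}$ via $\Theta$ and the naturality of central invariants, and then match it with $\sigma_C(\eta(x))$ using the in-ideal approximate unit $u_t\otimes v_t$ together with a polynomial-approximation estimate, i.e.\ asymptotic equality instead of homotopy. The trade-off: the paper's argument is much shorter, but it silently works with an approximate unit $u_t\otimes 1$ that does not lie in the ideal $\K(\E)\otimes C$ (the straight-line lemma is stated for approximate units inside the ideal); your argument is scrupulous on precisely this point, at the cost of the extra semisplit-exactness input and the intermediate extension.

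One flaw should be corrected, though it is inessential to your logic: the ``key observation'' that $\psi'$ factors as $j\circ(\phi'\otimes\id_C)$ through $Q(\E)\otimes_{\min}C$ is false in general. A well-defined $j\colon Q(\E)\otimes_{\min}C\to Q(\E\otimes C)$ compatible with the inclusion $B(\E)\otimes C\hookrightarrow B(\E\otimes C)$ would force $\ker\bigl(B(\E)\otimes_{\min}C\to Q(\E)\otimes_{\min}C\bigr)=\K(\E)\otimes C$, an exactness statement that can fail when $C$ is not exact (there is no c.p.\ splitting of $B(\E)\to Q(\E)$ to invoke). Fortunately your proof never uses $j$: the morphism $\Theta$ is built directly from the inclusion $B(\E)\otimes C\hookrightarrow B(\E\otimes C)$ paired with $\id_{A\otimes C}$, and that its image lies in $E_{\psi'}$ follows by checking on elementary tensors and then extending by continuity, since the difference of the two $\ast$-homomorphisms $E_{\phi'}\otimes C\to B(\E\otimes C)$ involved is a bounded linear map taking values in the closed subspace $\K(\E\otimes C)$ on a dense subspace. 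With the remark about $j$ deleted, the argument is sound.
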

\begin{proof} We consider the case $\ast=1$. Take any element $x=(\E, \phi, P)$ in $KK^G_1(A, B)$. Then, the element $\sigma_C(x)$ in $KK^G_1(A\otimes C, B\otimes C)$ is by definition, represented as $(\E\otimes C, \phi\otimes \id_C, P\otimes1)$. On the one hand, the element $\sigma_C(\eta(x))$ in the $\{\Sigma A\otimes C, B\otimes C\}_G$ is represented by an asymptotic morphism $\phi_t\colon f\otimes a\otimes c\mapsto (f(u_t)\otimes1)(\phi(a)P\otimes c) \in \K(\E)\otimes C$ where $(u_t)_{t\geq1}$ is an approximate unit for the pair $\K(\E)\subset E_{\phi'}$. On the other hand, $\eta(\sigma_C(x))$ is represented by an asymptotic morphism $\psi_t\colon f\otimes a\otimes c\mapsto f(v_t)(\phi(a)P\otimes c) \in \K(\E)\otimes C$ where $(v_t)_{t\geq1}$ is an approximate unit for the pair $\K(\E)\otimes C\subset E_{\phi'\otimes\id_C}$. They are homotopic via the straight line homotopy between $(u_t)$ and $(v_t)$. The case $\ast=0$ can be handled completely analogously.
\end{proof}

\begin{lemma}\label{lemStab} For any separable $G$-$\Calgs$ $A,B$, the following diagram commutes for $\ast=0,1$.
 \begin{align*}
\xymatrix{
KK^G_\ast(A, B) \ar[d]^{\eta} \ar@{=}[r] & KK^G_\ast(A, B\K) \ar[d]^{\eta} \\
\{\Sigma^{2-\ast}A, B\}_G \ar@{=}[r] & \{\Sigma^{2-\ast}A, B\K\}_G
}
\end{align*} Here, the top inequality is Stabilization in Equivariant $KK$-Theory: that is the Kasparov product with $(\K(\hill_G, \bbC), 1, 0)$ in $KK^G(\bbC, \K)$ or with its inverse $(\hill_G, \id_\K, 0)$ in $KK^G(\K, \bbC)$. The meaning of the bottom inequality is similar: in rightward direction, for any Hilbert $G$-$B$-module $\E$, we identify $\K(\E)$ with $\K(\E\otimes\K(\hill_G, \bbC))$; and for any Hilbert Hilbert $G$-$B\K$-module $\E'$, we identify $\K(\E')$ with $\K(\E\otimes_\K\hill_G)$.
\end{lemma}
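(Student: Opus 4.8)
The plan is to proceed exactly as in the two preceding lemmas: unwind both composites into asymptotic morphisms, show that after the Morita-type identifications named in the statement the two underlying $G$-extensions literally coincide, and conclude that the only remaining discrepancy is the choice of quasicentral approximate unit, which is killed by a straight-line homotopy. I would write out the case $\ast=1$ in full and remark that $\ast=0$ is verbatim the same with $A$ replaced by $\Sigma A$ and the essential projection $P$ replaced by the essential unitary $F$ (using the representation $f\otimes a\mapsto f(F)\phi(a)$ of $\Sigma A$).

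First I would record the Morita identifications underlying everything. Since $\K(\hill_G,\bbC)$ is a $\bbC$--$\K$ imprimitivity bimodule, for any Hilbert $G$-$B$-module $\E$ the exterior tensor product $\E\otimes\K(\hill_G,\bbC)$ is a Hilbert $G$-$B\K$-module, and one has $G$-equivariant isomorphisms $\K(\E\otimes\K(\hill_G,\bbC))\cong\K(\E)\otimes\K(\K(\hill_G,\bbC))\cong\K(\E)$; passing to multiplier algebras gives $B(\E\otimes\K(\hill_G,\bbC))\cong B(\E)$, compatibly with the quotients onto the respective Calkin algebras. Under these identifications the canonical $\ast$-homomorphism $T\mapsto T\otimes 1$ from $B(\E)$ to $B(\E\otimes\K(\hill_G,\bbC))$ becomes the identity, so in particular $P\otimes 1\mapsto P$ and $\phi\otimes 1\mapsto\phi$.

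Next I would invoke the standard fact (compare the imprimitivity-bimodule example earlier in the text, where Morita-Rieffel equivalence is implemented by cycles with zero operator) that Stabilization in $KK^G$, i.e.\ the Kasparov product with $[(\K(\hill_G,\bbC),1,0)]\in KK^G(\bbC,\K)$, is computed on cycles by the exterior tensor product, sending $x=(\E,\phi,P)$ to $(\E\otimes\K(\hill_G,\bbC),\,\phi\otimes 1,\,P\otimes 1)$ over $B\K$. Applying $\eta$ to this cycle yields the central invariant of the pullback extension of $A$ by $\K(\E\otimes\K(\hill_G,\bbC))$ associated to the $\ast$-homomorphism $a\mapsto(\phi(a)\otimes 1)(P\otimes 1)$ into $Q(\E\otimes\K(\hill_G,\bbC))$. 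By the previous step this extension is exactly the extension $E_{\phi'}$ defining $\eta(x)$, read through $\K(\E)\cong\K(\E\otimes\K(\hill_G,\bbC))$. On the other hand $\mathrm{stab}(\eta(x))$ is, by the definition of the bottom arrow of the diagram, nothing but $\eta(x)$ transported along this same identification $\K(\E)\cong\K(\E\otimes\K(\hill_G,\bbC))$. Thus both composites are central invariants of one and the same $G$-extension, differing only in the asymptotically equivariant, quasicentral approximate unit used to present them.

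Finally I would appeal to the fact recorded when central invariants were introduced: any two asymptotically equivariant quasicentral approximate units $(u_t)$ and $(v_t)$ for a pair $J\subset D$ are joined by the convex path $(1-s)u_t+sv_t$, which is again such an approximate unit, and this produces a homotopy of the associated asymptotic morphisms. Hence the two central invariants define the same class in $\{\Sigma A,B\K\}_G$, so the diagram commutes. The only genuine obstacle, and the step I would take most care over, is the bookkeeping in the second and third paragraphs: checking that the ``rightward'' identification implicit in Stabilization in $KK^G$ (read on cycles) agrees with the identification $\K(\E)\cong\K(\E\otimes\K(\hill_G,\bbC))$ defining the bottom arrow, and that $\phi\otimes 1$ and $P\otimes 1$ descend to $\phi$ and $P$. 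Once these coincidences are established, the equality of the two classes is immediate from the homotopy of approximate units.
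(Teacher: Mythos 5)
Your proposal is correct and takes essentially the same route as the paper: the paper's (very brief) proof consists exactly of the observation you develop in detail, namely that for any Hilbert $G$-$B$-module $\E$ one has the isomorphism $\K(\E)\cong\K(\E\otimes\K(\hill_G,\bbC))$, $T\mapsto T\otimes 1$, under which the stabilized cycle and pullback extension coincide with the original ones. Your additional bookkeeping (stabilization computed on cycles as exterior tensoring with $\K(\hill_G,\bbC)$, and independence of the central invariant from the choice of quasicentral approximate unit via the straight-line homotopy) is precisely what the paper leaves implicit in calling the lemma ``more or less obvious.''
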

\begin{proof} This is more or less obvious. We just need to see for any Hilbert $G$-$B$-module $\E$, we have an isomorphism from $\K(\E)$ to $K(\E\otimes\K(\hill_G, \bbC))$ sending $T$ to $T\otimes1$.
\end{proof}

\begin{dfn}\label{dfnrho}(cf. \cite{HigKas2} Definition 7.4.) We define the homomorphisms $\rho$ from $\{\Sigma^2, B\}_G$ to $KK^G_1(\bbC, B(0, \infty))\cong KK^G_0(\bbC, B)$ as follows. Given any asymptotic morphism $\phi=(\phi_t)_{t\geq1}$ from $\Sigma^2$ to $\K(\E)$, we view $\phi$ as a map from $\Sigma^2$ to a $\Calg$ $K(\E)(0,\infty)$ by deeming $\phi_t=t\phi_1$ for $t<1$. By extending $\phi$ to a unital map from $\tilde\Sigma^2$ to $B(\E)(0,\infty)$ and naturally extending it to a map $\phi'$ from $M_2(\tilde\Sigma^2)$ to $B(\E\oplus\E)(0,\infty)$, we now define an operator $P$ in $B(\E\oplus\E)(0,\infty)$ to be an image $\phi'(p)$ of a projection $p=\frac{1}{1+|z|^2}\begin{pmatrix}
1 & z \\
\bar{z} & |z|^2
\end{pmatrix}$ in $M_2(\tilde\Sigma^2)\cong M_2(\widetilde{C_0(\bbC)})$. An operator $P$ is an essentially projection, which is essentially equivariant. We set $\eta(\phi)$ in $KK^G_1(\bbC B)$ to be the odd Kasparov module $((\E\oplus\E)(0, \infty), 1, P)$.\\
We next define $\rho$ from $\{\Sigma, B\}_G$ to $KK^G_0(\bbC, B(0, \infty))\cong KK^G_1(\bbC, B)$. Given any asymptotic morphism $\phi=(\phi_t)_{t\geq1}$ from $\Sigma$ to $\K(\E)$, we view $\phi$ as a map from $\Sigma$ to a $\Calg$ $K(\E)(0,\infty)$ by deeming $\phi_t=t\phi_1$ for $t<1$. By extending $\phi$ to a unital map $\phi'$ from $\tilde\Sigma$ to $B(\E)(0,\infty)$, we define an operator $T$ in $B(\E)(0,\infty)$ to be an image $\phi'(z)$ of a unitary $z$ in $\tilde\Sigma\cong C(S^1)$. An operator $T$ is an essentially unitary which is essentially equivariant. We set $\eta(\phi)$ in $KK^G_0(\bbC, B(0, \infty))$ to be the even Kasparov module $(\E(0, \infty), 1, T)$.
\end{dfn}

The defined homomorphisms $\rho$ are right inverses of $\eta$.

\begin{lemma}\label{rhoinverse} (cf. \cite{HigKas2} Lemma 7.5.) For any separable $G$-$\Calg$ $B$, the following composition for $\ast=0,1$
\begin{align*}
\xymatrix{
KK^G_\ast(\bbC, B) \ar[r]^-{\eta} &  \{\Sigma^{2-\ast}, B\}_G \ar[r]^-{\rho} & KK^G_{1-\ast}(\bbC, B(0, \infty)) 
}
\end{align*} coincides with the Bott-Periodicity map (up to sign). Hence, $\rho$ is a right inverse of $\eta$.
\end{lemma}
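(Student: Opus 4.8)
The plan is to prove the identity $\rho\circ\eta=\pm(\text{Bott})$ by computing both composites explicitly on a representative cycle and then exhibiting a homotopy between the resulting Kasparov module and the one produced by the Bott map in Example \ref{exampleBott}. The two cases $\ast=1$ and $\ast=0$ run on the same scheme, so I would carry out $\ast=1$ in full and indicate the (purely bookkeeping) modifications for $\ast=0$.

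For $\ast=1$, start with $x=(\E,1,P)\in KK^G_1(\bbC,B)$, where after a homotopy I may assume the representation of $\bbC$ is the unit and $P$ is a self-adjoint operator which is a projection modulo $\K(\E)$. First I would unwind $\eta(x)$: the map $\phi'\colon\bbC\to Q(\E)$ sends $1$ to the class of $P$, and choosing the section $s(1)=P$ together with a continuous approximate unit $(u_t)_{t\ge1}$ for $\K(\E)$ that is quasicentral in $E_{\phi'}$ and asymptotically $G$-equivariant, the central invariant is the asymptotic morphism $\phi_t\colon\Sigma\to\K(\E)$, $f\mapsto f(u_t)P$ (note $f(u_t)\in\K(\E)$ since $f(0)=0$). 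Next I would feed this into $\rho$: after setting $\phi_t=t\phi_1$ for $t<1$ and extending unitally to $\widetilde{\Sigma}\cong C(S^1)$, evaluation on the generating unitary $z=e^{2\pi i x}$ (for which $z-1\in\Sigma$) gives $T_t=1+(e^{2\pi i u_t}-1)P\equiv(1-P)+Pe^{2\pi i u_t}\ (\mathrm{mod}\ \K)$, so that $\rho(\eta(x))=(\E(0,\infty),1,T)$. On the other hand, the Bott map of Example \ref{exampleBott} applied to $x$ is $(\E\Sigma,1,Pe^{2\pi i x}+(1-P))$; identifying $B\Sigma\cong B(0,\infty)$ by an increasing homeomorphism $h\colon(0,\infty)\to(0,1)$ turns its operator into $S_t=(1-P)+Pe^{2\pi i\,h(t)1}$.

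Thus everything comes down to comparing $T_t=(1-P)+Pe^{2\pi i u_t}$, built from the approximate unit $u_t\in\K(\E)$, with $S_t=(1-P)+Pe^{2\pi i\,h(t)1}$, built from the scalar function $h(t)$. I would connect them by the straight-line path $v^{(r)}_t=(1-r)u_t+r\,h(t)1$ and set $T^{(r)}_t=(1-P)+Pe^{2\pi i v^{(r)}_t}$. Along this path the commutator $[P,v^{(r)}_t]=(1-r)[P,u_t]$ still tends to $0$, so each $T^{(r)}_t$ is essentially unitary; $v^{(r)}_t\to 1$ as $t\to\infty$ and $v^{(r)}_t\to0$ as $t\to 0$, so $T^{(r)}_t-1$ lies in $\K(\E)\otimes C_0(0,\infty)$; and $g(v^{(r)}_t)-v^{(r)}_t\to0$ using asymptotic equivariance of $(u_t)$ and essential equivariance of $P$, so the family is asymptotically equivariant. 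This yields a homotopy of even Kasparov modules over $B(0,\infty)$ from $\rho(\eta(x))$ to the Bott module, proving $\rho\circ\eta=\pm(\text{Bott})$ for $\ast=1$. The even case $\ast=0$ is identical in spirit: starting from $(\E,1,F)$ with $F$ essentially unitary, $\eta(x)$ is the central invariant $g\otimes f\mapsto g(u_t)f(F)$ on $\Sigma^2$, and feeding this to $\rho$ — evaluating the unital extension $\widetilde{\Sigma^2}\cong M_2(\widetilde{C_0(\bbC)})$ on the Bott projection $p$ — yields an odd module built from $u_t$ and $F$ that the same straightening of $u_t$ to the coordinate identifies with the Bott formula for $KK^G_1(\bbC,B\Sigma)$.

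The main obstacle is the homotopy step: I must check that, as the operator-valued approximate unit $u_t$ is straightened to the scalar coordinate $h(t)1$, the error terms $P^2-P$, $[P,e^{2\pi i v^{(r)}_t}]$, and $g(\cdot)-(\cdot)$ not only lie in $\K(\E)$ but vanish suitably at the ends $t\to0,\infty$ so that $T^{(r)}_t-1\in\K(\E)\otimes C_0(0,\infty)$ uniformly in $r$. This is exactly where quasicentrality of $(u_t)$ and the fact that $(1-u_t)k\to0$ in norm for compact $k$ are used, together with functional-calculus continuity to pass from $[P,v^{(r)}_t]\to0$ to $[P,e^{2\pi i v^{(r)}_t}]\to0$. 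Once $\rho\circ\eta=\pm(\text{Bott})$ is established, the concluding assertion is immediate: since the Bott map is an isomorphism, $\eta$ is split injective and $\rho$ furnishes the claimed one-sided inverse after identifying the two groups by Bott periodicity.
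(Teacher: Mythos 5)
Your proposal is correct and takes essentially the same approach as the paper: the paper likewise represents the class by $(\E,1,P)$ (resp.\ $(\E,1,F)$), unwinds $\eta$ as the central invariant $f\mapsto f(u_t)P$, unwinds $\rho$ as the unital extension evaluated at $z$ (resp.\ at the Bott projection), obtaining $T_t=e^{2\pi i u_t}P+I-P$, and then applies the straight-line homotopy from $u_t$ to a scalar coordinate function ($\min(t,1)$ in the paper, your homeomorphism $h\colon(0,\infty)\to(0,1)$ playing the identical role) to land on the Bott formula of Example \ref{exampleBott}. One caveat: your intermediate claim that $T^{(r)}_t-1\in\K(\E)\otimes C_0(0,\infty)$ is both false (at $r>0$ the term $P(e^{2\pi i h(t)}-1)$ is not pointwise compact since $P$ need not be compact, and even at $r=0$ the norm need not vanish as $t\to\infty$) and unnecessary --- the Kasparov-module conditions only require the defects $T^{(r)*}T^{(r)}-1$, $T^{(r)}T^{(r)*}-1$ and $g(T^{(r)})-T^{(r)}$ to lie in that ideal, and these are exactly the quantities controlled by the error terms $P^2-P$, $[P,e^{2\pi i v^{(r)}_t}]$ and the equivariance defects that you correctly identify, so the argument itself stands.
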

\begin{proof} We consider the case $\ast=1$. Take any element $x=(\E, 1, P)$ in $KK^G_1(\bbC, B)$. The asymptotic morphism $\phi_t\colon f\mapsto f(u_t)P$ represents $\eta(x)$ in $\{\Sigma, B\}_G$. Here, $(u_t)$ is an asymptotically equivariant, approximate unit asymptotically commuting with $P$. Then, the map $\phi'\colon\tilde\Sigma\to B(\E)(0, \infty)$ as in Definition \ref{dfnrho} sends $z$ in $\tilde\Sigma\cong C(S^1)$ to an essentially unitary $T_t=e^{i2\pi u_t}P+I-P$ in $B(\E)(0, \infty)$ with $u_t=tu_1$ for $t\leq1$ (we are identifying $\Sigma$ with $C_0(S^1-1)$ using a homeomorphism $x\mapsto e^{i2\pi x}$ from $(0,\infty)$ to $S^1-1$). The straight line homotopy between $u_t$ and $\min(t, 1)$ shows, the even Kasparov module $(\E(0, \infty), 1, T_t)$ defines the class in $KK^G_0(\bbC, B(0, \infty))$ which corresponds to $x$ under the Bott Periodicity. The case $\ast=0$ is similar but a bit complicated. Take any element $y=(\E, 1, F)$ in $KK^G_0(\bbC, B)$. The asymptotic morphism $\phi_t\colon f\otimes f'\mapsto f(u_t)f'(F)$ represents $\eta(y)$ in $\{\Sigma^2, B\}_G=\{\Sigma C_0(S^1-1), B\}_G$. Here, $(u_t)$ is an asymptotically equivariant, approximate unit asymptotically commuting with $F$. Then, the map $\phi'\colon M_2(\tilde\Sigma^2)\to B(\E\oplus\E)(0,\infty)$ as in Definition \ref{dfnrho} sends $p=\frac{1}{1+|z|^2}\begin{pmatrix}
1 & z \\
\bar{z} & |z|^2
\end{pmatrix}$ in $M_2(\tilde\Sigma^2)\cong M_2(\widetilde{C_0(\bbC)})$ to an essentially projection which is homotopic to $P_t=\begin{pmatrix}
1-u_t &  (u_t-{u_t}^2)^{\frac12}F^\ast\\
(u_t-{u_t}^2)^{\frac12}F   & u_t
\end{pmatrix}$ in $M_2(B(\E)(0, \infty))$ with $u_t=tu_1$ for $t\leq1$). The straight line homotopy between $u_t$ and $\min(t, 1)$ shows, the odd Kasparov module $(\E(0, \infty)\oplus\E(0, \infty), 1, P_t)$ defines the class in $KK^G_1(\bbC, B(0, \infty))$ which corresponds to $y$ under the Bott Periodicity. \end{proof}

The following lemmas show the homomorphisms $\rho$ also behave well with the Bott-Periodicity and Stabilization.

\begin{lemma} (cf. \cite{HigKas2} Lemma 7.6.) For any separable $G$-$\Calg$ $B$, the following diagram commutes.
\begin{align*}
\xymatrix{
\{\Sigma, B\}_G \ar[d]^{\rho} \ar[r]^-{\sigma_\Sigma} & \{\Sigma^2, \Sigma B\}_G \ar[d]^\rho \\
KK^G_0(\bbC, B(0, \infty)) \ar@{=}[r] & KK^G_1(\bbC, \Sigma B(0, \infty))
}
\end{align*} The bottom inequality is of course, the Bott Periodicity. 
\end{lemma}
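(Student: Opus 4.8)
The plan is to unwind both composites around the square into explicit odd Kasparov $\bbC$-$\Sigma B(0,\infty)$-modules (both land in $KK^G_1(\bbC,\Sigma B(0,\infty))$, hence are presented by essential projections) and then to exhibit an explicit homotopy identifying the two representatives. Fix an asymptotic morphism $\phi=(\phi_t)_{t\geq1}\colon\Sigma\to\K(\E)$ representing a class of $\{\Sigma, B\}_G$, and as in Definition \ref{dfnrho} view it as a map into $\K(\E)(0,\infty)$ by deeming $\phi_t=t\phi_1$ for $t<1$; write $\phi'$ for the induced unital maps on $\tilde\Sigma$ and on matrix algebras over it, using $\Sigma\cong C_0(S^1\setminus\{1\})$ via $x\mapsto e^{2\pi i x}$ for the one-variable version of $\rho$ and $\Sigma^2\cong C_0(\bbC)$ for the two-variable version.

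For the composite down-then-along-the-bottom, first $\rho(\phi)=[(\E(0,\infty),1,T)]$ with $T=\phi'(z)$ the essential unitary attached to the generator $z\in\tilde\Sigma\cong C(S^1)$. Applying the explicit Bott periodicity formula recorded in Example \ref{exampleBott} (with $A=\bbC$, $B$ replaced by $B(0,\infty)$, $F=T$) gives
\begin{equation*}
\mathrm{Bott}(\rho(\phi))=\left[\left((\E(0,\infty))\Sigma\oplus(\E(0,\infty))\Sigma,\ 1,\ \begin{pmatrix} 1\otimes x & T^\ast\otimes(x-x^2)^{1/2}\\ T\otimes(x-x^2)^{1/2} & 1\otimes(1-x)\end{pmatrix}\right)\right]
\end{equation*}
in $KK^G_1(\bbC,\Sigma B(0,\infty))$; the indicated operator is an essential projection precisely because $T$ is essentially unitary and commutes exactly with the $\Sigma$-multiplier $x$, the two living on different tensor legs.

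For the composite right-then-down, $\sigma_\Sigma(\phi)=\phi\otimes\id_\Sigma\colon\Sigma\otimes\Sigma\to\K(\E)\otimes\Sigma=\K(\E\otimes\Sigma)$, so that $\rho(\sigma_\Sigma(\phi))=[((\E\otimes\Sigma)\oplus(\E\otimes\Sigma))(0,\infty),1,P]$ with $P=\phi'(p)$ the image of the planar Bott projection $p=\frac{1}{1+|z|^2}\begin{pmatrix}1&z\\ \bar z&|z|^2\end{pmatrix}$. The crux is to compute $P$ through the factorization $C_0(\bbC)\cong\Sigma\otimes\Sigma$ into the leg on which $\phi$ acts and the leg carried through by $\id_\Sigma$: on the first leg $\phi'$ produces the essential unitary $T$, while the second leg survives as the $\Sigma$-coordinate, i.e. as the multiplier $x$ (equivalently $e^{2\pi i x}$) of $\Sigma B(0,\infty)$. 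I would then prove the single matching identity: under this factorization $P$ is unitarily equivalent, and then homotopic through essential projections, to the $2\times2$ matrix displayed above. Concretely I would substitute $\sin^2\tfrac\pi2 x$ for $x$ and $\sin\tfrac\pi2 x\cos\tfrac\pi2 x$ for $(x-x^2)^{1/2}$ by the straight-line homotopy of Example \ref{exampleBott}, and conjugate by the unitary $1\oplus e^{-i\pi x}$, so that the off-diagonal entries pick up the factor $e^{2\pi i x}$ (the $z$-coordinate) and the diagonal becomes the rank function of $p$; these operations preserve the Kasparov relations and the asymptotic $G$-equivariance, since they only involve the scalar $G$-fixed $\Sigma$-multiplier $x$.

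The main obstacle I anticipate is the bookkeeping of the two coordinate conventions for $\Sigma$ — the $C_0(\bbR)$-picture used to build the planar projection $p$ over $C_0(\bbC)=C_0(\bbR)\otimes C_0(\bbR)$ versus the $C_0(S^1\setminus\{1\})$-picture $x\mapsto e^{2\pi i x}$ used to attach $T$ — together with verifying that after the $\sin^2$/phase homotopies the planar Bott projection genuinely reassembles into the suspension of the one-variable essential unitary $T$. Secondarily I must check that the normalization $\phi_t=t\phi_1$ for $t<1$ is applied consistently in both composites, which it is, since $\sigma_\Sigma$ and the Bott formula are built by tensoring with identities and leave the parameter $t$ untouched; and I would track the overall sign, which as in the companion Lemma \ref{rhoinverse} matches the orientation of the Bott isomorphism, so that the square commutes as stated.
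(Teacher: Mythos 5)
Your proposal is correct and takes essentially the same route as the paper's own proof: both unwind $\rho(\phi)$ into the even module $(\E(0,\infty),1,T)$ with $T=\phi'(z)$, unwind $\rho(\sigma_\Sigma(\phi))$ into the essential projection $\phi'(p)$ built from the planar Bott projection, and match the two via the explicit suspension-form matrix $\begin{pmatrix} 1\otimes x & T^\ast\otimes(x-x^2)^{1/2}\\ T\otimes(x-x^2)^{1/2} & 1\otimes(1-x)\end{pmatrix}$ together with the $\sin^2$/phase homotopies of Example \ref{exampleBott}. The only difference is that you spell out the homotopy identifying $\phi'(p)$ with this matrix (and the sign bookkeeping), which the paper compresses into the single assertion that $\rho(\id_\Sigma\otimes\phi_t)$ is ``homotopic to $P_t$.''
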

\begin{proof} Take any $x$ in $\{\Sigma, B\}_G$ represented by an asymptotic morphism $\phi_t\colon C_0(S^1-1)\to \K(\E)$. We may write an essential unitary on $\E(0, \infty)$ defining $\rho(x)$ in $KK^G_0(\bbC, B(0, \infty))$ as $T_t=\phi_t(z)$. Now, the homomorphism $\sigma_\Sigma$ sends $\phi_t$ to an asymptotic morphism $\id_\Sigma\otimes\phi_t\colon f\otimes f'\mapsto f\otimes \phi_t(f')$. The homomorphism $\rho$ sends this asymptotic morphism to an essential projection homotopic to $P_t=\begin{pmatrix}
1-x &  (x-{x}^2)^{\frac12}\phi_t(\overline{z})\\
(x-{x}^2)^{\frac12}\phi_t(z)   & x
\end{pmatrix}$ on $(\Sigma\otimes\E\oplus\Sigma\otimes\E)(0,\infty))$ which defines clearly the element in $KK^G_1(\bbC, \Sigma B(0,\infty))$ corresponding to $\rho(x)$. 
\end{proof}

\begin{lemma} For any separable $G$-$\Calgs$ $B$, the following diagram commutes for $\ast=0,1$.
 \begin{align*}
\xymatrix{
\{\Sigma^{2-\ast}, B\}_G \ar[d]^{\rho} \ar@{=}[r] & \{\Sigma^{2-\ast}, B\K\}_G \ar[d]^\rho \\
KK^G_{1-\ast}(\bbC, B(0, \infty))  \ar@{=}[r] & KK^G_{1-\ast}(\bbC, B\K(0, \infty))
}
\end{align*} Here, the top and bottom equalities are analogous to those of Lemma \ref{lemStab}.
\end{lemma}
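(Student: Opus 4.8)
The plan is to imitate the proof of Lemma \ref{lemStab} essentially verbatim, since both assertions say that a construction assembled from functional calculus inside multiplier algebras is insensitive to the Morita-type identification of $\K(\E)$ across a stabilization. First I would treat the $\ast=1$ corner and represent a class in $\{\Sigma, B\}_G$ by an equivariant asymptotic morphism $(\phi_t)\colon \Sigma\to\K(\E)$ for a countably generated Hilbert $G$-$B$-module $\E$. The top equality reinterprets $(\phi_t)$ as an asymptotic morphism into $\K(\E'')$, where $\E''=\E\otimes\K(\hill_G,\bbC)$ is the corresponding Hilbert $G$-$B\K$-module formed by the interior tensor product against the $(\bbC,\K)$-imprimitivity bimodule $\K(\hill_G,\bbC)$; the identification rests on $\K(\K(\hill_G,\bbC))\cong\bbC$, which yields the canonical isomorphism $\K(\E)\cong\K(\E'')$.

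The crucial structural observation is that this isomorphism $\K(\E)\cong\K(\E'')$ extends to an isomorphism of multiplier algebras $B(\E)=M(\K(\E))\cong M(\K(\E''))=B(\E'')$ that is exactly $S\mapsto S\otimes 1$. Granting this, the two routes around the square agree already at the level of cycles, with no homotopy required. Computing $\rho$ first produces the essentially unitary $T_t=\phi'_t(z)\in B(\E)(0,\infty)$, with $z$ the unitary generator of $\tilde\Sigma\cong C(S^1)$, and the cycle $(\E(0,\infty),1,T)$; its image under the bottom stabilization is $((\E\otimes\K(\hill_G,\bbC))(0,\infty),1,T\otimes1)$. Computing stabilization first and then $\rho$ produces $T''_t=\phi''_t(z)\in B(\E'')(0,\infty)$ and the cycle $(\E''(0,\infty),1,T'')$. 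Since the unital extension and the evaluation at $z$ are all carried out inside the identified multiplier algebras, $\phi''_t(z)=\phi'_t(z)\otimes1$, so $T\otimes1=T''$ and the two cycles are literally equal. The $\ast=0$ corner runs identically, with $\Sigma$ replaced by $\Sigma^2$, the generator $z$ replaced by the projection $p=\frac{1}{1+|z|^2}\left(\begin{smallmatrix}1 & z\\ \bar z & |z|^2\end{smallmatrix}\right)\in M_2(\tilde\Sigma^2)$, the module $\E$ replaced by $\E\oplus\E$, and the essentially unitary $T$ replaced by the essentially projection $P=\phi'(p)$; the same multiplier-extension argument gives $P\otimes1=P''$.

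The only point demanding genuine care, and hence the main obstacle, is verifying that the stabilization identification really extends to multiplier algebras as $S\mapsto S\otimes1$ and that it is compatible with the $G$-action, so that the reinterpreted family $(\phi''_t)$ is again equivariant and equicontinuous and the functional calculus defining $\rho$ is honestly preserved. This is precisely the content of the single sentence appearing in the proof of Lemma \ref{lemStab}, and it reduces to the standard facts that $\K(\E_1\otimes\E_2)\cong\K(\E_1)\otimes\K(\E_2)$ with $B(\E_1)\otimes1\subseteq B(\E_1\otimes\E_2)$ for the interior tensor product, together with the equivariance of the imprimitivity bimodule $\K(\hill_G,\bbC)$. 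Once this is in place, everything else is a direct comparison of cycles.
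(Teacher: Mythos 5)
Your proposal is correct and takes essentially the same approach as the paper: the paper's one-line proof asserts precisely that $\rho(x')$ is the Kasparov product of $\rho(x)$ with $(\K(\hill_G,\bbC),1,0)$, which is exactly what you verify at the level of cycles via the multiplier-algebra extension $S\mapsto S\otimes 1$ of the identification $\K(\E)\cong\K(\E\otimes\K(\hill_G,\bbC))$. The additional detail you supply (including the $\ast=0$ case with the projection $p$) is a faithful expansion of what the paper dismisses as ``more or less obvious.''
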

\begin{proof} This is again, more or less obvious. It is clear that for any $x$ in $\{\Sigma^{2-\ast}, B\}_G$, if we denote by $x'$ the corresponding element in $\{\Sigma^{2-\ast}, B\K\}_G$, $\rho(x')$ is just the Kasparov product of $\rho(x)$ with $(K(\hill_G, \bbC), 1, 0)$.
\end{proof}

\begin{prop} (cf. \cite{HigKas2} Lemma 8.4.) Consider the Bott element $b$ in $KK^G_1(\bbC, A(\hill))$. We have $-\eta(b)=\beta$ in the group $\{\Sigma, A(\hill)\}_G$. Here, we consider the dual Dirac element $\beta$ in $\{S, A(\hill)\}_G$ as an element in $\{\Sigma, A(\hill)\}_G$ using an (order preserving) homeomorphism between the real line $\bbR$ and the interval $(0, 1)$.
\end{prop}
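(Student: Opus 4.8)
The plan is to unwind both sides into explicit asymptotic morphisms built from the functional calculus of the Bott operator $\cB$, and then to compare them.

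First I would compute $\eta(b)$ from Definition \ref{eta}. Here $A=\bbC$, $B=A(\hill)$, the Hilbert module is $\E=A(\hill)$ over itself with $\phi=1$, and the operator is $P=\tfrac{F+1}{2}$ with $F=\cB(1+\cB^2)^{-\frac12}$. Writing $p(x)=\tfrac12\bigl(1+x(1+x^2)^{-\frac12}\bigr)$, which is an order-preserving homeomorphism $\bbR\to(0,1)$, we have $P=p(\cB)$, and the Kasparov condition $1-F^2=(1+\cB^2)^{-1}\in A(\hill)=\K(\E)$ shows that $\cB$ has compact resolvent relative to $A(\hill)$; hence $g(\cB)\in A(\hill)$ for every $g\in C_0(\bbR)$. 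By the definition of $\eta$ and of the central invariant, $\eta(b)$ is the class in $\{\Sigma,A(\hill)\}_G$ of the asymptotic morphism $f\mapsto f(u_t)P$, where $(u_t)_{t\ge1}$ is any quasicentral, asymptotically equivariant approximate unit of $A(\hill)$, so that in particular $[u_t,P]\to0$.

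Next I would record the two identifications hidden in the statement. For $Y$ a point, $\beta$ is represented by the genuine family of $\ast$-homomorphisms $\beta_t(g)=g(t^{-1}\cB)$, $g\in S=C_0(\bbR)$. Transporting $S$ to $\Sigma=C_0(0,1)$ along $p$ turns $\beta$ into $f\mapsto f\bigl(p(t^{-1}\cB)\bigr)$; and since $p(-x)=1-p(x)$, the order-reversing homeomorphism $h(s)=1-s$ defining the inverse in $\{\Sigma,A(\hill)\}_G$ corresponds to $\cB\mapsto-\cB$. Thus the desired identity $-\eta(b)=\beta$ is equivalent to showing that the two asymptotic morphisms $f\mapsto (f\circ h)(u_t)\,P$ and $f\mapsto f\bigl(p(t^{-1}\cB)\bigr)$ define the same class.

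To carry out the comparison I would apply the right inverse $\rho$ of Lemma \ref{rhoinverse}, for which $\rho\circ\eta$ is the Bott isomorphism; in particular $\eta$ is injective, and $\rho(\eta(b))$ corresponds to $b$ under Bott periodicity. I would then compute $\rho(\beta)$ directly: the essential unitary produced by $\rho$ is the Cayley transform $z(t^{-1}\cB)=(t^{-1}\cB-i)(t^{-1}\cB+i)^{-1}$, and a straight-line homotopy of functions of the same type as in Example \ref{exampleBott} should identify the resulting even Kasparov module with the Bott image of $-b$, giving $\rho(\beta)=\rho(-\eta(b))$. Since $\eta$ is injective, it then remains to see that $-\beta$ lies in the image of $\eta$; this I would obtain by exhibiting $\beta$, up to homotopy, as the central invariant of an extension of $\bbC$ by $A(\hill)$ equivalent to the pullback $E_{\phi'}$ of $b$ — precisely the point at which $g(\cB)\in A(\hill)$ is used.

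The hard part will be this last reconciliation: the central-invariant representative $f\mapsto f(u_t)P$ uses a genuine (non-spectral) quasicentral approximate unit together with the extra factor $P$, whereas $\beta$ is the clean spectral family $g(t^{-1}\cB)$, and the two have different strict limits ($u_t\to1$ versus $t^{-1}\cB\to0$). Bridging them cannot be done by a convex interpolation of approximate units; it requires a careful homotopy of asymptotic morphisms that rescales the operator while simultaneously deforming $f$, and one must keep precise track of the orientation data ($p$, $h$, and the Cayley transform) in order to land on $-\eta(b)$ rather than $+\eta(b)$.
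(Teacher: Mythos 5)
Your setup is exactly the paper's: $\eta(b)$ is represented by $f\mapsto f(u_t)P$ with $P=p(\cB)$, $p(x)=\tfrac12\bigl(1+x(1+x^2)^{-1/2}\bigr)$; inversion in $\{\Sigma,A(\hill)\}_G$ is precomposition with $h(s)=1-s$; and since $p(-x)=1-p(x)$ the claim reduces to comparing $f\mapsto f(1-u_t)P$ with $f\mapsto f\bigl(p(t^{-1}\cB)\bigr)$. The observation that $g(\cB)\in A(\hill)$ for all $g\in C_0(\bbR)$ is also correct. But the strategy you then adopt is circular. Lemma \ref{rhoinverse} only gives $\rho\circ\eta=$ Bott (so $\rho$ is a \emph{left} inverse of $\eta$; nothing gives injectivity of $\rho$ itself), so the equality $\rho(\beta)=\rho(-\eta(b))$ yields $\beta=-\eta(b)$ only once you know $\beta$ lies in the image of $\eta$. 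Your proposed route to that --- exhibiting $\beta$, up to homotopy, as the central invariant of an extension equivalent to the pullback $E_{\phi'}$ associated to $b$ --- is, by Definition \ref{eta}, literally the assertion $\beta=\pm\eta(b)$: the central invariant of that pullback extension \emph{is} $\eta(b)$ by definition. So your final step is the whole proposition, and the $\rho$-detour (including the unproved claim $\rho(\beta)=$ Bott image of $-b$, itself a homotopy problem of the same nature, since it compares the rescaled spectral family $e^{2\pi i p(t^{-1}\cB)}$ with the phase-rotation family $(1-P)e^{2\pi i x}+P$) buys nothing. Nor can Theorem \ref{Result} rescue surjectivity of $\eta$ here: its hypotheses fail, since $A=\bbC$ is not a proper $G$-$\Calg$ for noncompact $G$ and $A(\hill)$ is not presented as a suspension.

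What is actually needed --- and what the paper does --- is precisely the direct bridge you declared impossible (``cannot be done by a convex interpolation of approximate units''). The paper's proof is three elementary moves: (a) $f\mapsto f(1-u_t)P$ is homotopic to $f\mapsto f\bigl((1-u_t)^{1/2}P(1-u_t)^{1/2}\bigr)$, using quasicentrality of $(u_t)$ and the fact that $P$ is essentially a projection; (b) $P=P_1$ may be replaced by $P_{s(t)}=p\bigl(s(t)^{-1}\cB\bigr)$ for a suitably slowly increasing reparametrization $s$, because $P_s-P_1\in A(\hill)$ while $(u_t)$ is an approximate unit, so $(1-u_t)^{1/2}(P_{s(t)}-P_1)(1-u_t)^{1/2}\to 0$; (c) the straight-line homotopy deforming the factor $(1-u_t)^{1/2}$ to $1$, followed by a reparametrization, lands exactly on $f\mapsto f\bigl(p(t^{-1}\cB)\bigr)$, i.e.\ on $\beta$ transported to $\Sigma$. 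Step (c) is legitimate because functional calculus of a single positive element is exactly multiplicative, the values stay in $A(\hill)$ (modulo $A(\hill)$ the compressed element equals the essential projection $P_{s(t)}$), and asymptotic equivariance holds at one end of the homotopy by invariance and quasicentrality of $(u_t)$ and at the other end because $s(t)\to\infty$. These three moves are unavoidable in your scheme as well (your $\rho(\beta)$ computation would need them), so you should simply carry them out directly and drop the detour.
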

\begin{proof} We use a homeomorphism $x\mapsto \frac{x(x^2+1)^{-\frac12}+1}{2}$ from $\bbR$ to $(0, 1)$. Recall that the Bott-element $b$ is represented by an essentially projection $P=\frac{\cB(1+\cB^2)^{-\frac12}+1}{2}$ in $A(\hill)$. We set $P_t=\frac{t^{-1}\cB(1+t^{-2}\cB^2)^{-\frac12}+1}{2}$. Then, the dual Dirac element $\beta$ in $\{\Sigma, A(\hill)\}_G$ is represented by an asymptotic morphism $f\mapsto f(P_t)$. On the other hand, $-\eta(b)$ is represented by $f\mapsto f(1-u_t)P_1$ where $(u_t)$ is an approximate unit in $A(\hill)$ which is asymptotically equivariant and quasi-central with respect to $P$. This asymptotic homomorphism is homotopic to an asymptotic morphism $f\mapsto f((1-u_t)^{\frac12}P_1(1-u_t)^{\frac12})$.  The latter is homotopic to an asymptotic morphism $f\mapsto f(1-u_t)^{\frac12}P_{s(t)}(1-u_t)^{\frac12}$ with suitably slowly increasing function $s$ on $[1,\infty)$ onto $[1, \infty)$. The straight line homotopy between $1$ and $u_t$ followed by a reparametrization connects this asymptotic morphism to the one $f\mapsto f(P_t)$.
\end{proof}

In order to get the Dirac element in Equivariant $KK$-Theory, we must lift the Dirac element $\alpha$ in the group $\{\Sigma A(\hill), S\Sigma\}_G$ to $KK^G_1(A(\hill), S\Sigma)$. The following ensures that this is possible.

\begin{theorem}\label{Result} (cf. \cite{GHT} Chapter 9.) Let $A,B$ be separable $G$-$\Calgs$. Suppose $A$ is a proper $G$-$\Calg$. Then, the abelian group $\{\Sigma A, B\}_G$ is naturally isomorphic to the abelian group $[[\Sigma A, B\K]]_G$. Suppose further that $A$ is nuclear and that $B$ is isomorphic to $\Sigma B'$ for some separable $G$-$\Calg$ $B'$. Then, the homomorphism $\eta\colon KK^G_1(A, B)\to \{\Sigma A, B\}_G$ is an isomorphism of abelian groups.
\end{theorem}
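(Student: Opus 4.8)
The plan is to transport the whole question into Equivariant $E$-Theory and then appeal to Corollary \ref{important}. The first assertion of the theorem (the Gersten--Higson--Trout stabilization result, which I take as input) supplies a natural isomorphism $\{\Sigma A, B\}_G \cong [[\Sigma A, B\K]]_G$, so it is enough to prove that the composite of $\eta$ with this isomorphism, a homomorphism $\tilde\eta\colon KK^G_1(A, B)\to [[\Sigma A, B\K]]_G$, is bijective. Comparing Definition \ref{eta} with the construction of the canonical functor $KK^G\to E^G$ in Chapter 5, one sees that, for an odd cycle $x=(\E,\phi,P)$, both $\tilde\eta(x)$ and the canonical class $\Theta(x)\in E^G(\Sigma A, B)$ are built from one and the same object: the central invariant $c\in[[\Sigma A,\K(\E)]]_G$ of the pullback extension of $A$ by $\K(\E)$ determined by $a\mapsto \phi(a)P$. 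The two differ only by the suspension and stabilization operations used to put $c$ in standard form.

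The crux is therefore to identify $[[\Sigma A, B\K]]_G$ with the $E$-theory group $E^G(\Sigma A, B)$ compatibly with those operations, and here the hypothesis $B\cong\Sigma B'$ enters decisively. Writing $B\K=\Sigma B'\K$ exhibits a suspension in the target, so that $[[\Sigma A, B\K]]_G=[[\Sigma A,\Sigma B'\K]]_G$ is recognizable as an $E$-theoretic group. Concretely I would argue in two steps: first, by the stability of $E$-Theory the source stabilization $\Sigma A\to\Sigma A\K$ induces an isomorphism $[[\Sigma A,\Sigma B'\K]]_G\cong[[\Sigma A\K,\Sigma B'\K]]_G=E^G(A, B')$; second, the suspension isomorphism of $E$-Theory gives $E^G(A, B')\cong E^G(\Sigma A, \Sigma B')$, which is $E^G(\Sigma A, B)$ since $\Sigma B'\cong B$. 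I emphasize that without the hypothesis $B\cong\Sigma B'$ the group $[[\Sigma A, B\K]]_G$ has no spare suspension in its target and so cannot be matched to any $E^G$-group; this is precisely where the assumption is needed.

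Tracking $c$ through this chain, the inserted $\K$ on the source is $\sigma_{\K}$ and the inserted $\Sigma$ on both sides is $\sigma_\Sigma$, and these are exactly the operations applied to $c$ in the definition of $\Theta$. Hence $\Theta=\Xi\circ\tilde\eta$, where $\Xi\colon[[\Sigma A, B\K]]_G\xrightarrow{\ \sim\ } E^G(\Sigma A, B)$ is the above composite of stability and suspension isomorphisms. Since $A$ is proper and nuclear, $KK^G(A,\cdot)$ is half-exact (Proposition \ref{KasSka}), so by Corollary \ref{important} the canonical homomorphism $\Theta$ is an isomorphism. Therefore $\tilde\eta=\Xi^{-1}\circ\Theta$, and with it $\eta$, is an isomorphism of abelian groups (up to the canonical identifications and signs).

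The main obstacle I expect is not the existence of the abstract chain of isomorphisms but the bookkeeping that makes $\Theta=\Xi\circ\tilde\eta$ rigorous: one must pin down the explicit form of the first-part isomorphism $\{\Sigma A, B\}_G\cong[[\Sigma A, B\K]]_G$, which rests on stabilizing the module $\E$ into $B\otimes\hill_G$, and verify that it intertwines the central-invariant description of $\eta$ with the tensoring operations $\sigma_\Sigma$ and $\sigma_{\K}$. The three commutation lemmas established immediately after Definition \ref{eta}, asserting that $\eta$ commutes with the Bott periodicity, with $\sigma_C$, and with stabilization, are exactly the tools designed for this verification, and I would organize the compatibility check around them.
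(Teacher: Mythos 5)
Your treatment of the second assertion follows essentially the same route as the paper's own proof. The paper assembles the commutative diagram \eqref{diagram:proof of theorem}, whose vertical maps are $\eta$ followed by the stabilization maps relating the groups $\{\cdot,\cdot\}_G$, $[[\cdot,\cdot]]_G$ and the $E^G$-groups, observes that the composite $KK^G_1(A,B)\to E^G(\Sigma A,B)$ is the canonical homomorphism of Chapter 5 (both being manufactured from the same central invariant $c$), and invokes Corollary \ref{important}; your factorization $\Theta=\Xi\circ\tilde\eta$ is precisely this diagram read along the left-and-bottom path, and the commutation lemmas you propose to organize the bookkeeping around are exactly the ones the paper uses to make the diagram commute.

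Two caveats, one of them substantive. First, the isomorphism $\{\Sigma A,B\}_G\cong[[\Sigma A,B\K]]_G$ is itself an assertion of Theorem \ref{Result}, not merely an available input: the paper proves it, by exhibiting the maps $\iota$, $\sigma_\K$, $\kappa_A$ and showing all three are bijective via explicit homotopies of isometries built from Proposition \ref{prop:stab}. Second, and more importantly, the first step of your $\Xi$ --- that ``by the stability of $E$-Theory'' the source stabilization $\Sigma A\to\Sigma A\K$ induces an isomorphism $[[\Sigma A,\Sigma B'\K]]_G\cong[[\Sigma A\K,\Sigma B'\K]]_G$ --- is not an instance of generic stability. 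For a general $G$-$\Calg$ there is no equivariant $\ast$-homomorphism $\Sigma A\to\Sigma A\K$ at all: the usual stabilization $a\mapsto a\otimes p$ is equivariant only when the rank-one projection $p\in\K(\hill_G)$ is $G$-invariant, which fails for noncompact $G$ since the regular representation has no invariant vectors. The map exists here only because $\Sigma A$ is proper (Proposition \ref{prop:stab} supplies the equivariant isometry $V$ and hence $\Ad_V\colon\Sigma A\to\Sigma A\K$), and its bijectivity on homotopy classes is exactly the bijectivity of the paper's $\kappa_{\Sigma A}$, i.e.\ the content of the proof of the first assertion. So properness enters your argument twice --- in $\Xi$ as well as in Corollary \ref{important} --- and the GHT result you defer to cannot be treated as a black box independent of your main argument: it is the same proper-algebra stabilization machinery, which is why the paper proves it first and then reuses the resulting explicit bijections as the vertical arrows of its diagram.
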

\begin{proof} We first prove a natural group homomorphism $\iota\colon[[\Sigma A,B\K]]_G \to \{\Sigma A, B\}_G$ (a map gained by regarding $B\K$ as $\K(B\otimes \hill_G)$) is an isomorphism when $A$ is a proper $G$-$\Calg$. In fact, we prove the natural map $\iota\colon[[A,B\K]]_G \to \{A, B\}_G$ is a bijection of sets (or of semigroups) when $A$ is proper. Let $\sigma_{\K}$ be a map from $\{A, B\}_G$ to $[[A\K, B\K]]_G$ which sends the class represented by an asymptotic morphism $\phi\colon A\to\to\K(\E)$ to the class represented by an asymptotic morphism $\phi\otimes\id_\K\colon A\K\to\to\K(\E)\K\to B\K$ (the last map is induced by any adjointable isometry $\E\otimes\hill_G\to B\otimes\hill_G$ of $G$-$B$-Hilbert modules) and $\kappa_A$ be a map from $[[A\K, B\K]]_G$ to $[[A, B\K]]_G$ given by the composition with a stabilization homomorphism $\Ad_{V}\colon A\to A\K$ induced by some adjointable isometry $V\colon A\to A\otimes\hill_G$ which exists since $A$ is proper (see Proposition \ref{prop:stab}). Note that the map $\sigma_\K$ is defined independently of choices of an isometry $\E\otimes\hill_G\to B\otimes\hill_G$ since any such isometry are (equivariantly) homotopic to each other (in the $\ast$-strong topology); similarly, $\sigma_A$ is defined independently of choices of an adjointable isometry $V\colon A\to A\otimes\hill_G$. We claim that the map $\kappa_A\circ\sigma_{\K}\colon\{A, B\}_G\to[[A\K, B\K]]_G\to[[A, B\K]]_G$ is the inverse of $\iota$. For later use, we prove three maps $\iota, \kappa_A, \sigma_{\K}$ are all bijective. That the maps $\kappa_A\circ\sigma_{\K}\circ\iota$ and $\sigma_{\K}\circ\iota\circ\kappa_A$ are the identities on $[[A, A\K]]_G$ and on $[[A\K, B\K]]_G$ respectively is essentially, the Stabilization in Equivariant $E$-Theory which is explained in \cite{GHT}.

$\iota\circ\kappa_A\circ\sigma_{\K}$ is the identity on $\{A, B\}_G$: Take any ($G$-equivariant) asymptotic morphism $\phi\colon A\to\to\K(\E)$ where $\E$ is a countably generated $G$-$B$-Hilbert module. The homomorphism $\iota\circ\kappa_A\circ\sigma_{\K}$ sends the class represented by an asymptotic morphism $\phi$ to the class represented by the asymptotic morphism $\phi\otimes\id_\K\circ\kappa_A\colon A\to A\K \to\to \K(\E)\K$. Let $\Ad_{V_s}\colon A\to A\K(\hill_G\oplus \bbC)$ ($s\in[0, 1]$) be a homotopy between the stabilization $\Ad_{V_0}=\Ad_{V}\colon A\to A\K$ and the identity $\Ad_{V_1}=\id_A\colon A\to A$ induced by a homotopy $V_s\colon A\to A\otimes(\hill_G\oplus\bbC)$ of (adjointable) isometries $V_0=V\colon A\to A\otimes\hill_G$ and $V_1=\id_A\colon A\to A\otimes\bbC$ of Hilbert $G$-$A$-modules which can be defined by $V_s=(1-s)^{\frac12}V_0\oplus s^{\frac12}V_1$ for example. The homotopy of asymptotic morphisms $\phi\otimes\id_{\K(\hill_G\oplus\bbC)}\circ\Ad_{V_s}\colon A\to A\K(\hill_G\oplus\bbC)\to\to\K(\E)\K(\hill_G\oplus\bbC)$ ($s\in[0, 1]$) connects the two asymptotic morphisms $\phi$ $(s=1)$ and $\phi\otimes\id_\K\circ\kappa_A$ $(s=0)$. This shows $\iota\circ\kappa_A\circ\sigma_{\K}$ is the identity on $\{A,B\}_G$.

$\kappa_A\circ\sigma_{\K}\circ\iota$ is the identity on $[[A, B\K]]_G$: The proof is almost identical as above. Take any asymptotic morphism $\phi\colon A\to\to B\K$. The map  $\kappa_A\circ\sigma_{\K}\circ\iota$ sends the class represented by $\phi$ to the one represented by $\phi\otimes\id_\K\circ\Ad_V\colon A\to A\K \to\to B\K\K$. We use the homotopy of  $\Ad_{V_s}\colon A\to A\K(\hill_G\oplus\bbC)$ defined above. The homotopy of asymptotic morphisms $\phi\otimes\id_{\K(\hill_G\oplus\bbC)}\circ\Ad_{V_s}\colon A\to A\K(\hill_G\oplus\bbC)\to\to B\K\K(\hill_G\oplus\bbC)$ ($s\in[0, 1]$) connects the two asymptotic morphisms $\phi$ $(s=1)$ and $\phi\otimes\id_\K\circ\Ad_V$ $(s=0)$. This shows $\kappa_A\circ\sigma_{\K}\circ\iota$ is the identity on $[[A, B\K]]_G$.

$\sigma_{\K}\circ\iota\circ\kappa_A$ is the identity on $[[A\K, B\K]]_G$: Take any asymptotic morphism $\phi\colon A\K\to\to B\K$. The map $\sigma_{\K}\circ\iota\circ\kappa_A$ sends the class represented by $\phi$ to the one represented by $(\phi\circ\Ad_V)\otimes\id_\K=\phi\otimes\id_\K\circ\Ad_V\otimes\id_\K\colon A\K \to A\K\K \to B\K\K$. We use the homotopy of  $\Ad_{V_s}\colon A\to A\K(\hill_G\oplus\bbC)$ again. The homotopy of asymptotic morphisms $\phi\otimes\id_{\K(\hill_G\oplus\bbC)}\circ\Ad_{V_s}\otimes\id_\K\colon A\K\to A\K(\hill_G\oplus\bbC)\K=A\K\K(\hill_G\oplus\bbC)\to\to B\K\K(\hill_G\oplus\bbC)$ ($s\in[0, 1]$) connects the two asymptotic morphisms $\phi\otimes\id_\K$ $(s=1)$ and $\phi\otimes\id_\K\circ\Ad_V\otimes\id_\K$ $(s=0)$, but the asymptotic morphisms $\phi$ and $\phi\otimes\id_\K$ are homotopic via the homotopy $\phi\otimes\id_{\K(\hill_G\oplus\bbC)}\circ\id_A\otimes\Ad_{W_{s'}}\colon A\K\to A\K\K(\hill_G\oplus\bbC)\to\to B\K\K(\hill_G\oplus\bbC)$ $(s'\in [0, 1])$ where $W_{s'}\colon \hill_G\to\hill_G\otimes(\hill_G\oplus\bbC)$ is any homotopy of isometries of $G$-Hibert spaces between $W_0\colon\hill_G\cong\hill_G\otimes\hill_G\xhookrightarrow{}\hill_G\otimes(\hill_G\oplus\bbC)$ and $W_1\colon\hill_G\cong\hill_G\otimes\bbC\xhookrightarrow{}\hill_G\otimes(\hill_G\oplus\bbC)$. This shows $\sigma_{\K}\circ\iota\circ\kappa_A$ is the identity on $[[A\K, B\K]]_G$.

Now, suppose further that $A$ is nuclear and that $B$ is isomorphic to $\Sigma B'$. We have the following commutative diagram of abelian groups.
\begin{align}
\xymatrix{
KK^G_1(A, B) \ar[d]^-{\eta} \ar[r]^-{\sigma_\Sigma} & KK^G_1(\Sigma A, \Sigma B) \ar[d]^-{\eta} \ar[r]^-{\sigma_\K} & KK^G_1(\Sigma A\K, \Sigma B\K) \ar[d]^-{\eta} \\
\{\Sigma A, B\}_G \ar[d]^-{\sigma_\K} \ar[r]^-{\sigma_\Sigma} & \{\Sigma^2 A, \Sigma B\}_G \ar[d]^-{\sigma_\K} \ar[r]^-{\sigma_\K} & \{\Sigma^2A\K, \Sigma B\K\}_G \ar[d]^-{\sigma_\K} \\
[[\Sigma A\K, B\K]]_G \ar@{=}[d]^-{} \ar[r]^-{\sigma_\Sigma} & [[\Sigma^2 A\K, \Sigma B\K]]_G \ar@{=}[d]^-{} \ar[r]^-{\sigma_\K} & [[\Sigma^2A\K, \Sigma B\K]]_G \ar@{=}[d]^-{} \\
E^G(A, B') \ar[r]^-{\sigma_\Sigma} & E^G(\Sigma A, B) \ar@{=}[r]^-{} & E^G(\Sigma A, B)
}
\label{diagram:proof of theorem}
\end{align}
In the diagram \eqref{diagram:proof of theorem}, we know all the indicated arrows are isomorphisms except those indicated by $\eta$ and $\sigma_\Sigma\colon\{\Sigma A, B\}_G\to\{\Sigma^2A, \Sigma B\}_G$. However, the composition $\sigma_\K\circ\eta\circ\sigma_\K\circ\sigma_\Sigma\colon KK^G_1(A, B) \to E^G(\Sigma A, B)$ is the natural isomorphism according to Corollary \ref{important}. It follows that all $\eta$ in the diagram \eqref{diagram:proof of theorem} are isomorphisms. In particular, the homomorphism $\eta\colon KK^G_1(A, B)\to \{\Sigma A, B\}_G$ is an isomorphism. Note, it follows $\sigma_\Sigma\colon\{\Sigma A, B\}_G\to\{\Sigma^2A, \Sigma B\}_G$ is also an isomorphism.
\end{proof}

\begin{dfn} (Compare with \cite{HigKas2} Definition 8.2.) We  define the Dirac element $d$ in $KK^G_1(A(\hill), S\Sigma)$ to be the unique element which corresponds to the Dirac element $\alpha$ in $\{\Sigma A(\hill), S\Sigma\}_G$ via the isomorphism $\eta\colon KK^G_1(A(\hill), S\Sigma)\to \{\Sigma A(\hill), S\Sigma\}_G$.
\end{dfn}

The following theorem is the heart of the Higson-Kasparov Theorem.

\begin{theorem}\label{HigKasTech} (cf. \cite{HigKas2} Theorem 7.8.) Let $A$ be a separable proper $G$-$\Calg$ and $B$ be a separable $G$-$\Calg$. Then, the following diagram commutes up to sign. \begin{align}
\xymatrix{
KK^G_1(\bbC, A) \ar[d]^-{\eta}_{\eta}  \times   KK^G_1(A, B)  \ar[r] & KK^G_0(\bbC, B)\\
[[\Sigma, A\K]]_G \ar[d]^-{\sigma_\K}_{\sigma_{\Sigma}}  \times  \{\Sigma A, B\}_G \ar[d] & \{\Sigma^2, B\}_G \ar[u]_-{\rho}\\
[[\Sigma^2, \Sigma A\K]]_G   \times  [[\Sigma A\K, B\K]]_G \ar[r] & [[\Sigma^2, B\K]]_G \ar[u]\\
}
\label{diagram:tech}
\end{align} Here, the homomorphism $\eta\colon KK^G_1(\bbC, A)\to \{\Sigma, A\}_G$ is naturally considered as the map from $KK^G_1(\bbC, A)$ to $[[\Sigma, A\K]]_G\cong\{\Sigma, A\}_G$ and the top (or the bottom) horizontal arrow is the Kasparov product (or the composition of asymptotic morphisms).
\end{theorem}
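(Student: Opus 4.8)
The plan is to read every vertical map in the diagram as a manifestation of the canonical functor $\Phi\colon KK^G\to E^G$ built in Chapter 5, and then to obtain the commutativity formally from the functoriality of $\Phi$ together with the fact that $\rho$ inverts it in the corner where the source is $\bbC$. Recall that the natural transformation $KK^G_1(A,B)\to E^G(\Sigma A,B)$ of Chapter 5 was defined by taking the central invariant of the pullback extension \eqref{extpicture2} and then tensoring with $\id_\Sigma$ and $\id_{\K}$; but this central invariant is exactly $\eta$ of the odd module, now read inside $[[\Sigma A,\K(\E)]]_G$. Thus, after the stabilizations $\sigma_\K$ and $\sigma_\Sigma$ and the isomorphism $\{\Sigma A,B\}_G\cong[[\Sigma A,B\K]]_G$ of Theorem \ref{Result} (valid since $A$ is proper), the left-hand column applied to $y\in KK^G_1(A,B)$ computes $\Phi(y)\in E^G(\Sigma A,B)$, and, using that $\bbC$ is proper, the column applied to $x\in KK^G_1(\bbC,A)$ computes $\Phi(x)\in E^G(\Sigma,A)$.

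First I would make these identifications precise, the key being that the two incarnations of $\eta$ — as a class in $\{\Sigma A,B\}_G$ and as an asymptotic morphism valued in $B\K$ — agree under the natural isomorphism of Theorem \ref{Result}, so that the passage from the middle to the bottom row of the diagram is precisely the identification used to define $\Phi$. Granting this, the bottom horizontal arrow
\[
[[\Sigma^2,\Sigma A\K]]_G\times[[\Sigma A\K,B\K]]_G\longrightarrow[[\Sigma^2,B\K]]_G
\]
becomes the $E$-theory composition $\sigma_\Sigma\Phi(x)\circ\Phi(y)$ once the degrees are matched. Since the Kasparov product $KK^G_1(\bbC,A)\times KK^G_1(A,B)\to KK^G_0(\bbC,B)$ is, under the Bott identifications $KK^G_1(\bbC,A)=KK^G(\Sigma,A)$ and $KK^G_1(A,B)=KK^G(\Sigma A,B)$, the composition $KK^G(\Sigma^2,\Sigma A)\times KK^G(\Sigma A,B)\to KK^G(\Sigma^2,B)\cong KK^G_0(\bbC,B)$, the functoriality of $\Phi$ and its compatibility with $\sigma_\Sigma$ give $\sigma_\Sigma\Phi(x)\circ\Phi(y)=\Phi(x\otimes_A y)$ in $E^G(\Sigma^2,B)\cong E^G(\bbC,B)$.

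It then remains to see that $\rho$ sends this class back to $x\otimes_A y$ in $KK^G_0(\bbC,B)$. Here one uses that $\bbC$ is nuclear and proper, so by Proposition \ref{KasSka} the canonical map $\Phi\colon KK^G_0(\bbC,B)\to E^G(\bbC,B)$ is an isomorphism; on this corner $\Phi$ agrees, up to a Bott sign and stabilization, with $\eta\colon KK^G_0(\bbC,B)\to\{\Sigma^2,B\}_G$, of which $\rho$ is a right inverse by Lemma \ref{rhoinverse}. A right inverse of an isomorphism is its two-sided inverse, so $\rho=\Phi^{-1}$ up to sign here, and hence $\rho(\Phi(x\otimes_A y))=\pm\,x\otimes_A y$, which is exactly the asserted commutativity. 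The hard part is not conceptual but the faithful bookkeeping across the three formalisms involved — the groups $\{\cdot,\cdot\}_G$, the asymptotic-morphism groups $[[\cdot,\cdot]]_G$, and the $E$-theory groups — and in particular checking that, after $\sigma_\K$, $\sigma_\Sigma$ and the isomorphism of Theorem \ref{Result}, the central invariant defining $\eta$ coincides on the nose with the component of $\Phi$; the several Bott-periodicity isomorphisms entering this comparison are the source of the sign ambiguity, which is why the diagram commutes only up to sign.
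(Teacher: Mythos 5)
Your plan substitutes abstract functoriality of the canonical functor $\Phi\colon KK^G\to E^G$ for the computation that the paper (following \cite{HigKas2}) actually performs, and it breaks down at the $\bbC$-corner, in two ways. First, $\bbC$ is \emph{not} a proper $G$-$\Calg$ when $G$ is noncompact, which is the only case of interest: properness would require a nonzero equivariant $\ast$-homomorphism $C_0(X)\to Z(M(\bbC))=\bbC$ for some proper $G$-space $X$, i.e.\ a $G$-fixed point $x_0\in X$; but then the preimage of the compact set $\{(x_0,x_0)\}$ under $(g,x)\mapsto(gx,x)$ is $G\times\{x_0\}$, which is compact only if $G$ is. So Proposition \ref{KasSka} does not apply to $A=\bbC$, and your claim that $\Phi\colon KK^G_0(\bbC,B)\to E^G(\bbC,B)$ is an isomorphism is unsupported; half-exactness of $KK^G(\bbC,\cdot)$ with respect to arbitrary $G$-extensions is precisely the kind of statement that is unavailable and that this chapter is engineered to avoid needing. (The same false premise appears where you justify $\{\Sigma,A\}_G\cong[[\Sigma,A\K]]_G$: that isomorphism is true, but because the \emph{target} $A$ is proper, via Proposition \ref{prop:stab}, not because $\bbC$ is.)

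Second, and independently of the properness slip, the architecture of your argument cannot deliver the conclusion. Functoriality gives $\Phi(y)\circ\sigma_\Sigma\Phi(x)=\Phi(x\otimes_A y)$ only as an identity in $E^G(\Sigma^2,B)$ (up to Bott identifications), i.e.\ \emph{after} the source $\Sigma^2$ has also been tensored with $\K(\hill_G)$. But $\rho$ is applied upstream of that passage, on $\{\Sigma^2,B\}_G$, and since $\Sigma^2$ carries the trivial $G$-action it is not proper, so the comparison from $[[\Sigma^2,B\K]]_G$ into the $E$-theory group is not known to be injective: the inverse built in the proof of Theorem \ref{Result} requires an equivariant adjointable isometry of the source algebra into its stabilization, which here amounts to a $G$-invariant unit vector in $\hill_G$ and does not exist for noncompact $G$. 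Nor do the lemmas of this chapter show that $\rho$ factors through $E^G(\bbC,B)$; they give compatibility of $\rho$ with stabilization and suspension of the \emph{target} $B$ only. Hence ``the two routes agree in $E^G$'' does not imply ``the two routes agree after applying $\rho$,'' which is what the theorem asserts. This is exactly why the paper's proof stays at the level of $\{\Sigma^2,B\}_G$: it reduces, via the stabilization-compatibility of $\eta$ and $\rho$, to stable $A,B$ with $x=(A,1,P)$, proves the concrete identity $\eta(y)\circ\sigma_\Sigma(\eta(x))=\eta(x\otimes_A y)$ inside $\{\Sigma^2,B\}_G$ by the explicit central-invariant homotopy computation of \cite{HigKas2}, and only then invokes Lemma \ref{rhoinverse} ($\rho\circ\eta=\pm\,\mathrm{Bott}$). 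That computation is the real content of the theorem, and it cannot be replaced by functoriality at the $E$-theory level.
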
  
\begin{proof} Since $\eta$ and $\rho$ are compatible with stabilization, it suffices to show when $A\cong A\K$ and $B\cong B\K$ (i.e. when $A,B$ are stable). This ensures that we need to only consider elements of the form $(A, 1, P)$ in $KK^G_1(\bbC, A)$. Also in this case, $\sigma_\K$ is the identity on $\{\Sigma A, B\}_G$. Hence, it suffices to show $\eta(y)\circ\sigma_\Sigma(\eta(x))=\eta(x\otimes_A y)$ in $\{\Sigma^2, B\}_G$ for any $x=(A, 1, P)$ in $KK^G_1(\bbC, A)$ and $y=(\E, \phi, Q)$ in $KK^G_1(A, B)$. Here, $x\otimes_Ay$ denotes the Kasparov product of $x$ and $y$ in $KK^G_0(\bbC, B)$.  The rest of the proof would be identical to the one given in \cite{HigKas2}. 
\end{proof}

Theorem\ \ref{HigKasTech} \,enables\ us\ to\ compute\ the\ composition\ of\ the\ Bott\ element\ $b$\, in $KK^G_1(\bbC, A(\hill))$ and the Dirac element $d$ in $KK^G_1(A(\hill), S\Sigma)$.

\begin{theorem}(cf. \cite{HigKas2} Theorem 8.5.)\label{KKcomp} The composition $b\otimes_{A(\hill)}d$ in $KK^G(\bbC, S\Sigma)$ coincides with the identity in $KK^G(\bbC, \bbC)$ up to sign under the Bott Periodicity $KK^G(\bbC, \bbC)\cong KK^G(\bbC, S\Sigma)$. 
\end{theorem}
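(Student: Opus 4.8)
The plan is to deduce Theorem~\ref{KKcomp} by combining the $E$-theoretic computation of Theorem~\ref{theorem:comp} with the compatibility diagram of Theorem~\ref{HigKasTech}, thereby transferring the known composition of asymptotic morphisms into a statement in $KK^G$. First I would specialize the setup to the case $Y = \mathrm{pt}$, so that $A(\hill)(Y) = A(\hill)$, $S(Y) = S$, and the Dirac element $\alpha \in \{\Sigma A(\hill), S\Sigma\}_G$ and dual Dirac element $\beta \in \{S, A(\hill)\}_G$ are the classes constructed above. I would recall that the Dirac element $d \in KK^G_1(A(\hill), S\Sigma)$ is by definition the unique lift of $\alpha$ under the isomorphism $\eta\colon KK^G_1(A(\hill), S\Sigma) \to \{\Sigma A(\hill), S\Sigma\}_G$ of Theorem~\ref{Result} (applicable since $A(\hill)$ is proper and nuclear, being an inductive limit of nuclear algebras, and $S\Sigma \cong \Sigma(S)$), and that the Bott element $b \in KK^G_1(\bbC, A(\hill))$ satisfies $-\eta(b) = \beta$ in $\{\Sigma, A(\hill)\}_G$ by the proposition preceding Theorem~\ref{Result}.

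The core of the argument is to apply Theorem~\ref{HigKasTech} with $A = A(\hill)$ and $B = S\Sigma$ to the pair $(b, d)$. The commutativity (up to sign) of the diagram~\eqref{diagram:tech} gives
\begin{equation*}
\rho\bigl(\eta(d)\circ \sigma_\Sigma(\eta(b))\bigr) = \eta(b \otimes_{A(\hill)} d)
\end{equation*}
in the appropriate groups, after accounting for the stabilization maps $\sigma_\K$ (which, as in the proof of Theorem~\ref{HigKasTech}, may be absorbed by passing to stable models). Since $\eta(d) = \alpha$ and $\eta(b) = -\beta$, the composition appearing on the left is precisely (up to sign) the composite $\alpha \circ \Sigma\beta$ of asymptotic morphisms, which Theorem~\ref{theorem:comp} identifies with the flip isomorphism $\Sigma S \to S\Sigma$ tensored with the identity. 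I would then use Lemma~\ref{rhoinverse}, which says $\rho$ is a right inverse of $\eta$ realizing the Bott periodicity map, to conclude that $\rho$ applied to the class of the flip yields the identity class in $KK^G_0(\bbC, S\Sigma) \cong KK^G(\bbC, \bbC)$ under Bott periodicity. Chasing through the identifications then forces $\eta(b \otimes_{A(\hill)} d)$ to equal $\eta$ of the Bott-periodicity image of $1_\bbC$; since $\eta$ is injective on $KK^G_0(\bbC, S\Sigma)$ (again by Theorem~\ref{Result}, as $\bbC$ is trivially proper and nuclear and $S\Sigma$ is a suspension), this yields $b \otimes_{A(\hill)} d = 1_\bbC$ up to sign.

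The main obstacle I anticipate is bookkeeping the degree shifts and the repeated suspension/stabilization identifications without losing track of signs, since every step (the two occurrences of $\eta$, the Bott-periodicity identifications $KK^G_1 \cong KK^G_0$, and the flip) introduces an ambiguity of $\pm 1$; the theorem is stated only up to sign precisely because of this, so the goal is to verify that the \emph{total} composite is a generator rather than zero, rather than to pin down the exact sign. A secondary subtlety is justifying that Theorem~\ref{HigKasTech} applies: one must check that $A(\hill)$ is a separable proper $G$-$\Calg$ (established earlier) and that the metric properness assumption of this chapter guarantees properness, and one must correctly match the flip class computed in the $E$-theory group $\{\Sigma S, S\Sigma\}_G$ with the element $\eta(d)\circ\sigma_\Sigma(\eta(b))$ living in $\{\Sigma^2, S\Sigma\}_G$ after applying $\sigma_\Sigma$ to one factor. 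Once these identifications are set up carefully, the conclusion follows formally from the injectivity of $\eta$ provided by Theorem~\ref{Result}, which is exactly what the main result of the thesis was designed to supply.
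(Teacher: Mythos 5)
Your overall strategy is exactly the paper's: combine Theorem \ref{HigKasTech} with Theorem \ref{theorem:comp}, using $\eta(d)=\alpha$ (the definition of $d$ via Theorem \ref{Result}) and $\eta(b)=-\beta$, to identify the relevant composition of asymptotic morphisms with the flip class. However, two of your steps are wrong as written, and one of them is a genuine gap. First, your displayed transfer equation does not typecheck: in diagram \eqref{diagram:tech} the arrow $\rho$ lands in $KK^G_0(\bbC, B)$, so what commutativity gives is
\begin{equation*}
b\otimes_{A(\hill)}d \;=\; \pm\,\rho\bigl(\sigma_\K(\eta(d))\circ\sigma_\Sigma\sigma_\K(\eta(b))\bigr)
\end{equation*}
as elements of $KK^G_0(\bbC, S\Sigma)$; there is no $\eta$ on the Kasparov-product side (your left-hand side lies in a $KK$-group while your right-hand side lies in $\{\Sigma^2, S\Sigma\}_G$). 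Read correctly, the diagram already outputs the Kasparov product itself, so once the composition is identified (up to sign, and after absorbing the stabilizations) with $\alpha\circ\Sigma\beta=$ flip, the only remaining step is that $\rho$ of the flip class is the Bott-periodicity image of $1_\bbC$; this follows from Definition \ref{dfnrho} applied to the honest $\ast$-isomorphism $\Sigma^2\cong\Sigma S\to S\Sigma$ (whose unital extension sends the Bott projection $p$ to the Bott projection over $S\Sigma$), or from Lemma \ref{rhoinverse}. No injectivity of $\eta$ is needed anywhere, and indeed the paper's proof uses none.

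The genuine gap is your justification of that injectivity step. You claim $\eta$ is injective on $KK^G_0(\bbC, S\Sigma)$ ``by Theorem \ref{Result}, as $\bbC$ is trivially proper and nuclear.'' But $\bbC$ with the trivial $G$-action is \emph{not} a proper $G$-$\Calg$ when $G$ is non-compact, which is precisely the case of interest (any non-compact a-$T$-menable group). Indeed, a nonzero equivariant $\ast$-homomorphism $C_0(X)\to Z(M(\bbC))=\bbC$ is evaluation at a $G$-fixed point of $X$, and a proper $G$-space has no fixed points unless $G$ is compact, since point stabilizers must be compact. So Theorem \ref{Result} cannot be invoked with first variable $\bbC$ (or $\Sigma$). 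The injectivity you want happens to be true, but for a different reason: Lemma \ref{rhoinverse} says $\rho\circ\eta$ is the Bott-periodicity isomorphism on $KK^G_\ast(\bbC, B)$, hence $\eta$ is split injective there. Replacing your appeal to Theorem \ref{Result} by this observation (or, better, dropping the injectivity detour altogether as above) repairs the argument and recovers the paper's proof.
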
 
\begin{proof}
We only need to check the composition $\sigma_\K(\eta(d))\circ\sigma_\Sigma(\eta(b))$ coincides with $\alpha\circ\Sigma\beta$ up to sign in $\{\Sigma^2, S\Sigma\}_G \cong \{\Sigma S, S\Sigma \}_G$, but at the level of asymptotic morphisms, the first element is $-\Sigma\beta\colon \Sigma S\to\to \Sigma A(\hill)$ composed with the composition of the stabilization $\Sigma A(\hill)\to \Sigma A(\hill)\K$ and $\sigma_\K(\alpha)$ which is homotopic to $\alpha$.
\end{proof}

The Dual-Dirac method (Theorem \ref{DD}) says the Baum-Connes conjecture with coefficients holds for $G$, if the identity in $KK^G(\bbC, \bbC)$ factors through a proper algebra. Thus, we finally finish the proof of the Higson-Kasparov Theorem.

\begin{theorem} (cf. \cite{HigKas2} Theorem 9.1.) The Baum-Connes conjecture with coefficients holds for all a-$T$-menable groups.
\end{theorem}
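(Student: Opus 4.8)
The plan is to deduce the theorem from the Dual-Dirac method, Theorem \ref{DD}, which asserts that the Baum-Connes conjecture with coefficients holds for a second countable, locally compact group $G$ as soon as the identity $1_\bbC \in KK^G(\bbC, \bbC)$ factors through a separable proper $G$-$\Calg$. Thus the entire task reduces to producing such a factorization when $G$ is a-$T$-menable. By Definition \ref{dfn:a-t}, a-$T$-menability supplies a metrically proper affine isometric action $(\pi, b)$ of $G$ on a real Hilbert space. Before anything else I would arrange that this Hilbert space is separable: since $G$ is second countable the cocycle $b$ has separable range, so restricting the action to the smallest closed $G$-invariant affine subspace $\hill$ containing $b(G)$ (together with a basepoint) yields a separable $\hill$ on which the restricted action is still affine isometric and still metrically proper. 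From now on $\hill$ denotes this separable Hilbert space.

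With $\hill$ in hand, the metric properness of the action guarantees, via the proposition that $A(\hill)$ is a proper $G$-$\Calg$, that $A(\hill)$ is a separable proper $G$-$\Calg$ --- exactly the kind of algebra through which Theorem \ref{DD} requires the identity to factor. The two morphisms implementing the factorization are already constructed: the Bott (dual-Dirac) element $b \in KK^G_1(\bbC, A(\hill))$, and the Dirac element $d \in KK^G_1(A(\hill), S\Sigma)$, the latter obtained by lifting the $E$-theoretic Dirac class $\alpha$ through the isomorphism $\eta$ of Theorem \ref{Result}. The crucial computation is Theorem \ref{KKcomp} (equivalently the summary statement Theorem \ref{conc}): the Kasparov product $b \otimes_{A(\hill)} d \in KK^G(\bbC, S\Sigma)$ equals the identity of $KK^G(\bbC, \bbC)$ up to sign under the Bott periodicity isomorphism $KK^G(\bbC, \bbC) \cong KK^G(\bbC, S\Sigma)$.

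To finish, I would transport this identity across the Bott periodicity isomorphism $S\Sigma \cong \bbC$ in $KK^G$, so that $b \otimes_{A(\hill)} d$ becomes an honest element of $KK^G(\bbC, \bbC)$; if the resulting sign is $-1$ I simply replace $d$ by $-d$, which is still a class in $KK^G_1(A(\hill), S\Sigma)$. This exhibits $1_\bbC$ as a composite in the category $KK^G$ of the two morphisms $b \colon \bbC \to A(\hill)$ and (the periodicity-adjusted) $d \colon A(\hill) \to \bbC$, i.e. the identity factors through the separable proper $G$-$\Calg$ $A(\hill)$. Theorem \ref{DD} then yields the Baum-Connes conjecture with coefficients for $G$. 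I expect the genuine obstacle to lie not in this final assembly --- which is essentially bookkeeping once Theorem \ref{KKcomp} is granted --- but in the harmless-looking separability reduction and, more substantively, in checking that the degree and Bott-periodicity identifications line up so that $b \otimes_{A(\hill)} d$ really lands on $\pm 1_\bbC$ rather than on some other invertible class. The deep content, Theorem \ref{KKcomp}, is assumed here and carries the analytic weight of the whole argument.
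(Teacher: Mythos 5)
Your proposal is correct and follows essentially the same route as the paper: the paper likewise deduces the theorem from the Dual-Dirac method (Theorem \ref{DD}) by factoring $1_\bbC$ through the separable proper $G$-$\Calg$ $A(\hill)$, using the Bott element $b$, the Dirac element $d$ obtained via the lifting isomorphism $\eta$ of Theorem \ref{Result}, and the product computation of Theorem \ref{KKcomp}. Your explicit separability reduction of the Hilbert space (which the paper silently assumes when it fixes a separable $\hill$) and your sign adjustment $d \mapsto -d$ are harmless additions to, not deviations from, the paper's argument.
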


\newpage
\section{Non-Isometric Actions}

In this last chapter, we consider an affine action of a second countable, locally compact group $G$ on a separable (infinite-dimensional) real Hilbert space $\hill$ whose linear part is not necessarily isometric. It has been suggested that it is important to consider such an action since some groups like $\text{sp}(n,1)$ which cannot admit metrically proper, affine isometric action on Hilbert space (due to the Kazhdan's property-($T$)) admits a metrically proper affine action whose linear part is not isometry but uniformly bounded. However, in order to carry out some analogy of the argument of the Higson-Kasparov Theorem to this case, it is necessary to go beyond the framework of $\Calgs$. For example, the $\Calg$ $A(\hill)$ of Hilbert space would not become a $G$-$\Calg$ in an obvious way. We will see, however, if we consider an affine action of $G$ whose linear part is of the form an isometry times a scalar, then there indeed exists a natural action of $G$ on the $\Calg$ $A(\hill)$ which makes it a $G$-$\Calg$.\\

In this chapter, by an affine action of a group $G$ on a Hilbert space $\hill$, we mean an affine action $(\pi\times r, b)$ of $G$ on $\hill$ whose linear part $\pi\times r$ is of the form an isometry times a scalar. Namely, $\pi$ and $r$ are continuous group homomorphisms from $G$ to $O(\hill)$ and to $\bbR_+$ respectively; and $b$ is a continuous map from $G$ to $\hill$ satisfying the cocycle condition $b(gg')=\pi(g)r(g)b(g')+b(g)$ for any $g,g'$ in $G$. We denote by $g$ the affine transformation given by $g$; i.e. the homeomorphism $v\mapsto \pi(g)r(g)v+b(g)$ of $\hill$.

Now, let $(\pi\times r, b)$ be an affine action of a group $G$ on a Hilbert space $\hill$. Then, we have a natural action of $G$ on a $\Calg$ $A(\hill)$ of Hilbert space which makes it a $G$-$\Calg$. The $G$-action is defined as follows. For $g$ in $G$ and for a finite dimensional affine subspace $V$ of $\hill$, we have a $\ast$-isomorphism $g$ from $A(V)=\mathcal{S}\hat\otimes C_0(V\times V_0, \mathcal{L}(V))$ to $A(gV)$ which decomposes as an action $r(g)_\ast$ on $\cS$ defined by $r(g)$,  isomorphisms $g_{\ast}\colon C_0(V)\to C_0(gV)$ and $(\pi(g)r(g))_\ast\colon C_0(V_0)\to  C_0(gV_0)=C_0(\pi(g)r(g)V_0)$ defined by $g$ and by $\pi(g)r(g)$ respectively and an isomorphism from $\pi(g)_\ast\colon\mathcal{L}(V)\to \cL(gV)=\cL(\pi(g)V)$ defined by $\pi(g)$. The next lemma says that this defines a $G$-action on $A(\hill)$.

\begin{lemma}\label{commute} For any element $g$ in $G$ and for any finite dimensional affine subspaces $V\subseteq V'=V\oplus W$, the following diagram commutes:
\begin{align*}
\xymatrix{
A(V) \ar[d]^-{g} \ar[r] & A(V') \ar[d]^-{g} \\
A(gV)  \ar[r] & A(gV') \\
}
\end{align*} Here, the horizontal maps are the natural inclusion; and the vertical maps are the maps defined above.
\end{lemma}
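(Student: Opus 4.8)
The plan is to verify commutativity of the square by tracking an elementary tensor through both composite maps and checking that the two results agree. Recall the inclusion $A(V)\hookrightarrow A(V')$ is built by tensoring the inclusion $\cS\hookrightarrow A(W)$ (which uses the odd multiplier $X\hat\otimes1+1\hat\otimes\cB_W$) with the identity on $\cC(V)$, after the identification $\cC(V')\cong\cC(V)\hat\otimes\cC(W)$. The vertical maps $g$ decompose as $r(g)_\ast$ on $\cS$, together with the homeomorphism-induced isomorphisms $g_\ast$, $(\pi(g)r(g))_\ast$ on the $C_0$-factors and $\pi(g)_\ast$ on the $L(V)$-factor. So first I would reduce the whole diagram, via the decomposition $A(V')=A(V)\hat\otimes\cC(W)$ and naturality of graded tensor products, to the single nontrivial square living on the $W$-directions: namely that the square relating the inclusion $\cS\hookrightarrow A(W)$ and the $g$-maps $\cS\to\cS$, $A(W)\to A(gW)$ commutes, where $gW$ denotes the linear part $\pi(g)r(g)W$.

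Next I would make precise what the inclusion $\cS\hookrightarrow A(W)$ does after applying $g$. The key computation is how the Bott operator transforms under $g$. Since $g$ acts on $\cC(W)$ through $(\pi(g)r(g))_\ast$ on the position variable and $\pi(g)_\ast$ on the Clifford factor $L(W)$, I expect the Bott operator $\cB_W=(w_1,w_2)\mapsto i\overline{c}(w_1)+c(w_2)$ to be carried, under the coordinate change $w\mapsto \pi(g)r(g)w$ for positions and $c(w)\mapsto c(\pi(g)w)$ for Clifford multiplications, to $r(g)$ times the Bott operator on $gW$. This scaling by $r(g)$ is exactly what is compensated by the action $r(g)_\ast$ on the $\cS$-factor: the multiplier $X\hat\otimes1+1\hat\otimes\cB_W$ is rescaled by $r(g)$ on both summands, and rescaling the variable of $\cS$ by $r(g)$ intertwines the two functional calculi. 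Concretely, I would check that $r(g)_\ast$ sends the resolvent functions $(x\pm i)^{-1}$ to functions whose functional calculus against $X\hat\otimes1+1\hat\otimes\cB_{gW}$ matches the $g$-image of the functional calculus against $X\hat\otimes1+1\hat\otimes\cB_W$; by density of resolvent functions in $\cS$ and the uniqueness of functional calculus for unbounded multipliers this suffices.

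The main obstacle I anticipate is precisely bookkeeping the scalar $r(g)$ correctly through the functional calculus, since the multiplier $X\hat\otimes1+1\hat\otimes\cB_W$ mixes the $\cS$-coordinate $X$ (acted on by $r(g)_\ast$) with the Bott operator (which scales by $r(g)$ under $\pi(g)r(g)$). The clean way to handle this is to observe that $g$ acts on the odd multiplier $X\hat\otimes1+1\hat\otimes\cB_W$ by scaling it uniformly by $r(g)$ — the $X$-part because of $r(g)_\ast$ on $\cS$ and the $\cB_W$-part because of the scalar $r(g)$ in the linear action — and that scaling an essentially selfadjoint multiplier by a positive constant intertwines functional calculi via the corresponding rescaling of $\cS$. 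Once this single equivariance identity for the Bott multiplier is established, the rest is the routine naturality of graded tensor products and the functoriality of the pushforward maps $g_\ast$, $(\pi(g)r(g))_\ast$, $\pi(g)_\ast$ on the commutative and Clifford factors, which commute with taking tensor products by construction. I would therefore present the proof as: reduce to the $W$-directions, establish the Bott-operator equivariance up to the scalar $r(g)$, invoke uniqueness of functional calculus, and finally reassemble using naturality of $\hat\otimes$.
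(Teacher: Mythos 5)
Your proposal is correct, and its first half coincides exactly with the paper's own proof: the paper likewise reduces, via the decomposition $\cC(V')\cong\cC(V)\hat\otimes\cC(W)$ and the compatible factorization of the $g$-action, to the single square relating the inclusion $\cS\hookrightarrow A(W)$ with $r(g)_\ast$ on $\cS$ and the $g$-map $A(W)\to A(\pi(g)r(g)W)$. Where you diverge is in how that square is verified. The paper checks it by direct computation on the two Gaussian generators $e^{-x^2}$ and $xe^{-x^2}$ of $\cS$ (printed in the paper with a sign typo as $\exp(x^2)$), using the explicit form of the functional calculus of $X\hat\otimes1+1\hat\otimes\cB_W$ on even and odd functions; both routes visibly yield $\exp(-r(g)^{-2}x^2)\hat\otimes\exp(-r(g)^{-2}\|(w_1',w_2')\|^2)$ and its odd companion. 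You instead establish equivariance of the unbounded multiplier itself --- $g$ carries $X\hat\otimes1+1\hat\otimes\cB_W$ to a scalar multiple of $X\hat\otimes1+1\hat\otimes\cB_{W'}$ --- and conclude by uniqueness of the functional calculus determined by the resolvents $(x\pm i)^{-1}$. Both are sound; your route avoids the explicit Gaussian formulas (in particular the comultiplication-type formula needed for the odd generator $xe^{-x^2}$), while the paper's check is more concrete and self-contained. One caution: with the natural conventions (position variables transform by $\pi(g)r(g)$, Clifford multiplications by $\pi(g)$, and $c$, $\overline{c}$ linear in their vector argument), the Bott multiplier pushes forward to $r(g)^{-1}\cB_{W'}$, not $r(g)\cB_{W'}$; correspondingly the dilation $r(g)_\ast$ on $\cS$ must be $f\mapsto f(r(g)^{-1}\,\cdot\,)$ so that the $X$-summand scales by the same factor. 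Your argument only uses that the two summands scale by a \emph{common} scalar that is undone by the dilation on $\cS$, so this inversion does not affect validity, but the direction should be stated consistently in a final write-up, since with the opposite convention for $r(g)_\ast$ the square would genuinely fail to commute.
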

\begin{proof} The proof is identical to the isometric case. It suffices to show the following diagram commutes:
\begin{align*}
\xymatrix{
\cS \ar[d]^-{r(g)_\ast} \ar[r] & A(W) \ar[d]^-{(\pi(g)r(g))_\ast} \\
\cS  \ar[r] & A(\pi(g)r(g)W) \\
}
\end{align*} Rewrite $\pi(g)r(g)W$ as $W'$. We need to check the commutativity of the diagram only for $\exp(x^2)$ and for $x\exp(x^2)$ in $\cS$. Both routes send $\exp(x^2)$ to \begin{equation*}
\exp(r(g)^{-2}x^2)\hat\otimes\exp(r(g)^{-2}\|(w_1', w_2')\|^2)
\end{equation*}
in $A(W')=\cS\hat\otimes C_0(W'\times W', \cL(W'))$, and similarly send $x\exp(x^2)$ to \begin{align*}
r(g)^{-1}x\exp(r(g)^{-2}x^2)\hat\otimes\exp(r(g)^{-2}\|(w_1', w_2')\|^2)\\+\exp(r(g)^{-2}x^2)\hat\otimes r(g)^{-1}\cB_{W'}\exp(r(g)^{-2}\|(w_1', w_2')\|^2)
\end{align*}
in $A(W')$. Here, $\cB_{W'}$ is the Bott operator for $W'$. 
\end{proof}

\begin{dfn} For any affine action $(\pi\times r, b)$ on $\hill$, we define the $G$-action on $A(\hill)$ which is guaranteed by Lemma \eqref{commute}. This makes $A(\hill)$ a $G$-$\Calg$. 
\end{dfn}

Now, denote by $S_G$, the (ungraded) $G$-$\Calg$ $S$ with the $G$-action coming from the homomorphism $r\colon G\to\bbR_+$. Then, the natural inclusion $S_G \to A(\hill)$ is $G$-equivariant when the affine part $b$ of the $G$-action on $\hill$ is zero. In general, analogously to the case of isometric actions, we have an equivariant asymptotic morphism $(\phi_t)\colon S_G\to\to A(\hill)$ given by $\phi_t(f):=f(t^{-1}\cB)$ for $t$ in $[1,\infty)$. We will prove the following (though very little) generalization of infinite dimensional Bott Periodicity by N. Higson, G. Kasparov and J. Trout (see \cite{HKT}).

\begin{theorem} An equivariant asymptotic morphism $(\phi_t)\colon S_G\to\to A(\hill)$ defines an invertible morphism in $E^G(S_G, A(\hill))$.
\end{theorem}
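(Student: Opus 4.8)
The plan is to reduce the statement to the infinite--dimensional Bott periodicity for \emph{isometric} actions established earlier, by using Fell's absorption to deform the scalar part $r$ of the linear action into the trivial scalar while keeping control of the morphism $(\phi_t)$. The engine of the argument is the following observation: if $(D_s)_{s\in[0,1]}$ is a field of $G$-$\Calgs$ whose underlying $\Calg$ is the constant $C([0,1],D)$ and only the $G$-action varies continuously with $s$, then each evaluation $\mathrm{ev}_{s_0}\colon C([0,1],D)\to D$ is an isomorphism in $E^G$. Granting this, the theorem follows by chasing a few commuting squares.

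To prove the observation I would invoke the untwisting (Packer--Raeburn / Fell absorption) isomorphism: for any $G$-$\Calg$ $(D,\delta)$ the unitary multiplier $(W\xi)(g)=\delta_g^{-1}(\xi(g))$ of $D\otimes\K(L^2(G))$ conjugates the diagonal action $\delta\otimes\Ad\lambda$ into $\id_D\otimes\Ad\lambda$, giving a $G$-equivariant isomorphism $(D,\delta)\otimes(\K(L^2(G)),\Ad\lambda)\cong(D,\mathrm{triv})\otimes(\K(L^2(G)),\Ad\lambda)$. Because this unitary depends only, and continuously, on the action, it assembles for the field above into an isomorphism of $C([0,1],D)\otimes\K(L^2(G))$ with the constant trivial-action field $(D,\mathrm{triv})\otimes C[0,1]\otimes\K(L^2(G))$, intertwining $\mathrm{ev}_{s_0}\otimes\id$ with the genuine constant-field evaluation. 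The latter is an $E^G$-equivalence since $C[0,1]\to\bbC$ is a homotopy equivalence; and since $\hill_G\otimes L^2(G)\cong\hill_G$ as $G$-Hilbert spaces, stabilization by $(\K(L^2(G)),\Ad\lambda)$ is itself an $E^G$-equivalence. Transporting back proves the claim.

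Now I apply this twice. First, replacing $b$ by $sb$ satisfies the cocycle condition for every $s$, so $A(\hill)$ equipped with the actions $(\pi r, sb)$ for $s\in[0,1]$ is such a field; its evaluations are $E^G$-equivalences, and since $(\phi_t)$ is equivariant on every fibre it assembles into a family morphism whose evaluation squares at $s=0,1$ commute on the nose, reducing the theorem to the linear case $b=0$. Second, with $b=0$ the cocycle condition is vacuous, so $v\mapsto\pi(g)r(g)^s v$ (an isometry times the scalar $r^s$, again a homomorphism to $\bbR$) defines a valid action for each $s$, giving a field interpolating the isometric action $\pi$ at $s=0$ and the given scalar action at $s=1$, on both $A(\hill)$ and $S_G$. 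The Bott inclusion $f\mapsto f(t^{-1}\cB)$ is equivariant on every fibre, so it assembles into $\Phi\colon\mathcal{S}\to\to\mathcal{A}$ with commuting evaluation squares over $S_G\to A(\hill)$. By the observation all four evaluation maps are $E^G$-equivalences, and the $s=0$ fibre is precisely the isometric Bott periodicity proved earlier (with $Y$ a point), hence an $E^G$-equivalence. Chasing the two squares gives: $\phi^{\mathrm{iso}}$ is an equivalence $\Rightarrow\Phi$ is $\Rightarrow(\phi_t)$ is, which is the assertion.

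The main obstacle I anticipate is the careful execution of the Fell-absorption step for a field with continuously varying action: one must verify that the fibrewise untwisting unitaries $W_s$ patch to a strictly continuous unitary multiplier of the field, that conjugation by it genuinely yields the constant trivial-action field, and that it commutes with the evaluation $\ast$-homomorphisms. The auxiliary points — equivariance of $(\phi_t)$ for the scalar action, which is the Bott-operator analogue of Lemma \ref{commute}, and continuity of all the families in $s$ — should be routine adaptations of the isometric arguments and of the computation carried out in Lemma \ref{commute}.
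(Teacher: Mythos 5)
Your two deformation steps ($b\rightsquigarrow sb$ and $r\rightsquigarrow r^{s}$) are legitimate families of actions, and your remark that stabilization by $(\K(L^2(G)),\Ad\lambda)$ is an $E^G$-equivalence is correct; but the ``engine'' that everything rests on is a genuine gap, and the Fell-absorption lemma you propose to prove it with is false. The map $W$ defined by $(W\xi)(g)=\delta_g^{-1}(\xi(g))$ is not $D$-linear: $W(\xi\cdot d)(g)=\delta_g^{-1}(\xi(g))\,\delta_g^{-1}(d)\neq (W\xi)(g)\,d$. So $W$ is not an adjointable operator on the Hilbert module $L^2(G)\otimes D$, hence not a unitary multiplier of $D\otimes\K(L^2(G))$, and conjugation by it does not preserve that algebra. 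The conclusion it was supposed to yield, $(D,\delta)\otimes(\K(L^2(G)),\Ad\lambda)\cong(D,\mathrm{triv})\otimes(\K(L^2(G)),\Ad\lambda)$, is in fact false: take $G=\bbZ/2$ and $D=C(G)$ with the translation action. The left side has crossed product $C(G,M_2(\bbC))\rtimes G\cong M_4(\bbC)$, so $K_0=\bbZ$, while the right side has crossed product $\bbC^2\otimes\bigl(M_2(\bbC)\rtimes_{\Ad\lambda}\bbZ/2\bigr)\cong M_2(\bbC)^{\oplus 4}$, so $K_0=\bbZ^4$; the two sides are not even $E^G$-equivalent. What is true is either that $\delta\otimes\Ad\lambda$ is exterior equivalent to $\delta\otimes\id_{\K}$ (via the cocycle $g\mapsto 1\otimes\lambda_g$), which untwists the wrong tensor leg, or genuine Fell absorption $(D,\delta)\otimes(C_0(G),\tau)\cong(D,\mathrm{triv})\otimes(C_0(G),\tau)$ via $f\mapsto[g\mapsto\delta_g^{-1}(f(g))]$ --- but the latter cannot be fed into your engine, because tensoring with $(C_0(G),\tau)$ is \emph{not} invertible in $E^G$ (it essentially forgets the group action, by Green's imprimitivity), so invertibility of $\mathrm{ev}_{s_0}\otimes\id_{C_0(G)}$ implies nothing about $\mathrm{ev}_{s_0}$. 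Your engine is exactly homotopy invariance of $E^G$ in the \emph{action} variable, and that is not an available principle; it is the whole difficulty of the statement.

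The paper's proof threads this needle by using the special structure of the action: the only non-isometric part factors through the homomorphism $\log r\colon G\to\bbR$. One therefore tensors not with $C_0(G)$ but with $S_T=C_0(\bbR)$ carrying the translation action pulled back along $\log r$. This choice has the two features your candidates cannot have simultaneously: (i) $S_T\otimes S_T\cong\bbC$ in $E^G$ (translation on $\bbR^2$ is an affine isometric action, covered by Chapter 7), so tensoring with $S_T$ \emph{reflects} invertibility --- this replaces your unavailable engine; and (ii) the absorption isomorphism $f(y)\mapsto(\exp(-y))_\ast(f(y))$ untwists the dilation part of the action on $S_G S_T$ and on $A(\hill)S_T$. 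Note finally that even after untwisting one does not obtain a constant field with a single isometric action: the cocycle becomes fiber-dependent, $b_y=e^{-y}b$, i.e.\ a continuous field of affine isometric actions over the $G$-space $Y=\bbR_T$. This is precisely why the Chapter 7 theorem was proved for fields over an arbitrary $G$-space $Y$ and is then quoted for $Y=\bbR_T$, rather than reducing to a single fiber by deformation as you attempt to do.
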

\begin{proof} There might be a direct proof of this, but we will soon see that this result follows from an already established result: the infinite dimensional Bott Periodicity for a continuous field of affine isometric actions on real Hilbert spaces. We use Fell's absorption technique. Denote by $S_T$ the $G$-$\Calg$ $S$ equipped with the $G$-action induced by the translation action on $\bbR$ defined by $g\colon y\to y+\log(r(g))$ for $g$ in $G$. Since $S_T^2$ is isomorphic to $\bbC$ in Equivariant $E$-Theory, our claim follows if we show an equivariant asymptotic morphism $(\phi_t)\otimes\id_{S_T}\colon S_GS_T\to\to A(\hill)S_T$ defines an invertible morphism in $E^G(S_GS_T, A(\hill)S_T)$. Now, the $G$-$\Calg$ $S_GS_T$ is isomorphic to $SS_T$: write $S_GS_T$ as $C_0(\bbR_T, S_G)$ and $SS_T$ as $C_0(\bbR_T, S)$ when $\bbR_T$ is equipped with the translation action defined above. The isomorphism sends a function $f\colon\bbR_T\to S_G$ to a function $\bbR_T\ni y\mapsto (\exp(-y))_\ast(f(y))\in S$. Similarly, write $A(\hill)S_T$ as $C_0(\bbR_T, A(\hill))$. Use exactly the same formula; namely, send a function $f\colon\bbR_T\to A(\hill)$ to a function $\bbR_T\ni y\mapsto (\exp(-y))_\ast(f(y))\in A(\hill)$ where $(\exp(-y))_\ast$ denotes now an action on $A(\hill)$ defined by $\exp(-y)$ in $\bbR_+$. Then, this defines an isomorphism from $G$-$\Calg$ $A(\hill)S_T$ to the $G$-$\Calg$ which we write by $A(\hill)(\bbR_T)$ by the slight abuse of notation. The $G$-action on the latter algebra $A(\hill)(\bbR_T)$ is defined as follows. For $g$ in $G$ and for $f\colon\bbR_T\to A(\hill)$, $g(f)(y)=(\pi(g), \exp(-y)b(g))_\ast f (y-\log r(g))$ where $(\pi, b)_\ast$ denotes the action on $A(\hill)$ induced from an affine isometric action $(\pi, b)$ on $\hill$. Rewrite $SS_T$ as $S(\bbR_T)$. With these identifications the asymptotic morphism $(\phi_t)\otimes\id_{S_T}\colon S(\bbR_T)\to\to A(\hill)(\bbR_T)$ is nothing but the asymptotic morphism associated to the continuous filed of affine isometric actions $(\pi, (b_y)_{y\in \bbR_T})$ over $\bbR_T$ with $b_y=\exp(-v)b(g)$. In Chapter 7, we already proved that this defines invertible morphism in the category $E^G$; hence, we are done.
\end{proof}
 
One might want to consider whether we can do some analogy of the Higson-Kasparov Theorem in this situation. Namely, one may want to consider an affine action of a group $G$ on a Hilbert space $\hill$ which is metrically proper which makes $A(\hill)$ a proper $G$-$\Calg$ and see whether there is a Bott element and a Dirac element in Equivariant Kasparov's category. However, it is not enlightening to do so. (When such an action is metrically proper, it is more or less an isometric affine action.)  In stead, one should consider the action such that the $G$-$\Calg$ $A(\hill)$ becomes a proper $G$-$\Calg$ after tensoring $S_T$ as above. In such a situation, it is highly likely that one can do the exact analogy of the Higson-Kasparov Theorem to deduce $G$ satisfies BCC. Whether or not, there is a non a-$T$-menable group $G$  which admits such a ``proper'' action is not clear to the author's knowledge, but it would be just an extension of some subgroup of $\bbR_+$ by an a-$T$-menable group. As we remarked at the outset of this chapter, it is definitely an interesting and important problem to find the analogy of the Higson-Kasparov Theorem for more general affine actions of a group on a Hilbert space. The author considers in attacking this interesting problem, our Theorem \ref{Result} or the idea behind its proof could play some important role. 
 
\newpage

\bibliography{bib1}

\begin{thebibliography}{CFKS87}

\bibitem[BCH94]{BCH}
Paul Baum, Alain Connes, and Nigel Higson.
\newblock Classifying space for proper actions and {$K$}-theory of group {$C\sp
  \ast$}-algebras.
\newblock In {\em {$C\sp \ast$}-algebras: 1943--1993 ({S}an {A}ntonio, {TX},
  1993)}, volume 167 of {\em Contemp. Math.}, pages 240--291. Amer. Math. Soc.,
  Providence, RI, 1994.

\bibitem[Bla98]{Bla}
Bruce Blackadar.
\newblock {\em {$K$}-theory for operator algebras}, volume~5 of {\em
  Mathematical Sciences Research Institute Publications}.
\newblock Cambridge University Press, Cambridge, second edition, 1998.

\bibitem[BO08]{BrOza}
Nathanial~P. Brown and Narutaka Ozawa.
\newblock {\em {$C\sp *$}-algebras and finite-dimensional approximations},
  volume~88 of {\em Graduate Studies in Mathematics}.
\newblock American Mathematical Society, Providence, RI, 2008.

\bibitem[CEM01]{CEM}
J{\'e}r{\^o}me Chabert, Siegfried Echterhoff, and Ralf Meyer.
\newblock Deux remarques sur l'application de {B}aum-{C}onnes.
\newblock {\em C. R. Acad. Sci. Paris S\'er. I Math.}, 332(7):607--610, 2001.

\bibitem[CFKS87]{Shrodinger}
H.~L. Cycon, R.~G. Froese, W.~Kirsch, and B.~Simon.
\newblock {\em Schr\"odinger operators with application to quantum mechanics
  and global geometry}.
\newblock Texts and Monographs in Physics. Springer-Verlag, Berlin, study
  edition, 1987.

\bibitem[Dix77]{DixC}
Jacques Dixmier.
\newblock {\em {$C\sp*$}-algebras}.
\newblock North-Holland Publishing Co., Amsterdam-New York-Oxford, 1977.
\newblock Translated from the French by Francis Jellett, North-Holland
  Mathematical Library, Vol. 15.

\bibitem[GHT00]{GHT}
Erik Guentner, Nigel Higson, and Jody Trout.
\newblock Equivariant {$E$}-theory for {$C\sp *$}-algebras.
\newblock {\em Mem. Amer. Math. Soc.}, 148(703):viii+86, 2000.

\bibitem[HG04]{HigGuen}
Nigel Higson and Erik Guentner.
\newblock Group {$C\sp \ast$}-algebras and {$K$}-theory.
\newblock In {\em Noncommutative geometry}, volume 1831 of {\em Lecture Notes
  in Math.}, pages 137--251. Springer, Berlin, 2004.

\bibitem[HK97]{HigKas1}
Nigel Higson and Gennadi Kasparov.
\newblock Operator {$K$}-theory for groups which act properly and isometrically
  on {H}ilbert space.
\newblock {\em Electron. Res. Announc. Amer. Math. Soc.}, 3:131--142
  (electronic), 1997.

\bibitem[HK01]{HigKas2}
Nigel Higson and Gennadi Kasparov.
\newblock {$E$}-theory and {$KK$}-theory for groups which act properly and
  isometrically on {H}ilbert space.
\newblock {\em Invent. Math.}, 144(1):23--74, 2001.

\bibitem[HKT98]{HKT}
Nigel Higson, Gennadi Kasparov, and Jody Trout.
\newblock A {B}ott periodicity theorem for infinite-dimensional {E}uclidean
  space.
\newblock {\em Adv. Math.}, 135(1):1--40, 1998.

\bibitem[HR00]{HigRoe}
Nigel Higson and John Roe.
\newblock {\em Analytic {$K$}-homology}.
\newblock Oxford Mathematical Monographs. Oxford University Press, Oxford,
  2000.
\newblock Oxford Science Publications.

\bibitem[Jul98]{Julg}
Pierre Julg.
\newblock Travaux de {N}. {H}igson et {G}. {K}asparov sur la conjecture de
  {B}aum-{C}onnes.
\newblock {\em Ast\'erisque}, (252):Exp.\ No.\ 841, 4, 151--183, 1998.
\newblock S{\'e}minaire Bourbaki. Vol. 1997/98.

\bibitem[Kas80]{Kas1}
G.~G. Kasparov.
\newblock The operator {$K$}-functor and extensions of {$C\sp{\ast}
  $}-algebras.
\newblock {\em Izv. Akad. Nauk SSSR Ser. Mat.}, 44(3):571--636, 719, 1980.

\bibitem[Kas88]{Kas2}
G.~G. Kasparov.
\newblock Equivariant {$KK$}-theory and the {N}ovikov conjecture.
\newblock {\em Invent. Math.}, 91(1):147--201, 1988.

\bibitem[KS03]{bolic}
Gennadi Kasparov and Georges Skandalis.
\newblock Groups acting properly on ``bolic'' spaces and the {N}ovikov
  conjecture.
\newblock {\em Ann. of Math. (2)}, 158(1):165--206, 2003.

\bibitem[KW95]{KW}
Eberhard Kirchberg and Simon Wassermann.
\newblock Operations on continuous bundles of {$C\sp *$}-algebras.
\newblock {\em Math. Ann.}, 303(4):677--697, 1995.

\bibitem[Mey00]{Mey}
Ralf Meyer.
\newblock Equivariant {K}asparov theory and generalized homomorphisms.
\newblock {\em $K$-Theory}, 21(3):201--228, 2000.

\bibitem[MN06]{MeNe}
Ralf Meyer and Ryszard Nest.
\newblock The {B}aum-{C}onnes conjecture via localisation of categories.
\newblock {\em Topology}, 45(2):209--259, 2006.

\bibitem[Ped89]{analysisnow}
Gert~K. Pedersen.
\newblock {\em Analysis now}, volume 118 of {\em Graduate Texts in
  Mathematics}.
\newblock Springer-Verlag, New York, 1989.

\bibitem[Tho97]{AsympE}
Klaus Thomsen.
\newblock Asymptotic equicariant {$E$}-theory.
\newblock {\em preprint}, 1997.

\bibitem[Tu99]{Tu}
Jean-Louis Tu.
\newblock The {B}aum-{C}onnes conjecture and discrete group actions on trees.
\newblock {\em $K$-Theory}, 17(4):303--318, 1999.

\bibitem[Val02]{Val}
Alain Valette.
\newblock {\em Introduction to the {B}aum-{C}onnes conjecture}.
\newblock Lectures in Mathematics ETH Z\"urich. Birkh\"auser Verlag, Basel,
  2002.
\newblock From notes taken by Indira Chatterji, With an appendix by Guido
  Mislin.

\end{thebibliography}
\end{document}